\documentclass[11pt]{article}

\usepackage[T1]{fontenc}
\usepackage[dvipsnames]{xcolor}
\usepackage{xurl}
\usepackage[colorlinks=true, linkcolor=blue, citecolor=blue, urlcolor=blue, hypertexnames=false]{hyperref}
\usepackage{natbib}
\bibpunct[, ]{(}{)}{,}{a}{}{,}%

\usepackage{amsmath, amssymb, amsfonts, mathtools, bbm, dsfont, cases}
\usepackage{array}

\DeclareFontFamily{U}{mathx}{}
\DeclareFontShape{U}{mathx}{m}{n}{<-> mathx10}{}
\DeclareSymbolFont{mathx}{U}{mathx}{m}{n}
\DeclareMathAccent{\widehat}{0}{mathx}{"70}
\DeclareMathAccent{\widecheck}{0}{mathx}{"71}

\usepackage{graphicx}
\usepackage{enumerate}
\usepackage{booktabs}
\usepackage{float}
\usepackage{multicol}
\usepackage{multirow}
\usepackage{tabularx}
\usepackage{threeparttable}
\usepackage{hhline}
\usepackage{rotating}
\usepackage{afterpage}

\usepackage[inline]{enumitem}
\usepackage{setspace}
\usepackage{fancyvrb}
\usepackage{fix-cm}
\usepackage{anyfontsize}
\usepackage{etoolbox}

\usepackage{tcolorbox}
\usepackage{tikz}
\usetikzlibrary{arrows.meta, positioning, calc, fit}
\usepackage{cleveref}

\usepackage{bibunits}
\usepackage{endnotes}
\let\footnote=\endnote

\usepackage[linesnumbered,ruled,vlined]{algorithm2e}

\def \dd{{\rm d}}
\newcommand{\R}{\mathbb{R}}
\newcommand{\E}{\mathbb{E}}

\renewcommand{\bf}[1]{\textbf{#1}}

\usepackage{subfigure}
\usepackage[protrusion=true,expansion=false]{microtype}
\graphicspath{{figures/}} 

\newtheorem{theorem}{Theorem}
\newtheorem{corollary}{Corollary}

\newtheorem{assumption}{Assumption}
\newtheorem{remark}{Remark}
\newtheorem{lemma}{Lemma}
\makeatletter

\newcommand{\Rmnum}[1]{\expandafter\@slowromancap\romannumeral #1@}
\makeatother

\newcommand{\Halmos}{\hfill\ensuremath{\square}}

\allowdisplaybreaks

\begin{document}

\title{Scalable Bi-causal Optimal Transport via KL Relaxation and Policy Gradients\footnote{The authors thank Jose Blanchent, Yinbin Han, Wenhao Yang, and other participants at the Stanford Student OR Seminar for helpful comments and feedback.}
}


\author{Haoyang Cao\thanks{Department of Applied Mathematics and Statistics, Data Science and AI Institute, and Mathematical Institute for Data Science, Johns Hopkins University. \url{hycao@jhu.edu} (email).}, \ Jesse Hoekstra\thanks{Department of Statistics, Oxford-Man Institute of Quantitative Finance, and Nuffield College, University of Oxford. \url{jesse.hoekstra@nuffield.ox.ac.uk} (email).}, \ Renyuan Xu\thanks{ Management Science \& Engineering Department, Stanford University. \url{renyuanxu@stanford.edu} (email).}, \ 
Yumin Xu\thanks{School of Mathematical Sciences, Peking University. \url{xuyumin@pku.edu.cn} (email).}, \  
and 
Ruixun Zhang\thanks{School of Mathematical Sciences, Center for Statistical Science, Laboratory for Mathematical Economics and Quantitative Finance, and National Engineering Laboratory for Big Data Analysis and Applications, Peking University. \url{zhangruixun@pku.edu.cn} (email).}}

\date{\today 
}

\maketitle
\thispagestyle{empty}

\centerline{\textbf{Abstract}} \baselineskip 14pt \vskip 10pt
\noindent
Bi-causal optimal transport (OT) is a natural framework for comparing and coupling stochastic processes under nonanticipative information constraints, with important applications in robust finance, sequential uncertainty quantification, and multistage stochastic optimization.
In particular, a learned bi-causal coupling naturally serves as a simulator for generating joint sample paths that respect both prescribed marginal laws and the underlying information flow.
Its practical use, however, is limited by the computational difficulty of enforcing bi-causal coupling constraints over path space, especially for continuous distributions and long horizons. We develop a scalable stochastic-optimization framework for computing bi-causal OT couplings under general marginals. Our approach introduces a Kullback--Leibler (KL)-penalized relaxation that replaces hard marginal constraints with tractable divergence penalties while preserving the recursive structure of the problem. We establish dynamic programming principles for both the original and relaxed formulations, prove that the relaxed problem converges to the original bi-causal OT problem as the penalty grows, and derive explicit policy-gradient representations for the relaxed objective. Building on these results, we propose a practical policy-gradient algorithm with unbiased mini-batch estimators, variance reduction, and nonasymptotic regret guarantees. Numerical experiments show that the method accurately captures marginal laws and temporal dependence, and performs well in applications including robust subhedging and time series statistical downscaling. These results provide a scalable computational approach to bi-causal OT and broaden its applicability in settings where nonanticipative information constraints are essential.

\vskip 20pt\noindent \textbf{Keywords}: Bi-causal optimal transport; Policy gradient; Kullback--Leibler regularization; Dynamic programming; Sequential modeling

\newpage

\begingroup
\pagestyle{empty}
\singlespacing
\tableofcontents
\clearpage
\endgroup
\newpage

\setcounter{page}{1}
\pagenumbering{arabic}
\setlength{\baselineskip}{1.5\baselineskip}
\onehalfspacing
\setcounter{equation}{0}
\setcounter{table}{0}
\setcounter{figure}{0}

\section{Introduction}

Bi-causal optimal transport (OT), also known as adapted transport, extends classical OT to sequential settings by restricting admissible couplings to be \emph{non-anticipative} with respect to the natural filtration of two stochastic processes \citep{hellwig1996sequential,pflug2012distance,bartl2023sensitivity,bartl2026wasserstein}. Concretely, given two discrete-time processes, a coupling is bi-causal if, at every time step $n$, the conditional law of each process's next state depends only on the joint history up to time $n$ and coincides with its own prescribed transition---so neither process can exploit or anticipate the other's future. 
The bi-causal OT problem then seeks the coupling in this restricted set that minimizes a cumulative expected cost, yielding a distance between stochastic processes that respects their temporal information structure.

This information-aware structure makes bi-causal OT a natural framework for learning couplings between stochastic processes under non-anticipative constraints, with broad applications in operations research and quantitative finance. Moreover, once computed, a bi-causal coupling also serves as a {\it principled simulator} for generating joint sample paths that are consistent with prescribed marginals and information flows. Among these applications, two settings particularly highlight the need for scalable bi-causal transport: robust hedging and time series statistical downscaling. In robust hedging, pricing bounds are defined through worst-case expectations over models that must be consistent with prescribed marginals and adapted to the market filtration \citep{burzoni2019pointwise,backhoff2020adapted,neufeld2022modelfree,krsek2025duality}, so the coupling must respect the underlying information flow; at the same time, the learned coupling provides a simulator for generating dynamically consistent stress scenarios. In statistical downscaling, one seeks to transform low-fidelity time series into high-fidelity ones while preserving temporal dependence, where naive marginal matching can lead to dynamically inconsistent samples \citep{wan2024debias,lopez2025dynamical}; here, the bi-causal coupling similarly acts as a simulator for generating temporally consistent high-fidelity trajectories. More broadly, any sequential modeling or uncertainty quantification task in which the underlying information flow must be explicitly respected \citep{casey2005scenario,pflug2012distance} can benefit from the bi-causal OT framework \citep{hoyland2001generating,heitsch2009scenario}.

The main obstacle to the broader adoption of bi-causal OT is {\it computational scalability}. Existing methods are most effective either for discrete distributions or for empirical samples from continuous laws at small scale, where one can exploit Sinkhorn-type iterations via entropic regularization or linear-programming formulations \citep{pichler2021nested,eckstein2024computational}. However, the resulting computational and memory costs typically increase rapidly with the sample size, time horizon, and state dimension. For continuous distributions, the recently proposed fitted value-iteration methods based on dynamic programming provide an important alternative \citep{bayraktar2025fitted}. Their computational cost, however, can remain substantial, since the bi-causal constraint must be enforced at each iteration, often via an additional inner iterative procedure. In addition, the sample complexity of their method suffers from the curse of dimensionality. These challenges become even more pronounced when bi-causal OT must be repeatedly evaluated within a larger optimization or statistical pipeline \citep{backhoff2020adapted,bartl2023sensitivity,gao2024multistage}. As a result, scalable computational methods for bi-causal OT under general marginals remain limited.

\paragraph{Our work and contributions.} This paper addresses the above gap by developing an efficient end-to-end stochastic-optimization framework for computing bi-causal OT couplings with general marginals, including continuous ones. The central difficulty is that bi-causal OT imposes hard sequential marginal constraints on a path-space coupling, so the coupling must respect the underlying information structure, even though direct optimization over the resulting global feasible set is intractable. Our key idea is to replace the hard marginal-matching constraints, which render the problem combinatorially difficult, with a Kullback--Leibler (KL)-penalized formulation that remains differentiable, computationally tractable, and amenable to policy-gradient methods, while preserving the recursive structure needed for computation.

The resulting framework learns joint laws that capture both marginal dynamics and temporal information flow. Such couplings can be used to generate dynamically consistent trajectories, and in some cases admit decision-theoretic interpretations, such as in robust hedging. Our primary focus, however, is on the learning and computation of the coupling itself. For clarity, we focus on the Markov setting, although our analysis extends to more general adapted marginals. Our main contributions are as follows.

\emph{Relaxation theory, well-posedness, and asymptotic consistency.}
We establish the dynamic programming principle for both the original bi-causal OT problem and the KL-relaxed problem (Lemmas \ref{lem:dpp} and \ref{lem:dpp2}), providing the recursive characterization of the coupling and value functions underlying our algorithmic design. We further show that, as the penalty parameter tends to infinity, both the relaxed objective and the corresponding optimal couplings converge to their counterparts in the original bi-causal OT problem (Theorem \ref{thm:gamma-convergence}). This result establishes that the relaxed formulation is not merely computationally convenient, but a principled surrogate for the constrained problem.

\emph{Scalable policy-gradient algorithm with convergence guarantees.}
Building on a Markov parameterization of bi-causal couplings, we derive explicit policy-gradient representations (Theorem \ref{thm:policy-grad}), construct unbiased gradient estimators with asymptotic variance reduction (Lemma \ref{lem:unbiased-cov}), and obtain a practical algorithm that can be implemented with expressive conditional models such as normalizing flows (Algorithm \ref{alg:kl-pg}). A key advantage of our policy-gradient formulation is that its sampling-based implementation is {\it dimension-free}, in the sense that it does not require discretization over the state or action spaces.  In addition, we establish a nonasymptotic convergence guarantee showing that the average regret is of order $\mathcal{O}(1/K)$, where $K$ denotes the number of episodes (Theorem~\ref{thm:beta-PG-converge}). Taken together, these results turn the recursive structure of bi-causal OT into an efficient computational procedure that scales beyond discrete settings.

\emph{Numerical validation and applications.}
We evaluate the performance of our method across several experiments. In controlled synthetic settings, the proposed algorithm performs well in capturing both marginal fidelity in multimodal distributions and temporal dependence in high-dimensional and long-horizon problems (Section \ref{sec: synthetic experiments}). We then demonstrate its practical value in two representative applications: robust subhedging under adaptedness constraints, where the method recovers near-optimal pricing bounds with relative errors below $1\%$ (Section \ref{sec: robust hedging}), and time series statistical downscaling, where OT-debiased conditioning substantially outperforms unconditional generation (Section \ref{sec: statistical downscaling}). In addition, our implementation is computationally faster than existing approaches in the literature, further supporting the practical scalability of the proposed framework.
Our code is publicly available on GitHub and all experimental results can be replicated.\footnote{\href{https://github.com/jesse-hoekstra/statistical_downscaling}{https://github.com/jesse-hoekstra/statistical\_downscaling}.}

\paragraph{Related literature.}
The literature most closely related to our work concerns bi-causal OT and its direct extensions.
The discrete-time theory of causal and bi-causal transport was developed by \citet{backhoff2017causal}, 
who established duality and dynamic programming formulations that remain central to the recursive structure of the problem. 
Subsequent work has explored both theoretical and computational aspects of the framework. Specifically,
\citet{lassalle2018ot} formulated causal transport on filtered spaces as a non-anticipative Monge--Kantorovich problem, 
while \citet{backhoff2020fundamental} studied the nested distance (also known as the bi-causal Wasserstein distance) 
and established its basic metric and topological properties. 
Dynamic-programming formulations have also been investigated for Markov chains 
\citep{moulos2021bicausal} and via fitted value-iteration \citep{bayraktar2025fitted}. 
Recent work further develops computational and structural aspects, including efficient algorithms 
\citep{calo2024bisimulation,eckstein2024computational}, general duality results \citep{krsek2025duality}, 
multi-process transport \citep{acciaio2025multicausal}, and specialized settings such as stochastic differential equation (SDE) laws 
\citep{hitz2024bicausal} or Gaussian processes \citep{gunasingam2025adapted}. 
Additional directions include equilibrium formulations for time-inconsistent costs \citep{bayraktar2025equilibrium} 
and statistical estimation of bi-causal transport distances \citep{calo2025distances}. However, most existing computational approaches either focus on discrete settings or rely on recursive dynamic-programming formulations tailored to specific model classes. 
Our work instead develops a stochastic-optimization framework that enables scalable computation of bi-causal OT couplings under general marginals.

Another strand of literature studies adapted transport and the geometry of stochastic processes under 
information constraints. 
Important precursors include scenario-tree modeling \citep{hoyland2001generating,casey2005scenario} and the nested distance \citep{pflug2012distance} for multistage stochastic optimization. 
Building on this perspective, \citet{backhoff2020adapted} introduced adapted Wasserstein distances and established 
stability results for dynamic optimization problems. 
Further work clarified the topological structure of stochastic-process spaces under adapted metrics 
\citep{backhoff2020all} and developed statistical estimation and empirical approximation methods 
\citep{backhoff2022estimating}. 
More recently, \citet{gao2024wasserstein} and \citet{bartl2026wasserstein} proposed a geometric framework for Wasserstein spaces 
of stochastic processes. Whereas this line of work focuses primarily on geometry, stability, and statistical properties, our paper addresses the algorithmic problem of computing bi-causal OT couplings efficiently.

Finally, our work is connected to several broader strands of literature. 
Classical OT provides the foundational framework, and entropic regularization methods 
\citep{cuturi2013sinkhorn} have become a standard tool for scalable OT computation. 
Connections between OT and robust finance were explored by \citet{beiglbock2013model}, 
while dynamic arbitrage and duality under model uncertainty were studied by \citet{bouchard2015arbitrage}. 
Optimal-transport-based objectives have also played an important role in modern generative modeling, 
including large-scale learning with Sinkhorn divergences \citep{genevay2018learning}, flow-matching approaches 
\citep{lipman2023flow}, diffusion Schr\"odinger bridge models \citep{shi2023dsbm,ma2025schr,alouadi2026lightsbb}, and stochastic-interpolant 
frameworks \citep{albergo2025stochastic}. 
These works develop powerful computational tools for classical transport problems but do not incorporate the non-anticipative information constraints that characterize bi-causal OT.

\paragraph{Paper organization.} 
Section~\ref{sec: bicausal ot} lays the mathematical foundation for the bi-causal OT problem between Markov sequences and establishes its dynamic programming principle. Section~\ref{sec: relaxed formulation} introduces the KL-penalized relaxation and proves \(\Gamma\)-convergence to the original problem. Section~\ref{sec: algorithm and convergence} develops the policy gradient algorithm with convergence guarantees. Section~\ref{sec: synthetic experiments} presents synthetic experiments and Section~\ref{sec:applications} develops two applications in robust hedging and time series statistical downscaling, respectively. All formal proofs and technical details are deferred to the appendices.

\section{Bi-causal optimal transport of Markov sequences}
\label{sec: bicausal ot}
In this section, we introduce the mathematical setting for the bi-causal OT problem between Markov sequences. We first revisit the definitions of both couplings and bi-causal couplings, and introduce equivalent Markov bi-causal couplings. Then, we establish the dynamic programming principle for the bi-causal OT problem.

\subsection{Set-up and notation}
\paragraph{Sequential data and their distributions.}
We consider $d$-dimensional time series data with length $N+1$. Formally, let $S=\mathbb R^d$ be a state space with Borel $\sigma$-algebra $\mathcal S=\mathcal B(S)$. Let $\mathcal Y=S^{N+1}$ be the set of paths on \(S\) with length \(N+1\), and its Borel $\sigma$-algebra is $\mathcal B(\mathcal Y)=\mathcal S^{\otimes(N+1)}$. To model two coupled time series $\pmb Y$ and $\pmb Y'$, we have the product space \(\mathcal Y\times\mathcal Y\) and its Borel \(\sigma\)-algebra \(\mathcal B(\mathcal Y\times\mathcal Y)=\mathcal B(\mathcal Y)\otimes\mathcal B(\mathcal Y)\). Throughout the paper, we write $[N]:=\{1,\dots,N\}$ and $[N]_0 := \{0,1,\dots,N\}$. For any $\pmb y=(y_0,\dots,y_N)\in\mathcal Y$ and $n \in [N]_0$, we write the prefix $\pmb y_{0:n}=(y_0,\dots,y_n)$ and the suffix
$\pmb y_{n:N}=(y_n,\dots,y_N)$. We define $\mu\otimes\mu'$ as the product coupling of two independent marginal laws $\mu$ and $\mu'$.

Consider a probability space \((\Omega, \mathcal F,\mathbb P)\). Let \(\mathcal P(S)\), \(\mathcal P(\mathcal Y)\) and \(\mathcal P(\mathcal Y\times\mathcal Y)\) be the sets of probability distributions over the state space \(S\), the path space \(\mathcal Y\) and the product space \(\mathcal Y\times\mathcal Y\), respectively. For any path law $\mu\in\mathcal P(\mathcal Y)$ and \(\pmb Y\sim\mu\), given any $n \in [N]$ and $\pmb y_{0:n} \in S^{\otimes(n+1)}$, its one-step conditional kernel at step \(n\), \(\mu(\cdot \,|\, \pmb y_{0:n-1})\), is given by
\begin{equation}
\label{eq:marginal-transition-history}
\mu(A \,|\, \pmb y_{0:n-1})
=\mathbb P \big(Y_n\in A \,\big|\, \pmb Y_{0:n-1}=\pmb y_{0:n-1}\big),
\qquad \forall\,A\in\mathcal S.
\end{equation}
and its marginal law is $\mu_n(B)=\mathbb{P}(Y_n \in B)$ for any $B \in \mathcal S$. Moreover, if $\mu$ is Markov with initial law $\mu_0$ and transitions $\pmb T=(T_1,\dots,T_N)$, where \(T_n(\cdot \,|\, y)\in\mathcal P(S)\) for any $n \in [N]$ and \(y\in S\), then 
\begin{equation*}
\mu(\dd \pmb y)=\mu_0(\dd y_0)\prod_{n=1}^N T_n(\dd y_n \,|\, y_{n-1}),
\end{equation*}
and \eqref{eq:marginal-transition-history} reduces to $\mu(A \,|\, \pmb y_{0:n-1})=T_n(A \,|\, y_{n-1})$.

\paragraph{Couplings and bi-causality.}
To model two coupled time series---such as two price processes, or a real sample path and a synthetic sample path---we introduce the notion of bi-causal couplings. They describe two stochastic processes that are non-anticipative in both directions, so neither process is allowed to use the other’s future when coupled.

In particular, we consider two distributions \(\mu,\mu'\in\mathcal P(\mathcal Y)\), and a probability distribution $\pi\in\mathcal P(\mathcal Y\times\mathcal Y)$ is said to be a coupling of $\mu$ and $\mu'$ if \(\pi\) satisfies
\begin{align}
\label{eq:coupling}
\pi(A\times\mathcal Y)=\mu(A),\quad
\pi(\mathcal Y\times B)=\mu'(B),
\qquad \forall\,A,B\in\mathcal B(\mathcal Y).
\end{align}
Denote the set of all possible couplings of \(\mu\) and \(\mu'\) by \(\Pi(\mu,\mu')\). For $(\pmb Y,\pmb Y')\sim\pi\in\Pi(\mu,\mu')$ and $n\in[N]$, we define one-step \emph{marginal} conditionals as
\begin{align*}
\pi_n^{(1)}(A \,|\, \pmb y_{0:n-1},\pmb y'_{0:n-1})
&:=\mathbb P_\pi \big(Y_n\in A \,\big|\, \pmb Y_{0:n-1}=\pmb y_{0:n-1}, \pmb Y'_{0:n-1}=\pmb y'_{0:n-1}\big), \\
\pi_n^{(2)}(B \,|\, \pmb y_{0:n-1},\pmb y'_{0:n-1})
&:=\mathbb P_\pi \big(Y_n'\in B \,\big|\, \pmb Y_{0:n-1}=\pmb y_{0:n-1}, \pmb Y'_{0:n-1}=\pmb y'_{0:n-1}\big), 
\end{align*}
for all $A,B\in\mathcal S$. Throughout the paper, when there is no confusion, we denote $\E_{\pi}[\cdot] := \E_{(\pmb Y, \pmb Y') \sim\pi}[\cdot]$.

Furthermore, define the set of \emph{bi-causal couplings} as
\begin{align}
\label{eq:bicausal-coupling}
\Pi_{\textrm{bc}}(\mu,\mu')
:=\Big\{ \pi\in\Pi(\mu,\mu'):\ 
&\pi_n^{(1)}(A \,|\, \pmb y_{0:n-1},\pmb y'_{0:n-1})=\mu(A \,|\, \pmb y_{0:n-1}) \textrm{ and } \\ 
&\pi_n^{(2)}(B \,|\, \pmb y_{0:n-1},\pmb y'_{0:n-1})=\mu'(B \,|\, \pmb y'_{0:n-1}), \forall A, B \in \mathcal S, n \in [N] \Big\}. \nonumber
\end{align}
Bi-causal couplings are central to our framework: they preserve the prescribed marginal dynamics of both processes while allowing dependence only through the joint past, and therefore exclude any use of the other process's future. This non-anticipative structure makes the problem recursive, leading to the Markov kernel representation in Lemma~\ref{lem:mbc-equivalence}, the dynamic programming principle in Lemma~\ref{lem:dpp}, and the subsequent relaxed formulation and algorithmic developments in Sections~\ref{sec: relaxed formulation}--\ref{sec: algorithm and convergence}.

In addition, we assume that $\mu,\mu'$ are Markov with transitions $\pmb T,\pmb T'$, respectively. In this case, \(\pi\in\Pi_{\textrm{bc}}(\mu,\mu')\) in \eqref{eq:bicausal-coupling} is equivalent to
\begin{align}
\label{eq:markov-bicausal-coupling}
\pi_n^{(1)}(A \,|\,\pmb{y}_{0:n-1},\pmb{y}_{0:n-1}')=T_n(A \,|\, y_{n-1}), \quad  \pi_n^{(2)}(B \,|\, \pmb{y}_{0:n-1},\pmb{y}_{0:n-1}')=T_n'(B \,|\, y_{n-1}'),
\end{align}
for any $ A,B \in \mathcal S$, $n\in[N]$, and $\pmb{y}_{0:n-1},\pmb{y}_{0:n-1}'\in S^{\otimes n}$. 

\begin{remark}[Beyond the Markov setting]
The Markov setting in \eqref{eq:markov-bicausal-coupling} is imposed purely for technical and notational convenience and is not intrinsic to the framework. More generally, one may allow the coupling kernels at time $n$ to depend on the full past histories $(\pmb Y_{0:n-1}, \pmb Y'_{0:n-1})$, rather than only on the current states $(Y_{n-1},Y'_{n-1})$. The extension is conceptually straightforward: one simply replaces current-state-dependent kernels with history-dependent kernels, at the cost of additional notation but without any substantial new technical difficulty.
\end{remark}

\paragraph{Bi-causal optimal transport problem.}
Given admissible bi-causal couplings \eqref{eq:bicausal-coupling} and a sequence of bounded cost functions \(c_n:S\times S\to[0,c]\subset\mathbb R\), \(n \in [N]_0\), our goal is to find a coupling \(\pi\in\Pi_{\mathrm{bc}}(\mu,\mu')\) that minimizes the expected cumulative discrepancy between the two time series while respecting the non-anticipative information structure:
\begin{equation}
    \label{eq:bicausal-ot}
    W_{\textrm{bc}}(\mu,\mu')
    =
    \inf_{\pi\in\Pi_{\textrm{bc}}(\mu,\mu')}
    \mathbb E_{\pi}\left[\sum_{n=0}^N c_n(Y_n,Y_n')\right].
\end{equation}
Here, the cost function $c_n(y_n,y_n')$ measures the mismatch between two time series $\pmb y$ and $\pmb y'$ at step~$n$.

\paragraph{One-step Markov joint kernels.}
For Markov data, the above bi-causal couplings~\eqref{eq:bicausal-coupling} can be equivalently characterized by their one-step transition kernels, which reduces the global pathwise constraint to a local one-step description and thereby makes the analysis tractable.

In particular, we fix any pair of Markov path laws $(\mu, \mu')$ and \((\pmb Y,\pmb Y')\sim \pi\) with $\pi\in\Pi_{\textrm{bc}}(\mu,\mu')$, and define the one-step Markov joint kernel at each time $n\in[N]$ as
\begin{align}
\label{eq:coupling-kernel}
Q_n^\pi(A\times B  \,|\,  y,y')
:= \mathbb P_{\pi}\big(Y_n\in A, Y'_n\in B \,\big|\, Y_{n-1}=y, Y'_{n-1}=y'\big),
\ \forall A,B\in\mathcal S, \forall y,y'\in S.
\end{align}
In addition, denote the first and second marginal distributions of $Q_n^\pi(A\times B  \,|\,  y,y')$ in \eqref{eq:coupling-kernel} by $Q_n^{\pi,(1)}(A \,|\, y,y')$ and $Q_n^{\pi,(2)}(B \,|\, y,y')$, respectively.

In the Markov setting, define the collection of one-step bi-causal coupling kernels between $T_n$ and $T_n'$ as
\begin{align}
\label{eq:transition-coupling}
\Pi_M(T_n,T'_n)
:=\Big\{Q:S\times S\to\mathcal P(S\times S):\
&Q^{(1)}(A  \,|\,  y,y')=Q(A\times\mathcal Y|y,y')=T_n(A  \,|\,  y),\\
&\hspace{-80pt}Q^{(2)}(B  \,|\,  y,y')=Q(\mathcal Y\times B \,|\, y,y')=T'_n(B  \,|\,  y'),\ \forall A, B \in \mathcal S, \forall y,y' \in S \Big\}. \nonumber
\end{align}
Given any joint distribution \(\pi_0\in\mathcal P(S\times S)\) and kernels $\pmb Q=(Q_1,\dots,Q_N)$ with \(Q_n:S\times S\to P(S\times S)\) for \(n \in [N]\), we can define the corresponding Markov coupling $\pi_{\pi_0,\pmb Q}$ as
\begin{align}
\label{eq:markov-coupling}
\pi_{\pi_0,\pmb Q}(\dd \pmb y,\dd \pmb y')
:=\pi_0(\dd y_0,\dd y_0')\prod_{n=1}^N Q_n(\dd y_n,\dd y_n' \,|\,  y_{n-1},y_{n-1}').
\end{align}
Using \eqref{eq:transition-coupling} and \eqref{eq:markov-coupling}, we denote the collection of Markov bi-causal couplings as
\begin{align}
\label{eq:set-Markov-coupling}
    \mathcal M_{\textrm{bc}}(\mu,\mu')
    :=\Big\{\pi_{\pi_0,\pmb Q}:\ \pi_0\in\Pi(\mu_0,\mu_0'),\ Q_n\in\Pi_M(T_n,T_n'),\ \forall n \in [N]\Big\}.
\end{align}
Then we establish the following equivalent characterizations of Markov bi-causal couplings.
\begin{lemma}[Equivalent characterization of Markov bi-causal couplings]
\label{lem:mbc-equivalence}
Given that both $\mu$, $\mu'\in\mathcal P(\mathcal Y)$ are Markov, the set of Markov bi-causal couplings defined in \eqref{eq:set-Markov-coupling} coincides with the set of all bi-causal couplings, that is,
\[
\mathcal M_{\mathrm{bc}}(\mu,\mu')=\Pi_{\mathrm{bc}}(\mu,\mu').
\]
\end{lemma}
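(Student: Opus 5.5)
The plan is to prove the two inclusions separately. Both rest on writing a coupling $\pi$ on $\mathcal Y\times\mathcal Y$ via successive disintegrations $\pi=\pi_0\prod_{n=1}^N K_n(\dd y_n,\dd y_n'\,|\,\pmb y_{0:n-1},\pmb y'_{0:n-1})$. Here $\pi_0$ is the law of $(Y_0,Y_0')$, and $K_n$ is a regular conditional law of $(Y_n,Y_n')$ given the joint past. Such kernels exist because $S=\mathbb R^d$ is Polish. The bi-causality conditions \eqref{eq:markov-bicausal-coupling} say precisely that the two marginals of $K_n(\cdot\,|\,\pmb y_{0:n-1},\pmb y'_{0:n-1})$ equal $T_n(\cdot\,|\,y_{n-1})$ and $T_n'(\cdot\,|\,y'_{n-1})$ for $\pi$-a.e.\ history.

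\emph{Inclusion $\mathcal M_{\mathrm{bc}}\subseteq\Pi_{\mathrm{bc}}$.} Take $\pi=\pi_{\pi_0,\pmb Q}$ with $\pi_0\in\Pi(\mu_0,\mu_0')$ and $Q_n\in\Pi_M(T_n,T_n')$. By construction \eqref{eq:markov-coupling}, the conditional law of $(Y_n,Y_n')$ given the joint past is $Q_n(\cdot\,|\,y_{n-1},y'_{n-1})$. Its marginals are $T_n(\cdot\,|\,y_{n-1})$ and $T'_n(\cdot\,|\,y'_{n-1})$ by \eqref{eq:transition-coupling}, so \eqref{eq:markov-bicausal-coupling} holds. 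It then remains to check $\pi\in\Pi(\mu,\mu')$. I will integrate out the $\pmb y'$ coordinates backwards from $n=N$ to $n=1$. At each step the integral of $Q_n$ over $y_n'$ yields $T_n(\dd y_n\,|\,y_{n-1})$, which no longer depends on $y'_{n-1}$, so the induction proceeds. At the end, the first marginal of $\pi_0$ gives $\mu_0$. The result is $\mu_0\prod_n T_n=\mu$, and symmetrically the second marginal is $\mu'$.

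\emph{Inclusion $\Pi_{\mathrm{bc}}\subseteq\mathcal M_{\mathrm{bc}}$.} Take $\pi\in\Pi_{\mathrm{bc}}(\mu,\mu')$. The candidate representation uses $\pi_0:=\mathrm{Law}_\pi(Y_0,Y_0')$, which lies in $\Pi(\mu_0,\mu_0')$ since $\pi$ is a coupling, together with $Q_n:=Q_n^\pi$ from \eqref{eq:coupling-kernel}. First, $Q_n^\pi\in\Pi_M(T_n,T_n')$. By the tower property, $Q_n^{\pi,(1)}(A\,|\,y,y')=\E_\pi[\pi_n^{(1)}(A\,|\,\cdot)\,|\,Y_{n-1}=y,Y'_{n-1}=y']=T_n(A\,|\,y)$, and similarly for the second marginal. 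After modifying $Q_n^\pi$ on a null set, choosing $T_n\otimes T'_n$ there, these identities hold for every $(y,y')$. Second, $\pi=\pi_{\pi_0,\pmb Q^\pi}$. This identity holds if and only if the history kernel satisfies $K_n(\cdot\,|\,\pmb y_{0:n-1},\pmb y'_{0:n-1})=Q_n^\pi(\cdot\,|\,y_{n-1},y'_{n-1})$ a.s. for every $n$, i.e.\ if $(\pmb Y,\pmb Y')$ is a Markov chain on $S\times S$ under $\pi$. Given that, an induction on $n$ shows that the laws of $(\pmb Y_{0:n},\pmb Y'_{0:n})$ under $\pi$ and under $\pi_{\pi_0,\pmb Q^\pi}$ agree.

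\emph{Main obstacle.} The crux is the joint Markov property of $\pi$. Bi-causality pins down only the \emph{marginals} of $K_n$ as functions of the current states. The dependence structure of $K_n$ could a priori use older history, for instance by switching between the comonotone and the independent coupling of $T_n(\cdot\,|\,y_{n-1})$ and $T_n'(\cdot\,|\,y'_{n-1})$ according to $y_{n-2}$. Such a $\pi$ satisfies \eqref{eq:bicausal-coupling} but is not of the form \eqref{eq:markov-coupling}. So the step I expect to fail in full generality is exactly the identification $K_n=Q_n^\pi$. I would first try to deduce it from the fact that $Q_n^\pi$ in \eqref{eq:coupling-kernel} is defined as the one-step kernel of $\pi$, i.e.\ reading the Markov-setting definition as presupposing that the joint process is Markov. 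If that reading is not intended, the equality must be weakened. The reverse inclusion would then be replaced by showing that the Markovization $\pi\mapsto\pi_{\pi_0,\pmb Q^\pi}$ maps $\Pi_{\mathrm{bc}}$ into $\mathcal M_{\mathrm{bc}}$ and preserves the one-step joint marginals of $(Y_n,Y_n')$, and hence the cost in \eqref{eq:bicausal-ot}. That weaker statement is all the later sections need.
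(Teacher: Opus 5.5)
Your proposal is correct and follows the paper's two-inclusion argument. The paper's Step~1 in fact opens with ``Let $\pi\in\Pi_{\mathrm{bc}}(\mu,\mu')$ be Markov'', so it proves exactly the restricted identity $\mathcal M_{\mathrm{bc}}(\mu,\mu')=\{\pi\in\Pi_{\mathrm{bc}}(\mu,\mu'):\ (\pmb Y,\pmb Y')\text{ is Markov under }\pi\}$ that you identify as the true content, while your history-switching example correctly shows the unqualified equality is false and your Markovization $\pi\mapsto\pi_{\pi_0,\pmb Q^\pi}$ (which preserves each law of $(Y_n,Y_n')$, hence the cost) gives what the later sections need, namely that the infima over $\Pi_{\mathrm{bc}}(\mu,\mu')$ and $\mathcal M_{\mathrm{bc}}(\mu,\mu')$ coincide.
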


\subsection{Dynamic programming principle}
The dynamic programming principle provides a recursive characterization of the coupling and enables us to solve the bi-causal OT problem \eqref{eq:bicausal-ot} in the same spirit as sequential decision-making. It serves as the foundation for our relaxed formulation in Section~\ref{sec: relaxed formulation} and the policy-gradient method in Section~\ref{sec: algorithm and convergence}.

For any \(\pi\in\Pi_{\textrm{bc}}(\mu,\mu')\) and any \(y,y'\in S\), define the terminal condition as
\begin{equation}
    \label{eq:terminal-condition}
    J_N^\pi(y,y')=V_N(y,y')=c_N(y,y').
\end{equation}
For all $n \in [N-1]_0$, we denote the objective function at time $n$ by
\begin{equation}
    J_{n}^\pi(y,y') = \mathbb E_{\pi}\left[\sum_{k=n}^N c_k(Y_k,Y_k')\,\big|\,Y_{n}=y,Y_{n}'=y'\right]=c_n(y,y')+Q^\pi_{n+1}[J_{n+1}^\pi](y,y'),\label{eq:cost-n}
\end{equation}
which is the expected remaining cumulative cost from time $n$ onward. Here, for any kernel $Q: S\times S \to \mathcal P(S\times S)$, any bounded function $v: S \times S \to \mathbb{R}$ and any $y,y'\in S$, we define the operator 
\begin{equation}\label{eq:kernel-operator}
Q[v](y,y')=\int v(z,z')\,Q(\dd z,\dd z' \,|\,  y,y'),
\end{equation}
which measures the future expectation of $v$ from time $n$ onward given the current state $(y,y')$.

Next, we minimize the objective function $J_{n}^\pi$ in \eqref{eq:cost-n} over the admissible bi-causal couplings $\Pi_{\textrm{bc}}(\mu, \mu')$ to obtain the corresponding value function $V_n$, which is defined as
\begin{equation}
    V_n(y,y') = \inf_{\pi\in\Pi_{\textrm{bc}}(\mu,\mu')}J_n^\pi(y,y').\label{eq:opt-cost-n}
\end{equation}
For any \(n \in [N-1]_0\), given \((\pmb Y,\pmb Y')\sim \pi\), denote the conditional law from time $n+1$ to $N$ by
\begin{equation}
    \label{eq:joint-conditional-current}
    \pi_{(n+1):N}(y,y')={\rm Law}\Big(\pmb Y_{(n+1):N},\pmb Y_{(n+1):N}'\,\Big|\,Y_{n}=y,Y_{n}'=y'\Big),\quad \forall y,y'\in S.
\end{equation}
Then, \(J^\pi_n\) in \eqref{eq:cost-n} is equivalent to the following expression,
\begin{equation*} 
    J_n^\pi(y,y')=c_n(y,y')+\mathbb E_{\pi_{(n+1):N}(y,y')}\left[\sum_{k=n+1}^Nc_k(Y_k,Y_k')\right].
\end{equation*}
Here, for notational simplicity, we abbreviate $\mathbb E_{(\pmb Y_{(n+1):N},\pmb Y_{(n+1):N}')\sim\pi_{(n+1):N}(y,y')}[\cdot]$ as $\mathbb E_{\pi_{(n+1):N}(y,y')}[\cdot]$. For any bounded function $v:S\times S\to\R$, define the Bellman operator and Bellman optimality operator as follows,
\begin{align}
    \Gamma^Q_n[v]&=c_n+Q[v],\quad\forall Q\in\Pi_M(T_{n+1},T_{n+1}'); \nonumber \\
    \Gamma_n[v]&=c_n+\inf_{Q\in\Pi_M(T_{n+1},T'_{n+1})}Q[v]. \label{eq:bellman-op-n-2}
\end{align}
We have the following dynamic programming principle for \eqref{eq:bicausal-ot}, which characterizes the optimal solution to the bi-causal OT problem.

\begin{lemma}[Dynamic programming principle]\label{lem:dpp}
The following statements hold.
\begin{enumerate}
    \item For any \(n\in[N-1]_0\), \(V_{n}=\Gamma_{n}[V_{n+1}]\).
    \item \(W_{\mathrm{bc}}(\mu,\mu')=\underset{\pi_0\in\Pi(\mu_0,\mu_0')}{\inf}\E_{\pi_0}[V_0(Y_0,Y_0')]=\underset{\pi\in\mathcal{M}_{\mathrm{bc}}(\mu,\mu')}{\inf}\mathbb E_{\pi}\left[\sum_{n=0}^Nc_n(Y_n,Y_n')\right]\).
\end{enumerate}
\end{lemma}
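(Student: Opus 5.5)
The plan is a backward induction in $n$, using Lemma~\ref{lem:mbc-equivalence} throughout to replace $\Pi_{\mathrm{bc}}(\mu,\mu')$ by Markov couplings $\pi_{\pi_0,\pmb Q}$ with $Q_k\in\Pi_M(T_k,T_k')$. For such couplings \eqref{eq:cost-n} gives $J_n^\pi=\Gamma_n^{Q_{n+1}}[J_{n+1}^\pi]$. In particular $J_n^\pi$ depends only on $(Q_{n+1},\dots,Q_N)$ and not on $\pi_0$ or $Q_1,\dots,Q_n$. I will therefore read $V_n$ as the infimum over tails $(Q_{n+1},\dots,Q_N)$ with $Q_k\in\Pi_M(T_k,T_k')$. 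Since $c_k\in[0,c]$ and every kernel is a probability kernel, all $J^\pi_n$ and $V_n$ are bounded by $(N-n+1)c$. The base case $V_N=c_N$ is \eqref{eq:terminal-condition}.

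\textbf{Step 1, the inequality $V_n\ge\Gamma_n[V_{n+1}]$.} Fix any tail. Then $J_{n+1}^\pi\ge V_{n+1}$ pointwise, and $Q_{n+1}[\cdot]$ is monotone, so
\[
J_n^\pi=c_n+Q_{n+1}[J_{n+1}^\pi]\ge c_n+Q_{n+1}[V_{n+1}]\ge\Gamma_n[V_{n+1}].
\]
Taking the infimum over tails gives the claim.

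\textbf{Step 2, the inequality $V_n\le\Gamma_n[V_{n+1}]$.} This is the main obstacle, because it needs a measurable $\varepsilon$-optimal tail that is chosen uniformly over starting states. I will strengthen the induction hypothesis to the following statement: for every $\varepsilon>0$ there exist kernels $Q^\varepsilon_{n+1},\dots,Q^\varepsilon_N$, each $Q^\varepsilon_k\in\Pi_M(T_k,T_k')$ and measurable in $(y,y')$, such that the resulting $J_n$ satisfies $J_n\le V_n+\varepsilon$ everywhere. The point is that a single tail works for all starting points simultaneously, so no pointwise gluing is needed later.

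To carry out the inductive step:
\begin{enumerate}
\item Take the tail from $n+1$ given by the hypothesis with tolerance $\varepsilon/2$.
\item Choose $Q\in\Pi_M(T_{n+1},T'_{n+1})$ with $Q[V_{n+1}]\le\inf_{Q'}Q'[V_{n+1}]+\varepsilon/2$ uniformly in $(y,y')$.
\item Compose the two. Monotonicity together with the fact that $Q$ has total mass one gives $J_n\le\Gamma_n[V_{n+1}]+\varepsilon$.
\end{enumerate}
Combining this with Step 1 yields both $V_n=\Gamma_n[V_{n+1}]$ and the strengthened hypothesis at step $n$.

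The selection in item 2 is the delicate point. I would obtain it from a measurable selection theorem for the infimum of a bounded measurable integrand over the set-valued map $(y,y')\mapsto\Pi(T_{n+1}(\cdot\,|\,y),T'_{n+1}(\cdot\,|\,y'))$. This map has weakly compact values. Under mild regularity (for instance, Borel measurability of the transitions and lower semianalyticity), the map has an analytic graph, and a universally measurable $\varepsilon$-selector exists in the sense of Bertsekas--Shreve. If full rigor is not required I would state this selection as a standing measurability assumption. A gluing construction then ensures that the selector lies in $\Pi_M$ pointwise.

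\textbf{Part 2.} Let $\pi=\pi_{\pi_0,\pmb Q}$. Conditioning on $(Y_0,Y_0')$ gives $\E_\pi[\sum_n c_n]=\E_{\pi_0}[J_0^\pi]\ge\E_{\pi_0}[V_0]$. Taking the infimum over $\pi_0$ and over the tail, Lemma~\ref{lem:mbc-equivalence} identifies the infimum over $\mathcal M_{\mathrm{bc}}$ with $W_{\mathrm{bc}}$, so $W_{\mathrm{bc}}\ge\inf_{\pi_0}\E_{\pi_0}[V_0]$. For the reverse inequality, fix $\pi_0$ and use the uniform $\varepsilon$-optimal tail from Step 2. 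This gives $W_{\mathrm{bc}}\le\E_{\pi_0}[V_0]+\varepsilon$; now let $\varepsilon\to0$ and take the infimum over $\pi_0$. The second equality in part 2 is exactly Lemma~\ref{lem:mbc-equivalence} applied to \eqref{eq:bicausal-ot}.
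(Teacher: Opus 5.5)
Your argument is essentially the paper's proof. Monotonicity of $Q_{n+1}[\cdot]$ gives $V_n\ge\Gamma_n[V_{n+1}]$, backward composition of $\varepsilon$-optimal one-step kernels from $\Pi_M$ gives the reverse inequality, and Part 2 follows from the tower property plus the resulting $\varepsilon$-optimal Markov coupling; the paper fixes all kernels at once with tolerances $\varepsilon\,2^{-(n+1)}$ and simply asserts they exist, whereas you strengthen the induction hypothesis and rightly flag the measurable-selection issue.

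One caution concerns the inclusion $\Pi_{\mathrm{bc}}\subseteq\mathcal M_{\mathrm{bc}}$ in Lemma~\ref{lem:mbc-equivalence}, which you use for $W_{\mathrm{bc}}\ge\inf_{\mathcal M_{\mathrm{bc}}}$. It is only valid for jointly Markov bi-causal couplings, since a bi-causal coupling of Markov laws may let the joint transition depend on earlier joint history. A safer route: for any $\pi\in\Pi_{\mathrm{bc}}$, the kernels $Q_n^\pi$ of \eqref{eq:coupling-kernel} lie in $\Pi_M(T_n,T_n')$ by bi-causality and the tower property. Hence the Markovized coupling $\pi_{\pi_0,\pmb Q^\pi}\in\mathcal M_{\mathrm{bc}}$ has the same one-time pair marginals as $\pi$, and therefore the same cost.
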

The above lemma shows that the bi-causal OT problem \eqref{eq:bicausal-ot} can be viewed as a {\em Markov decision process} with coupling constraints. This result implies that, to solve the bi-causal OT problem \eqref{eq:bicausal-ot}, we only need to optimize recursively over the initial coupling and the one-step Markov coupling kernels instead of the entire path-space set. This reduction in complexity is what makes the relaxed formulation in Section~\ref{sec: relaxed formulation} and the policy gradient method in Section~\ref{sec: algorithm and convergence} tractable.

\section{Transport-relaxed problem}
\label{sec: relaxed formulation}
To tackle the computational challenges induced by the coupling constraints in Lemma \ref{lem:dpp}, we introduce a relaxed formulation of the bi-causal OT problem \eqref{eq:bicausal-ot} in this section. Specifically, we employ a KL penalization to regulate violations of the coupling constraints while preserving the dynamic programming principle.

\subsection{Relaxed formulation}
The original bi-causal OT problem \eqref{eq:bicausal-ot} requires every one-step coupling kernel to match the prescribed marginal transitions exactly, which poses the main computational difficulty. We therefore enlarge the feasible set by allowing arbitrary joint transition kernels and then penalize deviations from the prescribed marginals and transitions. Let us first consider the following set of general transition kernels $Q$ without marginal constraints,
\begin{equation*}
    \mathcal{U}_{\textrm{rel}}:=\Big\{Q: S\times S\rightarrow \mathcal{P}(S\times S)\Big\}.
\end{equation*}
Define the relaxed policy space over the entire horizon as 
\begin{equation}
\begin{split}
\label{eq:set_relaxed_bicausal_couplings}
\Pi_\textrm{rel}=\biggl\{\pi\in\mathcal{P}(\mathcal Y\times\mathcal{Y})\,\,\biggl|\,\,
&\exists\pi_0\in\mathcal P(S\times S),\,\exists Q_1,\dots,Q_{N}\in\mathcal{U}_\textrm{rel}\text{ s.t. }\forall \pmb B,\pmb B'\in\mathcal S^{\bigotimes(N+1)},\\
&\pi(\pmb B\times\pmb B')=\int_{\pmb B\times\pmb B'}\pi_0(\dd y_0,\dd y_0')\prod_{n=1}^NQ_n(\dd y_n,\dd y_n' \,|\, y_{n-1},y_{n-1}')\biggl\},
\end{split}
\end{equation}
which keeps the same Markov factorization structure as in the original formulation $\mathcal{M}_{\rm bc}$ in \eqref{eq:bicausal-coupling}, but removes the hard constraint that the one-step marginals of the joint kernels must coincide with the prescribed transitions $(T_n,T_n')$.
{
For any \(\pi\in\Pi_{\rm rel}\), define the marginals of $\pi_0$ as
\begin{equation*}
    \pi^{(1)}_0(A) = \pi_0(A\times \mathcal{S}),\,\,  \pi^{(2)}_0(B) = \pi_0(\mathcal{S}\times B),\quad \textrm{ for all } A,B\in\mathcal S. 
\end{equation*}

\paragraph{KL penalization.}
Under the set of relaxed couplings $\Pi_{\textrm{rel}}$ in \eqref{eq:set_relaxed_bicausal_couplings}, we use KL-divergence to penalize violations of the original bi-causal constraints. For any $\beta>0$, we consider the relaxed coupling problem
\begin{equation}
    \label{eq:relaxed-bicausal-ot}
    \begin{aligned}
    W_{\textrm{rel}}(\mu,\mu')&=\inf_{\pi\in\Pi_{\textrm{rel}}}\mathbb E_{\pi}\biggl[\sum_{n=0}^N c_n(Y_n,Y_n') + \beta \Big(\mathrm{KL}(\mu_0, \pi_0^{(1)}) + \mathrm{KL}(\mu_0', \pi_0^{(2)})\Big)\\
    &\quad +\beta\sum_{n=1}^N \E_{\mu \otimes \mu'}\Big[\mathrm{KL}\big(T_n(\cdot \,|\, Y_{n-1}), Q_n^{\pi,(1)}(\cdot \,|\, Y_{n-1},Y_{n-1}')\big)\Big] \\
    &\quad \quad + \beta\sum_{n=1}^N \E_{\mu \otimes \mu'}\Big[\mathrm{KL}\big(T_n'(\cdot \,|\, Y_{n-1}'), Q_n^{\pi,(2)}(\cdot \,|\, Y_{n-1},Y_{n-1}')\big)\Big]\biggl],
    \end{aligned}
\end{equation}
where the first two KL terms penalize deviations of the initial marginals, and the remaining terms penalize deviations of the one-step transition marginals over the entire horizon.
\footnote{For probability measures $\rho,\rho'\in\mathcal P(S)$, the KL divergence is defined as
${\rm KL}(\rho,\rho')=\mathbb E_{X\sim \rho}\left[\log\left(\frac{\dd \rho(X)}{\dd \rho'(X)}\right)\right]$ whenever $\rho\ll\rho'$, and is set to $+\infty$ otherwise.}

The KL divergence terms jointly penalize the deviation of $\pi\in\Pi_{\textrm{rel}}$ from the bi-causal constraint set $\Pi_{\textrm{bc}}(\mu,\mu')$. 
Here the prescribed marginals and transition kernels $(\mu_0,\mu_0',T_n,T_n')$
serve as the \emph{reference} objects, while the corresponding marginals and kernels induced by $\pi$, namely $(\pi_0^{(1)},\pi_0^{(2)},Q_n^{\pi,(1)},Q_n^{\pi,(2)}),$ act as the \emph{candidate} objects.

Finally, for a fixed reference measure, the KL divergence is convex in the candidate density. 
Moreover, the chosen orientation enforces absolute continuity of the reference with respect to the candidate, assigning an infinite penalty under support mismatch. 
As a result, the relaxation is mass-covering and provides a stable surrogate for enforcing the bi-causal constraints 
\citep{minka2005divergence,blei2017variational,zhang2022transport}.
\footnote{We provide a remark regarding the orientation of the KL penalty.
In many machine learning formulations involving KL penalties, one often considers problems of the form
\[
\min_{\mu} \; F(\mu) + \mathrm{KL}(\mu,\nu),
\]
where $\nu$ is a fixed reference measure. In contrast, in \eqref{eq:relaxed-bicausal-ot}, the reference objects appear in the first argument of the KL divergence. This choice is motivated by computational considerations. With this orientation, the KL terms are evaluated under the prescribed marginals and transition kernels $(\mu_0,\mu_0',T_n,T_n')$, which are fixed and directly sampleable from data. This structure enables efficient stochastic gradient estimation in our policy-gradient algorithm (see Algorithm \ref{alg:kl-pg}).  Finally, we note that this relaxation also enjoys a $\Gamma$-convergence property: as $\beta\to\infty$, the relaxed objective converges to the original bi-causal OT problem (see Theorem \ref{thm:gamma-convergence}).}

For notational simplicity of the recursive analysis in Lemma~\ref{lem:dpp2}, we isolate KL penalties at each time step. For $n=0$, we denote
\begin{equation}
\label{eq:expectation_kl_0}
    \mathrm{KL}_{0}^{\!(1)}\big(\mu, \pi\big) := \mathrm{KL}\big(\mu_0, \pi_0^{(1)}\big),\qquad
    \mathrm{KL}_{0}^{\!(2)}\big(\mu', \pi\big) := \mathrm{KL}\big(\mu_0', \pi_0^{(2)}\big).
\end{equation}
For any $n \in [N]$, define
\begin{align}
    \mathrm{KL}_{n}^{\!(1)}\big(\mu, \pi\big) &:= \E_{\mu \otimes \mu'}\Big[\mathrm{KL}\big(T_n(\cdot \,|\, Y_{n-1}), Q_n^{\pi,(1)}(\cdot \,|\, Y_{n-1},Y_{n-1}')\big)\Big], \label{eq:expectation_kl_n_1} \\
    \mathrm{KL}_{n}^{\!(2)}\big(\mu', \pi\big) &:= \E_{\mu \otimes \mu'}\Big[\mathrm{KL}\big(T_n'(\cdot \,|\, Y_{n-1}'), Q_n^{\pi,(2)}(\cdot \,|\, Y_{n-1},Y_{n-1}')\big)\Big] \label{eq:expectation_kl_n_2}.
\end{align}

\paragraph{Value functions.} The difference between the original problem \eqref{eq:bicausal-ot} and the relaxed bi-causal OT problem \eqref{eq:relaxed-bicausal-ot} in terms of the objective function and the corresponding value function is that the continuation cost now includes both the transport cost and the future KL penalties. Mathematically, given any \(\pi\in\Pi_{\textrm{rel}}\) and any \(y,y'\in S\), we first define the terminal condition as
\begin{equation}
\label{eq:relaxed-terminal-cost}
    \widetilde J_N^{\pi}(y,y')=\widetilde V_N(y,y')=c_N(y,y').
\end{equation}
Recalling the definition of \(\pi_{(n+1):N}(y,y')\) in \eqref{eq:joint-conditional-current} and for any \(n\in\{0,\dots,N-1\}\), we denote the relaxed objective function at time $n$ as
\begin{align}
    \widetilde J_n^\pi(y,y')&=\mathbb E_{\pi_{(n+1):N}(y,y')}\bigg[\sum_{k=n}^N c_k(Y_k,Y_k') +\beta \sum_{k=n+1}^{N} \Big[\mathrm{KL}_{k}^{\!(1)}\big(\mu, \pi\big)+\mathrm{KL}_{k}^{\!(2)}\big(\mu', \pi\big)\Big] \,\biggl|\, Y_n=y, Y_n'=y' \bigg] \label{eq:relaxed-cost-n-OG} \\
    &=c_n(y,y')+
    \beta \Big[\mathrm{KL}_{n+1}^{\!(1)}\big(\mu, \pi\big)+\mathrm{KL}_{n+1}^{\!(2)}\big(\mu', \pi\big)\Big] + Q^\pi_{n+1}[\tilde{J}^\pi_{n+1}](y,y') \label{eq:relaxed-cost-n},
   \end{align}
and the corresponding value function as
\begin{equation}
    \widetilde V_n(y,y')=\inf_{\pi\in\Pi_{\textrm{rel}}}\widetilde J_n^\pi(y,y').\label{eq:relaxed-opt-cost-n}
\end{equation}
Under the relaxed bi-causal OT problem \eqref{eq:relaxed-bicausal-ot}, for any bounded function \(v:S\times S\to\mathbb R\) and any $n\in[N-1]_0$, we further define the Bellman operator $\widetilde\Gamma^Q_n$ and the corresponding Bellman optimality operator $\widetilde\Gamma_n$ as follows,
\begin{align}
    \widetilde\Gamma^Q_n[v]&=c_n+\beta \Big[\mathrm{KL}_{n+1}^{\!(1)}\big(\mu, \pi\big)+\mathrm{KL}_{n+1}^{\!(2)}\big(\mu', \pi\big)\Big] + Q\left[v \right],\quad \forall Q \in \mathcal U_{\textrm{rel}}; \nonumber \\
    \widetilde\Gamma_n[v]&=\inf_{Q\in \mathcal U_{\textrm{rel}}} \widetilde\Gamma^Q_n[v].\label{eq:bellman-op-n-2-2}
\end{align}

To show the above relaxation still preserves the dynamic programming principle, we establish the following result, a relaxed counterpart of Lemma~\ref{lem:dpp}.
\begin{lemma}[Dynamic programming principle of the relaxed problem]\label{lem:dpp2}
The following statements are true:
\begin{enumerate}
    \item For any \(n\in[N-1]_0\), \(\widetilde V_{n}=\widetilde\Gamma_{n}[\widetilde V_{n+1}]\).
    \item In addition, \(W_{{\mathrm{rel}}}(\mu,\mu')\) defined in \eqref{eq:relaxed-bicausal-ot} satisfies
    \[W_{\mathrm{rel}}(\mu,\mu')= \inf_{\pi_0\in \mathcal P(S \times S)}\E_{\pi_0}\left[\widetilde V_0(Y_0,Y_0')+\beta \Big[\mathrm{KL}_{0}^{\!(1)}\big(\mu, \pi\big)+\mathrm{KL}_{0}^{\!(2)}\big(\mu', \pi\big)\Big]\right].\]
\end{enumerate}
\end{lemma}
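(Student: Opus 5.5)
The plan is to reproduce the backward-induction argument behind Lemma~\ref{lem:dpp}, using the fact that the relaxed set $\Pi_{\rm rel}$ carries no marginal constraints. Every $\pi\in\Pi_{\rm rel}$ is determined by $(\pi_0,Q_1,\dots,Q_N)$ with $Q_n\in\mathcal U_{\rm rel}$, and these components can be chosen independently. In particular, for fixed $n$ the continuation $\pi_{(n+1):N}(y,y')$ and the functional $\widetilde J^\pi_n$ depend only on $(Q_{n+1},\dots,Q_N)$.

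The penalty $\mathrm{KL}^{(i)}_{k}(\cdot,\pi)$ is an expectation under the fixed reference law $\mu\otimes\mu'$. It is therefore a number that depends only on the kernel $Q_k$, and not on the current state $(y,y')$ or on the other kernels. This decoupling is the structural point that makes the recursion \eqref{eq:relaxed-cost-n} valid. It also lets me read $\widetilde\Gamma^Q_n[v]$ as $c_n+\beta[\mathrm{KL}^{(1)}_{n+1}+\mathrm{KL}^{(2)}_{n+1}](Q)+Q[v]$, with $Q$ standing in place of $Q^\pi_{n+1}$.

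\emph{Step 1: $\widetilde V_n\ge \widetilde\Gamma_n[\widetilde V_{n+1}]$.} I will argue by backward induction on $n$, starting from $\widetilde V_N=c_N$. Fix $\pi$ with kernels $Q_k$. By \eqref{eq:relaxed-cost-n} and the pointwise bound $\widetilde J^\pi_{n+1}\ge\widetilde V_{n+1}$, monotonicity of $Q_{n+1}[\cdot]$ gives
\[
\widetilde J^\pi_n\ge c_n+\beta[\mathrm{KL}^{(1)}_{n+1}+\mathrm{KL}^{(2)}_{n+1}](Q_{n+1})+Q_{n+1}[\widetilde V_{n+1}]\ge\widetilde\Gamma_n[\widetilde V_{n+1}].
\]
Taking the infimum over $\pi$ yields the claim.

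\emph{Step 2: the reverse inequality.} Fix $\varepsilon>0$ and $Q\in\mathcal U_{\rm rel}$. I need a tail $(Q_{n+2},\dots,Q_N)$ with $\widetilde J_{n+1}\le\widetilde V_{n+1}+\varepsilon$ uniformly in $(y,y')$; the policy $(Q,Q_{n+2},\dots,Q_N)$ then gives $\widetilde V_n\le\widetilde\Gamma^Q_n[\widetilde V_{n+1}]+\varepsilon$. The difficulty is that the infimum in \eqref{eq:relaxed-opt-cost-n} is pointwise in $(y,y')$, while the penalty is a single global number tied to the kernel. Consequently, pasting different near-optimal kernels on different regions of state space changes the KL terms. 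This is the main obstacle.

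My plan to overcome it uses the induction itself to produce a uniform near-optimizer. Suppose $\widetilde V_{k+1}$ is attained up to $\varepsilon$ by a tail that does not depend on the starting point. Then
\[
\widetilde V_k(y,y')=c_k(y,y')+\inf_{Q}\Big\{\beta\,\mathrm{KL}(Q)+Q[\widetilde V_{k+1}](y,y')\Big\}.
\]
I then need a single $Q$ that is $\varepsilon$-optimal simultaneously for all $(y,y')$. To get it, I would argue that the infimum of the functional $Q\mapsto\beta\,\mathrm{KL}(Q)+\int Q[\widetilde V_{k+1}]$ can be approached by kernels chosen measurably pointwise. The penalty is an integral of pointwise KL divergences against $\mu_{k-1}\otimes\mu'_{k-1}$, so it is local in $(y,y')$, and a measurable-selection argument applies. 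I will use boundedness of $c$, hence of $\widetilde V$ on the finite-KL region, to justify interchanging infimum and integral. Where only a $\mu\otimes\mu'$-a.e.\ statement is available, I will read the identity in that sense. I expect this uniformity and measurability step to require the most care.

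\emph{Step 3: statement 2.} Since $\pi_0$ is free and decoupled from the kernels, I write $W_{\rm rel}$ as
\[
\inf_{\pi_0}\Big\{\beta[\mathrm{KL}^{(1)}_0+\mathrm{KL}^{(2)}_0]+\inf_{Q_{1:N}}\E_{\pi_0}\big[\widetilde J^\pi_0(Y_0,Y_0')\big]\Big\}.
\]
The inequality $\ge$ follows from $\widetilde J^\pi_0\ge\widetilde V_0$. The inequality $\le$ uses the uniformly $\varepsilon$-optimal tail constructed in Step 2 at $n=0$, after which I let $\varepsilon\downarrow0$.
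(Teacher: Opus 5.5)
\emph{Gap in Step 2.} Your Step 1 is fine and matches the paper's monotonicity bound. The paper's proof of the reverse inequality simply posits kernels $\widetilde Q^{\epsilon}_{n+1}$ that are $\epsilon 2^{-(n+1)}$-optimal for $\widetilde\Gamma_n[\widetilde V_{n+1}]$ simultaneously at every $(y,y')$. You correctly identify this uniformity as the crux, but your repair does not supply it.

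Measurable selection solves the integrated problem $\inf_Q\{\beta\,\mathrm{KL}(Q)+\int Q[v]\,\dd\rho\}$, whose integrand is local. By contrast, $\widetilde\Gamma_n[v](y,y')$ charges the entire scalar $\beta\,\mathrm{KL}(Q)$ against the single number $Q[v](y,y')$. Near-minimizers of the first problem are therefore not pointwise near-minimizers of the second.

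To see this concretely, let $\rho=\mu_n\otimes\mu_n'$ and $\kappa_{y,y'}(\nu)=\mathrm{KL}(T_{n+1}(\cdot\,|\,y),\nu^{(1)})+\mathrm{KL}(T'_{n+1}(\cdot\,|\,y'),\nu^{(2)})$. Taking $Q=T_{n+1}\otimes T'_{n+1}$ away from $(y,y')$ shows, with $0\cdot\infty=0$,
\[
\widetilde\Gamma_n[v](y,y')=c_n(y,y')+\inf_{\nu}\big\{\beta\,\rho(\{(y,y')\})\,\kappa_{y,y'}(\nu)+\nu[v]\big\}.
\]
So the penalty is weighted by the reference mass of a single point. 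If $\rho$ is atomless, this equals $c_n(y,y')+\inf v$ everywhere. A single kernel that is $\varepsilon$-optimal $\rho$-a.e.\ would then need both $\mathrm{KL}(Q)\le\varepsilon/\beta$ and $Q[v]\le\inf v+\varepsilon$ $\rho$-a.e. That is impossible whenever couplings of $T_{n+1}(\cdot\,|\,y)$ and $T'_{n+1}(\cdot\,|\,y')$ stay away from $\inf v$ on a set of positive $\rho$-measure. Reading the identity $\rho$-a.e.\ does not help. The interchange of infimum and integral that you plan to justify by boundedness is exactly the step that fails.

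\emph{The inequality itself fails.} The inequality $\widetilde V_n\le\widetilde\Gamma_n[\widetilde V_{n+1}]$ is false at this generality, so no selection argument can close Step 2. The pointwise-optimal continuations from different successor states must be pasted into one kernel, whose penalty is paid once, in full.

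Take $N=2$, $Y_0=Y_0'=0$, $T_1=T_1'=T_2=\mathrm{Bern}(1/2)$, $T_2'=\delta_0$, $c_0=c_1=0$, $c_2(w,w')=\mathbf 1\{w\neq w'\}$, and $\beta>1/2$. Let $\phi(\lambda)=\inf_\nu\{\lambda[\mathrm{KL}(\mathrm{Bern}(1/2),\nu^{(1)})+\mathrm{KL}(\delta_0,\nu^{(2)})]+\nu(w\neq w')\}$, which is concave and strictly increasing.
\begin{itemize}
\item Since $\widetilde V_1=\widetilde\Gamma_1[c_2]$ by definition, the display gives $\widetilde V_1=\phi(\beta/4)$ on $G=\{0,1\}^2$ and $\widetilde V_1=0$ off $G$. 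The diagonal coupling then gives $\widetilde\Gamma_0[\widetilde V_1](0,0)\le\phi(\beta/4)$.
\item Set $\kappa_1(\nu)=\mathrm{KL}(\mathrm{Bern}(1/2),\nu^{(1)})+\mathrm{KL}(\mathrm{Bern}(1/2),\nu^{(2)})$. For every $(Q_1,Q_2)$ with $\nu=Q_1(\cdot\,|\,0,0)$, one has $\widetilde J_0(0,0)\ge\beta\,\kappa_1(\nu)+\sum_{j\in G}\nu_j\,\phi(\beta/(4\nu_j))$.
\item Any $\nu$ making this bound close to $\phi(\beta/4)$ must have $\kappa_1(\nu)\approx0$ and $\nu(G)\approx1$. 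Hence every $\nu_j\le\tfrac12+o(1)$ while some $\nu_j\ge\tfrac14-o(1)$. This keeps the bound at least $\phi(\beta/4)+\tfrac14[\phi(\beta/2)-\phi(\beta/4)]-o(1)$.
\end{itemize}
Thus $\widetilde V_0(0,0)>\widetilde\Gamma_0[\widetilde V_1](0,0)$.

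When every $\mu_n\otimes\mu_n'$ is atomless the identity does hold, but only degenerately. One-point modifications of the kernels show that both sides equal $c_n+\sum_{k>n}\inf c_k$, which is not your construction. Your Step 3 reuses the uniform tail and needs the same interchange $\inf_{\rm tail}\E_{\pi_0}[\widetilde J_0]=\E_{\pi_0}[\widetilde V_0]$, so it inherits the gap.
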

Lemma~\ref{lem:dpp2} shows that the relaxed problem also admits a recursive dynamic programming principle. As a result, the optimization reduces from a path-space problem to a sequence of one-step problems over the initial coupling and the transition kernels, which is the key to the tractability of the policy-gradient method developed in Section~\ref{sec: algorithm and convergence}.
}

\subsection{\texorpdfstring{$\Gamma$}{Gamma}-convergence: recovering the original bi-causal optimal transport}
Having introduced the relaxed problem above, a fundamental question is whether the original bi-causal OT problem can be recovered in the limit as the penalty parameter $\beta\rightarrow\infty$. Addressing this question is essential for validating the relaxation, as it ensures that the penalized formulation is not only computationally tractable but also asymptotically consistent with the original constrained problem. In what follows, we show that, as $\beta\rightarrow\infty$, both the relaxed bi-causal OT coupling and the value in \eqref{eq:relaxed-bicausal-ot} converge to their respective counterparts in the original problem \eqref{eq:bicausal-ot}. To establish this convergence result, we first introduce the following standard assumption in OT.
\begin{assumption}
\label{asm:lsc_cost}
For each $n \in [N]_0$, the cost function $c_n(y,y')$ is nonnegative and lower semi-continuous.
Moreover, the original problem \eqref{eq:bicausal-ot} has finite infimum, i.e.,
\[
{\mathcal M}_{\mathrm{bc}}(\mu,\mu')\neq\emptyset
\quad\text{and}\quad
\inf_{\pi\in{\mathcal M}_{\mathrm{bc}}(\mu,\mu')}
\int \Big(\sum_{n=0}^N c_n\Big)\,d\pi<\infty.
\]
\end{assumption}
In Assumption~\ref{asm:lsc_cost}, nonnegativity and lower semi-continuity ensure weak stability of the transport objective under weak convergence \citep{villani2009optimal,santambrogio2015optimal}; the finiteness condition rules out degenerate cases where the bi-causal constraint renders the value $+\infty$, so the limit problem remains well-defined and nontrivial \citep{rachev1998mass,villani2009optimal}.

We define the closures of \(\Pi_{\rm rel}\) and \(\mathcal M_{\rm bc}\) under the weak convergence of measures ($\rightharpoonup$) by $\overline{\Pi}_{\textrm{rel}}$ and $\overline{\mathcal{M}}_{\textrm{bc}}$, respectively.
For notational simplicity, we denote the summation of the expectation of KL divergences by
\begin{equation*}
\mathrm{KL}^{(1)}\big(\mu, \pi\big) := \sum_{n=0}^N \mathrm{KL}_{n}^{\!(1)}\big(\mu, \pi\big),\quad \mathrm{KL}^{(2)}\big(\mu', \pi\big) := \sum_{n=0}^N \mathrm{KL}_{n}^{\!(2)}\big(\mu', \pi\big).
\end{equation*}
Furthermore, we introduce two functionals: the limiting constrained functional $F$ that enforces the bi-causal constraints through indicator terms, and the penalized functional $F_\beta$ that replaces these hard constraints by KL penalties.
\begin{align}
    F(\pi) &:= \mathbb E_{\pi}\left[\sum_{n=0}^N c_n(Y_n,Y_n')\right] + \iota\Big(\mathrm{KL}^{(1)}\big(\mu, \pi\big)\Big) + \iota\Big(\mathrm{KL}^{(2)}\big(\mu', \pi\big)\Big), \label{eq:def_F} \\
    F_{\beta}(\pi) &:=\mathbb E_{\pi}\left[\sum_{n=0}^N c_n(Y_n,Y_n') +\beta \mathrm{KL}^{(1)}\big(\mu, \pi\big) + \beta \mathrm{KL}^{(2)}\big(\mu', \pi\big)\right],\label{eq:def_F_beta}
\end{align}
where $\iota(z)=0$ if $z=0$, otherwise $\iota(z)=+\infty$.  Let $\{\pi^\beta\}_{\beta>0}\subset\overline{\Pi}_{\mathrm{rel}}$
be any collection of $\epsilon_\beta$-almost minimizers such that \(\epsilon_\beta\downarrow 0\) as \(\beta\to+\infty\), and for any \(\beta>0\),
\begin{equation}
\label{eq:def_almost_minimizer}
F_\beta(\pi^\beta)\le \inf_{\pi\in\overline{\Pi}_{\mathrm{rel}}}F_\beta(\pi)+\varepsilon_\beta.
\end{equation}

Then we are ready to state the $\Gamma$-convergence in the following theorem, which captures the variational convergence of the penalized functionals to the constrained problem, guaranteeing convergence of both infimum values and minimizers.

\begin{theorem}[$\Gamma$-convergence]
\label{thm:gamma-convergence}
Under Assumption~\ref{asm:lsc_cost}, the following two claims hold.
\begin{enumerate}
\item[(i)] 
Infimum values under relaxation converge to the optimal value of the original problem in \eqref{eq:bicausal-ot}: as $\beta \to \infty$,
\begin{eqnarray*}
    \inf_{\pi\in \overline{\Pi}_{\mathrm{rel}}}\mathbb E_{\pi}\left[\sum_{n=0}^N c_n(Y_n,Y_n') +\beta \mathrm{KL}^{(1)}\big(\mu, \pi\big) + \beta   \mathrm{KL}^{(2)}\big(\mu', \pi\big)\right]\longrightarrow \min_{\pi \in \overline{\mathcal{M}}_{\mathrm{bc}}(\mu,\mu')}\mathbb E_{\pi}\left[\sum_{n=0}^N c_n(Y_n,Y_n')\right].
\end{eqnarray*}
\item[(ii)] Every weak limit point of any sequence of $\epsilon_\beta$-almost minimizers $\{\pi^{\beta}\}_{\beta>0}$ as \(\beta\to+\infty\), denoted by $\pi^*$, is the optimal solution of the original problem in \eqref{eq:bicausal-ot} satisfying
\[
\E_{\pi^*}\Big[ \sum_{n=0}^N c_n(Y_n,Y_n')\Big]
=\min_{\pi \in \overline{\mathcal{M}}_{\mathrm{bc}}(\mu,\mu')}\mathbb E_{\pi}\left[\sum_{n=0}^N c_n(Y_n,Y_n')\right].
\]
\end{enumerate}

\end{theorem}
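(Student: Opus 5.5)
The plan is the standard $\Gamma$-convergence route for monotone penalties. We show $F_\beta \xrightarrow{\Gamma} F$ on $\overline{\Pi}_{\mathrm{rel}}$ with respect to weak convergence, establish equicoercivity of $\{F_\beta\}$, and then invoke the fundamental theorem of $\Gamma$-convergence to obtain both (i) and (ii). Two preliminary observations drive everything.

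First, $F_\beta$ is nondecreasing in $\beta$ because the KL terms are nonnegative. Moreover, $F_\beta \uparrow F$ pointwise: if $\mathrm{KL}^{(1)}+\mathrm{KL}^{(2)}=0$ then $F_\beta=F$, and otherwise $\beta\,\mathrm{KL}\to\infty$.

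Second, $\{F(\pi)<\infty\}$ consists exactly of couplings with vanishing KL. For such $\pi$, the initial marginals of $\pi_0$ equal $\mu_0,\mu_0'$, and for $\mu\otimes\mu'$-a.e. $(y,y')$ the kernel marginals satisfy $Q_n^{\pi,(1)}=T_n$ and $Q_n^{\pi,(2)}=T_n'$. This places $\pi$ in $\overline{\mathcal M}_{\mathrm{bc}}(\mu,\mu')$ once we know the relevant states are charged only where the KL is evaluated. In that case $F(\pi)=\E_\pi[\sum c_n]$, so $\inf F$ equals the right-hand side of (i), which is finite by Assumption~\ref{asm:lsc_cost}.

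For the $\Gamma$-liminf, take $\pi^\beta\rightharpoonup\pi$. By monotonicity, for fixed $\beta_0\le\beta$ we have $\liminf F_\beta(\pi^\beta)\ge \liminf F_{\beta_0}(\pi^\beta)\ge F_{\beta_0}(\pi)$, provided each $F_{\beta_0}$ is weakly lower semicontinuous. Letting $\beta_0\to\infty$ then gives $\liminf F_\beta(\pi^\beta)\ge F(\pi)$.

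Lower semicontinuity of the cost part follows from Assumption~\ref{asm:lsc_cost} via the portmanteau theorem for nonnegative l.s.c. integrands. For the KL part we use the Donsker–Varadhan variational representation, $\mathrm{KL}(\rho,\rho')=\sup_{g\in C_b}\{\int g\,d\rho-\log\int e^g d\rho'\}$. Applied to the expected conditional KL, it becomes a supremum over $g\in C_b(S\times S\times S)$ of $\E_{\mu\otimes\mu'}\!\big[\int g(Y_{n-1},Y'_{n-1},z)\,T_n(dz|Y_{n-1})-\log\int e^{g(Y_{n-1},Y'_{n-1},z)}Q_n^{\pi,(1)}(dz|Y_{n-1},Y'_{n-1})\big]$.

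This is the step I expect to be the main obstacle. The candidate kernel $Q_n^{\pi,(1)}$ is only a disintegration of $\pi$, and disintegrations are not weakly continuous. I would handle it by rewriting the penalty as a KL between joint laws on $S^3$, namely $\mathrm{KL}\big((\mu_{n-1}\otimes\mu'_{n-1})\otimes T_n,\ (\mu_{n-1}\otimes\mu'_{n-1})\otimes Q_n^{\pi,(1)}\big)$, which is jointly l.s.c. in the second argument. Weak convergence of the second argument then needs to be deduced from $\pi^\beta\rightharpoonup\pi$. If the two-time marginals of $\pi^\beta$ are used in place of a fixed base law, I would restrict to the subsequence with bounded $F_\beta$ and use the data-processing inequality to show the base $(n-1)$-marginals are forced toward $\mu_{n-1}\otimes\mu'_{n-1}$-compatible laws. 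Failing that, I would simply take $\overline\Pi_{\mathrm{rel}}$ to mean the closure in which the penalty is defined through this joint-law form.

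For the $\Gamma$-limsup, use the constant recovery sequence $\pi^\beta=\pi$. If $F(\pi)=\infty$ there is nothing to prove. Otherwise the KL vanishes, so $F_\beta(\pi)=F(\pi)$ for all $\beta$. Monotonicity makes this immediate.

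For equicoercivity, note that the sublevel sets $\{F_\beta\le M\}$ with $\beta\ge1$ have bounded $\mathrm{KL}^{(1)}$ and $\mathrm{KL}^{(2)}$. The marginals of $\pi_0$ then lie in KL-balls around the tight measures $\mu_0,\mu_0'$, and such balls are tight: apply $\rho'(K^c)\le(\mathrm{KL}+\log2)/\log(1+1/\rho(K^c))$ with the roles arranged so that the candidate is controlled. Propagating tightness through the kernel marginals step by step gives tightness of every coordinate, hence of $\pi$. Prokhorov's theorem then yields relative compactness.

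The fundamental theorem of $\Gamma$-convergence now finishes the argument. Infimum values converge, which is (i), and $\min F$ is attained on $\overline{\mathcal M}_{\mathrm{bc}}$. For (ii), $\epsilon_\beta$-minimizers have weak limit points. Any limit $\pi^*$ satisfies $F(\pi^*)\le\liminf F_\beta(\pi^\beta)=\lim\inf F_\beta=\min F$, so $\pi^*\in\overline{\mathcal M}_{\mathrm{bc}}(\mu,\mu')$ and is optimal.
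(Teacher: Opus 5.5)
You use the same architecture as the paper: a $\Gamma$-liminf from portmanteau plus Donsker--Varadhan (your monotone-envelope version is a harmless variant of the paper's case split), a constant recovery sequence, precompactness, and the fundamental theorem of $\Gamma$-convergence. Two steps do not go through.

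\textbf{Equicoercivity.} This step has the KL orientation backwards. The bound $\rho'(A)\le(\mathrm{KL}(\rho',\rho)+\log 2)/\log(1+1/\rho(A))$ controls the \emph{first} argument. In $F_\beta$ the candidate sits in the \emph{second} slot, e.g.\ $\mathrm{KL}(\mu_0,\pi_0^{(1)})$, and the orientation is fixed by the problem. Balls $\{\nu:\mathrm{KL}(\mu_0,\nu)\le M\}$ are not tight: $\nu_R=\tfrac12\mu_0+\tfrac12\delta_R$ has $\mathrm{KL}(\mu_0,\nu_R)\le\log 2$ for every $R$. So a bounded penalty does not make even the initial marginals tight, and sublevel sets of $F_\beta$ are not precompact. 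The missing idea is the one in the paper's Step 4. The fundamental theorem only needs precompactness of $\epsilon_\beta$-almost minimizers. For these the penalty is at most $(\min F+\epsilon_\beta)/\beta\to0$, and Pinsker's inequality then gives $\mathrm{TV}$-convergence of marginals, hence tightness.

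\textbf{The step you flagged.} This is not a technicality, and neither patch works. The penalty averages over the fixed product $\mu_{n-1}\otimes\mu'_{n-1}$. It therefore never sees $Q_n$ on the support of $\pi$'s own time-$(n-1)$ state whenever that support is product-null, which is typical of optimal couplings. Data processing cannot push a coupling toward absolute continuity with respect to the product, and redefining the closure leaves the base measure unchanged. Read $Q_n^{\pi}$ as the defining kernels of $\pi\in\Pi_{\mathrm{rel}}$; off that support the conditional law is not determined by $\pi$ anyway.

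Here is a counterexample in the hedging example of Section~\ref{sec: robust hedging}.
\begin{itemize}
\item Take $Q_1(\cdot\,|\,y,y')=\mathrm{Law}(y+Z,\,y'+Z/2)$ with $Z\sim\mathcal N(0,1)$, so $(Y_1,Y_1')$ lives on a line $L$ that is $\mu_1\otimes\mu_1'$-null.
\item Set $Q_2=\delta_{(0,0)}$ on $L$ and $T_2\otimes T_2'$ off $L$.
\item For $n\ge3$, set $Q_n(\cdot\,|\,0,0)=\delta_{(0,0)}$ and $T_n\otimes T_n'$ elsewhere.
\end{itemize}
Every penalty term vanishes, so $F_\beta(\pi)=1.25$ for all $\beta$, while the bi-causal value is $8.75$. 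With the literal penalty, three things fail: lower semicontinuity, your step-by-step propagation of tightness through the kernels, and the implication $F<\infty\Rightarrow\pi\in\overline{\mathcal M}_{\mathrm{bc}}$ (your own caveat).

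The paper's proof does not close this either. It asserts lower semicontinuity of $\mathrm{KL}^{(1)}(\mu,\cdot)$ directly from Donsker--Varadhan, and applies Pinsker as if $\mathrm{KL}^{(1)}(\mu,\pi)$ were $\mathrm{KL}(\mu,\pi(\cdot\times\mathcal Y))$.

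A correct version must change the penalty. One option is $\mathrm{KL}\big(\mathrm{Law}_\pi(\pmb Y_{0:n-1},\pmb Y'_{0:n-1})\otimes T_n,\ \mathrm{Law}_\pi(\pmb Y_{0:n-1},\pmb Y'_{0:n-1},Y_n)\big)$ and its primed analogue, where the first law draws $Y_n\sim T_n(\cdot\,|\,Y_{n-1})$, together with weakly continuous $T_n$. With that penalty:
\begin{itemize}
\item your joint-law lower-semicontinuity argument works verbatim;
\item zero penalty characterizes $\Pi_{\mathrm{bc}}(\mu,\mu')$;
\item Pinsker propagates $\mathrm{TV}$-closeness of the time-$n$ marginals inductively in $n$, giving tightness of almost minimizers.
\end{itemize}
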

Here, under the notion of weak convergence, no uniqueness of minimizers is required, either for the relaxed
problems or for the original problem.

\begin{remark}[Proof sketch]
The main technical challenge is to show that a KL-penalized problem posed on a strictly larger coupling class still recovers the original bi-causal constraint in the limit. This step is delicate because the penalty must enforce feasibility while preserving compactness under weak convergence. Meanwhile, it is necessary to establish not only convergence of the penalized values, but also convergence of almost minimizers to the feasible Markov bi-causal set.

We address these difficulties through a three-step \(\Gamma\)-convergence argument. First, lower semicontinuity of the transport cost and the KL terms yields the \(\Gamma\)-liminf bound. Second, we construct recovery sequences on the feasible set to give the \(\Gamma\)-limsup bound. Finally, we prove that KL control yields asymptotic marginal consistency and tightness of almost minimizers. These properties imply that every weak limit point is feasible and optimal.
\end{remark}

Theorem~\ref{thm:gamma-convergence} establishes the asymptotic consistency of the relaxed bi-causal OT problem \eqref{eq:relaxed-bicausal-ot} with the original constrained problem \eqref{eq:bicausal-ot}. In particular, as $\beta\to\infty$, minimizers of the penalized objective~\eqref{eq:relaxed-bicausal-ot} recover both the optimal value and the optimal coupling of the original problem~\eqref{eq:bicausal-ot}.

\section{Policy gradient algorithm and convergence guarantee}
\label{sec: algorithm and convergence}
Given the relaxed formulation and dynamic programming principle in Section~\ref{sec: relaxed formulation}, we now formally develop a KL-regularized policy gradient method for the bi-causal OT problem \eqref{eq:bicausal-ot}. Specifically, we parameterize the relaxed coupling class and derive a policy gradient representation of the relaxed bi-causal OT problem in Section~\ref{subsec: parameterized coupling and policy gradient representation}. Building on this representation, Section~\ref{subsec: gradient estimators} constructs the estimators for the gradient terms with control variates, Section~\ref{subsec: algorithm implementation} specifies its implementation in Algorithm~\ref{alg:kl-pg}, and Section~\ref{subsec: convergence guarantee and regret analysis} establishes the convergence guarantee and derives the regret bound for our policy gradient method.

\subsection{Parameterized coupling and policy gradient representation}
\label{subsec: parameterized coupling and policy gradient representation}
Building on the recursive structure in Lemma~\ref{lem:dpp2}, this subsection parameterizes the relaxed coupling class and derives a policy gradient representation of the relaxed bi-causal OT problem~(\ref{eq:relaxed-bicausal-ot}). This representation converts the relaxed bi-causal OT problem into a computationally tractable optimization over a finite-dimensional parameter space.

We first consider parameterized measures in the relaxed feasible set, which reduces the optimization over measures to optimization over a finite-dimensional parameter vector. Specifically, let \(\Theta\subset\R^p\) denote the set of parameters, with \(p\in\mathbb N^+\); define $\theta_n\in\Theta$ as the parameter of the kernel $Q^{\theta_n}_n\in \mathcal{U}_{\textrm{rel}}$ at time step $n \in [N]$; denote $\pmb \theta = (\theta_0,\cdots,\theta_N)\in\Theta^{N+1}$ and construct $\pi^{\pmb \theta}\in\Pi_{\rm rel}$ as in \eqref{eq:markov-coupling}, namely, for any \(B_n,B'_n\in\mathcal S\), \(n \in [N]_0\), and \(\pmb B=\prod_{n=0}^NB_n,\pmb B'=\prod_{n=0}^NB_n'\),
\begin{eqnarray}
\label{eq:parameterized_markov_coupling}
    \pi^{\pmb \theta}(\pmb B\times \pmb B') =\int_{\pmb B\times \pmb B'}\pi^{\theta_0}_0(\dd y_0,\dd y_0')\prod_{n=1}^NQ^{\theta_n}_n(\dd y_n,\dd y_n' \,|\, y_{n-1},y_{n-1}').
\end{eqnarray}
With such a parameterization, we consider a {\em policy gradient} method. For notational simplicity, when there is no confusion, we denote $(\widehat{\pmb Y}, \widehat{\pmb Y}\mkern-1mu')=(\widehat Y_0,\dots, \widehat Y_N, \widehat Y_0',\dots, \widehat Y_N')$ as the generated path from $\pi^{\pmb\theta}$ and abbreviate $\E_{(\widehat{\pmb Y}, \widehat{\pmb Y}\mkern-1mu') \sim \pi^{\pmb\theta}}[\cdot]$ as $\E_{\pi^{\pmb\theta}}[\cdot]$. To highlight the role of $\pmb \theta$, we rewrite the corresponding cost function as
\begin{eqnarray*}
    J_\beta(\pmb \theta) =\E_{\pi^{\theta_0}_0}\bigg[\widetilde J^{\pi^{\pmb \theta}}_0(\widehat Y_0,\widehat Y_0')+\beta\Big[\mathrm{KL}_{0}^{\!(1)}\big(\mu, \pi^{\pmb\theta}\big) + \mathrm{KL}_{0}^{\!(2)}\big(\mu', \pi^{\pmb\theta}\big)\Big]\bigg],
\end{eqnarray*}
where $\mathrm{KL}_{0}^{\!(1)}$ and $\mathrm{KL}_{0}^{\!(2)}$ are defined in \eqref{eq:expectation_kl_0}. Note that under the parameterization \eqref{eq:parameterized_markov_coupling}, \(\inf_{\pmb\theta\in\Theta^{N+1}}J_{\beta}(\pmb\theta)\) is a parameterized approximation of the relaxed bi-causal OT cost in \eqref{eq:relaxed-bicausal-ot}.

To distinguish the parameterized and relaxed transport cost \(J_0^{\pi^{\pmb\theta}}\) defined in \eqref{eq:cost-n} and the KL regularizer in $J_\beta(\pmb\theta)$, we make the following decomposition:
\begin{equation}
\label{eq:Jbeta-decom}
J_\beta(\pmb\theta)\ =\ J_{\mathrm{val}}(\pmb\theta)\ +\ \beta\,J_{\mathrm{KL}}(\pmb\theta),
\end{equation}
where $J_{\mathrm{val}}(\pmb\theta)$ and $J_{\mathrm{KL}}(\pmb\theta)$ are defined as
\begin{align}
    J_{\mathrm{val}}(\pmb\theta)\, &\coloneqq\, \E_{\pi^{\pmb\theta}}\big[J_0^{\pi^{\pmb\theta}}(\widehat Y_0, \widehat Y_0')\big] = \E_{\pi^{\pmb\theta}}\Big[\sum_{n=0}^N c_n(\widehat Y_n,\widehat Y_n')\Big],\label{eq:Jval-def}\\
    J_{\mathrm{KL}}(\pmb\theta)\, &\coloneqq\,
    \sum_{n=0}^N \mathrm{KL}_{n}^{\!(1)}\big(\mu, \pi^{\pmb\theta}\big) + \mathrm{KL}_{n}^{\!(2)}\big(\mu', \pi^{\pmb\theta}\big).\label{eq:JKL-def}
\end{align}
with $\mathrm{KL}_{n}^{\!(i)}$ defined in \eqref{eq:expectation_kl_n_1} and \eqref{eq:expectation_kl_n_2} for \(i=1,2\) and \(n=0,\dots, N\). Here, \(J_{\mathrm{val}}\) represents the transport objective under the parameterized coupling, whereas \(J_{\mathrm{KL}}\) measures the aggregate violation of the prescribed marginal and transition constraints. This decomposition is the basis for the gradient formulas in Theorem~\ref{thm:policy-grad} and the estimator construction in Section~\ref{sec: algorithm and convergence}. To derive score-based gradient formulas for \(J_\beta(\pmb\theta)\) in Theorem~\ref{thm:policy-grad}, we make the following assumptions on regularity and moments.

\begin{assumption}
\label{asm:abs_cont_density}
For $n=0$, assume $\pi_0^{\theta_0}$ has a density $q_0^{\theta_0}$, with $q_0^{\theta_0, (i)}$ being the $i$-th marginal density of $\pi_0^{\theta_0,(i)}$, $i=1,2$. Assume the following Radon-Nikodym derivatives 
\begin{equation}
\label{eq:def_zeta_0}
    \zeta_0^{\theta_0,(1)}(y_0)=\frac{\dd\mu_0}{\dd\pi_0^{\theta_0,(1)}}(y_0),\,\, \zeta_0^{\theta_0,(2)}(y_0')=\frac{\dd\mu_0'}{\dd\pi_0^{\theta_0,(2)}}(y_0'),
\end{equation}
are well-defined. 

Similarly for each $n \in [N]$ and any $y_{n-1},y_{n-1}'\in S$, assume the transition kernel $Q_n^{\theta_n}(\cdot  \,|\,  y_{n-1}, y_{n-1}')$ has a density $q_n^{\theta_n}(\cdot  \,|\,  y_{n-1}, y_{n-1}')$, with $q_n^{\theta_n, (i)}$ being the conditional marginal density of $Q_n^{\theta_n,(i)}$, $i=1,2$. Assume the following Radon-Nikodym derivatives
\begin{equation}
\label{eq:def_zeta_n}
    \zeta_n^{\theta_n,(1)}(y_n  \,|\,  y_{n-1}, y_{n-1}')=\frac{\dd T_n}{\dd Q_n^{\theta_n,(1)}}(y_n  \,|\,  y_{n-1}, y_{n-1}'),\,\, \zeta_n^{\theta_n,(2)}(y_n'  \,|\,  y_{n-1}, y_{n-1}')=\frac{\dd T_n'}{\dd Q_n^{\theta_n,(2)}}(y_n'  \,|\,  y_{n-1}, y_{n-1}'),
\end{equation}
are well-defined. 

Moreover, assume that all the above densities are strictly positive on their supports, and for each $i=1,2$ and each $n=0,1,\dots,N$, both $q_n^{\theta_n,(i)}$ and $\log q_n^{\theta_n,(i)}$ are $\mathcal C^1$ in $\theta_n$ almost everywhere.
\end{assumption}

In the remainder of the paper, with a slight abuse of notation, we write $Y_{-1}=Y'_{-1}=\emptyset$ so that the one-step conditional objects can be written uniformly for all $n \in [N]_0$.
\begin{assumption}
\label{asm:exchange_expectation_gradient}
For any $\pmb\theta \in \Theta^{N+1}$ and each $n \in [N]_0$, assume $\widetilde J^{\pi^{\pmb\theta}}_n$ defined in \eqref{eq:relaxed-cost-n} with the terminal condition \eqref{eq:relaxed-terminal-cost} satisfies
\begin{equation*}
\E_{\pi^{\pmb\theta}}\Big[\big|\widetilde J^{\pi^{\pmb\theta}}_n(\widehat Y_n,\widehat Y_n')\big|\Big] < \infty.
\end{equation*}
There exist neighborhoods $\mathcal N_n(\theta_n) \subset \Theta$ for all $n \in [N]_0$, such that
\begin{equation*}
    \sup_{\vartheta_n\in \mathcal N_n(\theta_n)}\E_{\pi^{\pmb\theta}}\Big[
    \,|\widetilde J^{\pi^{\pmb\theta}}_n(\widehat Y_n,\widehat Y_n')|\,
    \big\|\nabla_{\vartheta_n}\log q_n^{\vartheta_n}(\widehat Y_n,\widehat Y_n' \,|\,  \widehat Y_{n-1},\widehat Y_{n-1}')\big\|
    \Big]\ <\ \infty,
\end{equation*}
and
\begin{equation*}
\begin{split}
    \sup_{\vartheta_n\in \mathcal N_n(\theta_n)}
\E_{\pi^{\pmb\theta}}\Big[&\big|\log \zeta_n^{\vartheta_n,(1)}(\widehat Y_n \,|\,  \widehat Y_{n-1},\widehat Y_{n-1}')\big|^2 + \big\|\nabla_{\vartheta_n}\log q_n^{\vartheta_n,(1)}(\widehat Y_n \,|\,  \widehat Y_{n-1},\widehat Y_{n-1}')\big\|^2 \\
&+\big|\log \zeta_n^{\vartheta_n,(2)}(\widehat Y_n' \,|\,  \widehat Y_{n-1},\widehat Y_{n-1}')\big|^2 + \big\|\nabla_{\vartheta_n}\log q_n^{\vartheta_n,(2)}(\widehat Y_n' \,|\,  \widehat Y_{n-1},\widehat Y_{n-1}')\big\|^2\Big] < \infty.
\end{split}
\end{equation*}
\end{assumption}

The regularity condition in Assumption~\ref{asm:abs_cont_density} is standard in both parametric statistics and policy gradient theory \citep{schervish1995theory,vandervaart1998asymptotic,bogachev2007measure,agarwal2021theory}. It guarantees that the score functions $\nabla_{\pmb\theta} \log q^{\pmb\theta}$ are well defined and that the KL divergence terms remain finite.

Assumption~\ref{asm:exchange_expectation_gradient} imposes the integrability and local moment conditions required to interchange differentiation and expectation for parameterized measures and to ensure finite-variance score estimators. These are the classical sufficient conditions underlying the logarithmic derivative trick used in Monte Carlo gradient estimation and policy gradient methods \citep{glynn1989likelihood,williams1992simple,billingsley1995probability}. For instance, exponential-family or Gaussian kernels with smooth parameterizations satisfy this assumption.

The following theorem provides an explicit formula for the gradient of $J_\beta(\pmb\theta)$ and a closed-form decomposition.

\begin{theorem}[Policy gradient theorem]
\label{thm:policy-grad} 
Under Assumptions \ref{asm:abs_cont_density} and \ref{asm:exchange_expectation_gradient}, for any \(\pi^{\pmb{\theta}}\) as in \eqref{eq:parameterized_markov_coupling} parameterized by \(\pmb\theta\in\Theta^{N+1}\), the gradient can be represented as
\begin{equation}
\label{eq:Jbeta-grad-decom}
\nabla_{\theta_n} J_\beta(\pmb\theta)\ =\ \nabla_{\theta_n} J_{\mathrm{val}}(\pmb\theta)\ +\ \beta\,\nabla_{\theta_n} J_{\mathrm{KL}}(\pmb\theta).
\end{equation}
For any $n \in [N]_0$, we have
\begin{align}
    \nabla_{\theta_n}J_{\mathrm{val}}(\pmb\theta)
    &= \E_{\pi^{\pmb\theta}}\bigg[\Big(\sum_{k=n}^{N} c_k(\widehat Y_k,\widehat Y_k')\Big)\ \nabla_{\theta_n}\log q_n^{\theta_n}\big(\widehat Y_n,\widehat Y_n' \,\big|\, \widehat Y_{n-1},\widehat Y_{n-1}'\big)\bigg],
    \label{eq:grad-Jval-n} \\
    \nabla_{\theta_n}J_{\mathrm{KL}}(\pmb\theta)
    &= -\E_{\mu \otimes \mu'}\Big[\nabla_{\theta_n} \log q_n^{\theta_n,(1)}(Y_n \,|\,  Y_{n-1},Y_{n-1}')+\nabla_{\theta_n} \log q_n^{\theta_n,(2)}(Y_n' \,|\,  Y_{n-1},Y_{n-1}')\Big].\label{eq:grad-JKL-n}
\end{align}
\end{theorem}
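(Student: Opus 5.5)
Linearity of the gradient together with the decomposition \eqref{eq:Jbeta-decom} gives \eqref{eq:Jbeta-grad-decom} immediately, once we know that both $J_{\mathrm{val}}$ and $J_{\mathrm{KL}}$ are differentiable in each $\theta_n$. So the work is in the two formulas \eqref{eq:grad-Jval-n} and \eqref{eq:grad-JKL-n}, and we treat them separately with the likelihood-ratio (score-function) trick.

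\emph{The value term.} By \eqref{eq:parameterized_markov_coupling}, $\pi^{\pmb\theta}$ has density $q_0^{\theta_0}(y_0,y_0')\prod_{k=1}^N q_k^{\theta_k}(y_k,y_k'\,|\,y_{k-1},y_{k-1}')$, and only the $n$-th factor depends on $\theta_n$. We differentiate $J_{\mathrm{val}}(\pmb\theta)=\int \sum_k c_k\,\dd\pi^{\pmb\theta}$ under the integral sign, using $\nabla q=q\nabla\log q$. This produces
\[
\E_{\pi^{\pmb\theta}}\Big[\big(\textstyle\sum_{k=0}^N c_k\big)\nabla_{\theta_n}\log q_n^{\theta_n}\Big].
\]
The next step removes the terms with $k<n$. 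Condition on $(\widehat{\pmb Y}_{0:n-1},\widehat{\pmb Y}'_{0:n-1})$ and use $\int \nabla_{\theta_n} q_n^{\theta_n}(\cdot\,|\,y_{n-1},y'_{n-1})=\nabla_{\theta_n}1=0$; this shows those terms have zero expectation. Only $\sum_{k=n}^N c_k$ survives, which is \eqref{eq:grad-Jval-n}.

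The interchange of derivative and integral needs a dominating function on a neighborhood $\mathcal N_n(\theta_n)$. We obtain it from the bound on $\sup_{\vartheta_n}\E[|\widetilde J_n|\,\|\nabla\log q_n^{\vartheta_n}\|]$ in Assumption~\ref{asm:exchange_expectation_gradient}. Concretely, condition on time $n$, write the tail cost as $J_n^{\pi^{\pmb\theta}}\le \widetilde J_n^{\pi^{\pmb\theta}}$ (both costs and KL terms are nonnegative), and apply the mean value theorem in $\theta_n$ together with the $\mathcal C^1$ property from Assumption~\ref{asm:abs_cont_density}. Dominated convergence then justifies the exchange.

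\emph{The KL term.} Here the expectation is under the fixed measure $\mu\otimes\mu'$, which does not depend on $\pmb\theta$. By Assumption~\ref{asm:abs_cont_density} we can write
\[
\mathrm{KL}_n^{(1)}=\E_{\mu\otimes\mu'}\big[\log T_n(Y_n\,|\,Y_{n-1})\big]-\E_{\mu\otimes\mu'}\big[\log q_n^{\theta_n,(1)}(Y_n\,|\,Y_{n-1},Y'_{n-1})\big],
\]
where $Y_n$ is drawn from $T_n(\cdot\,|\,Y_{n-1})$. In log form this is $\E[\log\zeta_n^{\theta_n,(1)}]$ under the reference measure. The $\theta$-free entropy part drops out under differentiation. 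The remaining term depends on $\theta$ only through $\theta_n$, and differentiating it gives minus the expected marginal score. The same computation for $\mathrm{KL}_n^{(2)}$ and for $n=0$ (with $Y_{-1}=\emptyset$) yields \eqref{eq:grad-JKL-n}.

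\emph{Expected main obstacle.} The hardest step is justifying differentiation under the integral for the KL terms, because the square-integrability conditions in Assumption~\ref{asm:exchange_expectation_gradient} are stated under $\pi^{\pmb\theta}$ rather than under $\mu\otimes\mu'$. The plan is to change measure using $\zeta_n^{\vartheta_n,(i)}$: write $\E_{\mu\otimes\mu'}[\nabla\log q_n^{(i)}]$ as an expectation of $\zeta\,\nabla\log q$ under the candidate marginal, then apply Cauchy--Schwarz with the $L^2$ bounds on $\log\zeta$ and on the score, combined with the inequality $\zeta\le e^{|\log\zeta|}$. If that route turns out not to close, the fallback is to read the assumption's moment bounds as holding for the reference-sampled states, since that is what the estimator construction actually uses.
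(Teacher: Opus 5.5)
Your proposal follows the paper's proof: the likelihood-ratio identity with the conditionally mean-zero score to discard the costs before time $n$, and, for the KL part, differentiating $\E_{\mu\otimes\mu'}[\log\zeta_n^{\theta_n,(i)}]$ under the fixed reference measure via $\nabla_{\theta_n}\log\zeta_n^{\theta_n,(i)}=-\nabla_{\theta_n}\log q_n^{\theta_n,(i)}$. The paper does not justify the KL interchange beyond invoking the assumptions, which is effectively your fallback; your Cauchy--Schwarz route would not close as stated, for two reasons: an $L^2$ bound on $\log\zeta$ does not control $\E[\zeta^2]$ (that requires an exponential moment of $|\log\zeta|$), and the conditioning states in Assumption~\ref{asm:exchange_expectation_gradient} are drawn from $\pi^{\pmb\theta}$ rather than from $\mu\otimes\mu'$.
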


\begin{remark}[Proof sketch]
The main technical challenge is to derive a gradient formula that respects both the sequential structure of the coupling and the asymmetric KL penalization:
\begin{enumerate*}
    \item[{(1)}] For the value component, the difficulty is to isolate the contribution of $\theta_n$ and express it only through the remaining cost from time \(n\) onward. We address this point through the likelihood-ratio identity and the Markov factorization of the parameterized coupling.
    \item[{(2)}] For the KL component, the difficulty is that the penalty is defined through marginal kernels under the reference measure rather than under the current coupling. We handle this problem by differentiating under the \textit{reference measure} and then representing the resulting marginal-score terms as score expectations through the Radon--Nikodym representation.
\end{enumerate*}

The proof therefore has two parts. We first derive the gradient of the value term by applying the likelihood-ratio identity and then using the Markov structure to reduce the weight to the continuation cost from time \(n\) onward. We next derive the gradient of the KL term under the reference dynamics and represent it as the expectation of marginal scores. Combining these two identities yields the decomposition in Theorem~\ref{thm:policy-grad}.
\end{remark}

Theorem~\ref{thm:policy-grad} has several implications.
First, consistent with the decomposition of \(J_\beta(\pmb\theta)\) in \eqref{eq:Jbeta-decom}, Theorem~\ref{thm:policy-grad} shows that the gradient \(\nabla J_\beta(\pmb\theta)\) also splits into two structurally distinct components: \begin{enumerate*}
    \item[{(1)}] \(\nabla J_{\mathrm{val}}\), which is an expectation under the current parameterized coupling weighted by the remaining cumulative cost; and
    \item[{(2)}] \(\nabla J_{\mathrm{KL}}\), which is an expectation under the prescribed marginal dynamics and captures the KL-penalty term.
\end{enumerate*}
This decomposition is central to the estimator construction in Section~\ref{sec: algorithm and convergence}.

Second, the representation in Theorem~\ref{thm:policy-grad} expresses the gradient entirely in terms of the Stein's score functions of the parameterized kernels. In particular, the gradient takes the form of weighted log-derivatives, with weights given by the cumulative future cost. This structure enables unbiased Monte Carlo estimation of the gradient using only samples from $\pi^{\pmb\theta}$, without requiring differentiation through the underlying dynamics. As a result, the method is particularly attractive in high-dimensional settings, where explicit density evaluation or backpropagation through the full trajectory is computationally expensive. 

Third, in terms of computational benefits, the KL contribution admits a particularly simple form: it is given by an expectation under the reference marginals and transition kernels, and hence can be computed without sampling from the current policy. This makes the regularization term particularly tractable to implement, while also clarifying the role of $\beta$ in controlling the strength of the relaxation.

Finally, the above identity is closely related to the classical likelihood-ratio policy gradient theorem in reinforcement learning. In this correspondence, the cumulative future cost plays the role of the return-to-go, while the KL penalization is analogous to regularization terms commonly used in entropy-regularized or trust-region-based policy optimization.

\subsection{Mini-batch gradient estimators and variance reduction}
\label{subsec: gradient estimators}
Theorem~\ref{thm:policy-grad} gives an exact gradient formula for the relaxed objective. For practical implementation, however, this gradient must be estimated from samples. This subsection constructs unbiased mini-batch estimators for the value and KL terms, and introduces control variates for variance reduction.

More specifically, evaluating $\nabla J_{\beta}(\pmb\theta)$ relies on two sources of randomness: independent trajectories $\pmb Y\sim\mu$ and $\pmb Y'\sim\mu'$ from the reference marginal distributions, and generated trajectories $(\widehat{\pmb Y},\widehat{\pmb Y}\mkern-1mu')\sim\pi^{\pmb\theta}$ from the $\pmb\theta$-parameterized joint coupling. This sampling structure underlies both the estimator construction below and the implementation of Algorithm~\ref{alg:kl-pg} in Section~\ref{subsec: algorithm implementation}. 

To describe the implementation in detail, we introduce the iteration index $k\in\mathbb N$, the mini-batch size $B\in\mathbb N^+$, and the sample index $b \in [B]$ within each mini-batch. At iteration $k$, given the parameter $\pmb\theta_k := (\theta_{k,0}, \theta_{k,1}, \dots, \theta_{k,N})$, we first draw a mini-batch of $B$ i.i.d.\ trajectories $\{(\widehat{\pmb Y}^{(b)},\widehat{\pmb Y}'^{(b)})\}_{b=1}^{B}\sim \pi^{\pmb\theta_k}$ and $\{(\pmb Y^{(b)},\pmb Y'^{(b)})\}_{b=1}^B \sim \mu \otimes \mu'$. To construct the control variates later, we also draw an auxiliary mini-batch of $B$ i.i.d.\ generated trajectories
$$
\{(\widetilde{\pmb Y}^{(b)},\widetilde{\pmb Y}'^{(b)})\}_{b=1}^{B} \sim \pi^{\pmb\theta_k},
$$
independent of the batch $\{(\widehat{\pmb Y}^{(b)},\widehat{\pmb Y}'^{(b)})\}_{b=1}^{B}$. Motivated by the gradient decomposition of $J_\beta$ in \eqref{eq:Jbeta-grad-decom} evaluated at $\pmb\theta=\pmb\theta_k$, we can estimate $\nabla_{\pmb\theta_k} J_{\mathrm{val}}(\pmb\theta_k)$ and $\nabla_{\pmb\theta_k} J_{\mathrm{KL}}(\pmb\theta_k)$ separately.

\paragraph{Per–trajectory statistics.}
To mirror the gradient decomposition in Theorem~\ref{thm:policy-grad}, we record three per-trajectory quantities: a remaining cost term \(\widehat V_{k,n}^{(b)}\), a generated-sample score term \(\widehat S_{k,n}^{(b)}\) for the value component, and a reference-sample score term \(\widehat K_{k,n}^{(b)}\) for the KL component.
Specifically, for any $n \in [N]_0$ and each trajectory index $b \in [B]$, to estimate the remaining cumulative cost in the value part $\nabla_{\pmb\theta_k} J_{\mathrm{val}}(\pmb\theta_k)$, we denote the ``per-trajectory cost'' by
\begin{equation}
\label{eq:Vhat-def}
    \widehat V_{k,n}^{(b)}:=\sum_{j=n}^{N} c_j\big(\widehat Y_j^{(b)}, \widehat Y_j'^{(b)}\big).
\end{equation}
Then, we define the ``per-trajectory joint score'' of the parameterized coupling evaluated on the generated trajectory $(\widehat{\pmb Y}^{(b)},\widehat{\pmb Y}'^{(b)})$ as
\begin{align}
\widehat S_{k,n}^{(b)}&:=
\nabla_{\theta_{k,n}}\log q_n^{\theta_{k,n}}\big(\widehat Y_n^{(b)},\widehat Y_n'^{(b)} \,\big|\, \widehat Y_{n-1}^{(b)},\widehat Y_{n-1}'^{(b)}\big). \label{eq:Shat-def}
\end{align}
Finally, for the KL-gradient term $\nabla_{\pmb\theta_k} J_{\mathrm{KL}}(\pmb\theta_k)$, we define the ``per-trajectory marginal score'' evaluated on the training data $(\pmb Y^{(b)},\pmb Y'^{(b)})$ as
\begin{align}
    \widehat K_{k,n}^{(b)}&:=
    \nabla_{\theta_{k,n}}\log q_n^{\theta_{k,n}, (1)}\big(Y_n^{(b)} \,\big|\, Y_{n-1}^{(b)},Y_{n-1}'^{(b)}\big)+\nabla_{\theta_{k,n}}\log q_n^{\theta_{k,n}, (2)}\big(Y_n'^{(b)} \,\big|\, Y_{n-1}^{(b)},Y_{n-1}'^{(b)}\big). \label{eq:Khat-def}
\end{align}

\paragraph{Mini-batch gradient estimators.}
Using the per–trajectory statistics in \eqref{eq:Shat-def}--\eqref{eq:Khat-def}, for each $n=0,1,\dots,N$, we define mini-batch estimators of $\nabla_{\theta_{k,n}}J_{\textrm{val}}(\pmb\theta_k)$ and $\nabla_{\theta_{k,n}}J_{\textrm{KL}}(\pmb\theta_k)$ as
\begin{align}
\widehat G_{k,\mathrm{val},n}
&:=\frac{1}{B}\sum_{b=1}^{B}\widehat V_{k,n}^{(b)}\,\widehat S_{k,n}^{(b)},\quad \widehat G_{k,\mathrm{KL},n}:=-\frac{1}{B}\sum_{b=1}^{B}\widehat K_{k,n}^{(b)},\label{eq:Gval-GKL-n-def}
\end{align}
and aggregate estimators as
\begin{equation}
\label{eq:agg-Gval-GKL-def}
    \widehat G_{k,\mathrm{val}}:=\big(\widehat G_{k,\mathrm{val},0},\dots,\widehat G_{k,\mathrm{val},N}\big),\quad \widehat G_{k,\mathrm{KL}}:=\big(\widehat G_{k,\mathrm{KL},0},\dots,\widehat G_{k,\mathrm{KL},N}\big)
\end{equation}
Here, \(\widehat G_{k,\mathrm{val},n}\) combines the remaining cost with the generated-sample score, whereas \(\widehat G_{k,\mathrm{KL},n}\) is constructed from the score evaluated on the training data. Their unbiasedness is established in Lemma~\ref{lem:unbiased-cov}.

\paragraph{Control variates.}
To reduce the variance of the estimators without introducing bias, we employ state–dependent control variates for the value component \citep{williams1992simple,greensmith2004variance}. We consider linear control variates, which are standard and effective \citep{konda2000actor,sutton2000policy}.
Mathematically, for any $n \in [N]$, given a feature map $\phi_{k,n}$, we define linear control variates as
\begin{align}
&\widehat l_{k,n}(y,y')=\widehat w_{k,n}^\top\phi_{k,n}(y,y'), \label{eq:control-variate-n-def}
\end{align}
with the coefficient fitted on the auxiliary mini-batch using ordinary least squares:
\begin{align}
&\widehat w_{k,n}\in\arg\min_{w}\ \frac{1}{B}\sum_{b=1}^{B}\Big\|\big(\widetilde V_{k,n}^{(b)}-w^\top\phi_{k,n}(\widetilde Y_{n-1}^{(b)},\widetilde Y_{n-1}'^{(b)})\big)\widetilde S_{k,n}^{(b)}\Big\|^2, \label{eq:wls-val}
\end{align}
where $\widetilde V_{k,n}^{(b)}$ and $\widetilde S_{k,n}^{(b)}$ are computed by replacing the input mini-batch $(\widehat{\pmb Y}^{(b)},\widehat{\pmb Y}'^{(b)})$ of \eqref{eq:Vhat-def} and \eqref{eq:Shat-def} by the auxiliary mini-batch $(\widetilde{\pmb Y}^{(b)},\widetilde{\pmb Y}'^{(b)})$.
For $n=0$, since no previous state is available, we use the following constant control variates
\begin{align}
\widehat l_{k,0}
&=\frac{\sum_{b=1}^{B}\widetilde V_{k,0}^{(b)}\big\|\widetilde S_{k,0}^{(b)}\big\|_2^2}{\sum_{b=1}^{B}\big\|\widetilde S_{k,0}^{(b)}\big\|_2^2} \in \arg\min_{c\in\R}\ \frac{1}{B}\sum_{b=1}^{B}\Big\|\big(\widetilde V_{k,0}^{(b)}-c\big)\widetilde S_{k,0}^{(b)}\Big\|^2. \label{eq:control-variate-0-def}
\end{align}
Multiplying $\widehat l_{k,n}$ by $\widehat S_{k,n}^{(b)}$ and stacking with respect to time index $n$ and trajectory index $b$, we denote
\begin{equation}
\label{eq:agg-control-variate-def}
\widehat L_k = \Big(\widehat L_{k,0},\dots, \widehat L_{k,N}\Big) := \frac{1}{B}\Big(\sum_{b=1}^{B}\widehat l_{k,0}\,\widehat S_{k,0}^{(b)},\,\dots,\, \sum_{b=1}^{B}\widehat l_{k,N} \big(\widehat Y_{N-1}^{(b)},\widehat Y_{N-1}'^{(b)}\big)\,\widehat S_{k,N}^{(b)}\Big).
\end{equation}

In Lemma~\ref{lem:unbiased-cov} below, we provide details of unbiasedness and asymptotic variance reduction achieved by introducing the above control variates \eqref{eq:control-variate-n-def}--\eqref{eq:agg-control-variate-def}.

\paragraph{Aggregate gradient estimators with control variates.}
Combining the mini-batch estimators $\widehat G_{k,\mathrm{val}}$, $\widehat G_{k,\mathrm{KL}}$, and the control variates $\widehat L_k$ defined in \eqref{eq:agg-Gval-GKL-def} and \eqref{eq:agg-control-variate-def} together, we denote the aggregate estimator of $\nabla_{\pmb \theta_k} J_\beta(\pmb\theta_k)$ by
\begin{equation}
\label{eq:gkhat-def}
\widehat g_k\ :=\ \widehat G_{k,\mathrm{val}}\ +\ \beta\,\widehat G_{k,\mathrm{KL}} - \widehat L_k.
\end{equation}

The following lemma establishes unbiasedness and asymptotic positive-semidefinite (PSD) covariance improvements for the mini-batch gradient estimators with control variates.

\begin{lemma}[Unbiasedness and PSD covariance improvement]\label{lem:unbiased-cov}
At iteration $k$, fix $\pmb\theta_k$ and suppose that $\{(\pmb Y^{(b)},\pmb Y'^{(b)})\}_{b=1}^B \sim \mu \otimes \mu'$, $\{(\widehat{\pmb Y}^{(b)},\widehat{\pmb Y}'^{(b)})\}_{b=1}^{B} \sim \pi^{\pmb \theta_k}$, and $\{(\widetilde{\pmb Y}^{(b)},\widetilde{\pmb Y}'^{(b)})\}_{b=1}^{B} \sim \pi^{\pmb \theta_k}$ are i.i.d.\ random trajectories. Then, the following two results hold.
\begin{enumerate}
\item \emph{(Unbiasedness).} The gradient estimators $\widehat G_{k,\mathrm{val}}$ and $\widehat G_{k,\mathrm{KL}}$ in \eqref{eq:agg-Gval-GKL-def} satisfy
\begin{align}
    \E_{\pi^{\pmb\theta_k}}[\widehat G_{k,\mathrm{val}} + \beta\widehat G_{k,\mathrm{KL}} \mid \pmb\theta_k] = \,\nabla_{\pmb\theta_k} J_{\mathrm{val}}(\pmb\theta_k)+\,\beta\,\nabla_{\pmb\theta_k} J_{\mathrm{KL}}(\pmb\theta_k), \label{eq:unbiased-gk0}
\end{align}
Moreover, the aggregate estimator $\widehat g_k$ in \eqref{eq:gkhat-def} is unbiased with
\begin{align}
    \E_{\pi^{\pmb\theta_k}}\left[\widehat g_k \,\big|\, \pmb\theta_k\right] = \nabla_{\pmb\theta_k} J_\beta(\pmb\theta_k).\label{eq:unbiased-gk}
\end{align}
\item \emph{(PSD covariance improvement).} For the gradient estimator $\widehat g_k$ in \eqref{eq:gkhat-def} with control variates $\widehat L_k$ defined in \eqref{eq:agg-control-variate-def}, it holds that as $B \to \infty$, we have
\begin{equation}\label{eq:cov-composite-psd}
    \operatorname{Cov}\big(\widehat g_k \,\big|\, \pmb\theta_k\big)\ \preceq\ \operatorname{Cov}\big(\widehat g_k^{0} \,\big|\, \pmb\theta_k\big),
\end{equation}
where $\operatorname{Cov}$ represents the covariance matrix and $\widehat g_k^{0}$ denotes $\widehat g_k$ in \eqref{eq:gkhat-def} with all control variates set to zero.
\end{enumerate}
\end{lemma}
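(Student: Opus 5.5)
For unbiasedness we take expectations term by term and then invoke Theorem~\ref{thm:policy-grad}. Since the $B$ generated trajectories $(\widehat{\pmb Y}^{(b)},\widehat{\pmb Y}'^{(b)})$ are i.i.d.\ draws from $\pi^{\pmb\theta_k}$, linearity gives
\[
\E[\widehat G_{k,\mathrm{val},n}\mid\pmb\theta_k]=\E_{\pi^{\pmb\theta_k}}\bigl[\widehat V_{k,n}^{(1)}\widehat S_{k,n}^{(1)}\bigr].
\]
By \eqref{eq:grad-Jval-n} this equals $\nabla_{\theta_{k,n}}J_{\mathrm{val}}(\pmb\theta_k)$. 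The integrability in Assumption~\ref{asm:exchange_expectation_gradient} makes the expectation finite. Similarly, the reference pairs are i.i.d.\ from $\mu\otimes\mu'$, so $\E[\widehat G_{k,\mathrm{KL},n}]$ is exactly the right-hand side of \eqref{eq:grad-JKL-n}. Stacking over $n\in[N]_0$ yields \eqref{eq:unbiased-gk0}.

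Next we show that $\E[\widehat L_{k,n}\mid\pmb\theta_k]=0$, which gives \eqref{eq:unbiased-gk}. The coefficient $\widehat w_{k,n}$ (respectively $\widehat l_{k,0}$) is a measurable function of the auxiliary batch only, and that batch is independent of the main batch. We therefore condition on the auxiliary batch and on $(\widehat Y_{n-1}^{(b)},\widehat Y_{n-1}'^{(b)})$. Since $\widehat l_{k,n}$ depends only on the previous state, it factors out of the conditional expectation, leaving
\[
\E\bigl[\nabla_{\theta_{k,n}}\log q_n^{\theta_{k,n}}(\widehat Y_n,\widehat Y_n'\mid y,y')\bigr]=\nabla_{\theta_{k,n}}\int q_n^{\theta_{k,n}}(\cdot\mid y,y')=0.
\]
This is the standard score identity. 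Exchanging $\nabla$ and $\int$ is justified by Assumption~\ref{asm:abs_cont_density} together with the local domination in Assumption~\ref{asm:exchange_expectation_gradient}. The tower property then finishes this step. We should also check $\E\|\widehat l_{k,n}\widehat S_{k,n}\|<\infty$, using the second moments of $\phi_{k,n}$ and of the score.

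For the covariance claim, fix $\pmb\theta_k$. Write $X_n=\widehat V_{k,n}\widehat S_{k,n}$, and let $Z_n=\phi_{k,n}(\widehat Y_{n-1},\widehat Y_{n-1}')\otimes\widehat S_{k,n}$ be the zero-mean regressor vector, so that $\widehat l_{k,n}\widehat S_{k,n}=\widehat w_{k,n}^\top$-linear in $Z_n$. The objective \eqref{eq:wls-val} is an empirical least-squares criterion. By the law of large numbers and continuity of the OLS solution, which requires assuming a nonsingular population Gram matrix, $\widehat w_{k,n}\to w_n^*$ almost surely, where $w_n^*$ minimizes $\E\|X_n-(w^\top\phi)\widehat S\|^2$. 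We then compare the ideal estimator, which uses $w_n^*$, with the feasible one. The difference is $\frac1B\sum_b(\widehat w_{k,n}-w_n^*)$ times a zero-mean average. Because the auxiliary batch is independent, its covariance scaled by $B$ vanishes as $B\to\infty$, so $B\operatorname{Cov}(\widehat g_k)$ and $B\operatorname{Cov}(\widehat g_k^{\rm ideal})$ have the same limit. For the ideal estimator, $\widehat G_{k,\mathrm{KL}}$ is independent of the generated batch, so its covariance adds identically to both sides and cancels. It thus suffices to show
\[
\operatorname{Cov}(X-L^*)\preceq\operatorname{Cov}(X).
\]
Here $L^*$ is the projection in the sense of the orthogonality conditions of the population least-squares problem: the normal equations give $\E[(X-L^*)^\top L^*]$-type cross terms equal to zero, hence $\operatorname{Cov}(X)=\operatorname{Cov}(X-L^*)+\operatorname{Cov}(L^*)$ along each direction.

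\textbf{Main obstacle.} The OLS criterion is scalar (a trace). Its normal equations therefore yield orthogonality only in the trace sense, $\E[\langle X_n-L_n^*,L_n\rangle]=0$ for all $L_n$ in the linear class, which gives a trace inequality, not a full Loewner ordering. Cross-time terms across $n$ also appear. My first attempt would be to observe that the class $\{w^\top\phi\,\widehat S\}$ is closed under the relevant linear maps, so that the orthogonality can be upgraded: reading the statement ``as $B\to\infty$'' as comparing asymptotic covariances, with the class enlarged to matrix-valued coefficients if needed, the projection residual becomes orthogonal componentwise, giving $\operatorname{Cov}(L^*)\succeq0$ subtracted blockwise. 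Failing that, I would prove the ordering in the sense of the PSD inequality for the per-block, per-$n$ projections, with each $n$ treated separately, and then argue about the cross-block covariances.
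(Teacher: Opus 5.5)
Your unbiasedness argument is correct and is the paper's argument. The paper also removes $\widehat L_k$ by pulling $\widehat l_{k,n}$ out of a conditional expectation given $(\widehat Y^{(b)}_{n-1},\widehat Y'^{(b)}_{n-1})$ and using the zero-mean score identity. Your explicit conditioning on the independent auxiliary batch is the right way to justify that step.

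The gap is in Part II, and the obstacle you flag is real. It cannot be closed along your lines, because the inequality $\operatorname{Cov}(X-L^*)\preceq\operatorname{Cov}(X)$ you reduce to is false for the estimator as defined, and with it the Loewner claim itself. Each block uses one scalar coefficient $w^\top\phi_{k,n}$ multiplying the whole score vector, so the least-squares fit only minimizes the trace $\E\|(\widehat V_{k,n}-l)\widehat S_{k,n}\|^2$.

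Here is a concrete counterexample in the $n=0$ block.
\begin{itemize}
\item Take $\theta_0\in\R^2$ and let $\pi_0^{\theta_0}$ put weights $\tfrac14(1\pm\theta_{0,1})$ and $\tfrac14(1\pm\theta_{0,2})$ on four disjoint bumps. At $\theta_0=0$ the score is $(\pm1,0)$ on the first pair and $(0,\pm1)$ on the second.
\item Set $c_0=1$ on the first pair, $c_0=0$ on the second, and $c_j\equiv 0$ for $j\ge1$.
\item Then $\widehat l_{k,0}$ is the auxiliary-batch mean of $\widetilde V_{k,0}$, so $\widehat l_{k,0}\to\tfrac12$.
\item $B$ times the $n=0$ diagonal block of $\operatorname{Cov}(\widehat g_k\mid\pmb\theta_k)$ equals $(\tfrac18+\tfrac1{8B})I$ plus the common KL contribution. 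For $\widehat g_k^0$ it equals $\operatorname{diag}(\tfrac12,0)$ plus the same contribution.
\item The trace drops, but the variance along $e_2$ strictly increases, for every $B$.
\end{itemize}
Enlarging the class to matrix-valued coefficients would fix this. However, it changes the estimator in \eqref{eq:control-variate-n-def}--\eqref{eq:agg-control-variate-def}, so it does not prove the statement as posed.

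Your per-block fallback is essentially the paper's proof. The paper asserts $\operatorname{Var}[u^\top\widehat g_{k,n}\mid\pmb\theta_k]\le\operatorname{Var}[u^\top\widehat g^0_{k,n}\mid\pmb\theta_k]$ for every $u$, from weighted-$L^2$ consistency of $\widehat l_{k,n}$. It then claims the off-diagonal blocks are unchanged by the tower property.
\begin{itemize}
\item The example above refutes the first step.
\item The second step fails as well. For $n<m$, $\operatorname{Cov}(\widehat G_{k,\mathrm{val},n},\widehat L_{k,m}\mid\pmb\theta_k)=B^{-1}\E[\widehat l_{k,m}\widehat V_{k,n}\widehat S_{k,n}\widehat S_{k,m}^\top]$. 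Here $\widehat V_{k,n}$ contains the costs $c_j$ with $j\ge m$. Their conditional covariance with $\widehat S_{k,m}$ given the time-$(m-1)$ state is the conditional policy gradient in $\theta_m$, which is generically nonzero. So this term does not vanish in general.
\item Equality of conditional means, which is all the tower-property argument delivers, says nothing about cross-covariances.
\end{itemize}

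What your reduction does prove is the asymptotic trace inequality $\operatorname{tr}\operatorname{Cov}(\widehat g_k\mid\pmb\theta_k)\le\operatorname{tr}\operatorname{Cov}(\widehat g^0_k\mid\pmb\theta_k)$. This holds because off-diagonal blocks do not enter the trace and $w=0$ is feasible in each block's least-squares problem. It requires upgrading $\widehat w_{k,n}\to w_n^*$ from almost-sure to weighted-$L^2$ convergence, so that the $B$-scaled covariances converge. Your limit $w_n^*$, the best coefficient in the linear class, is the correct one, unlike the conditional ratio $l^\star_{k,n}$ the paper uses. That weaker trace statement is all Lemma~\ref{lem:beta-noise-gk} actually needs, since it only bounds $\E\|\widehat g_k-\nabla J_\beta(\pmb\theta_k)\|^2$.
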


Lemma~\ref{lem:unbiased-cov} justifies the use of \(\widehat g_k\) in Algorithm~\ref{alg:kl-pg}: unbiasedness ensures that the estimator targets the correct descent direction, while variance reduction improves numerical stability.

\subsection{Algorithm implementation}
\label{subsec: algorithm implementation}
Building on the gradient estimator developed in Section~\ref{subsec: gradient estimators}, we now formulate the KL-regularized policy gradient method, summarized in Algorithm~\ref{alg:kl-pg}. In each iteration, the method draws samples from both the current parameterized coupling and the reference distributions, constructs the mini-batch gradient estimator with control variates, and updates the parameter vector through a stochastic gradient step.

\begin{algorithm}[!ht]
\caption{KL-regularized policy gradient for Bi-causal OT}
\label{alg:kl-pg}
\KwIn{regularization $\beta > 0$; horizon $N$; mini-batch size $B$; update rounds $K$; initial parameters $\pmb\theta_{1}\in\Theta^{N+1}$; step sizes $\{\eta_k\}_{k=1}^{K}$; baseline features $\{\phi_{k,n}\}_{1 \le k \le K, 1 \le n \le N}$.}
\KwOut{parameters $\pmb\theta_{K+1}$.}

\For{$k=1,\dots,K$}{
\textbf{1) Sample:} draw $\{(\widehat{\pmb Y}^{(b)},\widehat{\pmb Y}'^{(b)})\}_{b=1}^{B}\sim\pi^{\pmb\theta_k}$, $\{(\widetilde{\pmb Y}^{(b)},\widetilde{\pmb Y}'^{(b)})\}_{b=1}^{B}\sim\pi^{\pmb\theta_k}$, $\{(\pmb Y^{(b)},\pmb Y'^{(b)})\}_{b=1}^{B} \sim \mu \otimes \mu'$.

\textbf{2) Per–trajectory statistics:} for all $n \in [N]_0$ and $b=1,\dots,B$, compute
\newline
\indent\textbullet\enspace $\widehat V_{k,n}^{(b)}$---the ``per-trajectory cost'' on the generated data $(\widehat{\pmb Y}^{(b)},\widehat{\pmb Y}'^{(b)})$ by \eqref{eq:Vhat-def};
\newline
\indent\textbullet\enspace $\widehat S_{k,n}^{(b)}$---the ``per-trajectory joint scores'' on the generated data $(\widehat{\pmb Y}^{(b)},\widehat{\pmb Y}'^{(b)})$ by \eqref{eq:Shat-def};
\newline
\indent\textbullet\enspace $\widehat K_{k,n}^{(b)}$---the ``per-trajectory marginal scores'' on the training data $(\pmb Y^{(b)},\pmb Y'^{(b)})$ by \eqref{eq:Khat-def}.

\textbf{3) Control variates (stop–gradient):} for all $n \in [N]$, fit
\newline
\indent\textbullet\enspace $\widehat l_{k,n}(\cdot)$---the linear control variates  on the auxiliary data $(\widetilde{\pmb Y}^{(b)},\widetilde{\pmb Y}'^{(b)})$ via \eqref{eq:wls-val};
\newline
\indent\textbullet\enspace fit $\widehat l_{k,0}$---the constant control variate on the auxiliary data $(\widetilde{\pmb Y}^{(b)},\widetilde{\pmb Y}'^{(b)})$ via \eqref{eq:control-variate-0-def}.

\textbf{4) Mini-batch sums and gradient:}
\newline
\indent\textbullet\enspace compute $\widehat G_{k,\mathrm{val}}$, $\widehat G_{k,\mathrm{KL}}$, and $\widehat L_{k}$ by \eqref{eq:agg-Gval-GKL-def} and \eqref{eq:agg-control-variate-def};
\newline
\indent\textbullet\enspace compute the aggregate estimator $\widehat g_k$ via \eqref{eq:gkhat-def}.

\textbf{5) Update:} $\pmb\theta_{k+1}\gets \pmb\theta_{k}-\eta_k\,\widehat g_k$.
}
\end{algorithm}

\subsection{Convergence guarantee and regret analysis}
\label{subsec: convergence guarantee and regret analysis}
We next study the convergence behavior of Algorithm~\ref{alg:kl-pg} and quantify how the optimization error depends on the iteration number \(K\), the mini-batch size \(B\), and the step-size schedule. To this end, let $J_\beta^* := \inf_{\pmb\theta \in \Theta^{N+1}} J_\beta(\pmb\theta)$,
and consider the stochastic gradient iterates $\pmb\theta_{k+1} = \pmb\theta_k - \eta_k \widehat g_k$. We make the following assumptions on the objective $J_\beta(\pmb\theta)$ and the stochastic gradient iterates.

\begin{assumption}
\label{asm:beta-smooth-PL-var}
For any $\beta \geq 0$, assume there exist positive constants $L(\beta),\mu(\beta)>0$ and finite constants $\sigma_{\mathrm{val}},\sigma_{\mathrm{KL}} \geq 0$ such that
\begin{enumerate}
\item \emph{(Smoothness).} $J_\beta$ is $L(\beta)$-smooth:
\begin{equation}
\label{eq:asm-smooth}
\|\nabla J_{\beta}(\pmb\theta)-\nabla J_{\beta}(\pmb\theta')\|\ \le\ L(\beta)\,\|\pmb\theta-\pmb\theta'\|, \ \textrm{ for all }\ \pmb\theta,\pmb\theta' \in \Theta^{N+1}.
\end{equation}
\item \emph{(Polyak--\L ojasiewicz condition).} $J_\beta$ satisfies
\begin{equation}
\label{eq:asm-PL-condition}
    \frac{1}{2}\,\|\nabla J_{\beta}(\pmb\theta)\|^2 \ge \mu(\beta)\,\big(J_{\beta}(\pmb\theta)-J_{\beta}^{*}\big).
\end{equation}
\item \emph{(Bounded second moments).}
The conditional second moments of $\widehat G_{k,\mathrm{val}}$ and $\widehat G_{k,\mathrm{KL}}$ scale with the mini-batch size $B$ as
\begin{align}
\label{eq:component-variance-bounds-G}
\E\Big[\big\|\widehat G_{k,\mathrm{val}}-\E\big[\widehat G_{k,\mathrm{val}}\mid \pmb\theta\big]\big\|^2\,\big|\,\pmb\theta\Big] &\leq \frac{\sigma_{\mathrm{val}}^{2}}{B}, \quad
\E\Big[\big\|\widehat G_{k,\mathrm{KL}}-\E\big[\widehat G_{k,\mathrm{KL}}\mid \pmb\theta\big] \big\|^2 \,\big|\,\pmb\theta\Big] \leq \frac{\sigma_{\mathrm{KL}}^{2}}{B}.
\end{align}
\end{enumerate}
\end{assumption}

In Assumption~\ref{asm:beta-smooth-PL-var}, the Lipschitz–gradient hypothesis is standard for smooth models and underpins the descent lemma \citep{nesterov2004introductory}. The Polyak--Lojasiewicz (PL) condition, originating in \citet{lojasiewicz1963une} and \citet{polyak1963gradient}, is strictly weaker than strong convexity, yet still guarantees global linear (noise–limited) convergence for first–order methods; see \citet{karimi2016linear} for a modern treatment. 
Importantly, for the bounded second moments, scaling with the mini-batch size $B$ matches the classical stochastic approximation and stochastic gradient descent theory \citep{nemirovski2009robust,bottou2018optimization}. 

To summarize the optimization performance over the first $K$ iterations, we measure the average regret, namely the average suboptimality of the iterates $K$ relative to $J_\beta^*$, which is defined as
\begin{equation}
\label{eq:regret-def}
\overline{\mathrm{Reg}}_K(\beta)\ :=\ \frac{1}{K}\sum_{k=1}^{K}\,\mathrm{Err}_k(\beta),
\end{equation}
with the error of $k$-th iteration by $\mathrm{Err}_k(\beta) :=\E[J_\beta(\pmb\theta_k)-J_\beta^{*}]$ for all $k=1,\dots,K$.
We establish the error bound for the average regret over $K$ iterations and provide the convergence guarantee for $J_\beta(\pmb\theta_{K})$ with a fixed \(\beta\) in the following theorem.
\begin{theorem}
\label{thm:beta-PG-converge}
Under Assumption~\ref{asm:beta-smooth-PL-var} and suppose the step sizes $\{\eta_k\}_{k=1}^{K}$ satisfy $0< \eta_K \leq \eta_{K-1} \leq \dots \leq \eta_1 \leq \min\{1/L(\beta), 1/\mu(\beta)\}$. With a slight abuse of notation $\sum_{k=1}^{0} \eta_k := 0$, the following two statements hold.
\begin{enumerate}
\item \emph{(Average regret bound).} For any $K \geq 1$, the average regret in \eqref{eq:regret-def} is bounded by
\begin{equation}
\label{eq:bound-avg-regret-monotone}
\overline{\mathrm{Reg}}_K(\beta) \leq
\frac{\mathrm{Err}_1(\beta)}{K}\sum_{k=1}^{K}\prod_{s=1}^{k-1}\big(1-\mu(\beta)\eta_s\big)
\ +\ \frac{L(\beta)\sigma^2(\beta)}{2 B K}\sum_{k=1}^{K-1}\eta_k^2\sum_{t=k}^{K-1}\prod_{s=k+1}^{t}\big(1-\mu(\beta)\eta_s\big).
\end{equation}
\item \emph{(Last–iterate proximity).}
If the set $\mathcal A_{\mathrm{KL}}:=\{\pmb\theta:\, J_{\mathrm{KL}}(\pmb\theta)=0\}$ is nonempty, then for all $K \geq 1$,
\begin{equation}
\label{eq:bound-last-iterate-prox}
\begin{aligned}
    &\quad \,\, \E\Big[J_\beta(\pmb\theta_{K})-\inf_{\pmb\theta\in\mathcal A_{\mathrm{KL}}}J_{\mathrm{val}}(\pmb\theta)\Big] \\
    &\leq
    \Big(\prod_{k=1}^{K}1-\mu(\beta)\eta_k\Big) \cdot \mathrm{Err}_1(\beta)\, +\, \frac{L(\beta)\,\sigma^2(\beta)}{2B}\sum_{k=1}^{K-1}\Big(\eta_k^2 \big(\prod_{t=k+1}^{K-1}(1-\mu(\beta)\eta_t)\big)\Big).
\end{aligned}
\end{equation}
\end{enumerate}
\end{theorem}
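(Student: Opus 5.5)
The plan is the standard descent-lemma-plus-PL recursion for SGD, followed by averaging. Write $\sigma^2(\beta)$ for a bound on the conditional second moment of $\widehat g_k-\nabla J_\beta(\pmb\theta_k)$. Combining Lemma~\ref{lem:unbiased-cov} with the bounds \eqref{eq:component-variance-bounds-G}, one may take
\[
\sigma^2(\beta)=2(\sigma_{\mathrm{val}}^2+\beta^2\sigma_{\mathrm{KL}}^2),
\]
possibly inflated by a constant to absorb the control variate. The control variate is fitted on an independent auxiliary batch, so conditionally on that batch it only shifts by a mean-zero term. We then have $\E[\widehat g_k\mid\pmb\theta_k]=\nabla J_\beta(\pmb\theta_k)$ and $\E[\|\widehat g_k-\nabla J_\beta(\pmb\theta_k)\|^2\mid\pmb\theta_k]\le \sigma^2(\beta)/B$.

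\textbf{One-step recursion.} By $L(\beta)$-smoothness,
\[
J_\beta(\pmb\theta_{k+1})\le J_\beta(\pmb\theta_k)-\eta_k\langle\nabla J_\beta(\pmb\theta_k),\widehat g_k\rangle+\tfrac{L\eta_k^2}{2}\|\widehat g_k\|^2.
\]
Taking the conditional expectation and using unbiasedness gives
\[
-\eta_k\big(1-\tfrac{L\eta_k}{2}\big)\|\nabla J_\beta\|^2+\tfrac{L\eta_k^2\sigma^2}{2B}.
\]
Since $\eta_k\le 1/L$, we have $1-L\eta_k/2\ge 1/2$. The PL inequality \eqref{eq:asm-PL-condition} then yields
\[
\mathrm{Err}_{k+1}\le(1-\mu\eta_k)\,\mathrm{Err}_k+\tfrac{L\sigma^2}{2B}\eta_k^2 .
\]
The condition $\eta_k\le 1/\mu$ guarantees $1-\mu\eta_k\in[0,1)$, so the factors are nonnegative and the recursion can be unrolled.

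\textbf{Unrolling and averaging.} Iterating the recursion gives
\[
\mathrm{Err}_k\le \prod_{s=1}^{k-1}(1-\mu\eta_s)\,\mathrm{Err}_1+\tfrac{L\sigma^2}{2B}\sum_{j=1}^{k-1}\eta_j^2\prod_{s=j+1}^{k-1}(1-\mu\eta_s).
\]
We sum over $k=1,\dots,K$ and divide by $K$. Exchanging the double sum $\sum_{k}\sum_{j<k}$ into $\sum_{j=1}^{K-1}\sum_{t=j}^{K-1}$, with $t=k-1$, produces exactly \eqref{eq:bound-avg-regret-monotone}. The monotonicity of $\eta_k$ is not needed for this step beyond the upper bound on $\eta_1$.

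\textbf{Last-iterate proximity.} I would use the same unrolled bound at index $K$, or $K+1$, reindexing to match the stated product $\prod_{k=1}^K$, which is an upper bound since the factors are at most $1$. The key extra step is the inequality $J^*_\beta\le\inf_{\mathcal A_{\mathrm{KL}}}J_{\mathrm{val}}$. This holds because on $\mathcal A_{\mathrm{KL}}$ we have $J_\beta=J_{\mathrm{val}}+\beta\cdot 0$, and $J^*_\beta$ is an infimum over a superset. Hence
\[
\E\big[J_\beta(\pmb\theta_K)-\inf_{\mathcal A_{\mathrm{KL}}}J_{\mathrm{val}}\big]\le \mathrm{Err}_K,
\]
and the bound \eqref{eq:bound-last-iterate-prox} follows.

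The main obstacle I expect is bookkeeping rather than conceptual. Two points need care. First, the variance constant $\sigma^2(\beta)$ must be justified when the control variate $\widehat L_k$ is included; I would condition on the auxiliary batch and use its independence. Second, the indices of the products in the last-iterate bound must line up, which may require using $1-\mu\eta_k\le 1$ to drop or add a factor.
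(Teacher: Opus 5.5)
Your skeleton matches the paper's proof. You use the descent lemma with unbiasedness, the PL step through $1-L(\beta)\eta_k/2\ge 1/2$ (Lemma~\ref{lem:onestep-PG}), unrolling (Lemma~\ref{lem:gronwall-err}), and averaging with $t=k-1$, which reproduces \eqref{eq:bound-avg-regret-monotone} exactly. For part 2 you use the sandwich $J_\beta^*\le\inf_{\mathcal A_{\mathrm{KL}}}J_{\mathrm{val}}$, as the paper does. Two steps, however, fail as you propose them.

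\textbf{The variance constant.} Conditioning on the auxiliary batch only shows that $\widehat L_k$ is conditionally mean zero, i.e.\ unbiasedness. It gives no bound on $\E[\|\widehat g_k-\nabla J_\beta(\pmb\theta_k)\|^2\mid\pmb\theta_k]$.
\begin{itemize}
\item $\widehat L_k$ is built from the same scores $\widehat S_{k,n}^{(b)}$ as $\widehat G_{k,\mathrm{val}}$, so their cross term does not vanish; that correlation is the point of a control variate.
\item Assumption~\ref{asm:beta-smooth-PL-var}(3) says nothing about $\E\|\widehat L_k\|^2$, since the OLS coefficients $\widehat w_{k,n}$ are uncontrolled.
\end{itemize}
So there is nothing to ``absorb into a constant'' without an extra hypothesis. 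The paper instead uses Lemma~\ref{lem:unbiased-cov}(2): the control variate can only shrink the covariance, so the second moment is dominated by that of $\widehat g_k^0-\nabla J_\beta(\pmb\theta_k)$. Minkowski then gives Lemma~\ref{lem:beta-noise-gk} with $\sigma(\beta)=\sigma_{\mathrm{val}}+\beta\sigma_{\mathrm{KL}}$, which is the constant in the theorem. Your $2(\sigma_{\mathrm{val}}^2+\beta^2\sigma_{\mathrm{KL}}^2)$ can be up to twice that. The factor $2$ is avoidable anyway, because $\widehat G_{k,\mathrm{KL}}$ uses the reference batch, which is independent of the generated batches. Note also that Lemma~\ref{lem:unbiased-cov}(2) is only claimed as $B\to\infty$. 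For a rigorous finite-$B$ statement you would need to assume the second-moment bound directly for $\widehat g_k$.

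\textbf{The last-iterate indexing.} Here your reasoning runs backwards. Unrolling gives
\[
\mathrm{Err}_K(\beta)\le\prod_{s=1}^{K-1}\bigl(1-\mu(\beta)\eta_s\bigr)\,\mathrm{Err}_1(\beta)+\frac{L(\beta)\sigma^2(\beta)}{2B}\sum_{k=1}^{K-1}\eta_k^2\prod_{t=k+1}^{K-1}\bigl(1-\mu(\beta)\eta_t\bigr).
\]
Since $\prod_{k=1}^{K}\le\prod_{k=1}^{K-1}$, the right side of \eqref{eq:bound-last-iterate-prox} is \emph{smaller} than this. Factors at most $1$ let you delete a factor from an upper bound, not insert one. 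Switching to index $K+1$ does not help either: it bounds $\pmb\theta_{K+1}$ rather than $\pmb\theta_K$ and adds an $\eta_K^2$ noise term.

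In fact \eqref{eq:bound-last-iterate-prox} as displayed is false at $K=1$. Take $c_n\equiv 0$ with $\mathcal A_{\mathrm{KL}}\neq\emptyset$ (e.g.\ a well-specified Gaussian location family, for which $J_{\mathrm{KL}}$ is quadratic), and start at $\pmb\theta_1\notin\mathcal A_{\mathrm{KL}}$. Then $\inf_{\mathcal A_{\mathrm{KL}}}J_{\mathrm{val}}=0=J_\beta^*$, and the claim reads $\mathrm{Err}_1(\beta)\le(1-\mu(\beta)\eta_1)\,\mathrm{Err}_1(\beta)$, which fails whenever $\mathrm{Err}_1(\beta)>0$.

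The paper's own proof also only derives $\prod_{s=1}^{K-1}$, from its unrolled bound at $k=K-1$. The displayed $\prod_{k=1}^{K}$ is an off-by-one slip, consistent with Corollary~\ref{cor:converge-largebeta} using $\sum_{k=1}^{K-1}\eta_k$. So you should prove, and state, the version with $\prod_{k=1}^{K-1}$.
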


\begin{remark}[Proof sketch]
The main technical challenge is to derive nonasymptotic error bounds for a stochastic gradient method whose estimator depends on both the sampling noise and the penalty parameter \(\beta\). Specifically, the first difficulty is to obtain a one-step error recursion that simultaneously captures the descent effect of the algorithm and the stochastic fluctuation of the gradient estimator. To address this point, we establish one-step error in Lemma~\ref{lem:onestep-PG} by combining unbiasedness of estimators in Lemma~\ref{lem:unbiased-cov}, variance bound in Lemma~\ref{lem:beta-noise-gk}, and regularity conditions. The second difficulty is to convert the resulting one-step recursion into explicit bounds for the average regret and the last iterate. We address this point by unrolling the recursion through the discrete Gr\"onwall inequality in Lemma~\ref{lem:gronwall-err}.

The proof can be split into two parts. First, we establish the one-step descent inequality for the stochastic gradient iterate (as in Lemma~\ref{lem:onestep-PG}). We then unroll the recursion to derive the average-regret bound and the last-iterate proximity bound in Theorem~\ref{thm:beta-PG-converge}.
\end{remark}

Theorem~\ref{thm:beta-PG-converge} quantifies the optimization error of Algorithm~\ref{alg:kl-pg} for a fixed penalty parameter $\beta$. In particular, it provides both an average regret guarantee over the first $K$ iterations and the proximity of the last iterate to the optimal solution.

To make the regret bound in Theorem~\ref{thm:beta-PG-converge} more concrete, we discuss two typical step-size schedules and the corresponding regret rates.
The step-size sequence ${\eta_k}$ should be chosen so that $\sum_k \eta_k \to \infty$ to eliminate the initialization error, while $\sum_k \eta_k^2$ remains small to control the noise.
\begin{itemize}
\item \emph{Constant step size $0 < \eta_k\equiv\eta \le \min\{1/L(\beta), 1/\mu(\beta)\}$.} Substituting $\eta_k \equiv \eta$ into \eqref{eq:bound-avg-regret-monotone}, we have
\[
\overline{\mathrm{Reg}}_K(\beta)
=
\mathcal O\!\left(
\frac{\mathrm{Err}_1(\beta)}{K\eta}
+
\frac{L(\beta)\sigma^2(\beta)}{2B\,\mu(\beta)}\,\eta
\right).
\]
Here, the error consists of a $\mathcal O(1/(K\eta))$ initialization error and a $\mathcal O(\eta)$ stochastic error.

\item \emph{Polynomial decay $\eta_k=\frac{1}{k^{\alpha}\mu(\beta)}$ with $\alpha\in(\tfrac{1}{2},1)$.} Using the fact that $\sum_{t=k}^{K-1}\prod_{s=k+1}^{t}\bigl(1-\mu(\beta)\lambda s^{-\alpha}\bigr) = \mathcal{O}(k^\alpha)$ and $\sum_{k=1}^{K} k^{-\alpha} = \mathcal{O}(K^{1-\alpha})$, plugging such $\eta_k$ into \eqref{eq:bound-avg-regret-monotone} yields the following results.
$$
\overline{\mathrm{Reg}}_K(\beta)=
\mathcal{O}\left(K^{-\alpha}\right), \quad\textrm{if}\,\, \alpha\in\left(\tfrac{1}{2},1\right)\,\,;\,\,
\overline{\mathrm{Reg}}_K(\beta)=\mathcal{O}\left(\dfrac{\log K}{K}\right), \quad\textrm{if}\,\, \alpha=1.
$$

\end{itemize}

\begin{remark}[Relation to stochastic approximation]
The significance of the above bounds is that, after the KL relaxation and Markov parameterization, the constrained infinite-dimensional bi-causal OT problem admits the same iteration-level behavior as a standard stochastic approximation problem. In classical stochastic optimization, smooth objectives satisfying a strong-convexity or Polyak--\L ojasiewicz-type growth condition achieve the canonical $\mathcal O(1/K)$ rate with unbiased stochastic gradients of controlled variance \citep{robbins1951stochastic,polyak1992acceleration,karimi2016linear,bottou2018optimization}. Indeed, with the optimized decay $\eta_k=\mathcal{O}(k^{-1})$, we recover the canonical $\mathcal O(1/K)$ rate without grid-based enforcement of bi-causality.
\end{remark}

Combined with the large-$\beta$ consistency result in Theorem~\ref{thm:gamma-convergence}, Theorem~\ref{thm:beta-PG-converge} yields an approximation guarantee for the original bi-causal OT problem. The following corollary formalizes this result for Algorithm~\ref{alg:kl-pg}.
\begin{corollary}[Approximation under large-$\beta$]
\label{cor:converge-largebeta}
Under the assumptions in Theorem~\ref{thm:beta-PG-converge}, fix $\epsilon\in(0,1)$ and define
\begin{align}
\label{eq:K0-choice}
    K_0(\beta,\epsilon)\ &:=\ \min\Bigg\{K\ge1:\ \sum_{k=1}^{K-1}\eta_k\ \ge\ \frac{1}{\mu(\beta)}\,\log\frac{2\cdot \mathrm{Err}_1(\beta)}{\epsilon}\Bigg\},\\
\label{eq:B0-choice}
    B_0(\beta,\epsilon,K)\ &:=\ \min\Bigg\{B \geq 1:\ \sum_{k=1}^{K}\eta_k^2 \ \leq\ \frac{B\,\epsilon}{L(\beta)\,\sigma^2(\beta)}\Bigg\}.
\end{align}
Then, for all $K\ge K_0(\beta,\epsilon)$ and $B\ge B_0(\beta,\epsilon,K)$, it holds
\begin{equation*}
\E\Big[J_\beta(\pmb\theta_{K})-\inf_{\pmb\theta\in\mathcal A_{\mathrm{KL}}}J_{\mathrm{val}}(\pmb\theta)\Big]
\ \le\ \epsilon,
\end{equation*}
where $\mathcal A_{\mathrm{KL}}$ is defined in Theorem \ref{thm:beta-PG-converge}.
\end{corollary}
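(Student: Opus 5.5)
The plan is to derive the corollary directly from the last-iterate proximity bound \eqref{eq:bound-last-iterate-prox} in Theorem~\ref{thm:beta-PG-converge}. I will show that its two terms are each at most $\epsilon/2$ once $K\ge K_0(\beta,\epsilon)$ and $B\ge B_0(\beta,\epsilon,K)$. Nonemptiness of $\mathcal A_{\mathrm{KL}}$ is implicit in the statement (otherwise the infimum is $+\infty$ and the claim is vacuous), so I assume it, as item~2 of the theorem requires.

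\emph{Initialization term.} Since $0<\mu(\beta)\eta_k\le 1$ by the step-size condition, I use $1-x\le e^{-x}$ to bound
\[
\prod_{k}(1-\mu(\beta)\eta_k)\le \exp\Big(-\mu(\beta)\sum_{k}\eta_k\Big).
\]
The product in \eqref{eq:bound-last-iterate-prox} has factors for $k=1,\dots,K$. I keep only those up to $K-1$, which is legitimate because every factor lies in $[0,1]$. This aligns the product with the sum appearing in \eqref{eq:K0-choice}. Because $\eta_k>0$, the partial sums $\sum_{k=1}^{K-1}\eta_k$ are nondecreasing in $K$. Hence for every $K\ge K_0$ the defining inequality of $K_0$ still holds, and the first term is at most $\mathrm{Err}_1(\beta)\cdot \epsilon/(2\,\mathrm{Err}_1(\beta))=\epsilon/2$. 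If $\mathrm{Err}_1(\beta)=0$, the term vanishes trivially. If $\mathrm{Err}_1(\beta)<\epsilon/2$, the logarithm is negative and $K_0=1$ works directly.

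\emph{Noise term.} Each inner product $\prod_{t=k+1}^{K-1}(1-\mu(\beta)\eta_t)$ is at most $1$, so the second term is bounded by
\[
\frac{L(\beta)\sigma^2(\beta)}{2B}\sum_{k=1}^{K-1}\eta_k^2\le \frac{L(\beta)\sigma^2(\beta)}{2B}\sum_{k=1}^{K}\eta_k^2.
\]
The constraint \eqref{eq:B0-choice} defining $B_0$ says $\sum_{k=1}^K\eta_k^2\le B_0\epsilon/(L(\beta)\sigma^2(\beta))$. Since the bound $B\epsilon/(L\sigma^2)$ increases with $B$, the same inequality holds for any $B\ge B_0$. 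Substituting gives a second term of at most $\epsilon/2$. If $\sigma^2(\beta)=0$ or $L(\beta)$ degenerates, the term is simply zero.

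Adding the two estimates yields the bound $\epsilon$. The main obstacle is bookkeeping rather than substance. I must check that the index ranges in the products match those in the definitions of $K_0$ and $B_0$, which is handled by discarding factors bounded by one. I must also confirm monotonicity in $K$ and $B$, so that the conditions $K\ge K_0$ and $B\ge B_0$, and not just equality, suffice. Finally, $\sigma^2(\beta)$ must be read as the combined variance constant from Lemma~\ref{lem:beta-noise-gk}, as in the theorem.
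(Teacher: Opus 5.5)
Your proposal is correct and follows essentially the same route as the paper: you bound each of the two terms in \eqref{eq:bound-last-iterate-prox} by $\epsilon/2$, using $1-x\le e^{-x}$ with the definition of $K_0$ for the initialization term, and bounding each tail product by $1$ together with the definition of $B_0$ for the noise term. Your extra bookkeeping (discarding the $K$-th factor so the product matches the index range $K-1$ in $K_0$, and using monotonicity in $K$ and $B$) makes explicit what the paper's short proof leaves implicit.
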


Corollary~\ref{cor:converge-largebeta} provides an approximation guarantee for the bi-causal OT problem. In particular, once \(\beta\) is sufficiently large and the number of iterations \(K\) and the mini-batch size \(B\) are chosen appropriately, the output of Algorithm~\ref{alg:kl-pg} is guaranteed to be arbitrarily close in objective value.

\section{Synthetic experiments}
\label{sec: synthetic experiments}
In this section, we apply our framework, specifically Algorithm~\ref{alg:kl-pg}, to learn the bi-causal OT map in controlled synthetic settings. We demonstrate that the learned generator performs well along two axes: (i) \emph{marginal fidelity} at each time step and (ii) \emph{temporal dependence}, quantified via adjacent-step correlations along the trajectory. 

\subsection{Set-up}
\label{subsec:synthetic_exp_setup}

\paragraph{Data-generating process.}
We generate paired source and target trajectories from a first-order autoregressive model (AR(1)). For each asset $i\in\{1,\dots,d\}$, we simulate two trajectories $\{Y_{n,i}\}_{n=0}^N$ and $\{Y'_{n,i}\}_{n=0}^N$ according to
\begin{equation}
\label{eq:ar1_simulation}
Y_{n,i}   = \begin{cases}
    \mu_i + \varepsilon_{0,i}, & n=0 \\
    \mu_i + \phi\, Y_{n-1,i} + \varepsilon_{n,i}, & n=1,\ldots,N
\end{cases},
\qquad
Y'_{n,i}  = \begin{cases}
    \mu'_i + \varepsilon'_{0,i}, & n=0 \\
    \mu'_i + \phi\, Y'_{n-1,i} + \varepsilon'_{n,i}, & n=1,\ldots,N
\end{cases}
\end{equation}
where $\phi\in(-1,1)$ is the autoregressive coefficient, $\mu_i$ and $\mu'_i$ are asset-specific means, and
$\{\varepsilon_{n,i}\}$ and $\{\varepsilon'_{n,i}\}$ are innovation noises. Throughout, we model the distributions of the innovations to produce both Gaussian and non-Gaussian marginals while controlling the strength of temporal correlation.

We consider two complementary data-generating regimes, a unimodal setting and a bimodal setting with a time-constant selector:

\begin{itemize}
    \item  \emph{Unimodal setting.}
We set $\phi=0.5$ and $\mu_i=\mu'_i=-2+\frac{4(i-1)}{d-1}$, and draw the innovations from Gaussian and Beta distributions, respectively:
\footnote{The two shape parameters in Beta distribution control the distributional form (e.g., symmetric, skewed, and U-shaped), which makes it highly adaptable for modeling diverse phenomena \citep{johnson1995continuous}.}
\begin{equation}
\label{eq:ar1_unimodal}
\varepsilon_{n,i}\sim\mathcal{N}(0,0.5^2),
\qquad
\varepsilon'_{n,i}\sim 4.0\cdot\mathrm{Beta}(2.0,5.0)-1.5,
\qquad n=0,\dots,N.
\end{equation}
All assets share the same innovation law and autocorrelation structure, so the adjacent-step correlation satisfies
\begin{equation*}
\mathrm{Corr}(Y_{n,i},Y_{n-1,i})
=\rho_{n, \mathrm{uni}}:=\phi\sqrt{\frac{1-\phi^{2n}}{1-\phi^{2(n+1)}}},
\qquad n=1,\dots,N,
\end{equation*}
and the same expression holds for $\mathrm{Corr}(Y'_{n,i},Y'_{n-1,i})$.

\item \emph{Bimodal setting (time-constant selector).}
We set $\phi=0.5$ and $\mu_i=\mu'_i=-0.5+\frac{0.5(i-1)}{d-1}$. For each asset $i$, we draw a selector
$S_i=(S^{(y)}_i,S^{(y')}_i)$ once, with independent entries
$S^{(y)}_i,S^{(y')}_i\sim \mathrm{Bernoulli}(0.5)$, and keep it fixed across time. The innovations are then sampled from the regime-dependent mixtures
\begin{equation*}
\begin{split}
\varepsilon_{n,i} &= (1-S^{(y)}_i)\cdot \mathcal{N}(1.5,0.5^2) + S^{(y)}_i \cdot \big( 2.5 \cdot \mathrm{Beta}(2.0,5.0)-2.5 \big), \\
\varepsilon'_{n,i} &= (1-S^{(y')}_i)\cdot \mathcal{N}(-1.5,0.5^2) + S^{(y')}_i \cdot \big( 2.5\cdot \mathrm{Beta}(5.0,2.0)+0.5 \big).
\end{split}
\end{equation*}
In this setting, the adjacent-step correlation is given by
\begin{equation*}
\mathrm{Corr}(Y_{n,i},Y_{n-1,i})=\rho_{n,\mathrm{bi}}
:=\frac{\rho_{n,\mathrm{uni}}+\sqrt{\kappa_n\kappa_{n-1}}}
{\sqrt{(1+\kappa_n)(1+\kappa_{n-1})}},
\text{ with } \kappa_n:=\frac{\frac{1+\phi}{1+\phi^{n+1}} \cdot \mathrm{Var}\big(\E[ \varepsilon_{n,i}\mid S^{(y)}_i ]\big)}{\frac{1-\phi}{1-\phi^{n+1}} \cdot \E\big[\mathrm{Var}( \varepsilon_{n,i}\mid S^{(y)}_i)\big]}.
\end{equation*}
An analogous expression holds for $\mathrm{Corr}(Y'_{n,i},Y'_{n-1,i})$ by replacing $(\varepsilon_{n,i},S_i^{(y)})$ with $(\varepsilon'_{n,i},S_i^{(y')})$. Because the selector is constant along the trajectory, the process is an AR(1) conditional on a latent regime, and hence exhibits a persistent random-intercept effect. Unconditionally, this increases adjacent-step dependence beyond the $\phi$-driven component and yields substantially larger correlations than in the unimodal setting.

\end{itemize}

\paragraph{Normalizing-flow parameterization.}
We parametrize the neural network in Algorithm~\ref{alg:kl-pg} using a pair of coupled normalizing flows composed of neural spline layers \citep{durkan2019neural,papamakarios2021normalizing}. The resulting model represents the one-step transition kernels required by the bi-causal generator while preserving exact log-density evaluation.

For each time step $n \in [N]_0$, we generate a pair of dependent noises $(U_n,V_n)$ through a time- and state-dependent network $\mathfrak{N}_n^{\theta_n}$, and apply invertible transforms $\widehat Y_n=f^{-1}_{\varphi}(U_n;C_n)$ and $\widehat Y'_n=g^{-1}_{\psi}(V_n;C'_n)$. The conditioning variables $C_n$ and $C'_n$ consist of their respective lagged states $(\widehat Y_{n-1}, \widehat Y'_{n-1})$ and a sinusoidal embedding of $n$. The dependence between $\widehat Y_n$ and $\widehat Y'_n$ is induced by a copula on $(U_n,V_n)$ with a time- and state-dependent dependence parameter in $\mathfrak{N}_n^{\theta_n}$ \citep{laszkiewicz2021copula}. Thus, the two conditional flows specify the marginals, whereas the copula captures the cross-sectional dependence. This construction yields exact joint log-likelihood terms and enables efficient evaluation of the KL objective and stochastic optimization in Algorithm~\ref{alg:kl-pg} \citep{durkan2019neural,papamakarios2021normalizing}. In implementation, we set the penalty coefficient $\beta$ to 50. See more details in Appendix~\ref{sec: details of nf implementation}.

\paragraph{Evaluation metrics.}
Let $\{(\pmb Y^{(m)}, \pmb Y'^{(m)})\}_{m=1}^M$ and $\{(\widehat{\pmb Y}^{(m)}, \widehat{\pmb Y}'^{(m)})\}_{m=1}^M$ denote i.i.d.\ samples from the ground-truth distribution and the learned model, respectively.\footnote{Our synthetic experiments use 2000 samples generated from the ground truth and learned coupling for training.}  
Each sample $(\pmb Y^{(m)}, \pmb Y'^{(m)})=\{(Y_{n,i}^{(m)},Y'^{(m)}_{n,i})\}_{0 \le n \le N,\; 1 \le i \le d}$ represents a length-$(N+1)$ trajectory of $d$ assets, and $(\widehat{\pmb Y}^{(m)}, \widehat{\pmb Y}'^{(m)})$ is defined analogously. We set the stage cost in \eqref{eq:bicausal-ot} to be
$c_n(y,y')=\|y-y'\|_2^2 / d$. We evaluate the learned generator from several aspects, including comparisons of marginal distributions, path-space discrepancy metrics between sample trajectories, and temporal-dependence metrics.
See Appendix \ref{sec:gen_set_metrics} for the definition of each evaluation metric.
\begin{itemize}
    \item \textbf{Transport cost}, $\mathtt{Cost\_avg}$, which quantifies the objective value attained by the relaxed formulation~\eqref{eq:relaxed-bicausal-ot}.
    
   \item \textbf{Wasserstein--2 distance for marginal distributions}, $\mathtt{W2\_avg}$, which quantifies discrepancies between one-step marginals across time and assets.

    \item \textbf{Kolmogorov--Smirnov distance for marginal distributions}, $\mathtt{KS\_avg}$, which provides a complementary and distribution-free measure of marginal mismatch.

    \item \textbf{Sliced Wasserstein distance} \citep{Bonneel2015}, $\mathtt{SWD}$, which captures discrepancies in the joint distribution of the stacked trajectories through one-dimensional random projections, and is approximated numerically via Monte Carlo sampling over random projection directions.

    \item \textbf{Maximum mean discrepancy} \citep{Gretton2012}, $\mathtt{MMD}^2$, which captures discrepancies in the full joint distribution of the stacked trajectories.

    \item \textbf{Adjacent-step correlation discrepancy}, $\mathtt{T}_{\max}$ and $\mathtt{T}_2$ and their corresponding $p$-values computed using a trajectory-level permutation test \citep{Good2005,WestfallYoung1993}.
\end{itemize}

\subsection{Results}

We consider two synthetic settings: a lower-dimensional, shorter-horizon case $(d=2, N=3)$, and a more challenging higher-dimensional, longer-horizon case \((d=3, N=10)\). We present the latter in the main text and defer the former to Appendix~\ref{sec: additional results of synthetic experiments}.

The generated marginals closely align with the true distributions across all time steps.
Figures~\ref{fig:unimodal_d3_N10_distributions} and \ref{fig:bimodal_d3_N10_distributions} compare the empirical marginal distributions at times $n\in\{0,3,7,10\}$ obtained from the normalizing-flow generator with those from direct sampling of the ground-truth model. Our method accurately captures both Gaussian marginals and non-Gaussian features induced by the shifted and scaled Beta distributions.

\begin{figure}[!ht]
    \centering
    \caption{Marginal distributions across different timesteps for the unimodal setting ($d=3$, $N=10$).\label{fig:unimodal_d3_N10_distributions}}
    
    \subfigure[$n=0$]{
        \includegraphics[width=0.48\textwidth]{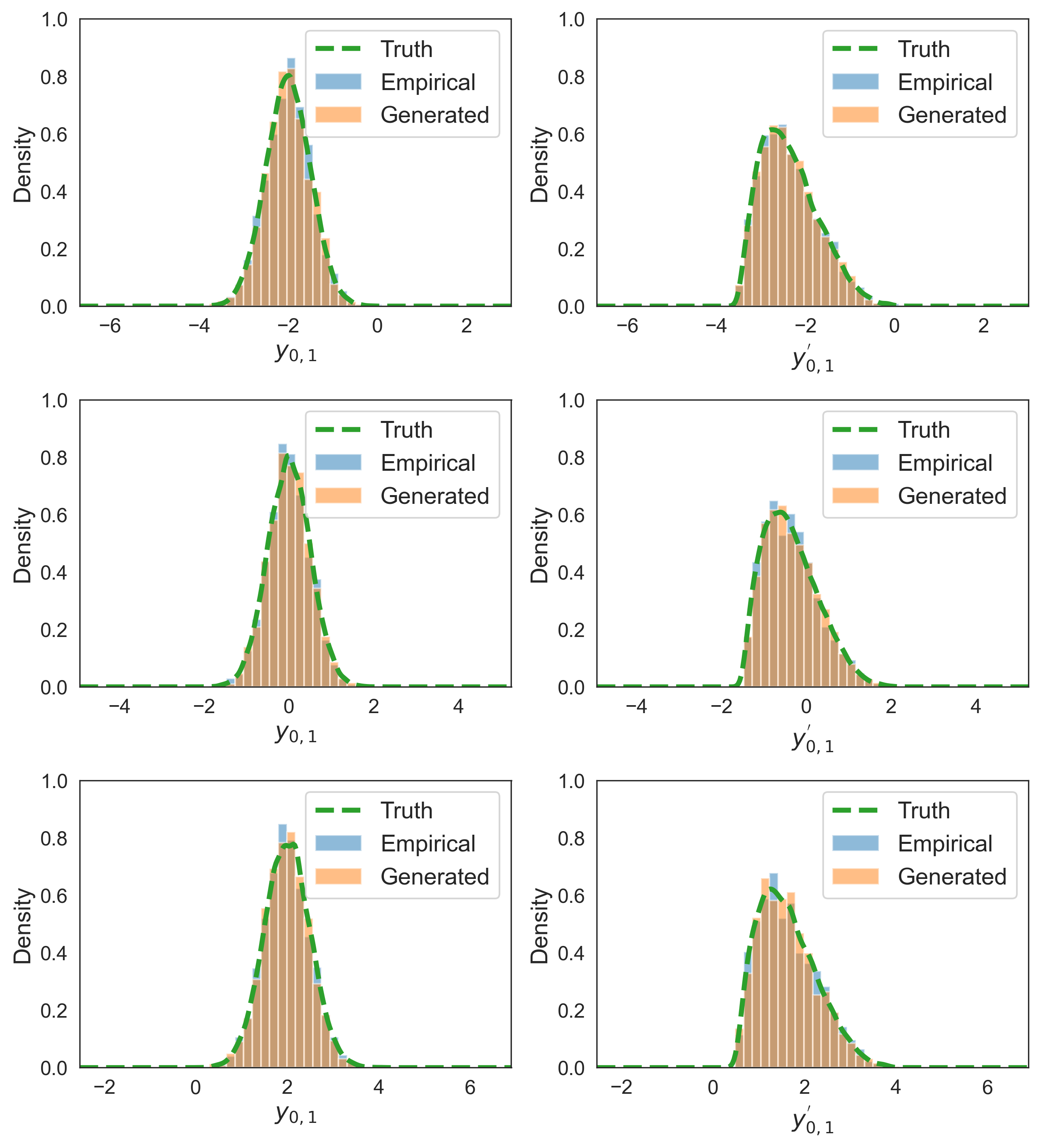}
    }
    \hfill
    \subfigure[$n=3$]{
        \includegraphics[width=0.48\textwidth]{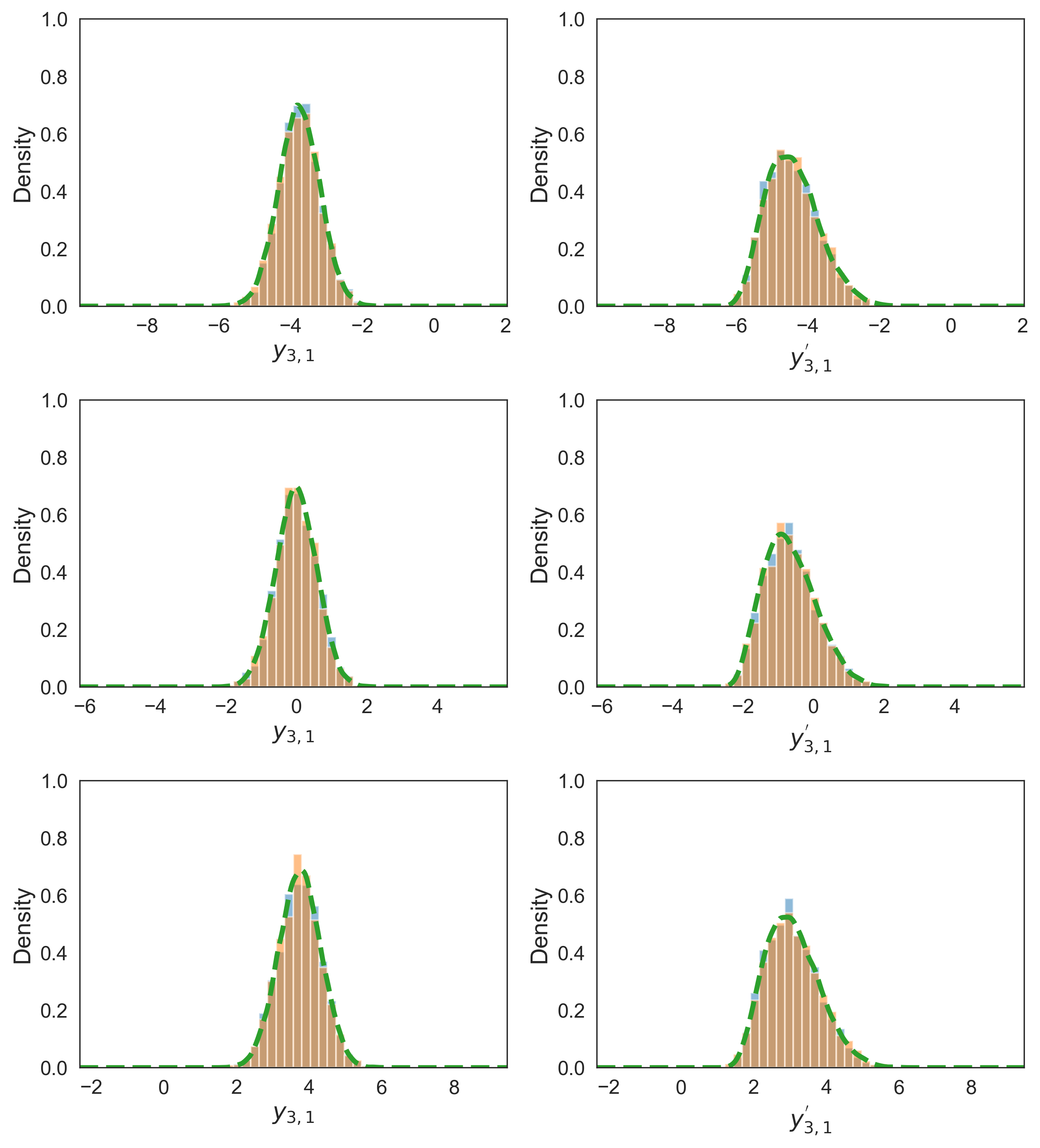}
    }
    
    \subfigure[$n=7$]{
        \includegraphics[width=0.48\textwidth]{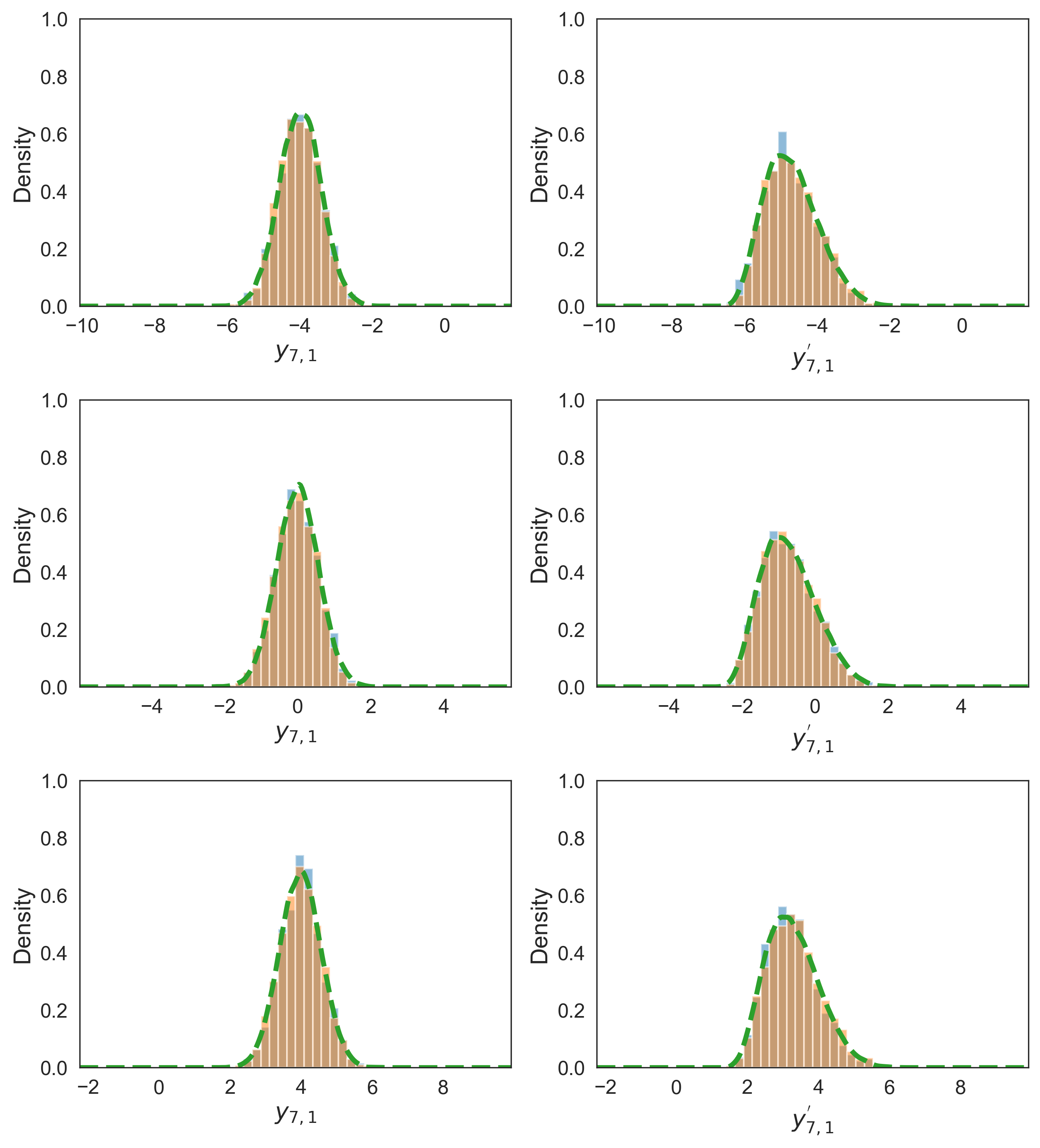}
    }
    \hfill
    \subfigure[$n=10$]{
        \includegraphics[width=0.48\textwidth]{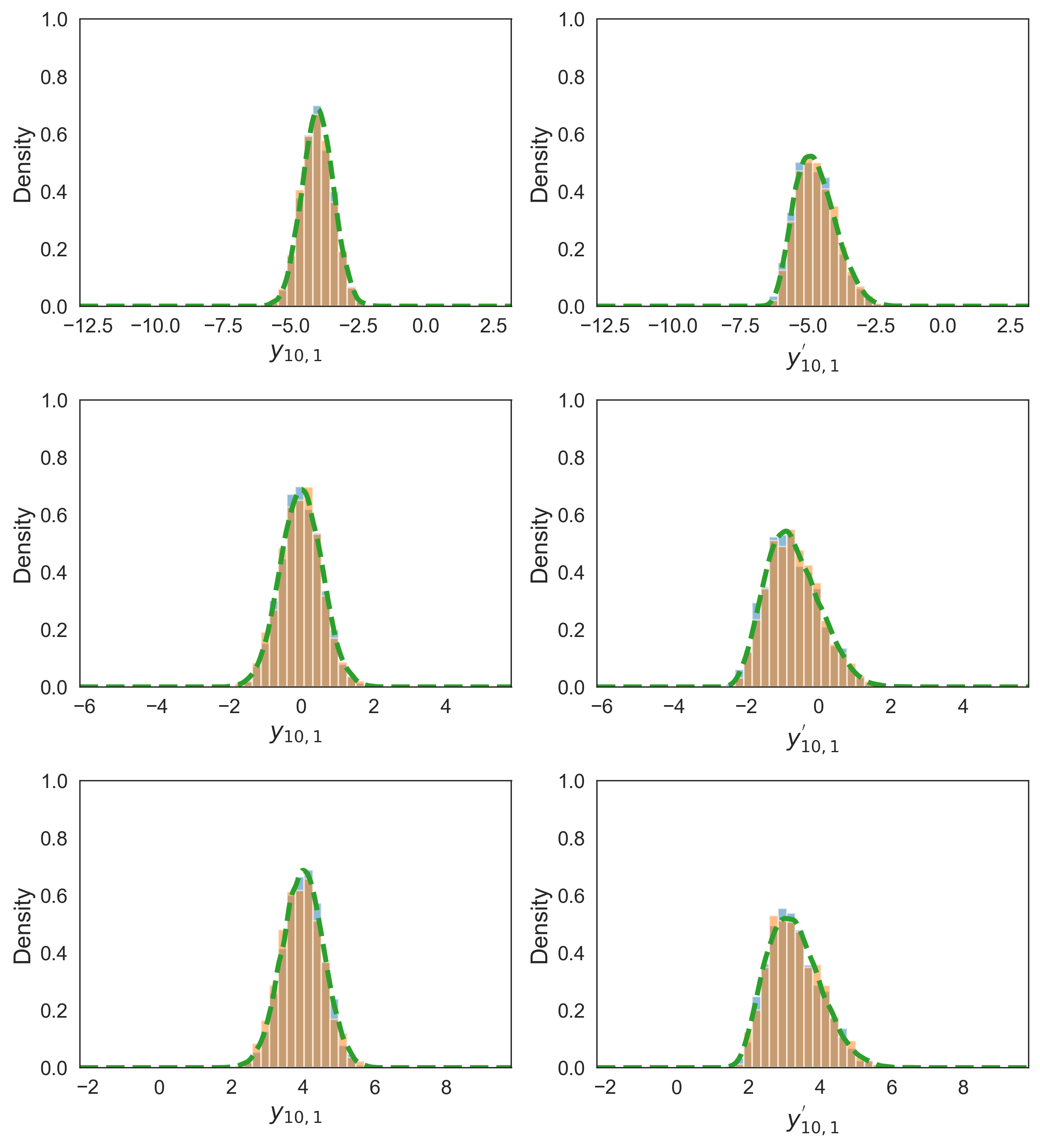}
    }
\end{figure}

\begin{figure}[!ht]
    \centering
    \caption{Marginal distributions across different timesteps for the bimodal setting ($d=3$, $N=10$).\label{fig:bimodal_d3_N10_distributions}}
    
    \subfigure[$n=0$]{
        \includegraphics[width=0.48\textwidth]{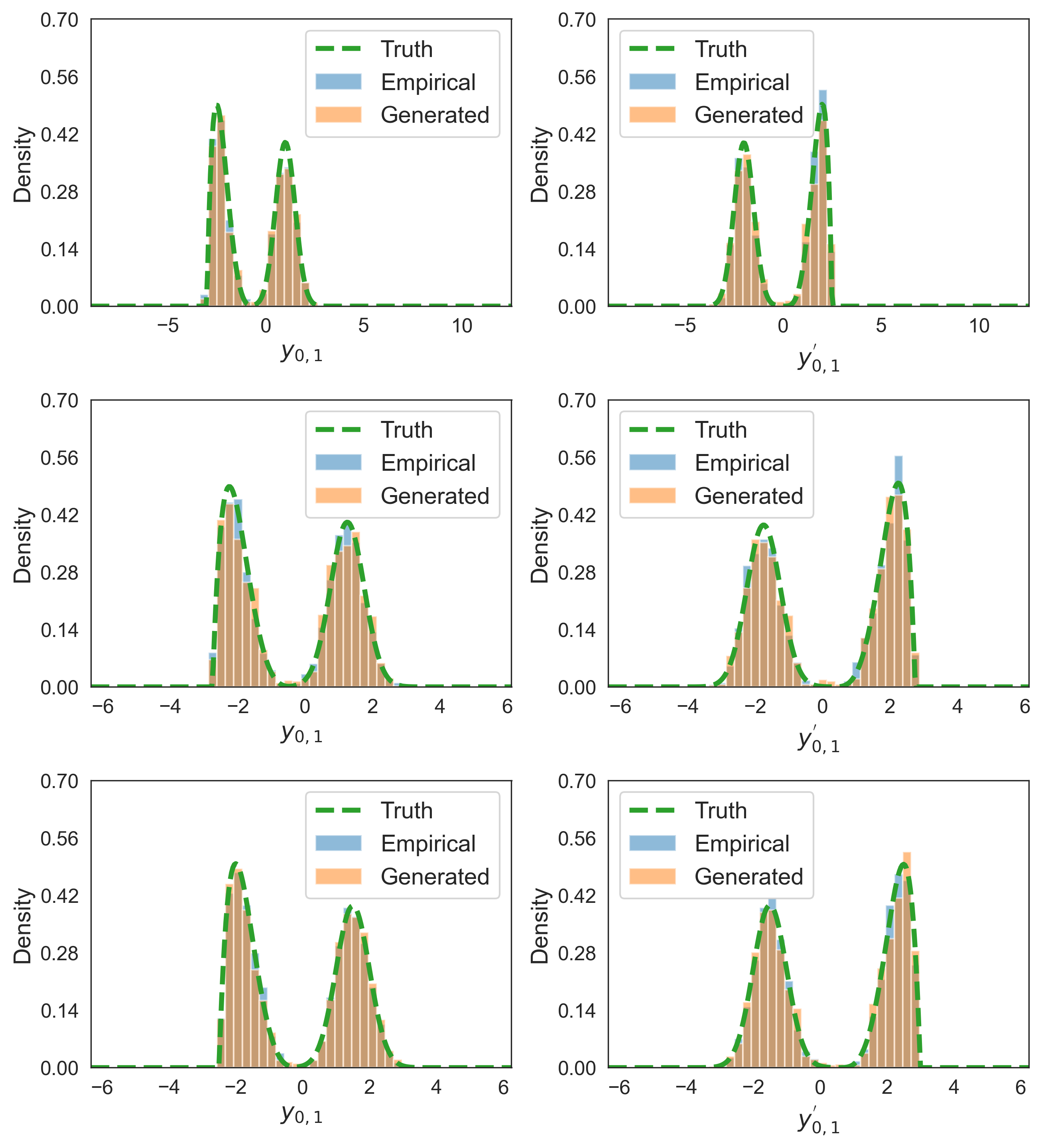}
    }
    \hfill
    \subfigure[$n=3$]{
        \includegraphics[width=0.48\textwidth]{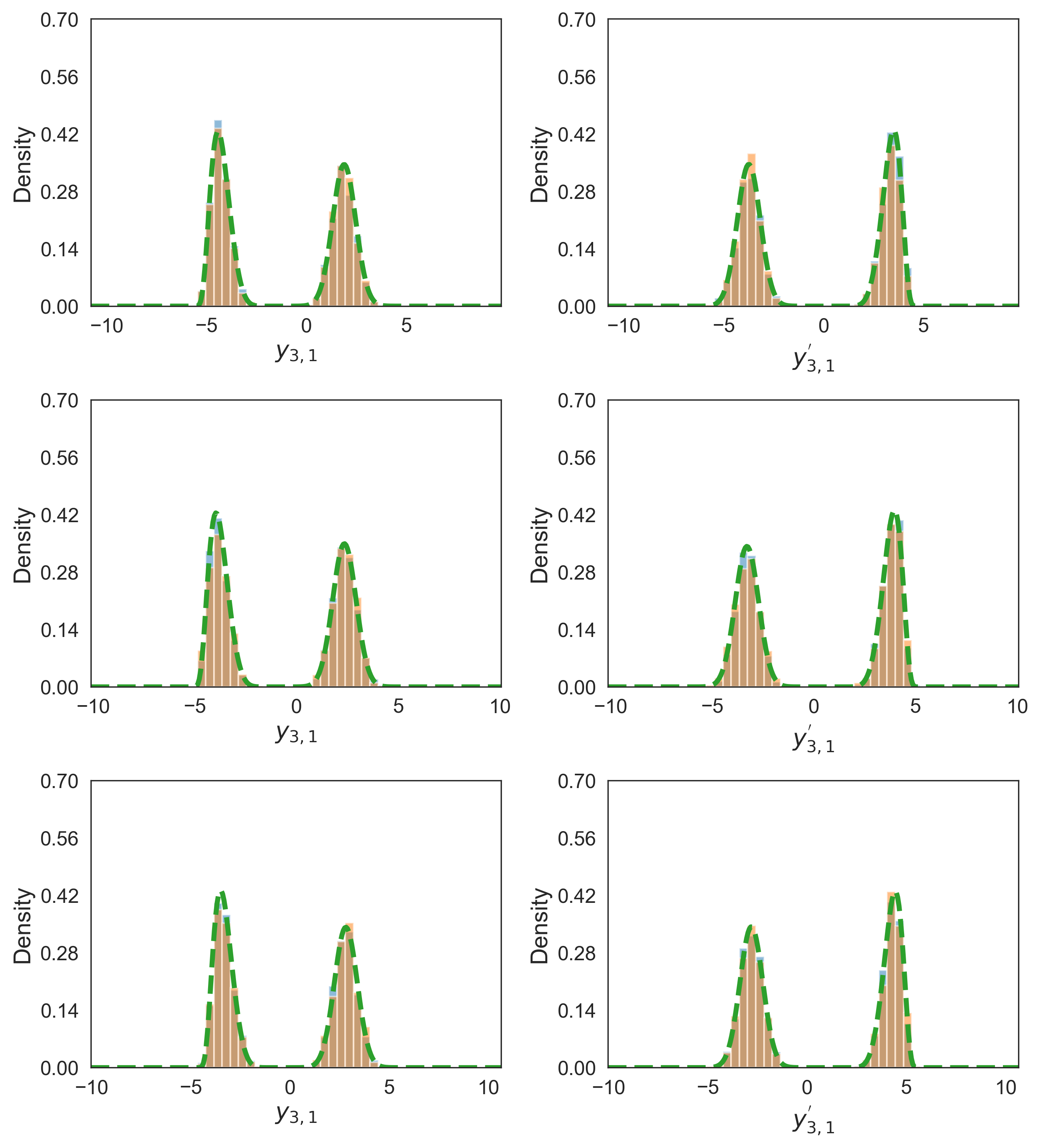}
    }
    
    \subfigure[$n=7$]{
        \includegraphics[width=0.48\textwidth]{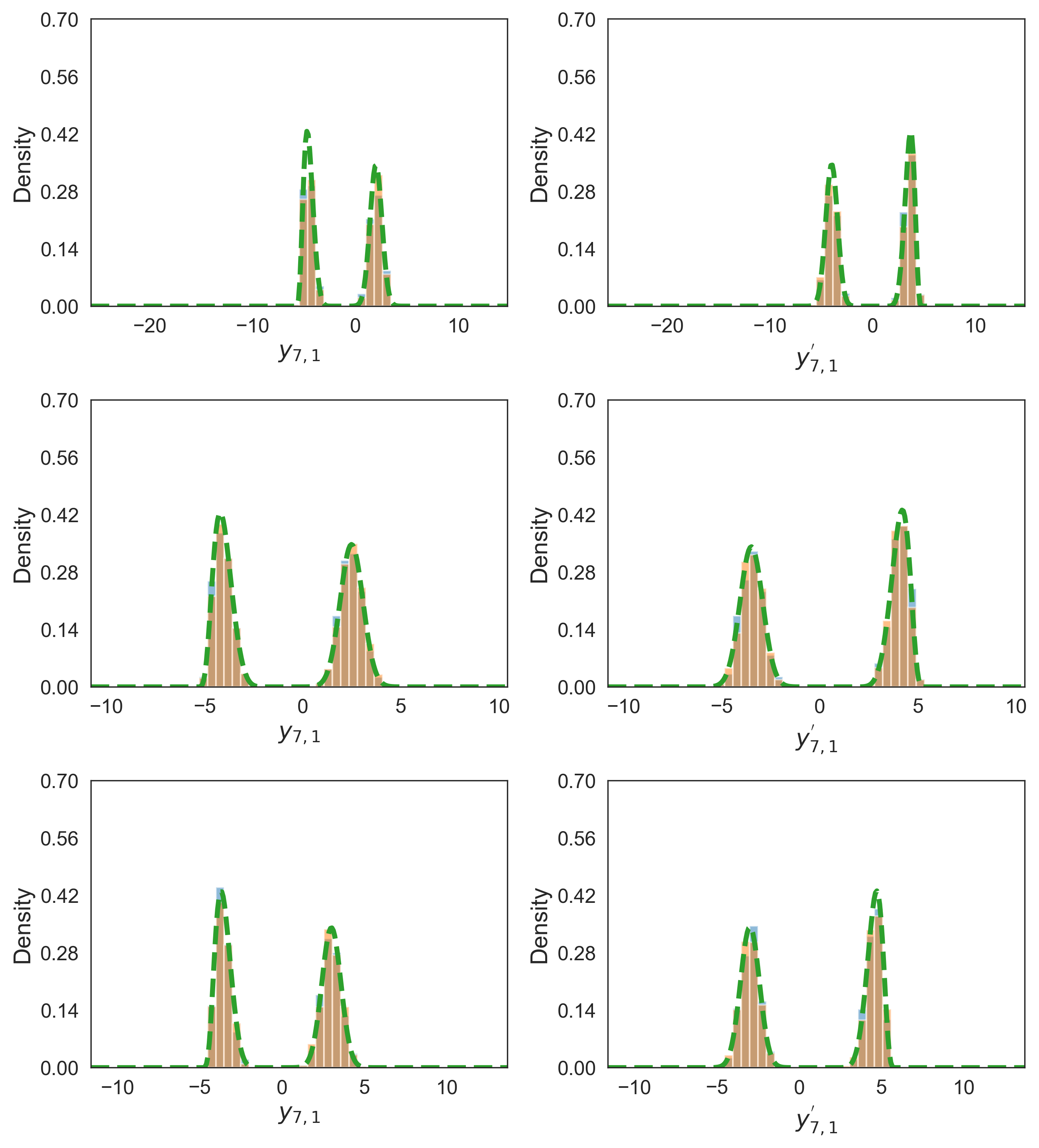}
    }
    \hfill
    \subfigure[$n=10$]{
        \includegraphics[width=0.48\textwidth]{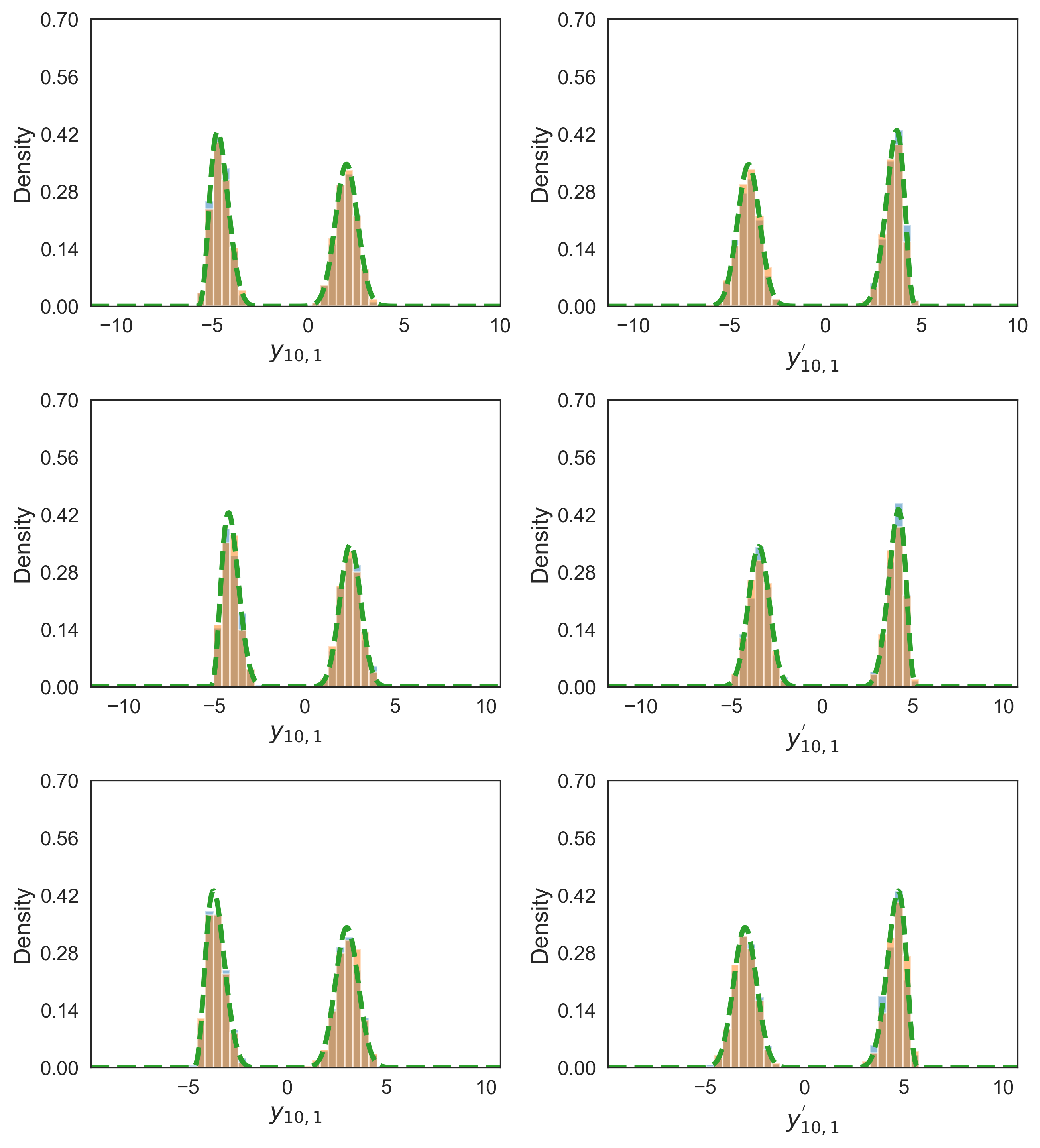}
    }
\end{figure}

In addition, the learned generator reproduces adjacent-step correlations accurately across all time steps, as shown in Figure~\ref{fig:simulation_d3_N10_corr}. The absolute discrepancy between the generated and true adjacent-step correlations remains uniformly small. This demonstrates that the proposed method robustly preserves temporal dependence as both the dimensionality and the time horizon increase.

\begin{figure}[!ht]
    \centering
    \caption{Temporal correlations between adjacent time steps ($d=3$, $N=10$).\label{fig:simulation_d3_N10_corr}}
    
    \subfigure[Unimodal.]{
        \includegraphics[width=0.48\linewidth]{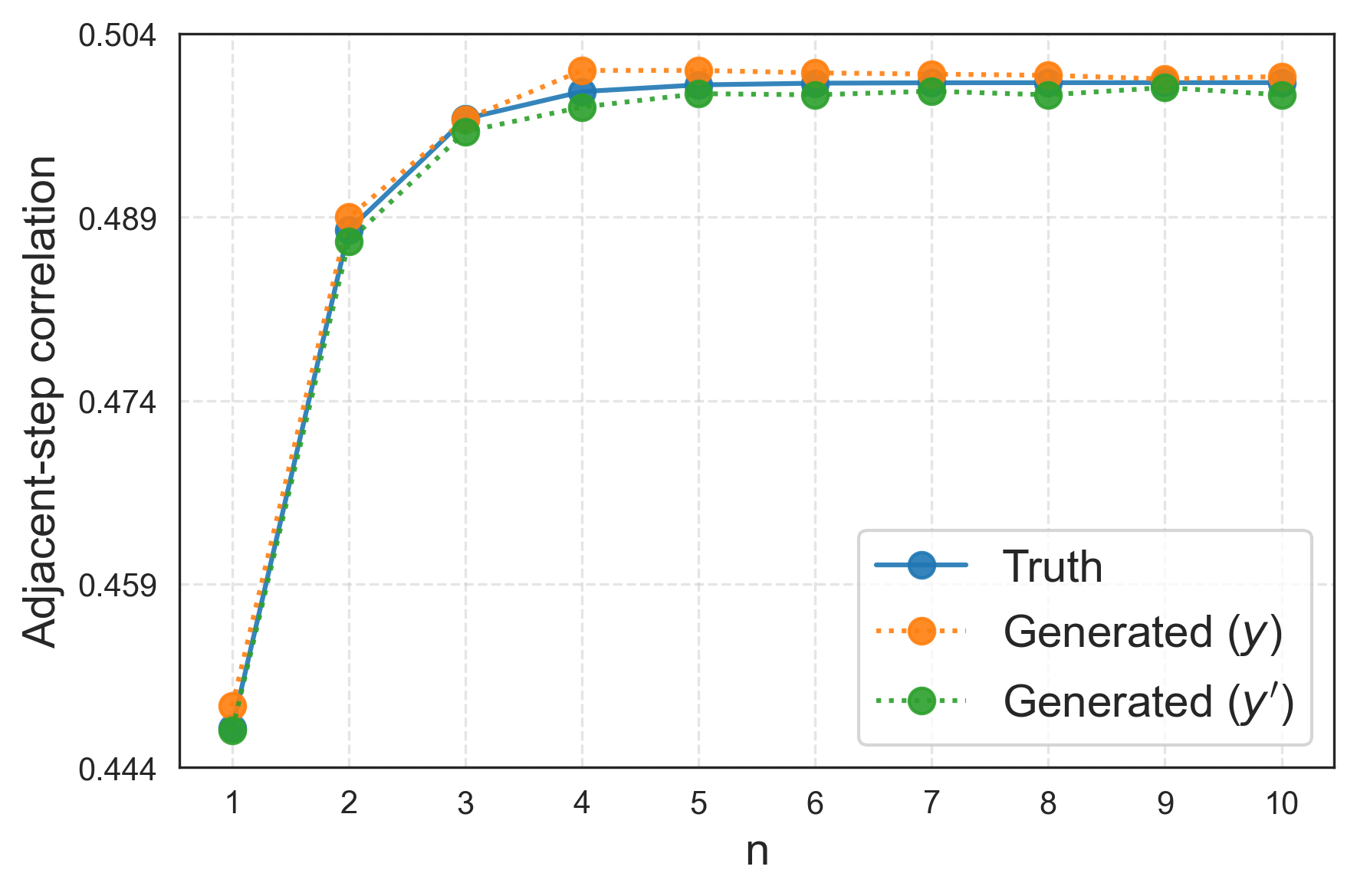}
    }
    \hfill
    \subfigure[Bimodal.]{
        \includegraphics[width=0.48\linewidth]{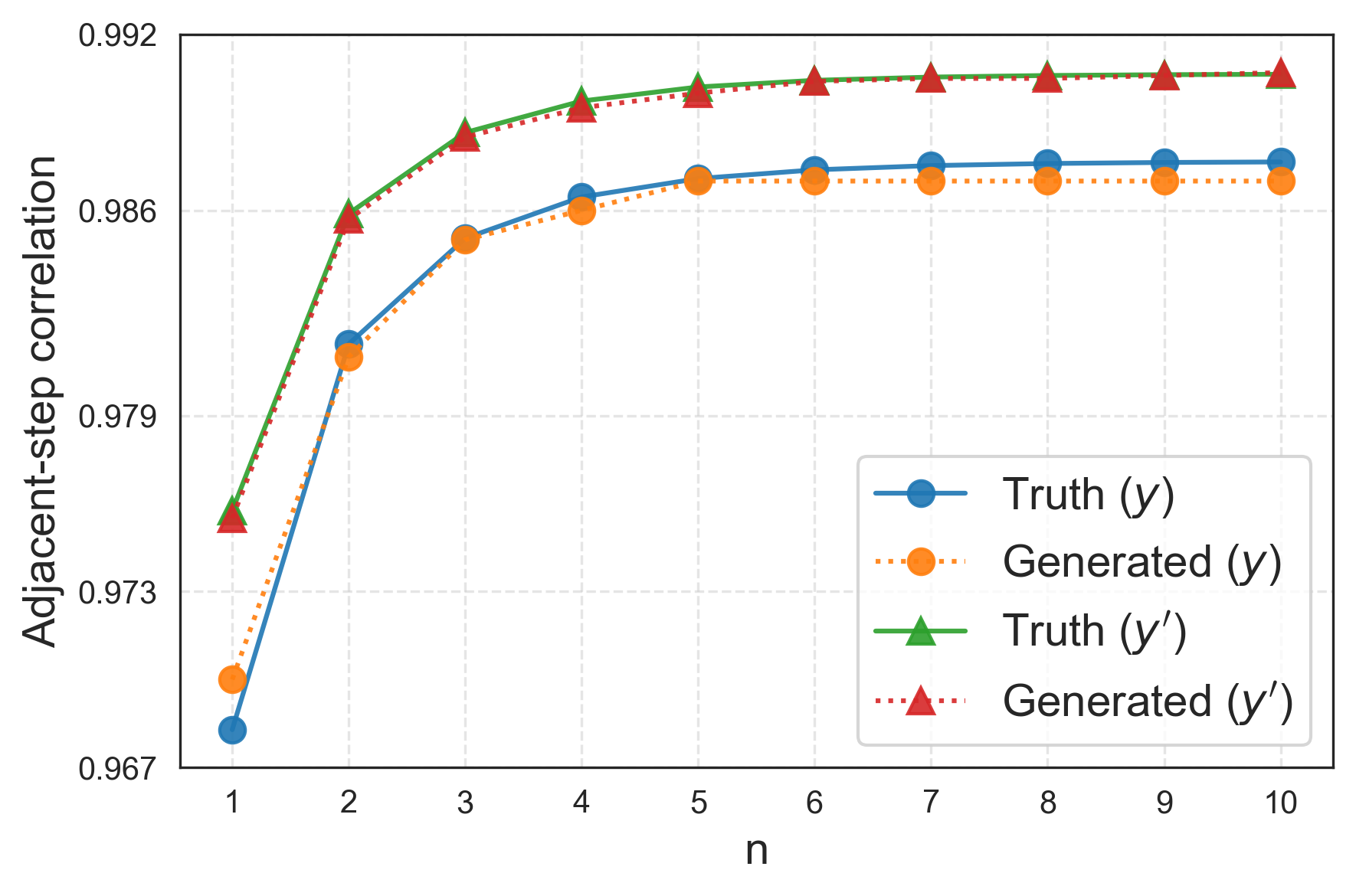}
    }
\end{figure}

Finally, we run 10 repeated experiments and summarize distributional discrepancies and the adjacent-correlation tests in Table~\ref{tab:d3_N10_metrics}. 
The learned model closely matches the ground-truth distribution in both unimodal and bimodal settings. 
In particular, the marginal discrepancy metrics ($\mathtt{W2\_avg}$ and $\mathtt{KS\_avg}$), trajectory-level metrics ($\mathtt{SWD}$ and $\mathtt{MMD}^2$), and the learned average cost ($\mathtt{Cost\_avg}$) all remain uniformly moderate, indicating accurate recovery of both single-step marginals and the full joint path distribution. Moreover, the adjacent-correlation discrepancies are small, with the permutation tests yielding large $p$-values (unimodal: $\mathtt{p(T_{\max})}=0.9590$, $\mathtt{p(T_2)}=0.9732$; bimodal: $\mathtt{p(T_{\max})}=0.5686$, $\mathtt{p(T_2)}=0.3030$). Thus, there is no statistically significant evidence that the generated and ground-truth adjacent-step correlations differ for $n \in [N]$. Overall, these results demonstrate that the proposed method captures not only non-Gaussian distributional features but also the underlying temporal dependence structure.

\begingroup
\renewcommand{\arraystretch}{0.9}
\setlength{\extrarowheight}{0pt}
\begin{table}[!ht]
\centering
\small
\caption{
Performance under distributional metrics and adjacent-correlation tests ($d=3$, $N=10$). Mean values are reported with standard deviations in parentheses.}
\label{tab:d3_N10_metrics}
{
\begin{tabular*}{\textwidth}{@{\extracolsep{\fill}}l *{9}{c}@{}}
\toprule
& \multicolumn{5}{c}{\textbf{Distributional Metrics}} & \multicolumn{4}{c}{\textbf{Adjacent-Correlation Tests}} \\
\cmidrule(lr){2-6} \cmidrule(lr){7-10}
\textbf{Setting} & $\mathtt{Cost\_avg}$ & $\mathtt{W2\_avg}$ & $\mathtt{KS\_avg}$ & $\mathtt{SWD}$ & $\mathtt{MMD}^2$ & $\mathtt{T_{\max}}$ & $\mathtt{p(T_{\max})}$ & $\mathtt{T_{2}}$ & $\mathtt{p(T_{2})}$ \\
\midrule
\textbf{Unimodal} & 0.3882 & 0.0011 & 0.0099 & 0.0042 & 0.0058 & 0.0118 & 0.9590 & 0.0004 & 0.9732 \\
& (0.0376) & (0.0001) & (0.0003) & (0.0004) & (0.0003) & (0.0023) & (0.0377) & (0.0001) & (0.0245) \\
\addlinespace
\textbf{Bimodal} & 2.3849 & 0.0070 & 0.0108 & 0.0871 & 0.0240 & 0.0016 & 0.5686 & 0.0000 & 0.3030 \\
& (0.1035) & (0.0035) & (0.0006) & (0.0542) & (0.0016) & (0.0004) & (0.1799) & (0.0000) & (0.1524) \\
\bottomrule
\end{tabular*}
}
{}
\end{table}
\endgroup

\section{Applications}
\label{sec:applications}
We now illustrate how the proposed framework applies to two representative problems in which respecting temporal information structure is essential: robust hedging and time series statistical downscaling. While these applications arise in different domains, they share a common feature: the objective depends on pathwise interactions between stochastic processes under non-anticipative constraints. This makes them natural settings for bi-causal OT, and highlights the advantage of our approach in learning couplings that are both distributionally accurate and dynamically consistent.

\subsection{Robust hedging}
\label{sec: robust hedging}
We apply Algorithm~\ref{alg:kl-pg} to learn robust subhedging for financial derivatives under the adaptiveness constraint to the natural information filtration. We evaluate the learned coupling in terms of (i) marginal fidelity over time, (ii) temporal dependence, quantified by adjacent-step correlations, and (iii) pricing accuracy, measured by the learned subhedging value relative to the ground truth.

\subsubsection{Framework}
We first describe how robust hedging can be formulated as a bi-causal OT problem. Consider the dynamics of a financial market over discrete timestamps $n \in [N]_0$ with $2d$ tradable assets, and denote the price vector by $(Y_n, Y'_n)\in \mathbb{R}^{d} \times \mathbb{R}^{d}$.
\footnote{This two-group structure arises naturally in practice. For instance, they may represent a market index and a tradable ETF that tracks it, latent asset-pricing factors and the portfolios constructed to replicate them, or a benchmark portfolio and a hedging portfolio whose tracking error the investor seeks to minimize.}
Trading strategies are required to be predictable with respect to the natural filtration
\[
\mathcal{F}_n := \sigma\big(Y_0,Y'_0,\dots,Y_n,Y'_n\big),\qquad n \in [N]_0,
\]
so that portfolio positions at time $n$ depend only on the observed history up to time $n$.

For a target payoff/criterion $\xi$ on asset prices $\pmb Y=(Y_0,\dots,Y_N)$ and $\pmb Y'=(Y'_0,\dots,Y'_N)$, the robust subhedging price is defined as the maximal initial capital whose terminal wealth can be guaranteed \emph{not to exceed} the target payoff $\xi$ under all admissible price dynamics:
\begin{align}
\label{eq:subhedge_primal_robust}
\underline p(\xi)
:= \sup\biggl\{ x \in \mathbb{R} \;:\;
&\exists\, H=(H_n)_{n=0}^{N-1},\ H_n\in\mathbb{R}^{2d},\ \text{with } H_n \text{ being } \mathcal{F}_n\text{-adapted, s.t. } \\
&x + \sum_{n=1}^N H_{n-1}\cdot (Y_n-Y_{n-1}, Y'_n-Y'_{n-1})
\ \le\ \xi(\pmb Y,\pmb Y') \, ,
\quad \Pi_{\mathrm{bc}}\textrm{-quasi-surely}
\biggr\}, \nonumber
\end{align}
where $H_n$ represents a trading strategy at step $n$, and $\Pi_{\mathrm{bc}}$ denotes the set of admissible bi-causal laws on path space satisfying prescribed marginal constraints $(\mu,\mu')$ as well as the bi-causal information structure.
\footnote{Here, “$\Pi_{\rm bc}$-quasi-surely” means  $\pi$-almost surely for every $\pi\in \Pi_{\rm bc}$.}
In incomplete markets, $\underline p(\xi)$ corresponds to a \emph{robust lower (bid) bound} for $\xi$, complementing the robust upper (ask) bound from superhedging and forming a no-arbitrage price interval \citep{beiglbock2013model}.

Under mild conditions, the problem \eqref{eq:subhedge_primal_robust} admits a dual adapted-transport representation \citep[Section 3.1]{krsek2025duality}: 
\begin{equation}
\label{eq:robust_duality_bc}
\underline p(\xi)
\;=\;
\inf_{\pi \in \Pi_{\mathrm{bc}}(\mu,\mu')}
\E_{(\pmb Y,\pmb Y')\sim\pi}\left[ \xi(\pmb Y,\pmb Y') \right].
\end{equation} 
Thus, $\underline p(\xi)$ is equivalent to the smallest attainable expected value of $\xi$ over all admissible bi-causal models consistent with $(\mu,\mu')$ and the information constraint. To connect \eqref{eq:robust_duality_bc} with bi-causal OT, consider an additive payoff $\xi$ specified by a sequence of nonnegative one-stage costs $\{c_n\}_{n=0}^N$ \citep[Chapter~6]{bertsekas2012dpoc2}:
\begin{equation}
\label{eq:connect_xi_cn}
\xi(\pmb y,\pmb y')
=
\sum_{n=0}^{N} c_n(y_n,y'_n)
\quad \textrm{ and } \quad
\underline p(\xi)
\;=\;
\inf_{\pi \in \Pi_{\mathrm{bc}}(\mu,\mu')}
\E_{(\pmb Y,\pmb Y')\sim\pi}\left[ \sum_{n=0}^{N} c_n(Y_n, Y'_n) \right].
\end{equation}
Under this specification, $\underline p(\xi)$ can be interpreted as a robust bid valuation for the cumulative cost (tracking error) incurred along the path \citep{duffie1991mean}, under all bi-causal models consistent with $(\mu,\mu')$. The objective on the right-hand side of $\underline p(\xi)$ in \eqref{eq:connect_xi_cn} coincides with a bi-causal OT problem \eqref{eq:bicausal-ot} with stage costs $\{c_n\}$, so learning the optimal coupling $\pi\in\Pi_{\mathrm{bc}}(\mu,\mu')$ directly corresponds to learning the robust subhedging value. Accordingly, Algorithm~\ref{alg:kl-pg} can be viewed as a numerical method for computing this robust bid bound under the imposed information constraint. In addition,  an optimal hedging strategy 
$H_n$ can be recovered from the value function 
$V_n$, for instance, via subgradients or martingale representation.

\subsubsection{Numerical experiment}
We evaluate Algorithm~\ref{alg:kl-pg} on a widely-used robust pricing example that admits a closed-form optimal value \citep{givens1984wasserstein, bayraktar2025fitted, han2025kalman}, which serves as a benchmark for comparison.

\paragraph{Setup.}
We consider a market with two one-dimensional price processes ($d=1$) jointly observed over a horizon of $N=5$ time steps. Following \citet[Sections~5.1--5.2]{bayraktar2025fitted}, we take the data-generating process to be martingale random walks with different initial values and price volatilities: 
\begin{equation*}
\begin{split}
& Y_{n}=Y_{n-1}+\varepsilon_{n},\qquad\text{where}\quad Y_{0}=1.0 \ \text{and}\ \varepsilon_{n}\sim\mathcal{N}(0,1),\quad n\in[N]; \\
& Y'_{n}=Y'_{n-1}+\varepsilon'_{n},\qquad\text{where}\quad Y'_{0}=2.0 \ \text{and}\ \varepsilon'_{n}\sim\mathcal{N}(0,0.5^2), \quad n\in[N].
\end{split}
\end{equation*}
The adjacent-step correlations for both processes admit a closed form
\begin{equation*}
\mathrm{Corr}(Y_{n},Y_{n-1})
=\mathrm{Corr}(Y'_{n},Y'_{n-1})=\rho_n:=\sqrt{\frac{n-1}{n}},\qquad n \in [N].
\end{equation*}
In particular, the correlation $\rho_0$ is defined as 0.

We set the stage cost in \eqref{eq:bicausal-ot} to be the quadratic tracking loss $c_n(y,y')=(y-y')^2$ for all $n \in [N]$ and $c_0 = 0$, so that the target functional \eqref{eq:connect_xi_cn} becomes the cumulative quadratic tracking error, which is commonly used for residual tracking risk between a target exposure and a traded proxy \citep{duffie1991mean, schweizer2010mvh}:
\[
\xi(\pmb y, \pmb y')=\sum_{n=1}^N (y_n-y'_n)^2.
\]

We evaluate performance using the distributional and temporal-dependence metrics introduced in Section \ref{subsec:synthetic_exp_setup} (definitions in Appendix \ref{sec:gen_set_metrics}). In addition, we report two pricing metrics (definitions in Appendix \ref{sec:eval_robust_hedging}):
\footnote{Without further specification, we set the KL-divergence penalty coefficient $\beta$ to be $50$.}
\begin{itemize}
    \item \textbf{Empirical subhedging price}, $\widehat{p(\xi)}$, under the learned coupling.
    \item \textbf{Relative error}, $\mathtt{RE}$, with respect to the closed-form optimal value $\underline p(\xi)$ \citep[Lemma~2]{han2025kalman}.
\end{itemize}

\paragraph{Main findings.}
The generated marginals closely track the ground truth.
Figure~\ref{fig:martingale_d1_N5_distributions} compares the empirical marginal distributions from samples generated by the normalizing flow with direct samples from the ground truth, at time steps $n\in\{1,2,4,5\}$. The generated marginals closely track the ground truth, indicating that our method preserves the Gaussian random-walk marginals.

\begin{figure}[!ht]
    \centering
    \caption{Marginal distributions across time for the martingale setting ($d=1$, $N=5$).\label{fig:martingale_d1_N5_distributions}}
    
    \subfigure[$n=1$]{
        \includegraphics[width=0.48\textwidth]{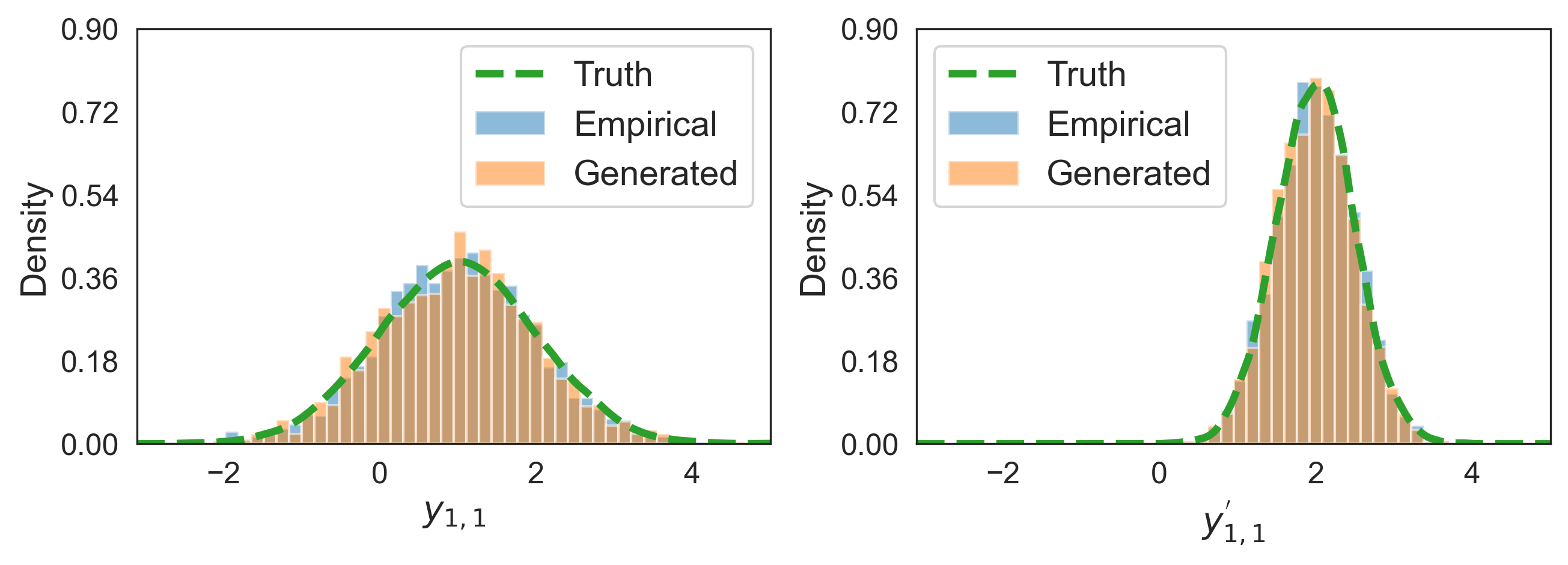}
    }
    \hfill
    \subfigure[$n=2$]{
        \includegraphics[width=0.48\textwidth]{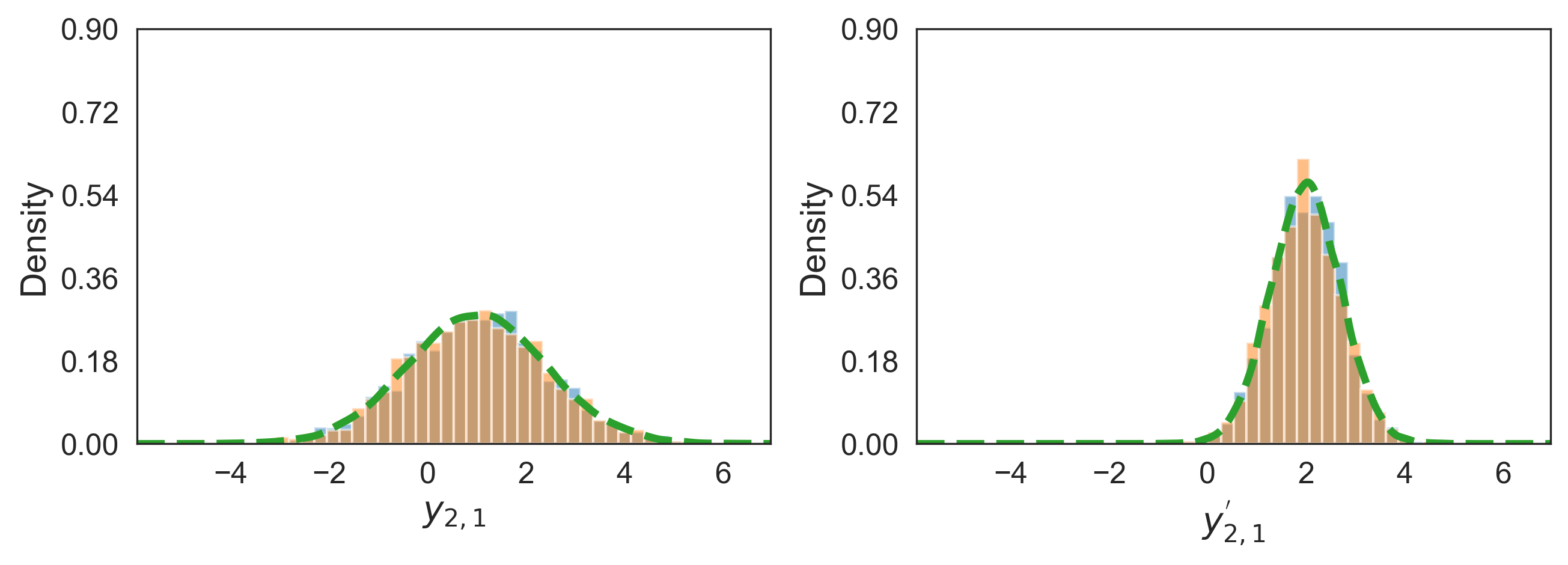}
    }
    
    \subfigure[$n=4$]{
        \includegraphics[width=0.48\textwidth]{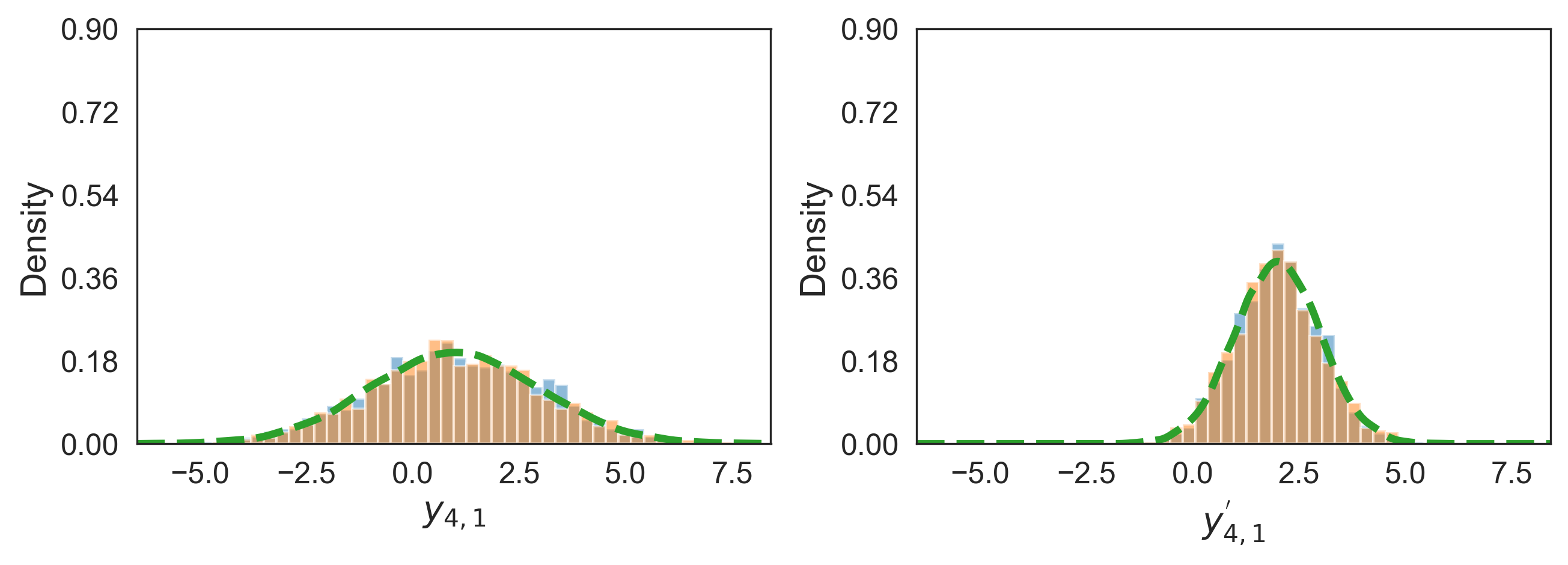}
    }
    \hfill
    \subfigure[$n=5$]{
        \includegraphics[width=0.48\textwidth]{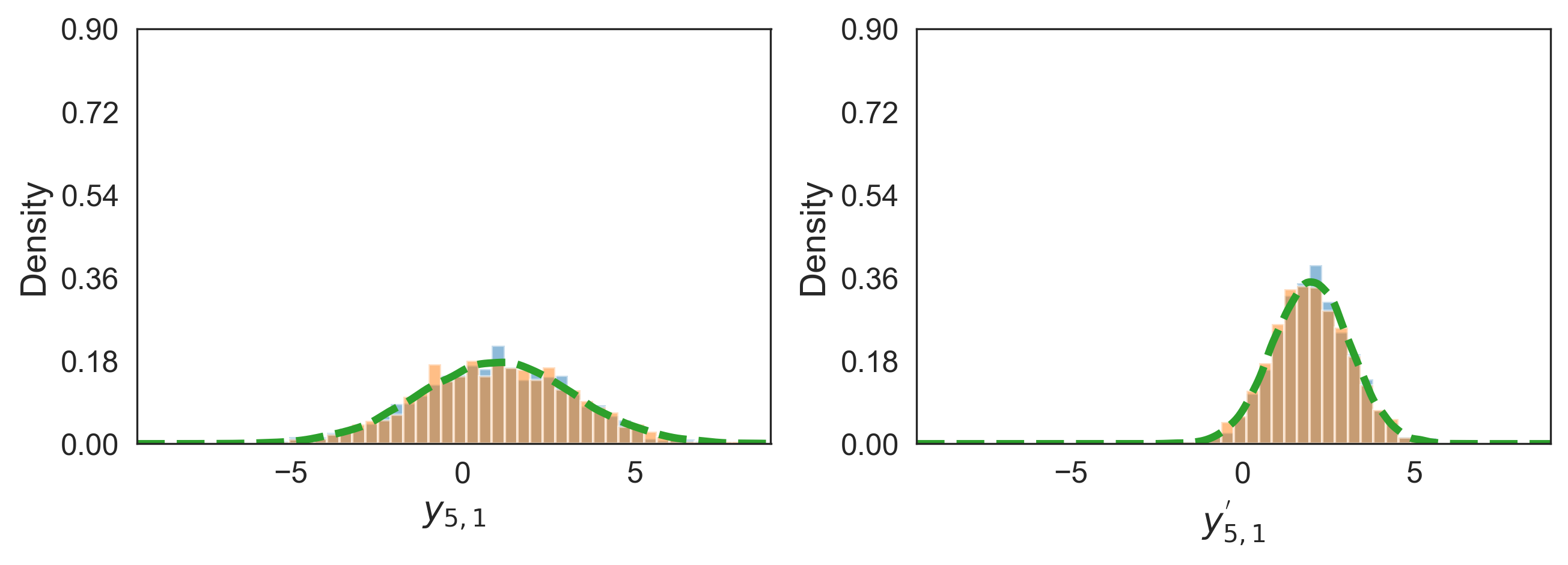}
    }
\end{figure}

The estimated temporal correlations closely match the ground truth over the entire horizon. Figure~\ref{fig:martingale_d1_N5_corr} shows that the learned coupling reproduces serial dependence very well. In particular, the absolute discrepancy between the generated and true adjacent-step correlations remains small, on the order of $10^{-3}$.

\begin{figure}[!ht]
    \centering
    \caption{Adjacent-step correlations over $n=1,\dots,N$ ($d=1$, $N=5$).\label{fig:martingale_d1_N5_corr}}
    \includegraphics[width=0.5\linewidth]{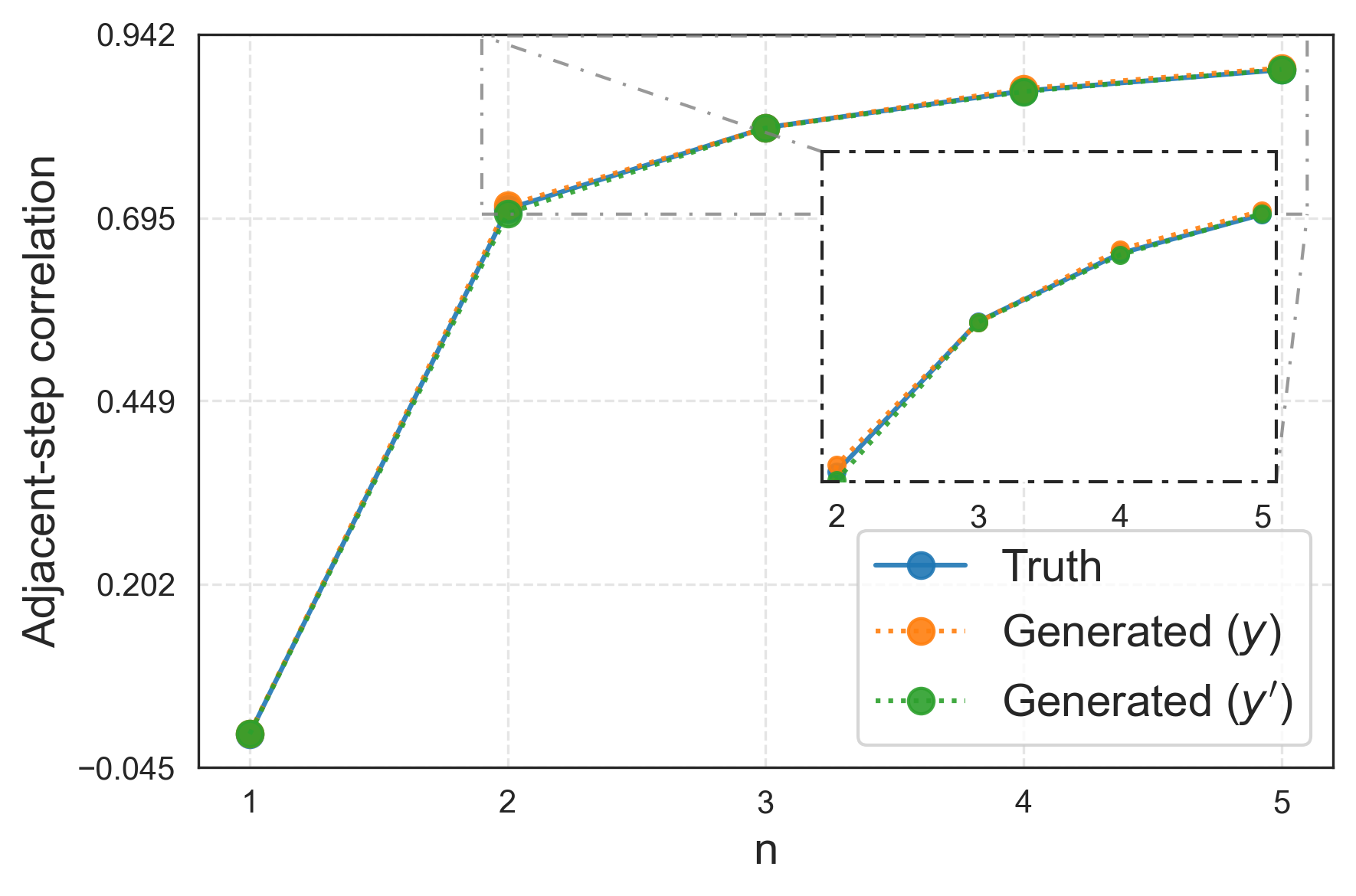}
\end{figure}

Furthermore, we report evaluation metrics over 10 independent runs in Table~\ref{tab:martingale_metrics}. Both the marginal discrepancy measures ($\mathtt{W2\_avg}$ and $\mathtt{KS\_avg}$) and trajectory-level metrics ($\mathtt{SWD}$ and $\mathtt{MMD}^2$) are uniformly small, demonstrating the accurate recovery of the one-step marginals as well as the joint law of the full trajectory. Moreover, the correlation-based permutation tests yield large $p$-values ($\mathtt{p(T_{\max})}=0.9768$, $\mathtt{p(T_2)}=0.9739$), so there is no statistically significant evidence that the generated and ground-truth adjacent-step correlations differ for \(n \in [N]\). Overall, the method preserves the martingale dynamics while maintaining a close fit to the ground truth.

Finally, it is worth noting that our implementation of Algorithm~\ref{alg:kl-pg} runs faster than the method reported by \citet{bayraktar2025fitted}.
\footnote{Our JAX implementation of Algorithm~\ref{alg:kl-pg} requires 31.60 seconds of end-to-end wall-clock time for a 50-step run, including one-time JAX just-in-time (JIT) compilation overhead, data sampling, and training. After compilation, each subsequent 50-step run takes about 1 second. In terms of per-run wall-clock time, this is faster than the 35.56 seconds reported by \citet{bayraktar2025fitted} using the method therein.}

\begingroup
\renewcommand{\arraystretch}{0.9}
\setlength{\extrarowheight}{0pt}
\begin{table}[!ht]
\centering
\small
\caption{
Performance metrics with mean values and standard deviations (in parentheses).
\label{tab:martingale_metrics}}
{
\begin{tabular*}{\textwidth}{@{\extracolsep{\fill}}*{10}{c}@{}}
\toprule
\multicolumn{4}{c}{\textbf{Distributional Metrics}} & \multicolumn{4}{c}{\textbf{Adjacent-Correlation Tests}} & \multicolumn{2}{c}{\textbf{Pricing}} \\
\cmidrule(lr){1-4} \cmidrule(lr){5-8} \cmidrule(lr){9-10}
$\mathtt{W2\_avg}$ & $\mathtt{KS\_avg}$ & $\mathtt{SWD}$ & $\mathtt{MMD}^2$ & $\mathtt{T_{\max}}$ & $\mathtt{p(T_{\max})}$ & $\mathtt{T_{2}}$ & $\mathtt{p(T_{2})}$ & $\widehat{p(\xi)}$ & $\mathtt{RE}(\%)$ \\
\midrule
0.0044 & 0.0198 & 0.1022 & 0.0184 & 0.0038 & 0.9768 & 0.0001 & 0.9739 & 8.7545 & 0.0976 \\
(0.0004) & (0.0009) & (0.0113) & (0.0025) & (0.0010) & (0.0168) & (0.0001) & (0.0239) & (0.1059) & (0.1077) \\
\bottomrule
\end{tabular*}
}
{}
\end{table}
\endgroup

\paragraph{Convergence with respect to $\beta$.}

As the penalty on marginal constraint violations increases, we show that the estimated subhedging price converges to the optimal value, which is $\underline p(\xi)=8.75$ under our parameter setting \citep[Lemma~2]{han2025kalman}. Table~\ref{tab:beta_costs} reports estimates $\widehat{p(\xi)}$ and the corresponding relative errors for several choices of $\beta$. Figure~\ref{fig:relerr_plot} plots $\mathtt{RE}$ over a uniform grid $\beta\in[10,1000]$, with markers indicating the values used in Table~\ref{tab:beta_costs}. As $\beta$ increases, $\widehat{p(\xi)}$ approaches $\underline p(\xi)$, and $\mathtt{RE}$ falls below $1\%$ for $\beta\ge 50$.

\begingroup
\renewcommand{\arraystretch}{0.9}
\setlength{\extrarowheight}{0pt}
\begin{table}[!ht]
\centering
\small
\caption{
Estimated subhedging price $\widehat{p(\xi)}$ and relative error under different $\beta$ (standard deviations in parentheses).
\label{tab:beta_costs}}
{
\begin{tabular*}{\textwidth}{@{\extracolsep{\fill}}l *{7}{c}@{}}
\toprule
$\beta$ & 10 & 50 & 100 & 500 & 1000 & 5000 & 10000 \\
\midrule
$\widehat{p(\xi)}$ & 8.4026 & 8.7545 & 8.7490 & 8.7507 & 8.7506 & 8.749974 & 8.749983 \\
& (0.1136) & (0.1059) & (0.0067) & (0.0033) & (0.0005) & ($<$0.0001) & ($<$0.0001) \\
\addlinespace
$\mathtt{RE}(\%)$ & 3.9700 & 0.0976 & 0.0615 & 0.0321 & 0.0070 & 0.0009 & 0.0003 \\
& (1.2979) & (0.1077) & (0.0410) & (0.0361) & (0.0061) & (0.0005) & (0.0002) \\
\bottomrule
\end{tabular*}
}
{}
\end{table}
\endgroup

\begin{figure}[!ht]
    \centering
    \caption{Relationship between relative error (log scale) and $\beta$ (log scale).}
    \label{fig:relerr_plot}
    \includegraphics[width=0.45\linewidth]{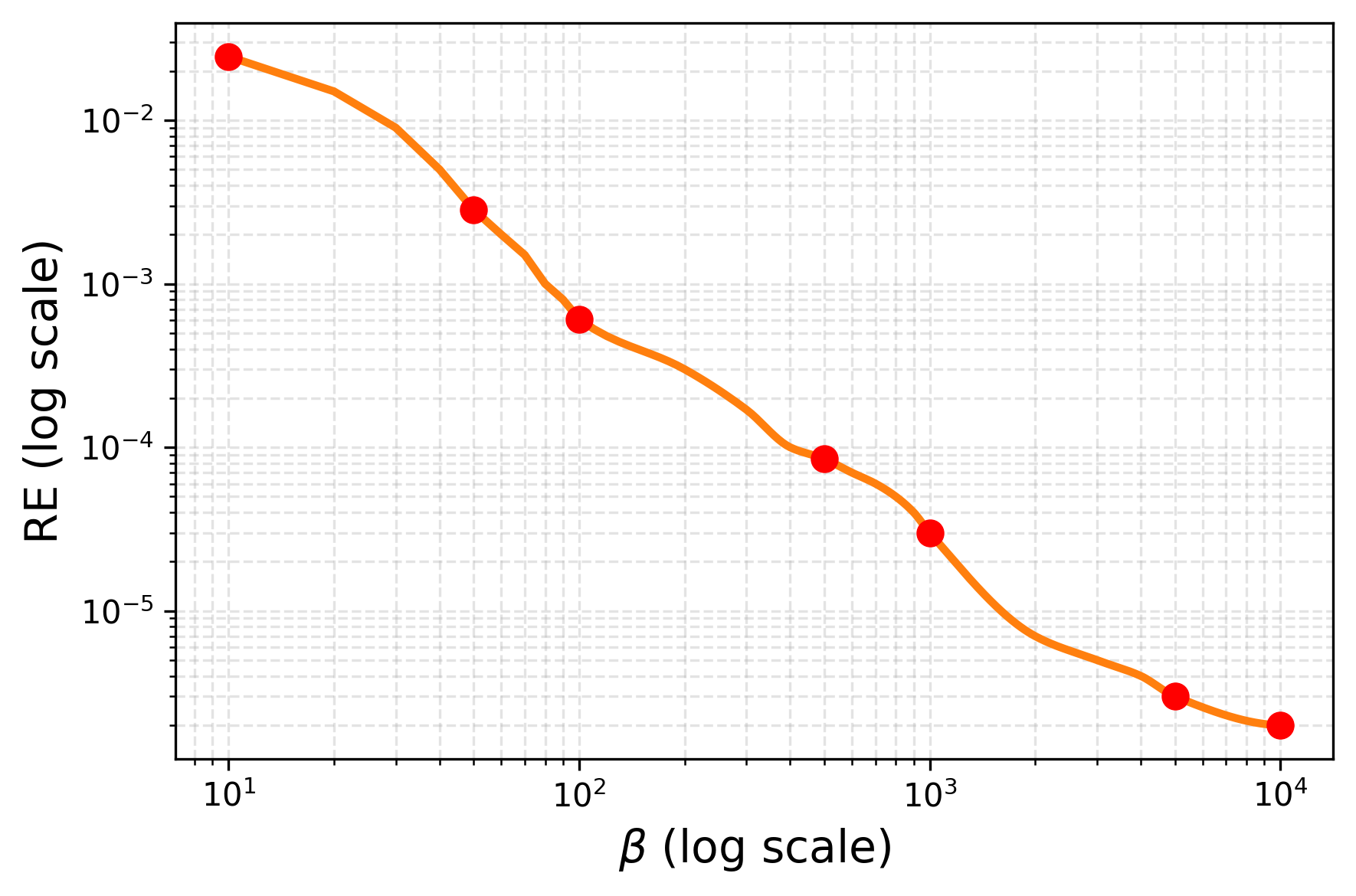}
\end{figure}

\paragraph{Robustness check for multidimensional case.}
We challenge the martingale benchmark by increasing the state dimension to $d\in\{2,3,4,5\}$, or equivalently, $2d\in\{4,6,8,10\}$ jointly observed assets, while keeping the same horizon $N=5$. The quadratic tracking cost is $c_n(y,y')=\|y-y'\|_2^2$, and we focus on pricing accuracy via $\widehat{p(\xi)}$ and $\mathtt{RE}$ relative to the known optimum $\underline p(\xi)$.

Across all dimensions, $\widehat{p(\xi)}$ remains close to $\underline p(\xi)$ with small $\mathtt{RE}$, as shown in Table~\ref{tab:martingale_dim_cost}. Although $\mathtt{RE}$ increases with dimension, the estimates remain reasonably accurate, indicating that the learned coupling continues to deliver reliable pricing approximations in higher-dimensional settings.

\begingroup
\renewcommand{\arraystretch}{0.9}
\setlength{\extrarowheight}{0pt}
\begin{table}[!ht]
\centering
\small
\caption{
Accuracy of robust subhedging prices for varying numbers of assets (standard deviations in parentheses).
\label{tab:martingale_dim_cost}}
{
\begin{tabular*}{\textwidth}{@{\extracolsep{\fill}}cccc@{}}
\toprule
\textbf{Number of assets} & $\underline p(\xi)$ & $\widehat{p(\xi)}$ & $\mathtt{RE}(\%)$ \\
\midrule
4 & 17.5000 & 17.6385 (0.0673) & 0.7913 (0.3846) \\
6 & 26.2500 & 26.5427 (0.1526) & 1.1149 (0.5815) \\
8 & 35.0000 & 36.1372 (0.1815) & 3.2491 (0.5185) \\
10 & 43.7500 & 45.3549 (0.2335) & 3.6685 (0.5337) \\
\bottomrule
\end{tabular*}
}
{}
\end{table}
\endgroup

\subsection{Time series statistical downscaling}
\label{sec: statistical downscaling}
We apply Algorithm~\ref{alg:kl-pg} to a time series statistical downscaling problem motivated by mixed-frequency data commonly encountered in operations research---such as facility-level demand logs versus widely available aggregate forecasts, fine-grained local weather measurements versus coarse climate forecasts, or intraday asset returns versus quarterly accounting disclosures. We evaluate the generated samples in terms of (i) marginal fidelity over time and (ii) temporal dependence, quantified by adjacent-step correlations.

\subsubsection{Framework}

In many applications, high-resolution time series $\pmb X$ (e.g., local logs and measurements) are scarce, while low-resolution data $\pmb Y$ (e.g., aggregate forecasts) are abundant. The goal of statistical downscaling is to generate realistic high-resolution trajectories from low-resolution observations.

To connect these two scales, we introduce a designed projection space $\pmb Y'$, obtained by applying a known downsampling operator $C$ to $\pmb X$. This yields the forward modeling pipeline where $\pmb Y'$ is a convenient but biased low-resolution representation induced by the high-resolution data, and the bi-causal OT step corrects this bias by aligning $\pmb Y'$ with the observed low-resolution distribution $\pmb Y$.

While this pipeline describes how the distributions are related, our ultimate goal is the inverse problem: to generate high-resolution trajectories $\pmb X$ from low-resolution observations $\pmb Y$. The role of the intermediate space $\pmb Y'$ is therefore to provide a structured bridge that links the scarce high-resolution data to the abundant low-resolution data.
See below for an illustration.

\begin{center}
\begin{tikzpicture}[>=stealth]

    \node (X)  at (0,0)   {$\pmb X$};
    \node (Yp) at (3.5,0) {$\pmb Y'$};
    \node (Y)  at (7,0) {$\pmb Y$};

    \draw[->, thick] (X) -- node[above] {$C$} (Yp);
    \draw[->, thick] (Yp) -- node[above] {bi-causal OT} node[below] {debias} (Y);

    \draw[->, thick]
        (Y) -- (7,-1.4)
            -- node[below=2pt] {generation} (0,-1.4)
            -- (X);
\end{tikzpicture}
\vspace{-10pt}
\end{center}

A related two-stage framework was proposed by \citet{wan2024debias}, which first performs debiasing and then conditional generation. However, their framework is designed for non-sequential settings. In our setting, the data are time series, and the debiasing step must preserve temporal dependence and information flow. This makes the debiasing problem naturally a bi-causal transport problem.

Formally, let $\pmb X = (X_0,\dots,X_{N_x}) \sim \mu_X$ denote the high-fidelity high-resolution (HFHR) process, which is the ultimate target of modeling, and let $\pmb Y = (Y_0,\dots,Y_{N_y}) \sim \mu_Y$ denote the observed low-fidelity low-resolution (LFLR) process, where $N_y \ll N_x$. We introduce a {\it known} linear downsampling operator $C:\mathcal X \to \mathcal Y$,
and define the projected high-fidelity low-resolution (HFLR) process
\[
\pmb Y' = (Y_0',\dots,Y_{N_y}') \sim \mu_{Y'} := C_\# \mu_X.
\]

The process $\pmb Y'$ is not intended to match the observed low-resolution data directly. Rather, it is a tractable representation of the high-resolution process on which the debiasing step can be performed. Because the projection map $C$ is imposed by design rather than derived from the true data-generating mechanism, the induced law $\mu_{Y'}$ is generally biased relative to the observed low-resolution law $\mu_Y$.

Accordingly, we propose the following two-step approach, with technical details provided in Appendix~\ref{sec:details_downscaling}:
\begin{enumerate}
    \item \emph{Debiasing via bi-causal OT.}
We learn a bi-causal transport map $T:\mathcal Y \to \mathcal Y$ such that $T_\# \mu_{Y'} = \mu_Y$, while preserving the temporal information structure. This step corrects the bias introduced by the projection space $\pmb Y'$ and aligns the projected low-resolution dynamics with the observed low-resolution data.

\item \emph{High-resolution conditional generation.}
After debiasing, we use the resulting low-resolution representation to generate an HFHR trajectory $\widehat{\pmb X}$ through a conditional generative model.
\end{enumerate}

\subsubsection{Numerical experiment}
We study a regime with limited HFHR data and abundant LFLR data in order to evaluate the practical value of the debiasing step. In such a setting, unconditional high-resolution generation is difficult because the HFHR sample size is small. Direct conditioning on low-resolution observations is also unreliable because these observations are coarse. The experiment is therefore designed to test whether bi-causal OT-based statistical downscaling improves downstream conditional generation.

In the experiment, we compare our method, \textsf{OT+cDfn}, with two baselines. Here, \textsf{OT+cDfn} denotes the OT-adjusted conditional diffusion model, which conditions on the HFLR data obtained from the bi-causal OT step:
\begin{itemize}
    \item \textsf{uDfn}: unconditional diffusion without using low-resolution conditioning.
    \item \textsf{cDfn}: conditional diffusion, which conditions directly on the observed LFLR input.
\end{itemize}

\paragraph{Setup.} We consider $d=2$ processes and model the HFHR data by the Gaussian AR(1) system in \eqref{eq:ar1_simulation}--\eqref{eq:ar1_unimodal}, over a horizon of length $N_x+1=300$. From each HFHR trajectory, we construct two different low-resolution objects. First, we subsample every 20 time steps, starting from the first observation, to obtain an LFLR sequence of length $N_y+1=15$. Second, we average over the same 20-step blocks to obtain the corresponding HFLR sequence.

We use 300 HFHR samples for training, which induce 300 HFLR samples, together with 6000 LFLR samples. At test time, we take 6000 LFLR samples and use the learned bi-causal coupling to obtain the corresponding HFLR samples for each LFLR sample. We generate 10 HFHR samples per conditioning sample and evaluate these generated samples against 20{,}000 HFHR test samples.

We report the distributional and temporal-dependence metrics introduced in Section~\ref{subsec:synthetic_exp_setup} (definitions in Appendix~\ref{sec:gen_set_metrics}). In addition, we report three metrics (definitions in Appendix~\ref{sec:eval_met_sd}):

\begin{itemize}
  \item \textbf{Constraint relative root mean squared error}, $\mathtt{cRMSE}$, which measures how well the generated trajectories satisfy the low-resolution constraint.
  \item \textbf{Wasserstein-1 distance}, $\mathtt{Wass1}$, which quantifies discrepancies in one-step marginals pooled across generated samples, conditionings, and assets. 
  \item \textbf{Kernel-density-estimated KL divergence}, $\mathtt{KLD}$, which measures one-step marginal discrepancy through kernel-density estimates. 
\end{itemize}

\paragraph{Main findings.}
Our method \textsf{OT+cDfn} robustly outperforms the other two methods across all reported metrics. We repeat the experiment over 10 independent runs, and report the mean with standard deviation (SD) in Table~\ref{tab:ar1_metrics_main}. In particular, \textsf{OT+cDfn} achieves the smallest values on all distributional metrics while maintaining zero $\mathtt{cRMSE}$, indicating that it both satisfies the low-resolution constraint and most accurately recovers the HFHR law. Moreover, this result suggests that low-resolution conditioning is effective only after matching with the HFHR law.

\begingroup
\renewcommand{\arraystretch}{0.9}
\setlength{\extrarowheight}{0pt}
\begin{table}[!ht]
\centering
\small
\caption{
Performance metrics per generation method, reporting mean values and standard deviations (in parentheses).
\label{tab:ar1_metrics_main}}
{
\begin{tabular*}{\textwidth}{@{\extracolsep{\fill}}l *{7}{c}@{}}
\toprule
\textbf{Method} & \multicolumn{7}{c}{\textbf{Distributional Metrics}} \\
\cmidrule(lr){2-8}
& $\mathtt{cRMSE}$ & $\mathtt{W2\_{avg}}$ & $\mathtt{KS\_{avg}}$ & $\mathtt{SWD}$ & $\mathtt{MMD^2}$ & $\mathtt{Wass1}$ & $\mathtt{KLD}$ \\
\midrule
\textsf{uDfn} & 0.2566 & 8.4465 & 0.0691 & 2.5145 & 0.0024 & 1.2662 & 0.1945 \\
& (0.0001) & (0.0290) & (0.0021) & (0.0074) & (0.0000) & (0.0336) & (0.0008) \\
\midrule
\textsf{cDfn} & 0.0000 & 8.1402 & 0.0812 & 2.7195 & 0.0037 & 1.3163 & 0.5818 \\
& (0.0000) & (0.3496) & (0.0043) & (0.0392) & (0.0001) & (0.1760) & (0.0121) \\
\midrule
\textsf{OT+cDfn} & 0.0000 & 2.8679 & 0.0416 & 1.4133 & 0.0007 & 0.7895 & 0.0610 \\
& (0.0000) & (0.0520) & (0.0008) & (0.0102) & (0.0000) & (0.0185) & (0.0012) \\
\bottomrule
\end{tabular*}
}
{}
\end{table}
\endgroup

The \textsf{OT+cDfn} method improves the estimation of marginal distributions. Figure~\ref{fig:marginal_densities_d1_and_d2} compares the marginal densities across time for the three methods. Without conditioning, the \textsf{uDfn} spreads probability mass too broadly and generates heavier tails; the \textsf{cDfn} partially sharpens the marginals, but introduces systematic distortion because the conditioning signal is drawn from the wrong coarse-scale law. After OT-based debiasing, \textsf{OT+cDfn} allocates probability mass more accurately across the support and tracks the target marginals more closely over time. These results further verify that debiasing through bi-causal OT indeed contributes to downstream marginal reconstruction of the HFHR process.

\begin{figure}[!ht]
    \centering
    \caption{Marginal distributions across time for the downscaling setting ($d=2$, $N_x=299$).}
    \label{fig:marginal_densities_d1_and_d2}
    
    \subfigure[$n=60$]{
        \includegraphics[width=0.48\textwidth]{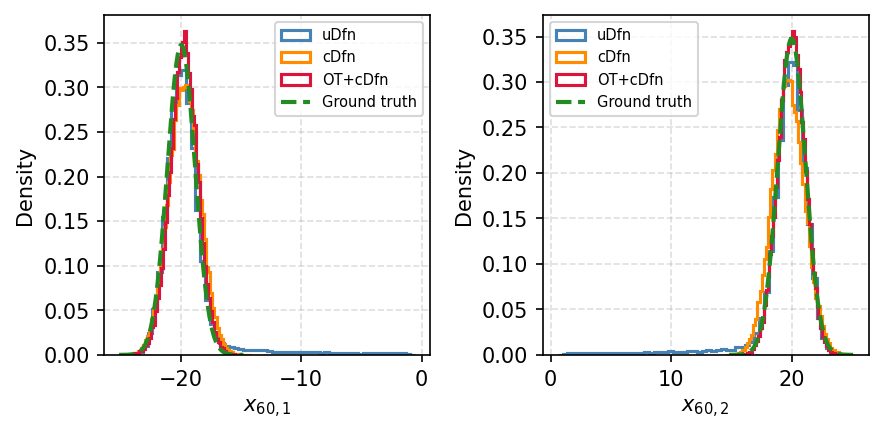}
    }
    \hfill
    \subfigure[$n=140$]{
        \includegraphics[width=0.48\textwidth]{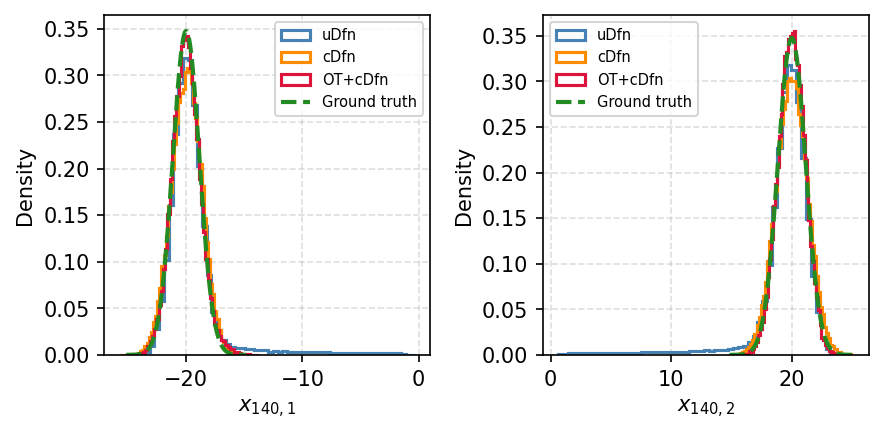}
    }

        \subfigure[$n=220$]{
        \includegraphics[width=0.4\textwidth]{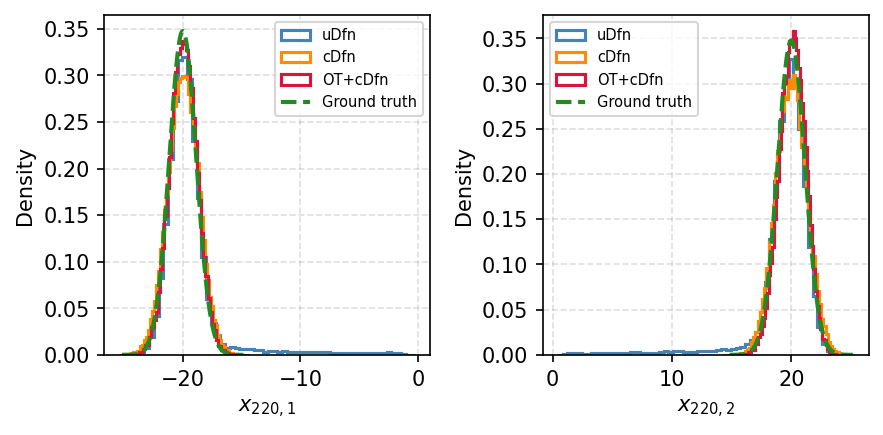}
    }
\end{figure}

The \textsf{OT+cDfn} method shows improvements in estimated temporal correlations, consistently tracking the ground truth and more accurately reproducing the pattern of adjacent-step correlations. Figure~\ref{fig:sd_ar_corr} reports adjacent-step correlations for different methods. The \textsf{uDfn} benchmark systematically underestimates the autocorrelation of the AR process. In contrast, the \textsf{cDfn} conditions on coarse LFLR data and overshoots the target correlations. Once we use the HFLR time series, the correlations generated by \textsf{OT+cDfn} are substantially closer to the ground truth. Therefore, debiasing contributes not only to correcting marginals but also to improving the temporal structure learned by conditional diffusion models.

\begin{figure}[!ht]
    \centering
    \caption{Adjacent-step correlations over $n=1, 10, 20, \dots,N_x$ ($d=2$, $N_x=299$).}
    \label{fig:sd_ar_corr}
    \includegraphics[width=0.5\linewidth]{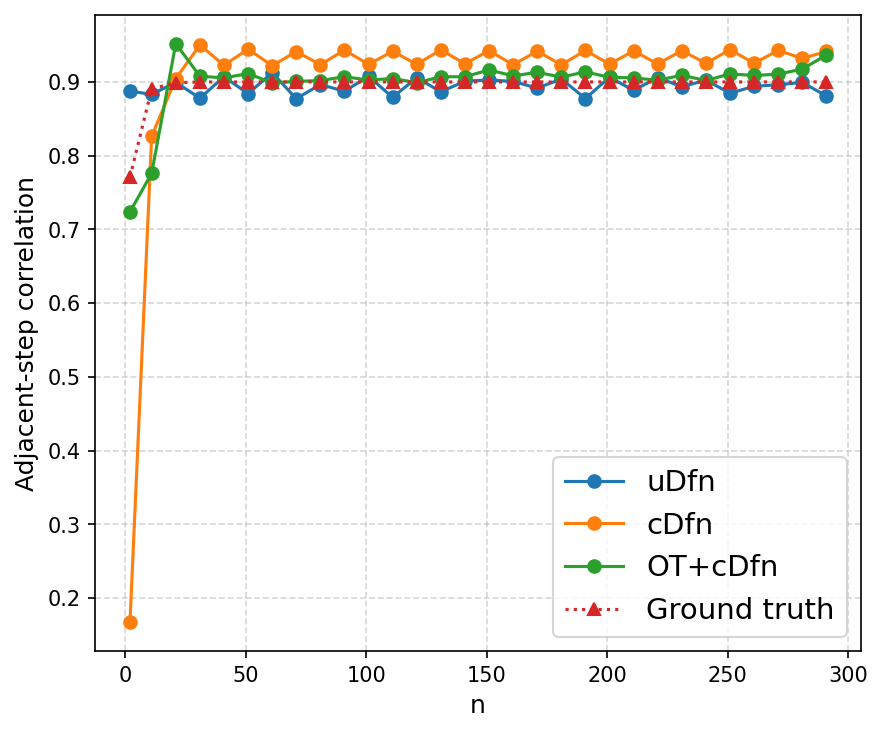}
\end{figure}

\section{Discussion}

This paper develops a scalable computational framework for bi-causal OT under general marginals. By combining a KL-penalized relaxation with the recursive structure of bi-causal OT, we obtain an end-to-end stochastic-optimization method that approximates the original path-space problem without abandoning the underlying information constraints. On the theoretical side, we establish dynamic-programming principles for both the constrained and relaxed formulations, prove \(\Gamma\)-convergence back to the original problem, derive explicit policy-gradient formulas, and provide nonasymptotic optimization guarantees for our algorithm. On the numerical side, we show that the method preserves both marginal laws and temporal dependence, and that it performs well in robust hedging and time series statistical downscaling.

These results make adapted transport computationally feasible in operations research (OR) settings where sequential information matters. Many OR problems involve stochastic processes, multistage decisions, and filtration constraints, including scenario generation and reduction, multistage stochastic optimization, robust finance, and data-driven approximation of dynamical systems. In such settings, respecting information flow is essential for both modeling fidelity and decision quality. Our results suggest that bi-causal OT can serve not only as a structural notion of distance between stochastic processes, but also as a practical computational primitive inside larger optimization and uncertainty-quantification pipelines.

Several promising directions build naturally on the present framework. First, extending the analysis beyond the Markov setting to incorporate path-dependent costs and non-Markovian marginals would further expand its scope to important applications such as American option pricing and inventory management with demand memory. In this setting, recurrent or attention-based architectures offer a flexible modeling route beyond the one-step flow parameterization, together with interesting new questions for convergence analysis. Second, the current framework also opens the door to further advances in both computation and statistical theory, including finite-sample guarantees, adaptive penalty parameters, and accelerated training methods for long-horizon problems.

Overall, the framework proposed here narrows the gap between the theory of adapted transport and the computational demands of modern OR practice. It provides a basis for bringing bi-causal OT to a broader class of sequential modeling and uncertainty quantification problems.

\theendnotes
\makeatletter

\singlespacing
\bibliography{references}


\newpage

\setlength{\baselineskip}{1.5\baselineskip}
\onehalfspacing
\counterwithin{figure}{section}
\makeatletter
\renewcommand\p@subfigure{\thefigure}
\makeatother
\counterwithin{equation}{section}
\counterwithin{table}{section}
\counterwithin{theorem}{section}
\counterwithin{proposition}{section}
\counterwithin{definition}{section}
\counterwithin{example}{section}
\counterwithin{lemma}{section}
\counterwithin{remark}{section}
\counterwithin{assumption}{section}

\appendix
\allowdisplaybreaks
\section{Omitted proofs in Section~\ref{sec: bicausal ot}}
\label{sec: omitted proofs of Section bicausal ot}
In this section, we provide the formal proofs of Lemma~\ref{lem:mbc-equivalence} and Lemma~\ref{lem:dpp}.

\subsection{Proof of Lemma~\ref{lem:mbc-equivalence}}
\label{subsec: proof of lemma mbc equivalence}

\begin{proof}{Proof.}
It suffices to prove $\Pi_{\textrm{bc}}\subset\mathcal M_{\textrm{bc}}$ and $\mathcal M_{\textrm{bc}}\subset\Pi_{\rm bc}$.\\
\noindent\underline{Step 1: $\textrm{ Markov }\Pi_{\textrm{bc}} \subseteq \mathcal M_{\textrm{bc}}$.}
Let $\pi\in\Pi_{\textrm{bc}}(\mu,\mu')$ be Markov and consider any $Q_n^\pi$ defined in \eqref{eq:coupling-kernel}. We only need to verify $Q_n^{\pi}\in\Pi_M(T_n,T'_{n})$. The transition property in \eqref{eq:markov-bicausal-coupling} and the definition of $Q_n^{\pi}$ in \eqref{eq:coupling-kernel} imply that for any $y,y'\in S$ and any $A, B\in\mathcal S$, the first and second marginal distributions of $Q^\pi_n$ satisfy that
\begin{align*}
    Q_n^{\pi,(1)}(A \,|\, y,y')&=\mathbb P_{\pi}\big(Y_n\in A, Y'_n\in \mathcal S \,\big|\, Y_{n-1}=y, Y'_{n-1}=y'\big)=T_n(A \,|\, y),\\
    Q_n^{\pi,(2)}(B \,|\, y,y')&=\mathbb P_{\pi}\big(Y_n\in \mathcal S, Y'_n\in B \,\big|\, Y_{n-1}=y, Y'_{n-1}=y'\big)=T_n'(B \,|\, y').
\end{align*}
Thus, we obtain $Q_n^{\pi}\in\Pi_M(T_n,T'_{n})$.\\
\noindent\underline{Step 2: $\mathcal M_{\textrm{bc}} \subseteq \textrm{ Markov }\Pi_{\textrm{bc}}$.}
Fix $\pi_0\in\Pi(\mu_0,\mu_0')$ and take $Q_n\in\Pi_M(T_n,T_n')$ for $n \in [N]$. 
For any $\pi_{\pi_0,\pmb Q}$ defined in \eqref{eq:markov-coupling}, by construction, the marginals evolve via $T_n$ and $T_n'$, respectively; and $(Y_n,Y_n')$ is Markov in $(Y_{n-1},Y'_{n-1})$ with transition kernel $Q_n$. Moreover, conditioning on past filtrations satisfies the bi-causal constraints since $Q_n$ uses only $(y_{n-1},y'_{n-1})$ and its marginals match $T_n,T_n'$.
Thus $\pi_{\pi_0,\pmb Q}\in\Pi_{\textrm{bc}}(\mu,\mu')$. \hfill\Halmos
\end{proof}

\subsection{Proof of Lemma~\ref{lem:dpp}}
\label{subsec: proof of lemma dpp}

\begin{proof}{Proof.}
For notational simplicity, denote $\pmb\Pi=\Pi(\mu,\mu')$, $\pmb\Pi_{\textrm{bc}}=\Pi_{\textrm{bc}}(\mu,\mu')$ and denote $\Pi_n=\Pi_M(T_n,T_n')$ for all $n\in[N]$.

\noindent\underline{Part 1: $V_n = \Gamma_n[V_{n+1}]$.}
We will prove $V_n \geq \Gamma_n[V_{n+1}]$ and $V_n \leq \Gamma_n[V_{n+1}]$ respectively.

\begin{enumerate}
    \item For any $\pi\in\pmb\Pi_{\textrm{bc}}$, we have $Q_{n+1}^\pi\in\Pi_{n+1}$ for all $n \in [N-1]_0$, and
    \begin{align*}
        J_n^\pi=c_n+Q_{n+1}^\pi[J_{n+1}^\pi] \geq c_n+Q_{n+1}^\pi[V_{n+1}] = \Gamma_n^{Q_{n+1}^{\pi}}[V_{n+1}] \geq\Gamma_n[V_{n+1}].
    \end{align*}
    Here, the first inequality follows from $J_{n+1}^{\pi} \geq V_{n+1}$ and the monotonicity of operator $Q_{n+1}^{\pi}$, and the last inequality follows from \eqref{eq:bellman-op-n-2}. Then, by the definition of $V_n$ in \eqref{eq:opt-cost-n} as the infimum over $\pi\in\pmb{\Pi}_{\textrm{bc}}$, we obtain $V_n \geq \Gamma_n[V_{n+1}]$.

    \item For any $\epsilon > 0$, there exist kernels $Q_{1}^{\epsilon} \in \Pi_1, \dots, Q_{N}^{\epsilon }\in \Pi_N$ such that for any $n \in [N-1]_0$,
    \begin{equation}
    \label{eq:lemma_4_1_part1_eps_kernel}
    \Gamma_n[V_{n+1}] \leq c_n+Q_{n+1}^\epsilon[V_{n+1}] \leq \Gamma_n[V_{n+1}]+\epsilon\,2^{-(n+1)}
    \end{equation}
    Then, for any $\pi_{0} \in \Pi(\mu_0,\mu_0')$, we construct a Markov bi-causal coupling 
    \begin{align}
    \label{eq:lemma_4_1_part1_pi_eps}
    \pi^\epsilon:=\pi_{\pi_0,\pmb Q^\epsilon} \textrm{ with } \pmb Q^\epsilon=(Q_1^\epsilon,\dots,Q_N^\epsilon).
    \end{align}
    Next, we prove by backward induction that for $n \in [N-1]_0$,
    \begin{equation}
    \label{eq:lemma_4_1_part1_ind_bound}
    J_n^{\pi^\epsilon} \leq \Gamma_n[V_{n+1}] + \epsilon\sum_{k=n}^{N-1}2^{-(k+1)}.
    \end{equation}
    When $n=N-1$, by the terminal condition in \eqref{eq:terminal-condition}, the equivalent expression of $J_n^{\pi}$ in \eqref{eq:cost-n} and the inequality of $Q_{n+1}^{\epsilon}$ in \eqref{eq:lemma_4_1_part1_eps_kernel}, we have
    \begin{align*}
        J_{N-1}^{\pi^\epsilon}=c_{N-1}+Q_N^\epsilon[J_N^{\pi^\epsilon}]
        = c_{N-1}+Q_N^\epsilon[V_N] \leq \Gamma_{N-1}[V_N]+\epsilon 2^{-N}.
    \end{align*}
    Assume that \eqref{eq:lemma_4_1_part1_ind_bound} holds for $n \in [N-2]_0$. Then, by the monotonicity of $Q_{n+1}^\epsilon$, the above induction hypothesis for $n+1$ and the second inequality in \eqref{eq:lemma_4_1_part1_eps_kernel}, we deduce that
    \begin{align*}
    J_n^{\pi^\epsilon}
    =c_n+Q_{n+1}^\epsilon[J_{n+1}^{\pi^\epsilon}]
    &\leq c_n+Q_{n+1}^\epsilon\Big[\Gamma_{n+1}[V_{n+2}]+\epsilon\sum_{k=n+1}^{N-1}2^{-(k+1)}\Big] \nonumber \\
    &\leq c_n+Q_{n+1}^\epsilon[V_{n+1}]+\epsilon\sum_{k=n+1}^{N-1}2^{-(k+1)} \nonumber \\
    &\leq \Gamma_n[V_{n+1}]+\epsilon\sum_{k=n}^{N-1}2^{-(k+1)}.
    \end{align*}
    That is, \eqref{eq:lemma_4_1_part1_ind_bound} holds for all $n \in [N-1]_0$.
    
    Since $\sum_{k=n}^{N-1}2^{-(k+1)}<1$, we have
    $J_n^{\pi^\epsilon} \leq \Gamma_n[V_{n+1}]+\epsilon$. Combined with the optimality of $V_n$, we derive 
    $$
    V_n \leq J_n^{\pi^\epsilon} \leq \Gamma_n[V_{n+1}]+\epsilon.
    $$
    Letting $\epsilon\downarrow0$ yields $V_n \leq \Gamma_n[V_{n+1}]$.
\end{enumerate}

Combining the inequality in both directions, we obtain $V_n = \Gamma_n[V_{n+1}]$.

\vspace{4pt}
\noindent\underline{Part 2: Representations of $W_{\textrm{bc}}$.}
First, notice that for any $\pi\in\pmb\Pi_{\textrm{bc}}$, using the tower property and the expression of $J^\pi_n$ in \eqref{eq:cost-n}, we have $$
\E_{\pi}\left[\sum_{n=0}^N c_n(Y_n, Y'_n)\right]=\mathbb E_{\pi}\left[\mathbb E_{\pi}\Big[\sum_{n=0}^Nc_n(Y_n,Y_n') \,\Big|\, Y_0,Y_0'\Big]\right] =\E_{(\pmb Y,\pmb Y')\sim\pi}[J_0^{\pi}(Y_0, Y'_0)],
$$
and therefore, \eqref{eq:bicausal-ot} is equivalent to $W_{\textrm{bc}}=\inf_{\pi\in\pmb\Pi_{\textrm{bc}}}\E_{\pi}[J_0^{\pi}(Y_0, Y'_0)]$. To show the first equivalent representation for $W_{\textrm{bc}}$, we combine the following two results.
\begin{enumerate}
    \item 
    For any $\pi \in \pmb\Pi_{\textrm{bc}}$ with $\pi_0 \in \Pi(\mu_0,\mu_0')$, $J_0^{\pi} \geq V_0$ and thus
    \begin{align*}
        \mathbb E_{\pi}\left[J_0^\pi(Y_0,Y_0')\right]
        &\ge \mathbb E_{\pi}\left[V_0(Y_0,Y_0')\right] =\mathbb E_{\pi_0}\left[V_0(Y_0,Y_0')\right] \\&\geq\inf_{\pi_0\in\Pi(\mu_0,\mu_0')}\mathbb E_{\pi_0}\left[V_0(Y_0,Y_0')\right].
    \end{align*}
    The definition of $W_{\textrm{bc}}$ in \eqref{eq:bicausal-ot} as the infimum over $\pi\in\pmb \Pi_{\textrm{bc}}$ yields 
    $
    W_{\textrm{bc}}(\mu,\mu') \geq \underset{\pi_0\in\Pi(\mu_0,\mu_0')}{\inf}\E_{\pi_0}[V_0]
    $.

    \item For any $\epsilon>0$, there exists $\gamma_0^{\epsilon} \in \Pi(\mu_0,\mu_0')$ such that 
    \begin{align}
    \label{eq:lemma_4_1_part2_gamma_0_eps}
        \E_{\gamma_0^{\epsilon}}[V_0] \leq \inf_{\pi_0 \in \Pi(\mu_0,\mu'_0)}\E_{\pi_0}[V_0] + \epsilon.
    \end{align}
    Using $\pmb Q^\varepsilon$ defined in \eqref{eq:lemma_4_1_part1_pi_eps}, we construct a Markov bi-causal coupling
    $\gamma^\epsilon :=\pi_{\gamma_0^{\epsilon},\pmb Q^\varepsilon}\in\mathcal M_{\textrm{bc}}$. Notice that \eqref{eq:lemma_4_1_part1_ind_bound} holds for any initial coupling in $\Pi(\mu_0,\mu_0')$. 
    Therefore, Part I and \eqref{eq:lemma_4_1_part1_ind_bound} imply that $V_0\leq J_0^{\gamma^\epsilon}<V_0+\epsilon$, and using \eqref{eq:lemma_4_1_part2_gamma_0_eps}, we have
    \begin{align*}
        \E_{\gamma^\epsilon}[J_0^{\gamma^\epsilon}] \leq \E_{\gamma^{\epsilon}}[V_0]+\epsilon =\E_{\gamma_0^\epsilon}[V_0]+\epsilon\leq \inf_{\pi_0 \in \Pi(\mu_0,\mu'_0)}\E_{\pi_0}[V_0]+2\epsilon.
    \end{align*}
    Therefore, by the definition of $W_{\textrm{bc}}$ in \eqref{eq:bicausal-ot} as the infimum over $\pi\in\pmb \Pi_{\textrm{bc}}$ and the arbitrarily chosen $\epsilon$, we obtain $W_{\textrm{bc}}(\mu,\mu') \leq \inf_{\pi_0\in\Pi(\mu_0,\mu_0')}\E_{\pi_0}[V_0]$.
\end{enumerate}
To establish the second equivalent representation, notice that $\gamma^\epsilon\in \mathcal M_{\textrm{bc}}(\mu,\mu')\subset\pmb\Pi_{\textrm{bc}}$ and $W_{\textrm{bc}}(\mu,\mu') = \inf_{\pi_0\in\Pi(\mu_0,\mu_0')}\E_{\pi_0}[V_0]$, we have 
\begin{align}
W_{\textrm{bc}}(\mu,\mu')
&=\inf_{\pi\in\pmb\Pi_{\textrm{bc}}}\mathbb E_{\pi}\left[\sum_{n=0}^Nc_n(Y_n,Y_n')\right] 
\leq \inf_{\pi\in\mathcal M_{\textrm{bc}}(\mu,\mu')}\mathbb E_{\pi}\left[J_0^{\pi}(Y_0, Y'_0)\right] \nonumber \\
&\leq \mathbb E_{\gamma^\epsilon}[J_0^{\pi^\epsilon}(Y_0,Y_0')] \leq W_{\textrm{bc}}(\mu,\mu')+2\epsilon.
\label{eq:lemma_4_1_part2_ineq3}
\end{align}
Since $\epsilon>0$ is arbitrary, all inequalities in \eqref{eq:lemma_4_1_part2_ineq3} become equalities, and hence
\begin{align*}
W_{\textrm{bc}}(\mu,\mu')=\inf_{\pi_0\in\Pi(\mu_0,\mu_0')}\E_{\pi_0}[V_0(Y_0,Y_0')]=\inf_{\pi\in\mathcal{M}_{\textrm{bc}}(\mu,\mu')}\mathbb E_{\pi}\left[\sum_{n=0}^Nc_n(Y_n,Y_n')\right]. \tag*{\Halmos}
\end{align*}
\end{proof}

\section{Omitted proofs in Section~\ref{sec: relaxed formulation}}
In this section, we provide proofs of Theorem~\ref{thm:gamma-convergence} and Lemma~\ref{lem:dpp2}.

\subsection{Proof of Theorem~\ref{thm:gamma-convergence}}
\label{subsec: proof of theorem gamma convergence}

\begin{proof}{Proof.}
In the following proof, we work on the $\mathcal P(\mathcal{Y}\times \mathcal{Y})$ with the weak convergence topology, and omit the ``weak'' thereafter. Observe that for any $0<\beta\leq\beta'$, 
\begin{eqnarray*}
    \inf_{\pi\in\overline{\Pi}_{\rm rel}}F_\beta(\pi)\leq \inf_{\pi\in\overline{\Pi}_{\rm rel}}F_{\beta'}(\pi)\leq \inf_{\pi \in \overline{\mathcal M}_{\textrm{bc}}(\mu,\mu')}F(\pi)
\end{eqnarray*}

\noindent \underline{Step 1: Existence of minimizers over the feasible limit set $\overline{\mathcal M}_{\textrm{bc}}(\mu,\mu')$.}
To prove that the limit (original) problem in \eqref{eq:bicausal-ot} admits a minimizer over $\overline{\mathcal M}_{\textrm{bc}}(\mu,\mu')$, it suffices to verify the following properties,
\begin{enumerate}
    \item[(i)] lower semi-continuity of $F(\pi)$ on $\overline{\mathcal M}_{\textrm{bc}}(\mu,\mu')$, and
    \item[(ii)] compactness of the feasible limit set $\overline{\mathcal M}_{\textrm{bc}}(\mu,\mu')$.
\end{enumerate}

To prove lower semi-continuity, it suffices to show that, for any $\{\pi^{(m)}\}_{m \ge 1} \subset \overline{\mathcal M}_{\textrm{bc}}(\mu,\mu')$ such that $\pi^{(m)}\rightharpoonup\pi\in\overline{\mathcal M}_{\textrm{bc}}(\mu,\mu')$, \[\liminf_{m\to\infty}F(\pi^{(m)})\geq F(\pi).\]
From Assumption~\ref{asm:lsc_cost}, we know that $C=\sum_{n=0}^Nc_n$ is lower semi-continuous and nonnegative. Therefore, for any $x\geq0$, $C^{-1}((x,+\infty))$ is open in $\mathcal Y\times\mathcal Y$ and $\int Cd\eta=\int_0^\infty\eta\left(C^{-1}((x,+\infty))\right)\dd x$ for any $\eta\in\mathcal P(\mathcal Y\times\mathcal Y)$. Hence, by Portmanteau theorem \cite[Theorem~2.1]{billingsley2013convergence} and Fatou's lemma, 
$$
\liminf_{m\to\infty}\int \sum_{n=0}^N c_n\,\dd\pi^{(m)} \ge \int \sum_{n=0}^N c_n\,\dd\pi.
$$
For the KL divergence term, applying the Donsker–Varadhan variational  (see, for instance, Lemma~1.4.3 in \cite{DupuisEllis1997} and Proposition~1.4.2 in \cite{DemboZeitouni1998}), we have  
\begin{align}
\label{eq:def_dv_formula}
    \mathrm{KL}(\mu, \nu)=\sup_{\varphi\in C_b(\mathcal Y)}\Big\{\textstyle\int \varphi\,\dd\mu-\log\int e^\varphi\,\dd\nu\Big\}.
\end{align}
For each $\varphi\in C_b(\mathcal Y)$, the mapping $\nu\mapsto\int\varphi d\mu-\log{\int e^{\varphi}d\nu}$ is continuous, then, by taking the supremum, $\mathrm{KL}(\mu, \cdot)$ is lower semi-continuous, and hence,
\[
\liminf_{m\to+\infty}\mathrm{KL}^{(1)}(\mu,\pi^{(m)})\geq \mathrm{KL}^{(1)}(\mu,\pi), \quad \liminf_{m\to+\infty}\mathrm{KL}^{(2)}(\mu',\pi^{(m)})\geq \mathrm{KL}^{(2)}(\mu',\pi).
\]
If $\liminf_{m\to+\infty}\mathrm{KL}^{(1)}(\mu,\pi^{(m)})>0$ or $\liminf_{m\to+\infty}\mathrm{KL}^{(2)}(\mu',\pi^{(m)})>0$, then $$
\liminf_{m\to+\infty}\iota\big(\mathrm{KL}^{(1)}(\mu,\pi^{(m)})\big)+\iota\big(\mathrm{KL}^{(2)}(\mu',\pi^{(m)})\big)=+\infty.
$$
Otherwise, $\mathrm{KL}^{(1)}(\mu,\pi)=\mathrm{KL}^{(2)}(\mu',\pi)=0$, and hence $\iota\big(\mathrm{KL}^{(1)}(\mu,\pi)\big)+\iota\big(\mathrm{KL}^{(2)}(\mu',\pi)\big)=0$. In either case,
\[
\liminf_{m\to+\infty}\iota\big(\mathrm{KL}^{(1)}(\mu,\pi^{(m)})\big)+\iota\big(\mathrm{KL}^{(2)}(\mu',\pi^{(m)})\big)\geq \iota\big(\mathrm{KL}^{(1)}(\mu,\pi)\big)+\iota\big(\mathrm{KL}^{(2)}(\mu',\pi)\big).
\]
Therefore, we can conclude that $F(\pi)$ is lower semi-continuous.

To prove compactness, notice that $\mathcal Y\cong \mathbb R^{d(N+1)}$ is Polish, therefore $\mu,\mu'\in\mathcal P(\mathcal Y)$ are tight, respectively, that is, for any $\epsilon>0$, there exist compact sets $K,K'\subset\mathcal Y$ such that
$\mu(K)\ge 1-\epsilon/2$ and $\mu'(K')\ge 1-\epsilon/2$.
Then, for any $\pi\in{\mathcal M}_{\textrm{bc}}(\mu,\mu')$, we have
\[
\pi\big((K\times K')^\complement\big)
\le \pi(K^\complement\times\mathcal Y)+\pi(\mathcal Y\times (K')^\complement)
= \mu(K^\complement)+\mu'((K')^\complement)\le \epsilon,
\]
where the first inequality follows from the fact that $(K\times K')^\complement\subset \left(K^\complement\times\mathcal Y\right)\bigcup\left(\mathcal Y\times (K')^\complement\right)$. Thus, ${\mathcal M}_{\mathrm{bc}}(\mu,\mu')$ is tight in $\mathcal P(\mathcal Y\times\mathcal Y)$ and by Prokhorov's theorem \cite[Theorem~5.1]{billingsley2013convergence}, $\overline{\mathcal M}_{\mathrm{bc}}(\mu,\mu')$ is compact in $\mathcal P(\mathcal Y\times\mathcal Y)$. 

Finally, notice that Assumption~\ref{asm:lsc_cost} guarantees non-degeneracy of the following problem:
$$
\inf_{\pi\in\overline{\mathcal M}_{\textrm{bc}}(\mu,\mu')}F(\pi)\leq\inf_{\pi\in\mathcal M_{\rm bc}(\mu,\mu')}F(\pi) = \inf_{\pi\in{\mathcal M}_{\textrm{bc}}(\mu,\mu')}\int \sum_{n=0}^{N}c_n\,d\pi<+\infty.
$$
By compactness of $\overline{\mathcal M_{\rm bc}}(\mu,\mu')$, for any minimizing sequence $\{\pi^{(m)}\}_{m \geq 1}\subset\overline{\mathcal M}_{\textrm{bc}}(\mu,\mu')$ with $F(\pi^{(m)})\downarrow \inf_{\pi \in \overline{\mathcal M}_{\textrm{bc}}(\mu,\mu')}F(\pi)$, it admits a subsequential limit, $\pi^{(m_k)}\overset{k\uparrow\infty}{\rightharpoonup}\overline{\pi}\in\overline{\mathcal M}_{\textrm{bc}}(\mu,\mu')$. Immediately, the lower semi-continuity of $F$ implies that $\bar\pi$ is indeed a minimizer,
\begin{equation}
\label{eq:finite_F_min}
    \inf_{\pi \in \overline{\mathcal M}_{\textrm{bc}}(\mu,\mu')}F(\pi) \le\ F(\overline{\pi})\ \le\ \liminf_{k\to\infty}F(\pi^{(m_k)})\ =\ \inf_{\pi \in \overline{\mathcal M}_{\textrm{bc}}(\mu,\mu')}F(\pi).
\end{equation}

\vspace{10pt}

\noindent \underline{Step 2: $\Gamma$-liminf sequences and lower bound.} Consider $\{\pi^\beta\}_{\beta>0}$ in $\overline{\Pi}_{\textrm{rel}}$ such that $\pi^{\beta}\overset{\beta\uparrow\infty}{\rightharpoonup} \pi$ for some $\pi\in\mathcal P(\mathcal Y\times\mathcal Y)$. 
By the continuous mapping theorem \cite[Theorem~2.7]{billingsley2013convergence}, weak convergence applies to the marginals as well. Similar to the arguments in Step 1, we have
$$
\liminf_{\beta\to\infty}\int \sum_{n=0}^{N} c_n \,\dd\pi^{\beta}\, \ge\, \int \sum_{n=0}^{N} c_n \,\dd\pi,
$$
and
\begin{equation}
\begin{split}
\label{eq:theorem4_3_step2_kl_ineq}
&\liminf_{\beta\to\infty}\mathrm{KL}^{(1)}\big(\mu, \pi^\beta\big) \geq \mathrm{KL}^{(1)}\big(\mu, \pi\big),\qquad \liminf_{\beta\to\infty}\mathrm{KL}^{(2)}\big(\mu', \pi^\beta\big) \geq \mathrm{KL}^{(2)}\big(\mu', \pi\big).        
\end{split}
\end{equation}

If $\pi\in\overline{\mathcal{M}}_{\mathrm{bc}}(\mu,\mu')$, then we have $\mathrm{KL}^{(1)}\big(\mu, \pi\big) = \mathrm{KL}^{(2)}\big(\mu', \pi\big) = 0$. Immediately, it holds that
$$
\liminf_{\beta\to\infty}F_\beta(\pi^{\beta}) \geq \int \sum_{n=0}^{N} c_n \,\dd\pi\ =\ F(\pi).
$$

If $\pi\notin\overline{\mathcal{M}}_{\mathrm{bc}}(\mu,\mu')$, then we have $\mathrm{KL}^{(1)}\big(\mu, \pi\big) > 0$ or $\mathrm{KL}^{(2)}\big(\mu', \pi\big) > 0$. Combined with \eqref{eq:theorem4_3_step2_kl_ineq}, we have
$$
\liminf_{\beta \to \infty} \big[\beta \mathrm{KL}^{(1)}\big(\mu, \pi^\beta\big) + \beta \mathrm{KL}^{(2)}\big(\mu', \pi^\beta\big)\big]>\lim_{\beta\to\infty}\beta \cdot \frac{\mathrm{KL}^{(1)}\big(\mu, \pi\big)+\mathrm{KL}^{(2)}\big(\mu', \pi\big)}{2}= +\infty.
$$
By the definition of $F_{\beta}$ in \eqref{eq:def_F_beta}, we have
$\liminf_{\beta\to\infty}F_\beta(\pi^{\beta})=+\infty \geq F(\pi)$. 

Therefore, in summary, we obtain the following inequality
\begin{align}
\label{eq:theorem4_3_step2_bound}
\liminf_{\beta\to\infty}F_\beta(\pi^{\beta}) \geq F(\pi).
\end{align}

\noindent \underline{Step 3: Recovery sequences and upper bound.}
For any $\pi\in\overline{\mathcal{M}}_{\mathrm{bc}}(\mu,\mu')$, we take the constant sequence $\pi^{\beta}\equiv\pi$. Then, the KL divergence terms vanish and we obtain $F_{\beta}(\pi^{\beta}) = F(\pi)$. Therefore, we have $\limsup_{\beta\to\infty}F_\beta(\pi^{\beta}) = F(\pi)$. 

For any $\pi\notin\overline{\mathcal{M}}_{\mathrm{bc}}(\mu,\mu')$, then $F(\pi)=+\infty$ and the result is trivial.

Hence, we deduce
\begin{align}
\label{eq:theorem4_3_step3_bound}
\limsup_{\beta\to\infty}F_\beta(\pi^{\beta}) \leq F(\pi).
\end{align}

{\noindent \underline{Step 4: Compactness of almost minimizers and convergence.} Inspired by Theorem~3.2 of \cite{thorpe2018deep}, we consider $\{\pi^{\beta}\}_{\beta > 0}$, a sequence of almost minimizers of $F_{\beta}$ defined in \eqref{eq:def_almost_minimizer}. By the definition of $\pi^{\beta}$ in \eqref{eq:def_almost_minimizer} and the fact that $\sum_{n=0}^N c_n \geq 0$ in Assumption~\ref{asm:lsc_cost}, we have
\begin{align*}
    \mathrm{KL}^{(1)}\big(\mu, \pi^\beta\big) + \mathrm{KL}^{(2)}\big(\mu', \pi^\beta\big)
    &\leq \frac{\inf_{\pi \in \overline{\Pi}_{\mathrm{rel}}} F_\beta(\pi)+\epsilon_\beta}{\beta} \nonumber \\
    &\leq \frac{\inf_{\pi \in \overline{\mathcal{M}}_{\mathrm{bc}}(\mu,\mu')} F_\beta(\pi)+\epsilon_\beta}{\beta} \nonumber \\
    &= \frac{\inf_{\pi \in \overline{\mathcal{M}}_{\mathrm{bc}}(\mu,\mu')} F(\pi)+\epsilon_\beta} {\beta} \xrightarrow[]{\beta\to\infty} 0,
\end{align*}
where the second inequality holds since $\overline{\mathcal{M}}_{\mathrm{bc}}(\mu,\mu') \subset \overline{\Pi}_{\mathrm{rel}}$; the last equality follows from the fact that $\mathrm{KL}^{(1)}\big(\mu, \pi\big)+\mathrm{KL}^{(2)}\big(\mu', \pi\big) = 0$ for any $\pi \in \overline{\mathcal{M}}_{\mathrm{bc}}$.
Furthermore, by Pinsker’s inequality \cite[Lemma 2.5]{tsy2009nonparametric}, we obtain
\begin{align}
&\mathrm{TV}\big(\mu, \pi^{\beta}(\cdot \times \mathcal{Y})\big) \leq \sqrt{\frac{1}{2}\,\mathrm{KL}^{(1)}\big(\mu, \pi^\beta\big)}\ \xrightarrow[]{\beta\to\infty}\ 0, \label{eq:theorem_4_3_step4_tv1_limit} \\
&\mathrm{TV}(\mu', \pi^{\beta}(\mathcal{Y} \times \cdot)\big) \leq \sqrt{\frac{1}{2}\, \mathrm{KL}^{(2)}\big(\mu', \pi^\beta\big)} \ \xrightarrow[]{\beta\to\infty}\ 0.
\label{eq:theorem_4_3_step4_tv2_limit}
\end{align}
Then, given any $\epsilon > 0$, we choose compact subsets $K,K'\subset\mathcal Y$ with $\mu(K) \geq 1-\epsilon/4$ and $\mu'(K') \geq 1-\epsilon/4$. Thanks to \eqref{eq:theorem_4_3_step4_tv1_limit} and \eqref{eq:theorem_4_3_step4_tv2_limit}, we can choose $\beta$ sufficiently large such that $\mathrm{TV}\big(\mu, \pi^{\beta}(\cdot \times \mathcal{Y})\big) \leq \epsilon/4$ and $\mathrm{TV}\big(\mu', \pi^{\beta}(\mathcal{Y} \times \cdot)\big) \leq \epsilon/4$. This further implies that 
\begin{align}
&\pi^{\beta}(K \times \mathcal{Y}) \geq \mu(K) - \mathrm{TV}\big(\mu, \pi^{\beta}(\cdot \times \mathcal{Y})\big) \geq 1-\epsilon/2, \label{eq:theorem4_3_step4_pi_tight1} \\
&\pi^{\beta}(\mathcal{Y} \times K') \geq \mu'(K') - \mathrm{TV}\big(\mu', \pi^{\beta}(\mathcal{Y} \times \cdot)\big) \geq 1-\epsilon/2.
\label{eq:theorem4_3_step4_pi_tight2}
\end{align}
Hence, the marginals of $\pi^{\beta}$ are uniformly tight. By the properties of set intersection and the tightness of marginals in \eqref{eq:theorem4_3_step4_pi_tight1} and \eqref{eq:theorem4_3_step4_pi_tight2}, we have
$$
\pi^{\beta}\big((K\times K')^\complement\big) \leq \pi^{\beta}(K^\complement \times \mathcal{Y})+\pi^{\beta}(\mathcal{Y} \times (K')^\complement) \leq \epsilon,
$$
so $\{\pi^\beta\}$ is uniformly tight in $\mathcal P(\mathcal Y\times\mathcal Y)$. 
By Prokhorov’s theorem \cite[Theorem~5.1]{billingsley2013convergence}, uniform tightness yields relative compactness: there exists a limit $\pi^{*}$ and a subsequence such that $\pi^\beta\rightharpoonup \pi^*$. Here, $\pi^\ast\in\overline{\Pi}_{\mathrm{rel}}$ since the feasible set $\overline{\Pi}_{\mathrm{rel}}$ is closed.

Finally, we conclude by a standard $\Gamma$-convergence argument. 
Now, we have verified the conditions needed in \cite[Theorem~3.2]{thorpe2018deep}: the relative compactness of $\{\pi^{\beta}\}_{\beta > 0}$ and the $\Gamma$-convergence induced by the liminf and limsup bounds \eqref{eq:theorem4_3_step2_bound} and \eqref{eq:theorem4_3_step3_bound}. Then, applying \cite[Theorem~3.2]{thorpe2018deep} to $\Omega=\overline{\Pi}_{\mathrm{rel}}$, $\mathcal{E}_n=F_{\beta_n}$ (with respect to $F$), and $u_n=\pi^{\beta_n}$ (with respect to $\pi^\ast$) for any sequence $\beta_n\to+\infty$, we deduce that
$$
\inf_{\pi\in\overline{\Pi}_{\mathrm{rel}}}F_{\beta_n}(\pi)\ \longrightarrow\ \min_{\pi\in\overline{\Pi}_{\mathrm{rel}}}F(\pi) \ \textrm{ and } \
\pi^{\beta_n}\rightharpoonup \pi^\ast\in \underset{\pi\in\overline{\Pi}_{\mathrm{rel}}}{\operatorname{argmin}} \ F(\pi), \quad \textrm{ as } \beta_n \to +\infty.
$$
Using the fact that $\overline{\mathcal{M}}_{\mathrm{bc}} \subset \overline{\Pi}_{\mathrm{rel}}$ and the existence of the minimizers for $F(\pi)$ in \eqref{eq:finite_F_min}, we have
$$
\min_{\pi\in\overline{\Pi}_{\mathrm{rel}}}F(\pi) \le \min_{\pi\in\overline{\mathcal{M}}_{\mathrm{bc}}}F(\pi) < +\infty.
$$
On the other hand, notice that $F(\pi)$ is finite if and only if
$$
\iota\Big(\mathrm{KL}^{(1)}\big(\mu, \pi\big)\Big) = \iota\Big(\mathrm{KL}^{(2)}\big(\mu', \pi\big)\Big) = 0,\quad \textrm{ i.e.,}\quad  \pi \in \overline{\mathcal{M}}_{\mathrm{bc}}.
$$
Consequently, any minimizer $\pi^*$ of $F(\pi)$ must belong to $\overline{\mathcal{M}}_{\mathrm{bc}}$, which gives the desired results.
}
\hfill\Halmos
\end{proof}

\subsection{Proof of Lemma~\ref{lem:dpp2}}
\label{subsec: proof of lemma dpp2}

\begin{proof}{Proof.}
We prove two parts separately.

\noindent\underline{Part 1: $\widetilde V_n=\widetilde\Gamma_n[\widetilde V_{n+1}]$.}
We will prove $\widetilde V_n \geq \widetilde\Gamma_n[\widetilde V_{n+1}]$ and $\widetilde V_n \leq \widetilde\Gamma_n[\widetilde V_{n+1}]$, respectively.
\begin{enumerate}
    \item For any $\pi\in\Pi_{\mathrm{rel}}$, we consider $Q_{n+1}^{\pi}\in\mathcal U_{\mathrm{rel}}$ for all $n \in [N-1]_0$, and
    \begin{align*}
        \widetilde J_n^\pi
        &=
        c_n
        +\beta\Big[\mathrm{KL}_{n+1}^{\!(1)}(\mu,\pi)+\mathrm{KL}_{n+1}^{\!(2)}(\mu',\pi)\Big]
        +Q_{n+1}^{\pi}[\widetilde J_{n+1}^{\pi}] \nonumber\\
        &\geq
        c_n
        +\beta\Big[\mathrm{KL}_{n+1}^{\!(1)}(\mu,\pi)+\mathrm{KL}_{n+1}^{\!(2)}(\mu',\pi)\Big]
        +Q_{n+1}^{\pi}[\widetilde V_{n+1}] \nonumber\\
        &=\widetilde\Gamma_n^{Q_{n+1}^{\pi}}[\widetilde V_{n+1}]
        \geq
        \widetilde\Gamma_n[\widetilde V_{n+1}].
    \end{align*}
    Here, the first inequality follows from $\widetilde J_{n+1}^{\pi}\geq \widetilde V_{n+1}$ and the monotonicity of the operator $Q_{n+1}^{\pi}$, and the last inequality follows from the definition of $\widetilde\Gamma_n$ in \eqref{eq:bellman-op-n-2-2}. Then, by the definition of $\widetilde V_n$ in \eqref{eq:relaxed-opt-cost-n} as the infimum over $\pi\in\Pi_{\mathrm{rel}}$, we obtain
    $$
    \widetilde V_n\geq \widetilde\Gamma_n[\widetilde V_{n+1}].
    $$

    \item For any $\epsilon>0$, there exist kernels $\widetilde Q_1^{\epsilon},\dots,\widetilde Q_N^{\epsilon}\in\mathcal U_{\mathrm{rel}}$ such that for any $n \in [N-1]_0$,
    \begin{equation}
    \label{eq:lemma_rel_part1_eps_kernel}
    \widetilde\Gamma_n[\widetilde V_{n+1}]
    \leq
    c_n
    +\beta\Big[\mathrm{KL}_{n+1}^{\!(1)}(\mu,\widetilde\pi^\epsilon)+\mathrm{KL}_{n+1}^{\!(2)}(\mu',\widetilde\pi^\epsilon)\Big]
    +\widetilde Q_{n+1}^{\epsilon}[\widetilde V_{n+1}]
    \leq
    \widetilde\Gamma_n[\widetilde V_{n+1}]+\epsilon\,2^{-(n+1)}.
    \end{equation}
    Then, for any $\widetilde\pi_0\in\mathcal P(S\times S)$, we construct a relaxed Markov policy
    \begin{align}
    \label{eq:lemma_rel_part1_pi_eps}
    \widetilde\pi^\epsilon:=\pi_{\widetilde\pi_0,\pmb{\widetilde Q}^{\epsilon}}
    \qquad\text{with}\qquad
    \pmb{\widetilde Q}^{\epsilon}=(\widetilde Q_1^\epsilon,\dots,\widetilde Q_N^\epsilon).
    \end{align}
    Next, we prove by backward induction that for $n \in [N-1]_0$,
    \begin{equation}
    \label{eq:lemma_rel_part1_ind_bound}
    \widetilde J_n^{\widetilde\pi^\epsilon}
    \leq
    \widetilde\Gamma_n[\widetilde V_{n+1}]
    +
    \epsilon\sum_{k=n}^{N-1}2^{-(k+1)}.
    \end{equation}

    When $n=N-1$, by the terminal condition in \eqref{eq:relaxed-terminal-cost} and the representation of $\widetilde J_n^\pi$ in \eqref{eq:relaxed-cost-n}, we have
    \begin{align*}
        \widetilde J_{N-1}^{\widetilde\pi^\epsilon}
        &=
        c_{N-1}
        +\beta\Big[\mathrm{KL}_{N}^{\!(1)}(\mu,\widetilde\pi^\epsilon)+\mathrm{KL}_{N}^{\!(2)}(\mu',\widetilde\pi^\epsilon)\Big]
        +\widetilde Q_N^\epsilon[\widetilde J_N^{\widetilde\pi^\epsilon}] \nonumber\\
        &=
        c_{N-1}
        +\beta\Big[\mathrm{KL}_{N}^{\!(1)}(\mu,\widetilde\pi^\epsilon)+\mathrm{KL}_{N}^{\!(2)}(\mu',\widetilde\pi^\epsilon)\Big]
        +\widetilde Q_N^\epsilon[\widetilde V_N] \nonumber\\
        &\leq
        \widetilde\Gamma_{N-1}[\widetilde V_N]+\epsilon\,2^{-N},
    \end{align*}
    where the last inequality follows from the inequality in \eqref{eq:lemma_rel_part1_eps_kernel} with $n=N-1$.
    Assume that \eqref{eq:lemma_rel_part1_ind_bound} holds for $n \in [N-2]_0$. Then, by the monotonicity of $\widetilde Q_{n+1}^{\epsilon}$ and the induction hypothesis for $n+1$, we deduce that
    \begin{align*}
    \widetilde J_n^{\widetilde\pi^\epsilon}
    &=
    c_n
    +\beta\Big[\mathrm{KL}_{n+1}^{\!(1)}(\mu,\widetilde\pi^\epsilon)+\mathrm{KL}_{n+1}^{\!(2)}(\mu',\widetilde\pi^\epsilon)\Big]
    +\widetilde Q_{n+1}^{\epsilon}[\widetilde J_{n+1}^{\widetilde\pi^\epsilon}] \\
    &\leq
    c_n
    +\beta\Big[\mathrm{KL}_{n+1}^{\!(1)}(\mu,\widetilde\pi^\epsilon)+\mathrm{KL}_{n+1}^{\!(2)}(\mu',\widetilde\pi^\epsilon)\Big] \\
    &\qquad\qquad
    +\widetilde Q_{n+1}^{\epsilon}\left[
    \widetilde\Gamma_{n+1}[\widetilde V_{n+2}]
    +\epsilon\sum_{k=n+1}^{N-1}2^{-(k+1)}
    \right] \\
    &\leq
    c_n
    +\beta\Big[\mathrm{KL}_{n+1}^{\!(1)}(\mu,\widetilde\pi^\epsilon)+\mathrm{KL}_{n+1}^{\!(2)}(\mu',\widetilde\pi^\epsilon)\Big]
    +\widetilde Q_{n+1}^{\epsilon}[\widetilde V_{n+1}]
    +\epsilon\sum_{k=n+1}^{N-1}2^{-(k+1)} \\
    &\leq
    \widetilde\Gamma_n[\widetilde V_{n+1}]
    +\epsilon\sum_{k=n}^{N-1}2^{-(k+1)},
    \end{align*}
    where the last inequality invokes the second inequality in \eqref{eq:lemma_rel_part1_eps_kernel}.
    
    Now, we have proved that \eqref{eq:lemma_rel_part1_ind_bound} holds for all $n \in [N-1]_0$. Since $\sum_{k=n}^{N-1}2^{-(k+1)}<1$, we further derive
    \begin{equation}
    \label{eq:ineq_tildeJ_bellman}
        \widetilde J_n^{\widetilde\pi^\epsilon}\leq \widetilde\Gamma_n[\widetilde V_{n+1}]+\epsilon.
    \end{equation}
    Combining the inequality \eqref{eq:ineq_tildeJ_bellman} with the optimality of $\widetilde V_n$ defined in \eqref{eq:relaxed-opt-cost-n}, we obtain
    $$
    \widetilde V_n\leq \widetilde J_n^{\widetilde\pi^\epsilon}\leq \widetilde\Gamma_n[\widetilde V_{n+1}]+\epsilon.
    $$
    Letting $\epsilon\downarrow0$ yields $\widetilde V_n\leq \widetilde\Gamma_n[\widetilde V_{n+1}]$.
\end{enumerate}

Combining the inequalities in both directions, we obtain
$$
\widetilde V_n=\widetilde\Gamma_n[\widetilde V_{n+1}].
$$

\vspace{4pt}
\noindent\underline{Part 2: Representation of $W_{\mathrm{rel}}(\mu,\mu')$.}
First, for any $\pi\in\Pi_{\mathrm{rel}}$, using the tower property and the expression of $\widetilde J_n^\pi$ in \eqref{eq:relaxed-cost-n-OG}, we have
\begin{align*}
&\E_{\pi}\left[\sum_{n=0}^{N}c_n(Y_n,Y_n')
+\beta\Big(\mathrm{KL}_{0}^{\!(1)}(\mu,\pi)+\mathrm{KL}_{0}^{\!(2)}(\mu',\pi)\Big)\right] \nonumber\\
&\qquad=
\E_{\pi_0}\left[
\widetilde J_0^\pi(Y_0,Y_0')
+\beta\Big(\mathrm{KL}_{0}^{\!(1)}(\mu,\pi)+\mathrm{KL}_{0}^{\!(2)}(\mu',\pi)\Big)
\right]. \nonumber
\end{align*}
Therefore, \eqref{eq:relaxed-bicausal-ot} is equivalent to
$$
W_{\mathrm{rel}}(\mu,\mu')
=
\inf_{\pi\in\Pi_{\mathrm{rel}}}
\E_{\pi_0}\left[
\widetilde J_0^\pi(Y_0,Y_0')
+\beta\Big(\mathrm{KL}_{0}^{\!(1)}(\mu,\pi)+\mathrm{KL}_{0}^{\!(2)}(\mu',\pi)\Big)
\right].
$$
To show the equivalent representation for $W_{\mathrm{rel}}(\mu,\mu')$, we combine the following two results.

\begin{enumerate}
    \item For any $\pi\in\Pi_{\mathrm{rel}}$, we have $\widetilde J_0^\pi\geq \widetilde V_0$, and thus
    \begin{align*}
        &\E_{\pi_0}\left[
        \widetilde J_0^\pi(Y_0,Y_0')
        +\beta\Big(\mathrm{KL}_{0}^{\!(1)}(\mu,\pi)+\mathrm{KL}_{0}^{\!(2)}(\mu',\pi)\Big)
        \right] \\
        &\qquad\geq
        \E_{\pi_0}\left[
        \widetilde V_0(Y_0,Y_0')
        +\beta\Big(\mathrm{KL}_{0}^{\!(1)}(\mu,\pi)+\mathrm{KL}_{0}^{\!(2)}(\mu',\pi)\Big)
        \right] \\
        &\qquad\geq
        \inf_{\pi_0\in\mathcal P(S\times S)}
        \E_{\pi_0}\left[
        \widetilde V_0(Y_0,Y_0')
        +\beta\Big(\mathrm{KL}_{0}^{\!(1)}(\mu,\pi)+\mathrm{KL}_{0}^{\!(2)}(\mu',\pi)\Big)
        \right].
    \end{align*}
    The definition of $W_{\mathrm{rel}}(\mu,\mu')$ in \eqref{eq:relaxed-bicausal-ot} as the infimum over $\pi\in\Pi_{\mathrm{rel}}$ yields
    $$
    W_{\mathrm{rel}}(\mu,\mu')
    \geq
    \inf_{\pi_0\in\mathcal P(S\times S)}
    \E_{\pi_0}\left[
    \widetilde V_0(Y_0,Y_0')
    +\beta\Big(\mathrm{KL}_{0}^{\!(1)}(\mu,\pi)+\mathrm{KL}_{0}^{\!(2)}(\mu',\pi)\Big)
    \right].
    $$

    \item For any $\epsilon>0$, there exists $\widetilde\gamma_0^\epsilon\in\mathcal P(S\times S)$ such that
    \begin{align}
    \label{eq:lemma_rel_part2_gamma0}
        &\E_{\widetilde\gamma_0^\epsilon}\left[
        \widetilde V_0(Y_0,Y_0')
        +\beta\Big(\mathrm{KL}_{0}^{\!(1)}(\mu,\widetilde\gamma^\epsilon)+\mathrm{KL}_{0}^{\!(2)}(\mu',\widetilde\gamma^\epsilon)\Big)
        \right] \\
        &\qquad\leq
        \inf_{\pi_0\in\mathcal P(S\times S)}
        \E_{\pi_0}\left[
        \widetilde V_0(Y_0,Y_0')
        +\beta\Big(\mathrm{KL}_{0}^{\!(1)}(\mu,\pi)+\mathrm{KL}_{0}^{\!(2)}(\mu',\pi)\Big)
        \right]
        +\epsilon. \nonumber
    \end{align}
    Using $\pmb{\widetilde Q}^{\epsilon}$ defined in \eqref{eq:lemma_rel_part1_pi_eps}, we construct a relaxed Markov policy
    $$
    \widetilde\gamma^\epsilon:=\pi_{\widetilde\gamma_0^\epsilon,\pmb{\widetilde Q}^{\epsilon}}\in\Pi_{\mathrm{rel}}.
    $$
    Notice that \eqref{eq:lemma_rel_part1_ind_bound} holds for any initial coupling in $\mathcal P(S\times S)$. Therefore, Part I and \eqref{eq:lemma_rel_part1_ind_bound} imply that
    $$
    \widetilde V_0\leq \widetilde J_0^{\widetilde\gamma^\epsilon}<\widetilde V_0+\epsilon,
    $$
    which further gives that
    \begin{align*}
        &\E_{\widetilde\gamma_0^\epsilon}\left[
        \widetilde J_0^{\widetilde\gamma^\epsilon}(Y_0,Y_0')
        +\beta\Big(\mathrm{KL}_{0}^{\!(1)}(\mu,\widetilde\gamma^\epsilon)+\mathrm{KL}_{0}^{\!(2)}(\mu',\widetilde\gamma^\epsilon)\Big)
        \right] \\
        &\qquad\leq
        \E_{\widetilde\gamma_0^\epsilon}\left[
        \widetilde V_0(Y_0,Y_0')
        +\beta\Big(\mathrm{KL}_{0}^{\!(1)}(\mu,\widetilde\gamma^\epsilon)+\mathrm{KL}_{0}^{\!(2)}(\mu',\widetilde\gamma^\epsilon)\Big)
        \right]
        +\epsilon \nonumber\\
        &\qquad\leq
        \inf_{\pi_0\in\mathcal P(S\times S)}
        \E_{\pi_0}\left[
        \widetilde V_0(Y_0,Y_0')
        +\beta\Big(\mathrm{KL}_{0}^{\!(1)}(\mu,\pi)+\mathrm{KL}_{0}^{\!(2)}(\mu',\pi)\Big)
        \right]
        +2\epsilon, \nonumber
    \end{align*}
    where the last inequality follows from \eqref{eq:lemma_rel_part2_gamma0}. Therefore, by the definition of $W_{\mathrm{rel}}(\mu,\mu')$ in \eqref{eq:relaxed-bicausal-ot} as the infimum over $\pi\in\Pi_{\mathrm{rel}}$ and the arbitrarily chosen $\epsilon$, we obtain
    $$
    W_{\mathrm{rel}}(\mu,\mu')
    \leq
    \inf_{\pi_0\in\mathcal P(S\times S)}
    \E_{\pi_0}\left[
    \widetilde V_0(Y_0,Y_0')
    +\beta\Big(\mathrm{KL}_{0}^{\!(1)}(\mu,\pi)+\mathrm{KL}_{0}^{\!(2)}(\mu',\pi)\Big)
    \right].
    $$
\end{enumerate}

Combining the two inequalities in both directions, we conclude that
\begin{align*}
W_{\mathrm{rel}}(\mu,\mu')
=
\inf_{\pi_0\in\mathcal P(S\times S)}
\E_{\pi_0}\left[
\widetilde V_0(Y_0,Y_0')
+\beta\Big(\mathrm{KL}_{0}^{\!(1)}(\mu,\pi)+\mathrm{KL}_{0}^{\!(2)}(\mu',\pi)\Big)
\right]. \tag*{\Halmos}
\end{align*}
\end{proof}

\section{Omitted proofs in Section~\ref{sec: algorithm and convergence}}
\label{sec: omitted proofs in section algorithm and convergence}
In this section, we provide the formal proofs of Theorems~\ref{thm:policy-grad} and~\ref{thm:beta-PG-converge}, Lemma~\ref{lem:unbiased-cov}, Corollary~\ref{cor:converge-largebeta}, and other supporting lemmas used in the proof.

\subsection{Proof of Theorem~\ref{thm:policy-grad}}
\label{subsec: proof of theorem policy grad}

\begin{proof}{Proof.}
In the following proof, we treat $J_{\mathrm{val}}(\pmb\theta)$ in~\eqref{eq:Jval-def} and $J_\mathrm{KL}(\pmb\theta)$ in~\eqref{eq:JKL-def} separately. We consider the case with arbitrary $n \in [N]_0$.

\medskip
\noindent\underline{Step 1: Gradient of $J_{\mathrm{val}}(\pmb\theta)$.}
Fix $n \in [N]_0$. For any function $F(\pmb y, \pmb y';\pmb\theta): \mathcal Y \times \mathcal Y\xmapsto{} \R$ that satisfies the condition that the expectation and differentiation can be interchanged, by direct calculations, it holds that
\begin{equation}
\label{eq:LR-general}
\nabla_{\theta_n}\E_{\pi^{\pmb\theta}}[F(\widehat{\pmb Y}, \widehat{\pmb Y}\mkern-1mu';\pmb\theta)] = \E_{\pi^{\pmb\theta}}\big[F(\widehat{\pmb Y}, \widehat{\pmb Y}\mkern-1mu';\pmb\theta)\nabla_{\theta_n}\log q_n^{\theta_n}(\widehat Y_n,\widehat Y_n' \,\big|\, \widehat Y_{n-1},\widehat Y_{n-1}')\big]+ \E_{\pi^{\pmb\theta}}[\nabla_{\theta_n}F(\widehat{\pmb Y}, \widehat{\pmb Y}\mkern-1mu';\pmb\theta)].
\end{equation}
The identity \eqref{eq:LR-general} converts differentiation of an expectation under $\pi^{\pmb\theta}$ into an expectation involving the score of the parameterized kernel.
Notice that $\sum_{k=0}^{N} c_k$ does not depend on the parameter $\theta_n$, which indicates that $\nabla_{\theta_n} \left[\sum_{k=0}^{N} c_k\right] = 0$. 

Therefore, substituting $F(\pmb y, \pmb y'; \pmb \theta)=\sum_{k=0}^{N} [c_k(y_k, y_k')]$ into the identity \eqref{eq:LR-general}, we obtain
\begin{align*}
    &\nabla_{\theta_n}J_{\mathrm{val}}(\pmb\theta)
    =\E_{\pi^{\pmb\theta}}\left[\left(\sum_{k=0}^{N}c_k(\widehat Y_k,\widehat Y_k')\right)\nabla_{\theta_n}\log q_n^{\theta_n}(\widehat Y_n,\widehat Y_n'  \,|\,   \widehat Y_{n-1},\widehat Y_{n-1}')\right] \\
    &\hspace{7pt}=\E_{\pi^{\pmb\theta}}\left[\E_{\pi^{\pmb\theta}}\left[\left(\sum_{k=0}^{N}c_k(\widehat Y_k,\widehat Y_k')\right)\nabla_{\theta_n}\log q_n^{\theta_n}(\widehat Y_n,\widehat Y_n'  \,\big|\,  \widehat Y_{n-1},\widehat Y_{n-1}') \,\biggl|\, \widehat Y_{n-1}, \widehat Y_{n-1}'\right]\right] \\
    &\hspace{7pt}\stackrel{(i)}{=}\E_{\pi^{\pmb\theta}}\left[\E_{\pi^{\pmb\theta}}\left[\sum_{k=0}^{n-1}c_k(\widehat Y_k,\widehat Y_k') \,\big|\, \widehat Y_{n-1},\widehat Y_{n-1}' \right] \E_{\pi^{\pmb\theta}}\left[\nabla_{\theta_n}\log q_n^{\theta_n}(\widehat Y_n,\widehat Y_n'  \,|\,  \widehat Y_{n-1},\widehat Y_{n-1}') \,\biggl|\, \widehat Y_{n-1}, \widehat Y_{n-1}'\right]\right] \\
    &\qquad \qquad +\E_{\pi^{\pmb\theta}}\left[\left(\sum_{k=n}^{N}c_k(\widehat Y_k,\widehat Y_k')\right)\nabla_{\theta_n}\log q_n^{\theta_n}(\widehat Y_n,\widehat Y_n'  \,|\,  \widehat Y_{n-1},\widehat Y_{n-1}')\right] \\
    &\hspace{7pt}=\E_{\pi^{\pmb\theta}}\left[\left(\sum_{k=n}^{N}c_k(\widehat Y_k,\widehat Y_k')\right)\nabla_{\theta_n}\log q_n^{\theta_n}(\widehat Y_n,\widehat Y_n'  \,\big|\,  \widehat Y_{n-1},\widehat Y_{n-1}')\right], 
\end{align*}
where $(i)$ uses the Markov property of $(\widehat{\pmb Y}, \widehat{\pmb Y}\mkern-1mu')$ that $(\widehat{\pmb Y}_{0:n-1}, \widehat{\pmb Y}\mkern-1mu'_{0:n-1})$ and $(\widehat{\pmb Y}_{n:N}, \widehat{\pmb Y}\mkern-1mu'_{n:N})$ are independent conditioning on $(\widehat Y_{n-1},\widehat Y_{n-1}')$; the last equality follows from 
\begin{align}
\label{eq:zero-score-int}
\E_{\pi^{\pmb\theta}}\bigg[\nabla_{\theta_n}\log q_n^{\theta_n}(\widehat Y_n,\widehat Y_n' \,\big|\, \widehat Y_{n-1},\widehat Y_{n-1}')\bigg] &= \E_{\pi^{\pmb\theta}}\Big[
\E_{\pi^{\pmb\theta}}\big[\nabla_{\theta_n}\log q_n^{\theta_n}(\widehat Y_n,\widehat Y_n' \,\big|\, \widehat Y_{n-1},\widehat Y_{n-1}') \,\big|\, \widehat Y_{n-1},\widehat Y_{n-1}'\big]
\Big] \nonumber \\
&= \E_{\pi^{\pmb\theta}} \Big[ \nabla_{\theta_{n}} \int
q_n^{\theta_{n}}(y_n, y_n'
 \,|\,  \widehat Y_{n-1},\widehat Y_{n-1}')\,\dd y_n \dd y_n' \Big]=0.
\end{align}

\noindent\underline{Step 2: Gradient of $J_{\mathrm{KL}}(\pmb \theta)$.}
For any $n \in [N]$, we note that for $\nabla_{\theta_n}J_{\mathrm{KL}}(\pmb \theta)=\nabla_{\theta_n}\mathrm{KL}_{n}^{\!(1)}(\mu, \pi^{\pmb\theta})+\nabla_{\theta_n}\mathrm{KL}_{n}^{\!(2)}(\mu, \pi^{\pmb\theta})$. Due to symmetry, we only consider $\nabla_{\theta_n}\mathrm{KL}_{n}^{\!(1)}(\mu, \pi^{\pmb\theta})$. Consider random variables $(Y_{n-1}, Y_{n-1}', Y_{n}, Y_{n}') \sim \mu \otimes \mu'$. With Assumptions~\ref{asm:abs_cont_density} and~\ref{asm:exchange_expectation_gradient}, as well as the definition of $\mathrm{KL}_{n}^{\!(1)}$ in \eqref{eq:expectation_kl_n_1}, we have
\begin{align*}
\nabla_{\theta_n} 
\mathrm{KL}_{n}^{\!(1)}\big(\mu, \pi^{\pmb\theta}\big) &= \E_{\mu \otimes \mu'}\Big[\nabla_{\theta_n} \log \zeta_n^{\theta_n,(1)}(Y_n \,|\,  Y_{n-1},Y_{n-1}')\Big] \\
&= -\E_{\mu \otimes \mu'}\Big[\nabla_{\theta_n} \log q_n^{\theta_n,(1)}(Y_n \,|\,  Y_{n-1},Y_{n-1}')\Big],
\end{align*}
where the last identity follows from the definition of $\zeta_n^{\theta_n, (1)}$ in \eqref{eq:def_zeta_0} and \eqref{eq:def_zeta_n}. \hfill\Halmos
\end{proof}

\subsection{Proof of Lemma~\ref{lem:unbiased-cov}}
\label{subsec: proof of lemma unbiased cov}

\begin{proof}{Proof.}
For notational simplicity, denote
$\widehat l_{k,n}^{(b)}:=\widehat l_{k,n}\big(\widehat Y_{n-1}^{(b)},\widehat Y_{n-1}'^{(b)}\big)$ for each $n \in [N]_0$ and when there is no confusion, we omit the input random variable $\widehat Y_n^{(b)}$ and $\widehat Y_n'^{(b)}$.

\medskip
\noindent\underline{Part I: Unbiasedness.}
For any $n \in [N]_0$ and any trajectory index $b$, we have
\begin{align}
&\quad \,\,
\E_{\pi^{\pmb\theta_k}}\bigg[\Big(\widehat V_{k,n}^{(b)}-\widehat l_{k,n}^{(b)}\Big)\widehat S_{k,n}^{(b)}-\beta\widehat K_{k,n}^{(b)} \,\Big|\, \pmb\theta_k\bigg] \nonumber \\
&=\E_{\pi^{\pmb\theta_k}}\bigg[\widehat V_{k,n}^{(b)}\widehat S_{k,n}^{(b)}-\beta\widehat K_{k,n}^{(b)} \,\Big|\, \pmb\theta_k\bigg]-\E_{\pi^{\pmb\theta_k}}\bigg[\widehat l_{k,n}^{(b)}\,\E_{\pi^{\pmb\theta_k}}\Big[\widehat S_{k,n}^{(b)} \,\Big|\,  \widehat Y_{n-1}^{(b)},\widehat Y_{n-1}'^{(b)} \Big] \,\Big|\, \pmb\theta_k\bigg] \label{eq:L_mul_S_zero} \\
&=\E_{\pi^{\pmb\theta_k}}\bigg[\widehat V_{k,n}^{(b)}\widehat S_{k,n}^{(b)}-\beta\widehat K_{k,n}^{(b)} \,\Big|\, \pmb\theta_k\bigg], 
\label{eq:unbiased-L}
\end{align}
where the first identity follows from the fact that conditioning on $(\widehat Y_{n-1}^{(b)},\widehat Y_{n-1}'^{(b)})$, $\widehat l_{k,n}$ is independent of $\widehat Y_{n}^{(b)},\widehat Y_{n}'^{(b)},\dots,\widehat Y_{N},\widehat Y_{N}'^{(b)}$; the second equality follows from the identity \eqref{eq:zero-score-int} with $\theta_{k,n}$. By the definitions of $\widehat V_{k,n}^{(b)}$, $\widehat S_{k,n}^{(b)}$, and $\widehat K_{k,n}^{(b)}$ for $i=1,2$ in \eqref{eq:Vhat-def}--\eqref{eq:Khat-def}, using the expectation representation of $\nabla_{\theta_{k,n}}J_{\mathrm{val}}$ and $\nabla_{\theta_{k,n}}J_{\mathrm{KL}}$ in \eqref{eq:grad-Jval-n}--\eqref{eq:grad-JKL-n} with $\theta_n=\theta_{k,n}$, we derive
\begin{equation}
\label{eq:unbiased-VK}
\E_{\pi^{\pmb\theta_k}}\bigg[\widehat V_{k,n}^{(b)}\widehat S_{k,n}^{(b)}-\beta\widehat K_{k,n}^{(b)} \,\Big|\, \pmb\theta_k\bigg] = \nabla_{\theta_{k,n}}J_{\mathrm{val}}(\pmb\theta_k) + \beta\nabla_{\theta_{k,n}}J_{\mathrm{KL}}(\pmb\theta_k).
\end{equation}
Immediately, by the definitions of $\widehat G_{k,\mathrm{val},n}$ and $\widehat G_{k,\mathrm{KL},n}$ in \eqref{eq:agg-Gval-GKL-def}, combining the identities \eqref{eq:unbiased-L} with \eqref{eq:unbiased-VK} and summing over $b \in [B]$, we obtain
\begin{equation}
\label{eq:unbiased-Gval-GKL-n}
\E_{\pi^{\pmb\theta_k}}\big[\widehat G_{k,\mathrm{val},n} + \beta\widehat G_{k,\mathrm{KL},n} \,\big|\, \pmb\theta_k\big]=\,\nabla_{\theta_{k,n}}J_{\mathrm{val}}(\pmb\theta_k)+\,\beta\,\nabla_{\theta_{k,n}}J_{\mathrm{KL}}(\pmb\theta_k).
\end{equation} 
Summing \eqref{eq:unbiased-Gval-GKL-n} over $n \in [N]_0$, we get the desired results in \eqref{eq:unbiased-gk0}.

Recall the definition of $\widehat g_k$ in \eqref{eq:gkhat-def}. Stacking \eqref{eq:unbiased-Gval-GKL-n} over $n$, and using the decomposition $\nabla J_\beta=\nabla J_{\mathrm{val}}+\beta\,\nabla J_{\mathrm{KL}}$ in \eqref{eq:Jbeta-decom} of Theorem~\ref{thm:policy-grad}, we obtain the identity \eqref{eq:unbiased-gk} for $\widehat g_k$.

\medskip
\noindent\underline{Part II: PSD covariance improvement.}
Fix $n \in [N]_0$ and $b \in [B]$. Denote the $n$-th element of $\widehat g_k$ by
$$
\widehat g_{k,n} =\ \widehat G_{k,\mathrm{val},n}+\ \beta\,\widehat G_{k,\mathrm{KL},n} -\ \widehat L_{k,n},
$$
and the corresponding $n$-th element of $\widehat g_k^{0}$ by
$$
\widehat g_{k,n}^{0} =\ \widehat G_{k,\mathrm{val},n}+\ \beta\, \widehat G_{k,\mathrm{KL},n}.
$$
For any deterministic vector $u \in \mathbb{R}^d$ with the same dimension as $\widehat S_{k,n}^{(b)}$, by the formula of $\widehat G_{k,\mathrm{val},n}$, $\widehat G_{k,\mathrm{KL},n}$, and $\widehat L_{k,n}$ in \eqref{eq:Gval-GKL-n-def} and \eqref{eq:agg-control-variate-def}, the variance ($\operatorname{Var}$) of $u^\top \widehat g_{k,n}$ satisfies
\begin{align}
\operatorname{Var}\Big[u^\top \widehat g_{k,n} \,\big|\, \pmb\theta_k\Big] &= \frac{1}{B^2}\sum_{b=1}^{B}\operatorname{Var}\Big[u^\top \Big(\big(\widehat V_{k,n}^{(b)}-\widehat l_{k,n}^{(b)}\big)\widehat S_{k,n}^{(b)}-\beta\widehat K_{k,n}^{(b)}\Big) \,\big|\, \pmb\theta_k\Big] \label{eq:var-gkn} \\
\operatorname{Var}\Big[u^\top \widehat g_{k,n}^{0} \,\big|\, \pmb\theta_k\Big] &= \frac{1}{B^2}\sum_{b=1}^{B}\operatorname{Var}\Big[u^\top \Big(\widehat V_{k,n}^{(b)}\widehat S_{k,n}^{(b)}-\beta\widehat K_{k,n}^{(b)}\Big) \,\big|\, \pmb\theta_k\Big] \label{eq:var-gkn0}.
\end{align}
Here, we use the fact that the trajectories are i.i.d. across $b=1,\dots,B$. Using the independence between trajectories $(\widehat{\pmb Y}^{(b)},\widehat{\pmb Y}'^{(b)})$ and $(\pmb Y^{(b)},\pmb Y'^{(b)})$, and incorporating the identities \eqref{eq:unbiased-L}--\eqref{eq:unbiased-Gval-GKL-n} into \eqref{eq:var-gkn} and \eqref{eq:var-gkn0}, we obtain
\begin{align}
\E_{\pi^{\pmb\theta_k}}\Big[u^\top \widehat g_{k,n} \,\big|\, \pmb\theta_k\Big]
&= \E_{\pi^{\pmb\theta_k}}\Big[u^\top \widehat g_{k,n}^{0} \,\big|\, \pmb\theta_k\Big],\quad \textrm{i.e.,} \quad \E_{\pi^{\pmb\theta_k}}\Big[\widehat g_k \,\big|\, \pmb\theta_k \Big] = \E_{\pi^{\pmb\theta_k}}\Big[\widehat g_k^{0} \, \big| \, \pmb\theta_k \Big]. \label{eq:expectation_equal}
\end{align}
Notice that the population weighted least-squares
\begin{align*}
&\widehat w_{k,n}\in\arg\min_{w}\ \frac{1}{B}\sum_{b=1}^{B} \E_{\pi^{\pmb\theta_k}} \Big\|\big(\widetilde V_{k,n}^{(b)}-w^\top\phi_{k,n}(\widetilde Y_{n-1}^{(b)},\widetilde Y_{n-1}'^{(b)})\big)\widetilde S_{k,n}^{(b)}\Big\|^2, \label{eq:wls-val}
\end{align*}
admits an optimal solution as
\begin{align*}
l_{k,n}^{*}(\widetilde Y_{n-1}^{(b)},\widetilde Y_{n-1}'^{(b)}) =  \frac{\sum_{b=1}^{B}\E\Big[\widetilde V_{k,0}^{(b)}\big\|\widetilde S_{k,0}^{(b)}\big\|_2^2 \,\Big|\, \widetilde Y_{n-1}^{(b)},\widetilde Y_{n-1}'^{(b)}\Big]}{\sum_{b=1}^{B}\E\Big[\big\|\widetilde S_{k,0}^{(b)}\big\|_2^2 \,\Big|\, \widetilde Y_{n-1}^{(b)},\widetilde Y_{n-1}'^{(b)}\Big]}.
\end{align*}
Using the fact that $(\widehat {\pmb Y}^{(b)}, \widehat {\pmb Y}'^{(b)})$ and $(\widetilde {\pmb Y}^{(b)}, \widetilde {\pmb Y}'^{(b)})$ are identically distributed, we obtain that the estimator $\widehat l_{k,n}$ is consistent for $l_{k,n}^\star$ in the weighted $L^2$-distance induced by \(\|\widehat S_{k,n}\|_2^2\), namely,
\begin{equation}
\label{eq:weighted-l2-consistency}
\lim_{B \to \infty}\mathbb E\Big[
\big(
\widehat l_{k,n}-l_{k,n}^\star
\big)^2
\|\widehat S_{k,n}\|_2^2
\Big] = 0.
\end{equation}
Therefore, combining the identities \eqref{eq:var-gkn}--\eqref{eq:expectation_equal} with the asymptotic convergence of $\widehat l_{k,n}$ defined in \eqref{eq:weighted-l2-consistency}, we obtain
\begin{equation*}
\operatorname{Var}\Big[u^\top \widehat g_{k,n} \,\big|\, \pmb\theta_k\Big] \leq \operatorname{Var}\Big[u^\top \widehat g_{k,n}^{0} \,\big|\, \pmb\theta_k\Big], \quad \textrm{ as } B \to \infty.
\end{equation*}
Then, the arbitrariness $u$ gives the variance induction of $\widehat g_{k,n}$.

Finally, we write the covariance of $\widehat g_k$ in block form
\begin{align}
\label{eq:cov-gk}
\operatorname{Cov}\big(\widehat g_k \,\big|\, \pmb\theta_k \big) = \begin{bmatrix}
\operatorname{Cov}\big(\widehat g_{k,0} \,\big|\, \pmb\theta_k \big) & \cdots & \operatorname{Cov}\big(\widehat g_{k,0},\widehat g_{k,N} \,\big|\, \pmb\theta_k \big)\\
\vdots & \ddots & \vdots\\
\operatorname{Cov}\big(\widehat g_{k,N},\widehat g_{k,0} \,\big|\, \pmb\theta_k \big) & \cdots & \operatorname{Cov}\big(\widehat g_{k,N} \,\big|\, \pmb\theta_k \big)
\end{bmatrix}.
\end{align}
Notice that for each $n \in [N]_0$, \eqref{eq:L_mul_S_zero} gives
$$
\E\Big[\widehat l_{k,n}^{(b)} \widehat S_{k,n}^{(b)} \,\big|\, \widehat Y_{n-1}^{(b)}, \widehat Y_{n-1}'^{(b)}\Big] = 0,
$$
which further indicates that
\begin{equation}
\begin{split}
\label{eq:cond_expectation_equal}
    &\quad \,\, \E_{\pi^{\pmb\theta_k}}\Big[\widehat g_{k,n} \,\big|\, \pmb\theta_k, \widehat Y_{n-1}^{(1)}, \widehat Y_{n-1}'^{(1)},\widetilde Y_{n-1}^{(1)}, \widetilde Y_{n-1}'^{(1)},\dots, \widehat Y_{n-1}^{(B)}, \widehat Y_{n-1}'^{(B)}, \widetilde Y_{n-1}^{(B)}, \widetilde Y_{n-1}'^{(B)} \Big] \\
    &= \E_{\pi^{\pmb\theta_k}}\Big[\widehat g_{k,n}^{0} \,\big|\, \pmb\theta_k, \widehat Y_{n-1}^{(1)}, \widehat Y_{n-1}'^{(1)},\widetilde Y_{n-1}^{(1)}, \widetilde Y_{n-1}'^{(1)},\dots, \widehat Y_{n-1}^{(B)}, \widehat Y_{n-1}'^{(B)}, \widetilde Y_{n-1}^{(B)}, \widetilde Y_{n-1}'^{(B)} \Big].
\end{split}
\end{equation}
Then, for any $n \neq m$, using the tower property and the identities \eqref{eq:expectation_equal} and \eqref{eq:cond_expectation_equal}, we have
\begin{equation}
\label{eq:cov_nondiag_equal}
    \operatorname{Cov}\big(\widehat g_{k,n},\widehat g_{k,m} \,\big|\, \pmb\theta_k \big) \, =\, \operatorname{Cov}\big(\widehat g_{k,n}^{0},\widehat g_{k,m}^{0} \,\big|\, \pmb\theta_k \big).
\end{equation}
Therefore, using the formula of the covariance in \eqref{eq:cov-gk} (replacing each $\widehat g_{k,n}$ with $\widehat g_{k,n}^{0}$ to obtain $\operatorname{Cov}(\widehat g_k^{0} \,|\, \pmb\theta_k )$) and the property of the non-diag elements in \eqref{eq:cov_nondiag_equal} we obtain
\begin{align*}
&\quad\,\, \operatorname{Cov}\big(\widehat g_k \,\big|\, \pmb\theta_k \big)-\operatorname{Cov}\big(\widehat g_k^{0} \,\big|\, \pmb\theta_k \big) \\
&=\operatorname{diag}\Big(\operatorname{Cov}(\widehat g_{k,0} \,\big|\, \pmb\theta_k \big)-\operatorname{Cov}\big(\widehat g_{k,0}^{0} \,\big|\, \pmb\theta_k \big), \dots, \operatorname{Cov}\big(\widehat g_{k,N} \,\big|\, \pmb\theta_k \big)-\operatorname{Cov}\big(\widehat g_{k,N}^{0} \,\big|\, \pmb\theta_k \big)\Big) \preceq 0. \tag*{\Halmos}
\end{align*}

\end{proof}

\subsection{Proof of Theorem~\ref{thm:beta-PG-converge}}
\label{subsec: proof of theorem beta pg converge}

\begin{proof}{Proof.}
We first prove the average regret bound and then the last-iteration regret bound.

\noindent\underline{Part I: Average regret bound.}
For each $k \geq 1$, taking expectation with respect to $\pmb\theta_k$ in \eqref{eq:onestep_PG} of Lemma~\ref{lem:onestep-PG}, we have
\begin{align*}
    \mathrm{Err}_{k+1}(\beta) \leq \big(1-\mu(\beta)\eta_k\big)\,\mathrm{Err}_k(\beta)\ +\ \frac{L(\beta)\,\sigma^2(\beta)}{2B}\,\eta_k^2.
\end{align*}
Substituting $a_k = \mathrm{Err}_k$, $\rho_k = \mu(\beta)\eta_k$, and $b_k = L(\beta)\,\sigma^2(\beta)\,\eta_k^2 / (2B)$ into the inequality \eqref{eq:gronwall-ineq} in Lemma~\ref{lem:gronwall-err}, we obtain
\begin{align}
    \mathrm{Err}_{k+1}(\beta) &\leq \Big(\prod_{s=1}^{k}(1-\mu(\beta)\eta_s)\Big)\mathrm{Err}_1(\beta)\ +\ \frac{L(\beta)\,\sigma^2(\beta)}{2B}\sum_{t=1}^{k}\Big(\eta_t^2\prod_{s=t+1}^{k}(1-\mu(\beta)\eta_s)\Big). \label{eq:bound-k-regret}
\end{align}
Averaging the inequality \eqref{eq:bound-k-regret} over $k \in [K-1]_0$ and rearranging the terms yields
\begin{equation*}
\label{eq:bound-avg-reg}
\overline{\mathrm{Reg}}_K(\beta) \leq
\frac{\mathrm{Err}_1(\beta)}{K}\sum_{k=1}^{K}\prod_{s=1}^{k-1}\big(1-\mu(\beta)\eta_s\big)
\ +\ \frac{L(\beta)\sigma^2(\beta)}{2 B K}\sum_{k=1}^{K-1}\eta_k^2\sum_{t=k}^{K-1}\prod_{s=k+1}^{t}\big(1-\mu(\beta)\eta_s\big).
\end{equation*}
Immediately, we obtain the desired result in \eqref{eq:bound-avg-regret-monotone}.

\medskip
\noindent\underline{Part II: Last–iterate proximity.}
By the decomposition of $J_\beta(\pmb\theta)$ in \eqref{eq:Jbeta-decom}, for any $\pmb\theta \in \mathcal A_{\mathrm{KL}}$, we have
\begin{equation}
\label{eq:prox-sandwich}
J_\beta(\pmb\theta)-\inf_{\pmb\theta\in\mathcal A_{\mathrm{KL}}}J_{\mathrm{val}}(\pmb\theta)\ \leq J_\beta(\pmb\theta)-J_\beta^{*}.
\end{equation}
Applying the inequalities $1-\mu(\beta)\eta_s \leq \exp(-\mu(\beta)\eta_s)$ to \eqref{eq:bound-k-regret} with $k=K-1$ and replacing the summation index $s$ with $k$, we obtain
\begin{equation}
\label{eq:bound-K-regret}
    \mathrm{Err}_K(\beta) \leq \Big(\prod_{k=1}^{K}(1-\mu(\beta)\eta_k)\Big) \cdot \mathrm{Err}_1(\beta)\ +\ \frac{L(\beta)\,\sigma^2(\beta)}{2B}\sum_{k=1}^{K-1}\Big(\eta_k^2 \big(\prod_{t=k+1}^{K-1}(1-\mu(\beta)\eta_t)\big) \Big).
\end{equation}
Combining \eqref{eq:bound-K-regret} with \eqref{eq:prox-sandwich} yields \eqref{eq:bound-last-iterate-prox}. \hfill\Halmos
\end{proof}

\subsection{Proof of Corollary~\ref{cor:converge-largebeta}}
\label{subsec: proof of corollary converge-largebeta}

\begin{proof}{Proof.}
By the definition of $K_0(\beta,\epsilon)$ and $B \geq B_0(\beta,\epsilon,K)$ in \eqref{eq:K0-choice} and \eqref{eq:B0-choice}, we have
$$
\exp\Big(-\mu(\beta)\sum_{k=1}^{K-1}\eta_k\Big) \cdot \mathrm{Err}_1(\beta) \leq \frac{\epsilon}{2},\ \textrm{ and }\ \ \frac{L(\beta)\,\sigma(\beta)^2}{2B}\,\sum_{k=1}^{K}\eta_k^2 \leq \frac{\epsilon}{2}.
$$
Meanwhile, notice that $\exp\big(\sum_{t=k+1}^{K-1}-\mu(\beta)\eta_t\big) \leq 1$. Combining the above results with the inequality \eqref{eq:bound-last-iterate-prox} in Theorem~\ref{thm:beta-PG-converge}, we obtain the desired result.
\hfill\Halmos
\end{proof}

\subsection{Supporting lemmas and proofs}
\label{subsec: supporting lemmas and proofs in section algorithm and convergence}

The proof of Theorem~\ref{thm:beta-PG-converge} relies on the following technical Lemmas \ref{lem:beta-noise-gk}--\ref{lem:gronwall-err}. The first two lemmas quantify the second moment of $\widehat g_k$ in \eqref{eq:gkhat-def} and the one-step error for the policy gradient iterate.
\begin{lemma}
\label{lem:beta-noise-gk}
Suppose that Assumption~\ref{asm:beta-smooth-PL-var} holds. Then, for any $\beta \geq 0$, we have
\begin{equation}
\label{eq:sigma-beta-gk}
\E_{\pi^{\pmb\theta_k}}\Big[\big\|\widehat g_k-\nabla J_{\beta}(\pmb\theta_k)\big\|^2\,\big|\,\pmb\theta_k\Big]
\leq \frac{\big(\sigma_{\mathrm{val}}+\beta\,\sigma_{\mathrm{KL}}\big)^2}{B}.
\end{equation}
\end{lemma}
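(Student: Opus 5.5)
The plan is to decompose the estimation error of $\widehat g_k$ into its value and KL components and apply Minkowski's inequality in $L^2$ conditional on $\pmb\theta_k$. By Lemma~\ref{lem:unbiased-cov} and the decomposition~\eqref{eq:Jbeta-grad-decom} in Theorem~\ref{thm:policy-grad}, we have $\nabla J_\beta(\pmb\theta_k)=\E[\widehat G_{k,\mathrm{val}}\mid\pmb\theta_k]+\beta\,\E[\widehat G_{k,\mathrm{KL}}\mid\pmb\theta_k]$. Writing $\Delta_{\mathrm{val}}:=\widehat G_{k,\mathrm{val}}-\E[\widehat G_{k,\mathrm{val}}\mid\pmb\theta_k]$ and $\Delta_{\mathrm{KL}}:=\widehat G_{k,\mathrm{KL}}-\E[\widehat G_{k,\mathrm{KL}}\mid\pmb\theta_k]$, this gives
\[
\widehat g_k-\nabla J_\beta(\pmb\theta_k)=\Delta_{\mathrm{val}}+\beta\,\Delta_{\mathrm{KL}}-\widehat L_k .
\]

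The first step treats the estimator without control variates, i.e.\ $\widehat g_k^0$. Minkowski's inequality in the conditional $L^2$ norm gives
\[
\Big(\E\|\Delta_{\mathrm{val}}+\beta\Delta_{\mathrm{KL}}\|^2\Big)^{1/2}\le\Big(\E\|\Delta_{\mathrm{val}}\|^2\Big)^{1/2}+\beta\Big(\E\|\Delta_{\mathrm{KL}}\|^2\Big)^{1/2}.
\]
Inserting the bounds~\eqref{eq:component-variance-bounds-G} from Assumption~\ref{asm:beta-smooth-PL-var}, namely $\E\|\Delta_{\mathrm{val}}\|^2\le\sigma_{\mathrm{val}}^2/B$ and $\E\|\Delta_{\mathrm{KL}}\|^2\le\sigma_{\mathrm{KL}}^2/B$, and squaring yields $(\sigma_{\mathrm{val}}+\beta\sigma_{\mathrm{KL}})^2/B$. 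Since the generated batch and the reference batch are independent, the cross term $2\beta\,\E[\Delta_{\mathrm{val}}^\top\Delta_{\mathrm{KL}}]$ actually vanishes, which gives the sharper bound $(\sigma_{\mathrm{val}}^2+\beta^2\sigma_{\mathrm{KL}}^2)/B$. Either form is dominated by the stated right-hand side, so Minkowski alone suffices and the independence argument is not needed.

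The second step, and the main obstacle, is the control-variate term $\widehat L_k$. It has conditional mean zero by~\eqref{eq:L_mul_S_zero}, but it is not covered by Assumption~\ref{asm:beta-smooth-PL-var}. There are two routes.
\begin{enumerate}
\item[(a)] Read the third item of Assumption~\ref{asm:beta-smooth-PL-var} as applying to the centered value component actually used in $\widehat g_k$, namely $\widehat G_{k,\mathrm{val}}-\widehat L_k$, which is also an unbiased estimator of $\nabla J_{\mathrm{val}}$. The argument above then carries over verbatim.
\item[(b)] Invoke the covariance comparison of Lemma~\ref{lem:unbiased-cov}, $\operatorname{Cov}(\widehat g_k\mid\pmb\theta_k)\preceq\operatorname{Cov}(\widehat g_k^0\mid\pmb\theta_k)$, and take traces. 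Since both estimators have the same mean $\nabla J_\beta(\pmb\theta_k)$, the trace of each covariance is exactly its mean squared error, so the bound for $\widehat g_k^0$ transfers to $\widehat g_k$.
\end{enumerate}
Route (b) holds only asymptotically in $B$, so I would state the lemma under interpretation (a). I would remark that (b) gives the same conclusion in the large-batch regime, and that (a) is the natural reading, because the variance constants are meant to describe the estimator that is fed into the update $\pmb\theta_{k+1}=\pmb\theta_k-\eta_k\widehat g_k$.
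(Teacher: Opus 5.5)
Your Minkowski step is the paper's proof: it also reduces to the control-variate-free estimator $\widehat g_k^{0}$ and applies conditional Minkowski to the two centered components with the bounds \eqref{eq:component-variance-bounds-G}. The paper's display inserts a spurious factor $1/B$ into $\widehat g_k^{0}$ and then uses $\sigma\sqrt{B}$ where $\sigma/\sqrt{B}$ is meant. The two slips cancel, and your computation is the clean version. Your independence remark is also correct and sharpens the constant, though it is not needed.

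The real divergence is how $\widehat L_k$ is handled. The paper takes your route (b): it passes from $\widehat g_k$ to $\widehat g_k^{0}$ by citing \eqref{eq:cov-composite-psd}. Your reason for preferring (a) is sound. Lemma~\ref{lem:unbiased-cov}(2) is only an ``as $B\to\infty$'' statement, and since both covariances are $O(1/B)$ it gives no inequality at any fixed batch size. Concretely, conditional on the auxiliary batch, the per-sample squared error exceeds its value at the population least-squares coefficient $w^\star$ by a quadratic form in $\widehat w_{k,n}-w^\star$. So a poorly fitted baseline can make $\widehat g_k$ worse than $\widehat g_k^{0}$, and Assumption~\ref{asm:beta-smooth-PL-var} as written places no control on $\widehat L_k$.

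Hence neither route, and in particular not the paper's, establishes the literal finite-$B$ statement.
\begin{itemize}
\item Route (a) buys a rigorous bound by moving the variance hypothesis onto the estimator actually used in the update, which is exactly what Lemma~\ref{lem:onestep-PG} consumes. The cost is that it changes the assumption rather than proving the lemma as stated.
\item Route (b) keeps the hypothesis as written but is valid only in the large-batch limit.
\end{itemize}

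Within (b), your trace formulation is the right one. In population, the least-squares criterion defining $\widehat w_{k,n}$ is, up to affine rescaling, the trace of the $n$-th diagonal covariance block, with $w=0$ feasible. So the trace reduction is what the construction genuinely supports in the limit, and the trace involves only diagonal blocks. By contrast, the full Loewner ordering claimed in \eqref{eq:cov-composite-psd} is not supported direction by direction. The paper's off-diagonal identity \eqref{eq:cov_nondiag_equal} is also doubtful: for $n<m$ the cost-to-go $\widehat V_{k,n}$ contains costs at times $\ge m$, which are correlated with $\widehat S_{k,m}$.
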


\begin{proof}{Proof.}
Inspired by the variance reduction in \eqref{eq:cov-composite-psd} of Lemma~\ref{lem:unbiased-cov}, it suffices to bound the variance of
$$
\widehat g_k^{0} = \frac{1}{B}\widehat G_{k,\mathrm{val}} + \frac{\beta}{B}\widehat G_{k,\mathrm{KL}}.
$$
Using the representation of $\nabla J_{\beta}(\pmb\theta_k)$ in \eqref{eq:Jbeta-grad-decom} and using the unbiasedness of $\widehat g_{k}^{0}$ in \eqref{eq:unbiased-gk0}--\eqref{eq:unbiased-gk} and \eqref{eq:expectation_equal}, we deduce
\begin{align}
\label{eq:gk-subtract-nablaJ}
\widehat g_k^{0}-\nabla J_\beta(\pmb\theta_k)
=\frac{1}{B}\Big(\widehat G_{k,\mathrm{val}}-\E[\widehat G_{k,\mathrm{val}}\mid\pmb\theta_k]\Big)
+\frac{\beta}{B}\Big(\widehat G_{k,\mathrm{KL}}-\E[\widehat G_{k,\mathrm{KL}}\mid\pmb\theta_k]\Big).
\end{align}
Using the inequality \eqref{eq:cov-composite-psd} and applying the conditional Minkowski inequality to \eqref{eq:gk-subtract-nablaJ}, we have
\begin{align}
    &\quad \,\, \E_{\pi^{\pmb\theta_k}}\Big[\big\|\widehat g_k-\nabla J_\beta(\pmb\theta_k)\big\|^2 \,\big|\, \pmb\theta_k\Big] \nonumber \\
    &\leq \E_{\pi^{\pmb\theta_k}}\Big[\big\|\widehat g_k^{0}-\nabla J_\beta(\pmb\theta_k)\big\|^2 \,\big|\, \pmb\theta_k\Big] \nonumber \\
    &\leq \Bigg[\frac{\Big(\E_{\pi^{\pmb\theta_k}}\Big[\Big\|\widehat G_{k,\mathrm{val}}-\E\big[\widehat G_{k,\mathrm{val}} \,\big|\, \pmb\theta_k\big]\Big\|^2 \,\big|\, \pmb\theta_k\Big]\Big)^{\frac{1}{2}}}{B} + \frac{\beta\Big(\E_{\pi^{\pmb\theta_k}}\Big[\Big\|\widehat G_{k,\mathrm{KL}}-\E\big[\widehat G_{k,\mathrm{KL}} \,\big|\, \pmb\theta_k\big]\Big\|^2 \,\big|\, \pmb\theta_k\Big]\Big)^{\frac{1}{2}}}{B} \Bigg]^2 \nonumber \\
    &\leq \bigg[\frac{1}{B}\Big(\sigma_{\mathrm{val}}\sqrt{B}+\beta\,\sigma_{\mathrm{KL}}\sqrt{B}\Big)\bigg]^2 = \frac{\big(\sigma_{\mathrm{val}}+\beta\,\sigma_{\mathrm{KL}}\big)^2}{B}, \nonumber
\end{align}
where the last inequality follows from the bounds \eqref{eq:component-variance-bounds-G} in Assumption~\ref{asm:beta-smooth-PL-var}. \hfill\Halmos
\end{proof}

\begin{lemma}[One-step error]
\label{lem:onestep-PG}
Suppose that Assumption~\ref{asm:beta-smooth-PL-var} holds and step sizes $\{\eta_k\}_{k \ge 1}$ satisfy $\eta_k \le \min\{1/L(\beta), 1/\mu(\beta)\}$ for each iteration $k \ge 1$. Denote $\sigma(\beta) =\sigma_{\mathrm{val}}+\beta\,\sigma_{\mathrm{KL}}$.
Then, the update $\pmb\theta_{k+1}=\pmb\theta_k - \eta_k \widehat g_k$ yields the following result.
\begin{equation}
\label{eq:onestep_PG}
    \E_{\pi^{\pmb\theta_k}}\Big[J_\beta(\pmb\theta_{k+1})-J_\beta^{*}\,\big|\,\pmb\theta_k\Big]
    \leq (1-\mu(\beta)\eta_k)\,\big(J_\beta(\pmb\theta_k)-J_\beta^{*}\big)\ +\ \frac{L(\beta)\,\sigma^2(\beta)\,\eta_k^2}{2B}.
\end{equation}
\end{lemma}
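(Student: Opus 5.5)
The argument is the standard descent-lemma calculation for SGD under smoothness and the PL condition, carried out conditionally on $\pmb\theta_k$. We start from the $L(\beta)$-smoothness in \eqref{eq:asm-smooth}, which gives the quadratic upper bound
\[
J_\beta(\pmb\theta_{k+1}) \le J_\beta(\pmb\theta_k) - \eta_k \nabla J_\beta(\pmb\theta_k)^\top \widehat g_k + \frac{L(\beta)\eta_k^2}{2}\|\widehat g_k\|^2 .
\]
Next we take the conditional expectation given $\pmb\theta_k$ and split the terms.

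\emph{Linear term.} By the unbiasedness \eqref{eq:unbiased-gk} of Lemma~\ref{lem:unbiased-cov}, it becomes $-\eta_k\|\nabla J_\beta(\pmb\theta_k)\|^2$.

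\emph{Quadratic term.} We use the bias--variance decomposition
\[
\E[\|\widehat g_k\|^2\mid\pmb\theta_k] = \|\nabla J_\beta(\pmb\theta_k)\|^2 + \E[\|\widehat g_k-\nabla J_\beta(\pmb\theta_k)\|^2\mid\pmb\theta_k],
\]
and bound the variance part by $\sigma^2(\beta)/B$ using Lemma~\ref{lem:beta-noise-gk}.

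Collecting the pieces yields
\[
\E[J_\beta(\pmb\theta_{k+1})\mid\pmb\theta_k] \le J_\beta(\pmb\theta_k) - \eta_k\Big(1-\tfrac{L(\beta)\eta_k}{2}\Big)\|\nabla J_\beta(\pmb\theta_k)\|^2 + \frac{L(\beta)\sigma^2(\beta)\eta_k^2}{2B}.
\]
The step-size condition $\eta_k\le 1/L(\beta)$ gives $1-L(\beta)\eta_k/2\ge 1/2$. The gradient term is therefore at most $-\tfrac{\eta_k}{2}\|\nabla J_\beta(\pmb\theta_k)\|^2$.

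We then apply the PL inequality \eqref{eq:asm-PL-condition}, $\tfrac12\|\nabla J_\beta\|^2\ge \mu(\beta)(J_\beta-J_\beta^*)$. This bounds the gradient term by $-\mu(\beta)\eta_k(J_\beta(\pmb\theta_k)-J_\beta^*)$. Subtracting $J_\beta^*$ from both sides gives exactly \eqref{eq:onestep_PG}.

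The condition $\eta_k\le 1/\mu(\beta)$ is not needed for the inequality itself. It only guarantees $1-\mu(\beta)\eta_k\ge0$, which the later unrolling through Lemma~\ref{lem:gronwall-err} requires.

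The only delicate point is the justification of the two conditional-moment facts. Unbiasedness is conditional on $\pmb\theta_k$, with all three mini-batches drawn fresh at iteration $k$. The control variate $\widehat l_{k,n}$ is fitted on the independent auxiliary batch, so the argument around \eqref{eq:L_mul_S_zero} applies and $\widehat g_k$ remains conditionally unbiased.

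For the second moment, Lemma~\ref{lem:beta-noise-gk} is used as stated. It is the one place where we lean on a previously established bound rather than rederiving it. If one prefers not to rely on the asymptotic covariance comparison inside that lemma, the fallback is to work directly with $\widehat g_k^0$, i.e. with the control variates set to zero. The component bounds \eqref{eq:component-variance-bounds-G} together with Minkowski's inequality then give the same $\sigma^2(\beta)/B$ bound.
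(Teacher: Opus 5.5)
Your proposal is correct and follows the same route as the paper: the descent lemma from $L(\beta)$-smoothness, conditional unbiasedness from Lemma~\ref{lem:unbiased-cov}, the second-moment bound of Lemma~\ref{lem:beta-noise-gk}, then $\eta_k\le 1/L(\beta)$ and the PL inequality. Your intermediate bound $-\tfrac{\eta_k}{2}\|\nabla J_\beta(\pmb\theta_k)\|^2$ is the right form of the paper's step (i); the paper writes $-\mu(\beta)\eta_k\|\nabla J_\beta(\pmb\theta_k)\|^2$ there, which is a slip, and your version is the one that yields the factor $1-\mu(\beta)\eta_k$ after applying PL.
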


\begin{proof}{Proof.}
By $L(\beta)$-smoothness \eqref{eq:asm-smooth} in Assumption~\ref{asm:beta-smooth-PL-var}, we have
$$
J_\beta(\pmb\theta_{k+1})
\leq J_\beta(\pmb\theta_k)
-\eta_k\langle \nabla J_\beta(\pmb\theta_k),\widehat g_k\rangle
+\frac{L(\beta)}{2}\eta_k^2\|\widehat g_k\|^2.
$$
Taking conditional expectation and using unbiasedness of $\widehat g_k$ in \eqref{eq:unbiased-gk} yields
\begin{align}
    \E_{\pi^{\pmb\theta_k}}\Big[J_\beta(\pmb\theta_{k+1})\,\big|\,\pmb\theta_k\Big] &\leq J_\beta(\pmb\theta_k)
    -\Big(\eta_k-\frac{L(\beta)}{2}\eta_k^2\Big)\big\|\nabla J_\beta(\pmb\theta_k)\big\|^2
    +\frac{L(\beta)}{2}\eta_k^2\,\E_{\pi^{\pmb\theta_k}}\Big[\big\|\widehat g_k-\nabla J_\beta(\pmb\theta_k)\big\|^2\,\big|\,\pmb\theta_k\Big]\nonumber \\
    &\stackrel{(i)}{\leq} J_\beta(\pmb\theta_k)
    -\mu(\beta)\eta_k\big\|\nabla J_\beta(\pmb\theta_k)\big\|^2
    +\frac{L(\beta)}{2}\eta_k^2\,\E\Big[\big\|\widehat g_k-\nabla J_\beta(\pmb\theta_k)\big\|^2\,\big|\,\pmb\theta_k\Big]\nonumber \\
    &\leq J_\beta(\pmb\theta_k)
    -\mu(\beta)\eta_k\big(J_\beta(\pmb\theta_k)-J_\beta^{*}\big)
    +\frac{L(\beta)\sigma^2(\beta)\eta_k^2}{2B}, \label{eq:temp-onestep-error}
\end{align}
where $(i)$ uses $\eta_k - L(\beta)\eta_k^2 / 2 \geq \eta_k / 2$ for $\eta_k\le 1/L(\beta)$ and the last inequality follows from the PL condition \eqref{eq:asm-PL-condition} in Assumption~\ref{asm:beta-smooth-PL-var} and bounded second moments of $\widehat g_k$ \eqref{eq:sigma-beta-gk} in Lemma \ref{lem:beta-noise-gk}. 

Finally, subtracting $J_\beta^{*}$ from \eqref{eq:temp-onestep-error} and combining the terms, we obtain the desired result. \hfill\Halmos
\end{proof}

We also use the following Gr\"onwall inequality to control the iteration error via recursions.

\begin{lemma}[Discrete Grönwall inequality with tail products]
\label{lem:gronwall-err}
Let $\{a_k\}_{k\ge1}$, $\{\rho_k\}_{k\ge1}$, and $\{b_k\}_{k\ge1}$ be nonnegative sequences with $\rho_k\in[0,1]$ for all $k$. Suppose the following one–step recursion holds
\begin{equation}
\label{eq:gronwall-onestep}
a_{k+1}\, \le\, (1-\rho_k)\,a_k\, +\, b_k,\qquad \forall\,k \geq 1.
\end{equation}
For any $t, k \ge 1$, denote tail products 
\begin{equation}
\label{eq:def_s_t_K}
s_{t:K}=\prod_{s=t}^{K}(1-\rho_s)\quad \textrm{if} \quad t \le K, \quad \textrm{else} \quad 1.
\end{equation} Then, for every $K \geq 1$, we have
\begin{align}
\label{eq:gronwall-ineq}
a_{K+1}\, \le\, s_{1:K}\,a_1\, +\, \sum_{t=1}^{K} b_t\,s_{t+1:K}.
\end{align}
\end{lemma}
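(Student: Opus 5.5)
We argue by induction on $K$, unrolling the one-step recursion \eqref{eq:gronwall-onestep} and using only that each factor $1-\rho_k$ is nonnegative (since $\rho_k\in[0,1]$). Nonnegativity is what lets us multiply an inequality by $1-\rho_k$ without reversing its direction. The convention $s_{t:K}=1$ for $t>K$ in \eqref{eq:def_s_t_K} makes the empty products come out correctly at the boundary.

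\textbf{Base case $K=1$.} Inequality \eqref{eq:gronwall-ineq} reads $a_2\le (1-\rho_1)a_1+b_1 s_{2:1}$. Here $s_{2:1}=1$ and $s_{1:1}=1-\rho_1$, so this is exactly \eqref{eq:gronwall-onestep} at $k=1$.

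\textbf{Inductive step.} Assume \eqref{eq:gronwall-ineq} holds for some $K\ge1$. Applying \eqref{eq:gronwall-onestep} with $k=K+1$ gives $a_{K+2}\le(1-\rho_{K+1})a_{K+1}+b_{K+1}$. Since $1-\rho_{K+1}\ge0$, we may substitute the inductive bound for $a_{K+1}$ and obtain
\[
a_{K+2}\le (1-\rho_{K+1})s_{1:K}\,a_1+\sum_{t=1}^{K}b_t\,(1-\rho_{K+1})s_{t+1:K}+b_{K+1}.
\]
We then need two product identities:
\begin{itemize}
\item $(1-\rho_{K+1})s_{1:K}=s_{1:K+1}$;
\item $(1-\rho_{K+1})s_{t+1:K}=s_{t+1:K+1}$ for $1\le t\le K$. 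For $t<K$ this is simply appending a factor. For $t=K$ the product $s_{K+1:K}=1$ is empty, and $(1-\rho_{K+1})\cdot 1=s_{K+1:K+1}$.
\end{itemize}
Finally, $b_{K+1}=b_{K+1}s_{K+2:K+1}$ because that product is empty. Collecting terms gives \eqref{eq:gronwall-ineq} with $K$ replaced by $K+1$.

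The main point of care is the boundary index $t=K$ and the empty-product convention; the argument itself is routine. The nonnegativity of $b_k$ and $a_k$ is not actually needed beyond the sign of $1-\rho_k$. It only ensures the bound is meaningful, for example when the result is later applied to $\mathrm{Err}_k(\beta)$ in the proof of Theorem~\ref{thm:beta-PG-converge}, where $\rho_k=\mu(\beta)\eta_k\le1$ follows from the step-size condition.
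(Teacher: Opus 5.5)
Your proof is correct and follows the same route as the paper: induction on $K$, substituting the inductive bound into the one-step recursion and absorbing the factor $1-\rho_{K+1}$ into the tail products. You also make explicit two points the paper leaves implicit, namely that $1-\rho_{K+1}\ge 0$ is what allows the substitution and that the empty-product cases $s_{K+1:K}$ and $s_{K+2:K+1}$ equal $1$.
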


\begin{proof}{Proof.}
We prove \eqref{eq:gronwall-ineq} by induction on $k$. For the base case $k=1$, it is trivial by \eqref{eq:gronwall-onestep}

Now, given $a_{k+1}\leq s_{1:k}a_1+\sum_{t=1}^{k} b_t\,s_{t+1:k}$ for some $k \geq 1$, then by induction, we have
\begin{align*}
    a_{k+2} &\leq (1-\rho_{k+1})a_{k+1}+b_{k+1} \\
    &\leq (1-\rho_{k+1})\Big(s_{1:k}a_1+\sum_{t=1}^{k} b_t\,s_{t+1:k}\Big)+b_{k+1} \\
    &= (1-\rho_{k+1})s_{1:k}a_1  +\Big(b_{k+1}+(1-\rho_{k+1})\sum_{t=1}^{k} b_t\,s_{t+1:k}\Big) \\
    &= s_{1:(k+1)}a_1 + \sum_{t=1}^{k+1} b_t\,s_{t+1:k+1},
\end{align*}
where the last equality follows from the recursive definitions of $s_{t:k+1}$ in \eqref{eq:def_s_t_K}. \hfill\Halmos
\end{proof}

\section{Additional results of synthetic experiments}
\label{sec: additional results of synthetic experiments}

For completeness, we report an additional synthetic setting with lower dimension and a shorter horizon, namely $(d=2, N=3)$. This case leads to the same qualitative conclusion as in the main text: the learned generator matches the ground-truth marginals closely and preserves the adjacent-step dependence structure in both unimodal and bimodal regimes.

The generated marginals align closely with the ground truth across all time steps. Figures~\ref{fig:unimodal_d2_N3_distributions} and \ref{fig:bimodal_d2_N3_distributions} compare the empirical marginal distributions at times $n\in\{0,1,2,3\}$ obtained from the learned normalizing-flow generator with those from direct sampling of the ground truth, showing that the method captures both Gaussian and non-Gaussian marginal features in this simpler setting.

\begin{figure}[!ht]
    \caption{Marginal distributions across different time steps for the unimodal setting ($d=2$, $N=3$).
    \label{fig:unimodal_d2_N3_distributions}}
    \centering
    \subfigure[$n=0$]{
        \includegraphics[width=0.48\textwidth]{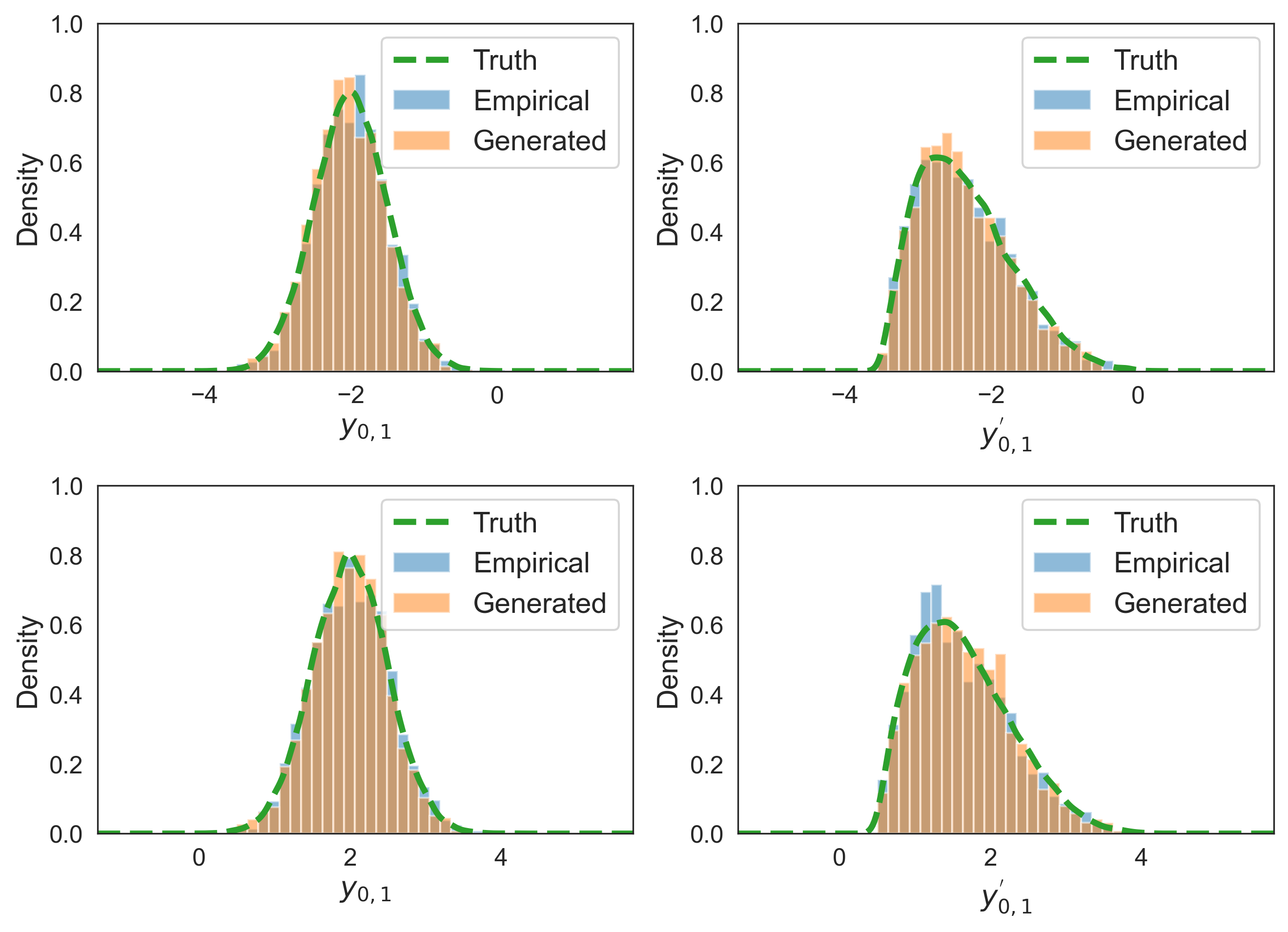}
    }
    \hfill
    \subfigure[$n=1$]{
        \includegraphics[width=0.48\textwidth]{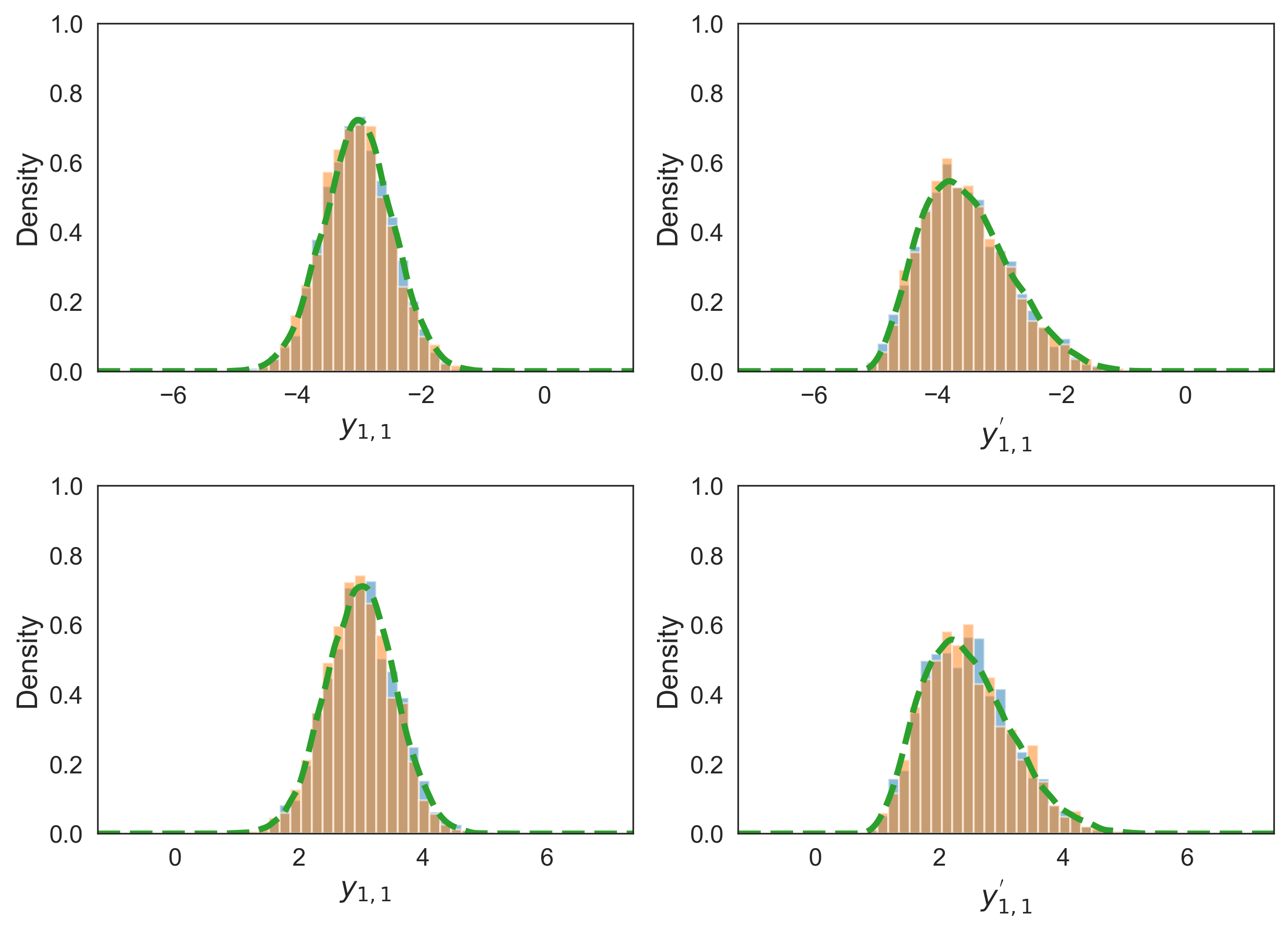}
    }
    
    \subfigure[$n=2$]{
        \includegraphics[width=0.48\textwidth]{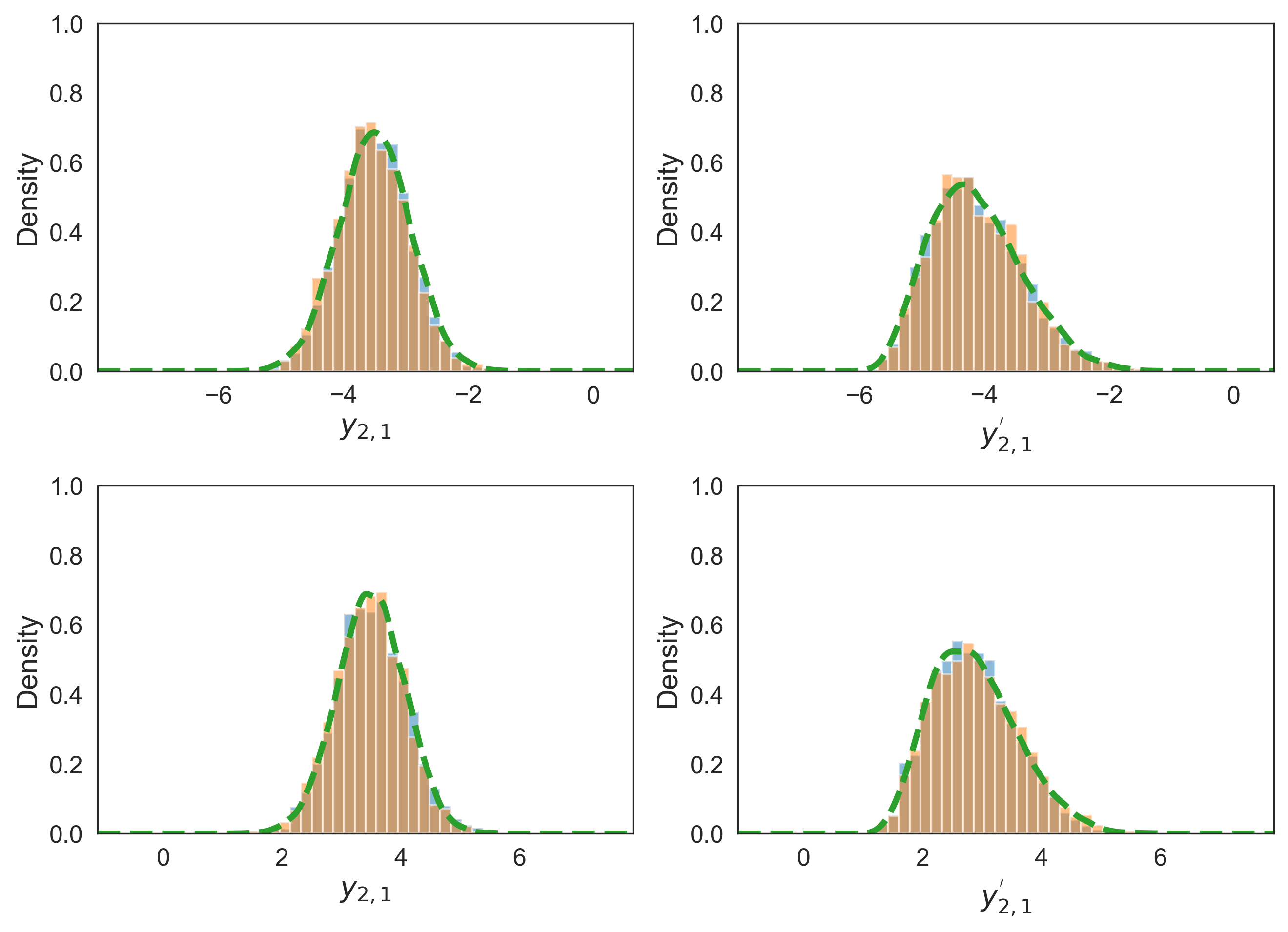}
    }
    \hfill
    \subfigure[$n=3$]{
        \includegraphics[width=0.48\textwidth]{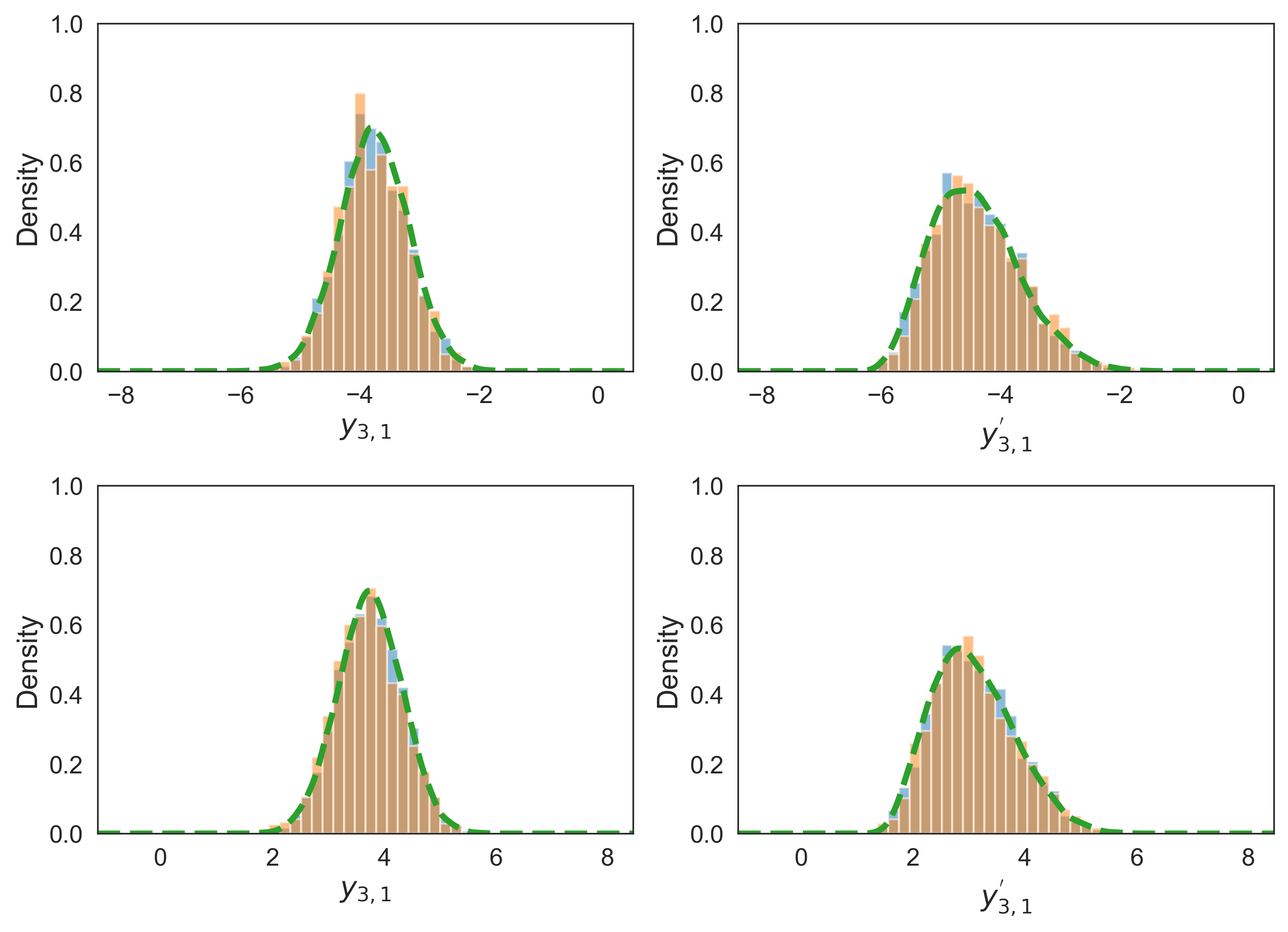}
    }
\end{figure}

\begin{figure}[!ht]
    \centering
    \caption{Marginal distributions across different time steps for the bimodal setting ($d=2$, $N=3$).
    \label{fig:bimodal_d2_N3_distributions}}
    \subfigure[$n=0$]{
        \includegraphics[width=0.48\textwidth]{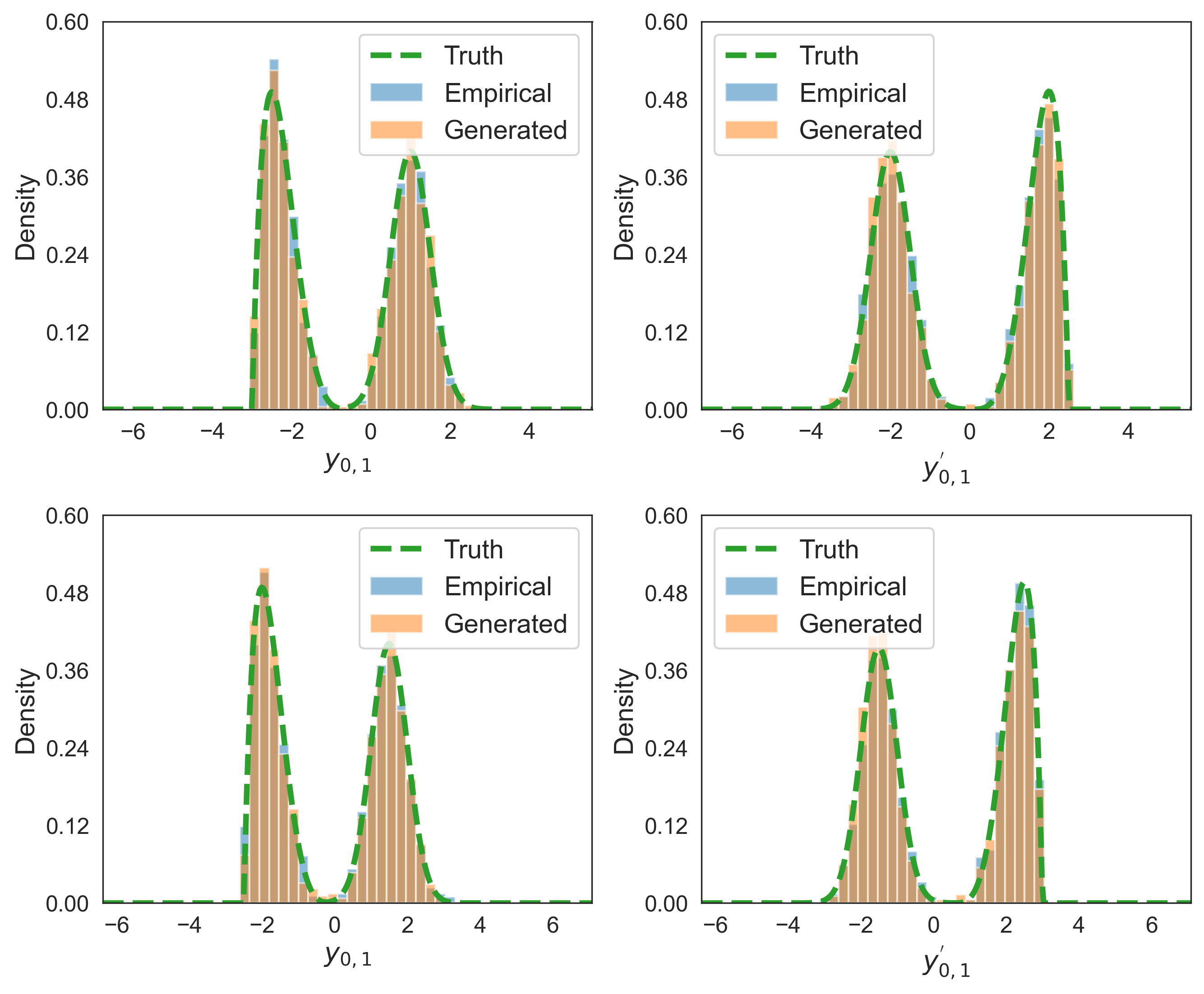}
    }
    \hfill
    \subfigure[$n=1$]{
        \includegraphics[width=0.48\textwidth]{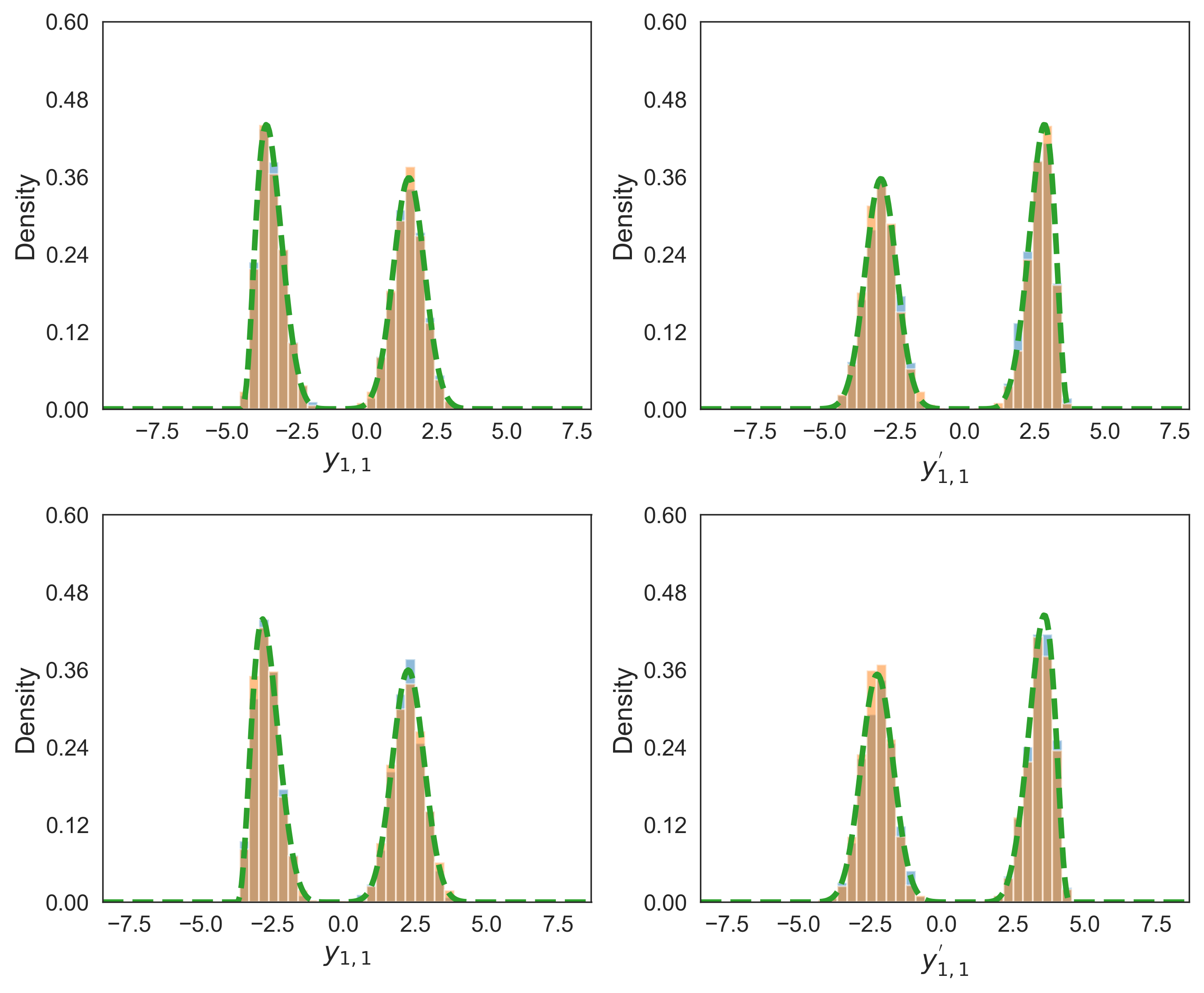}
    }
    
    \subfigure[$n=2$]{
        \includegraphics[width=0.48\textwidth]{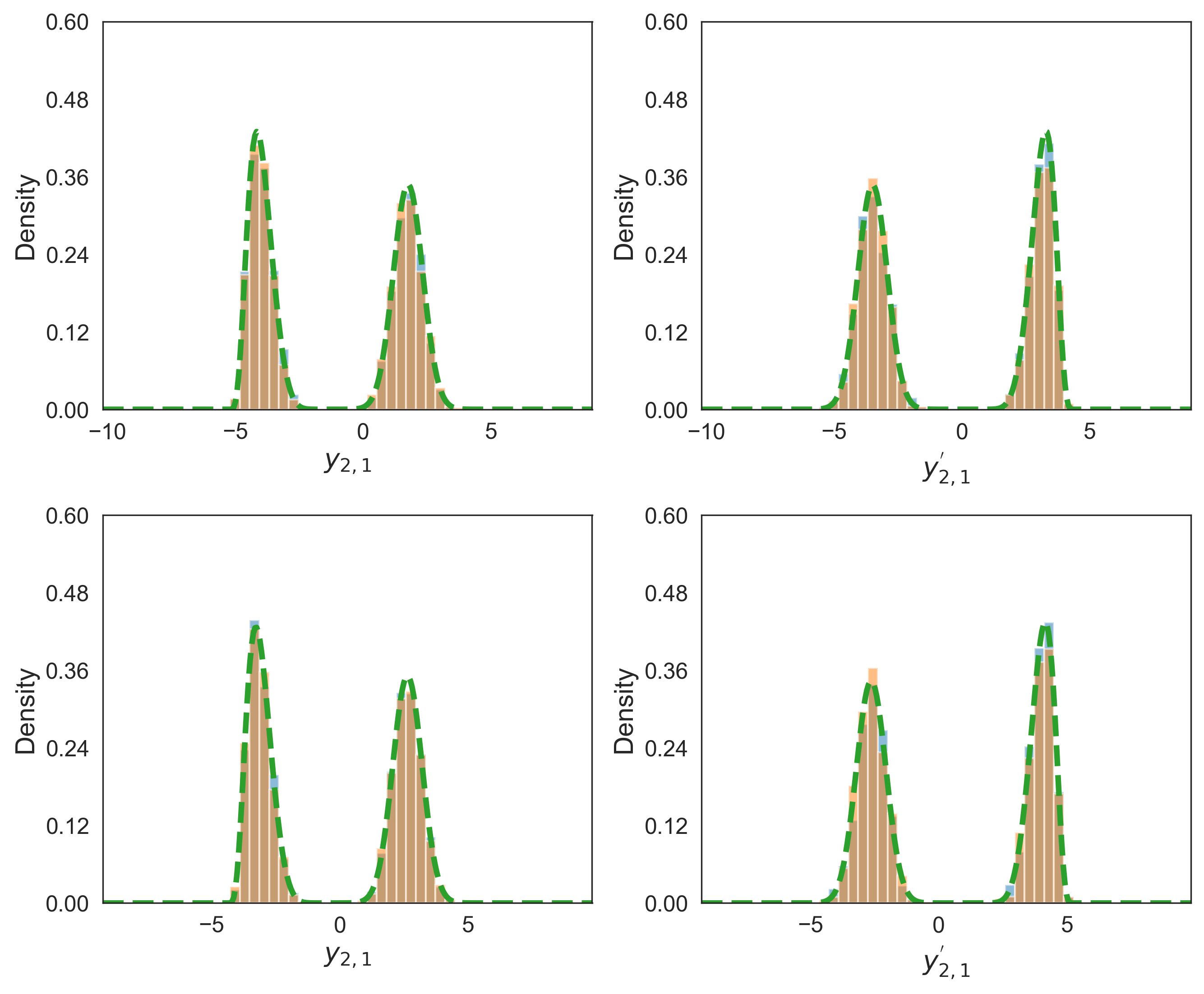}
    }
    \hfill
    \subfigure[$n=3$]{
        \includegraphics[width=0.48\textwidth]{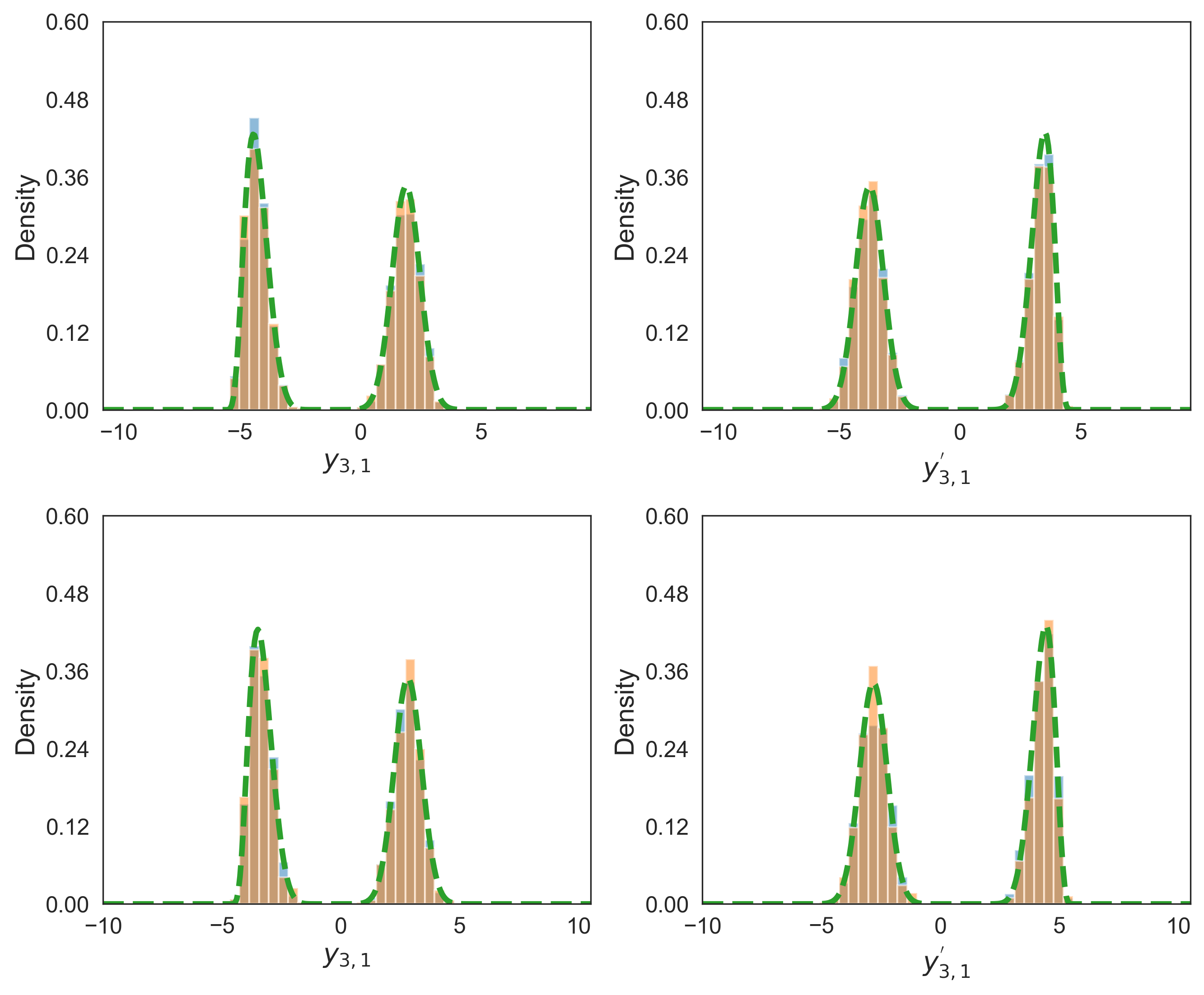}
    }
\end{figure}

We further assess whether the learned generator captures temporal dependence by comparing adjacent-step correlations in Figure~\ref{fig:simulation_d2_N3_corr}. The estimated correlations closely match the ground truth across the entire horizon, indicating that the method preserves the AR(1)-type serial structure beyond marginal matching. In particular, the absolute discrepancy between the generated and true adjacent-step correlations remains small, on the order of $10^{-3}$.

\begin{figure}[!ht]
    \centering
    \caption{Temporal correlations between adjacent time steps for both unimodal and bimodal settings ($d=2$, $N=3$).
    \label{fig:simulation_d2_N3_corr}}
    \subfigure[Unimodal.]{
        \includegraphics[width=0.48\linewidth]{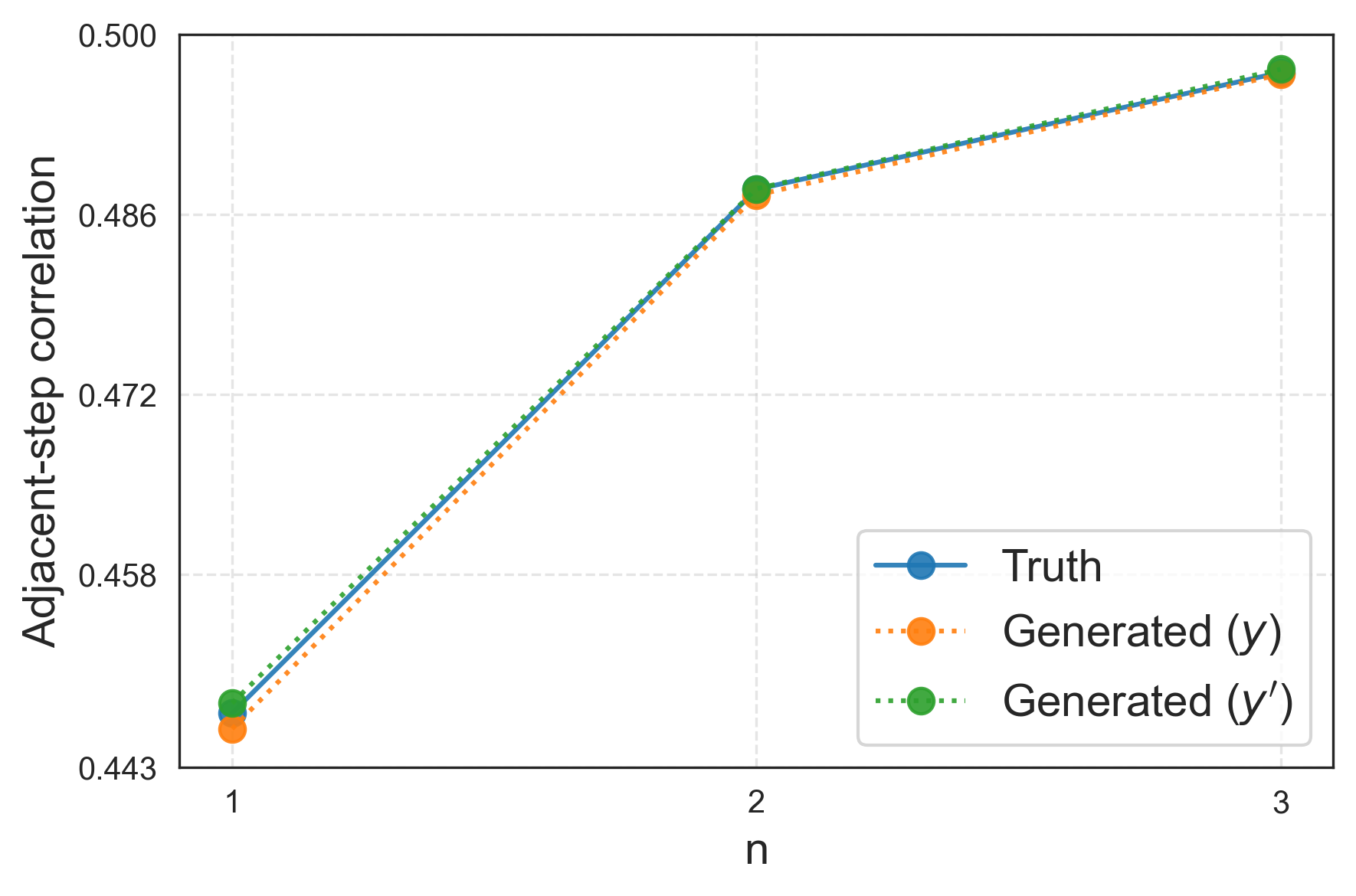}
    }
    \hfill
    \subfigure[Bimodal.]{
        \includegraphics[width=0.48\linewidth]{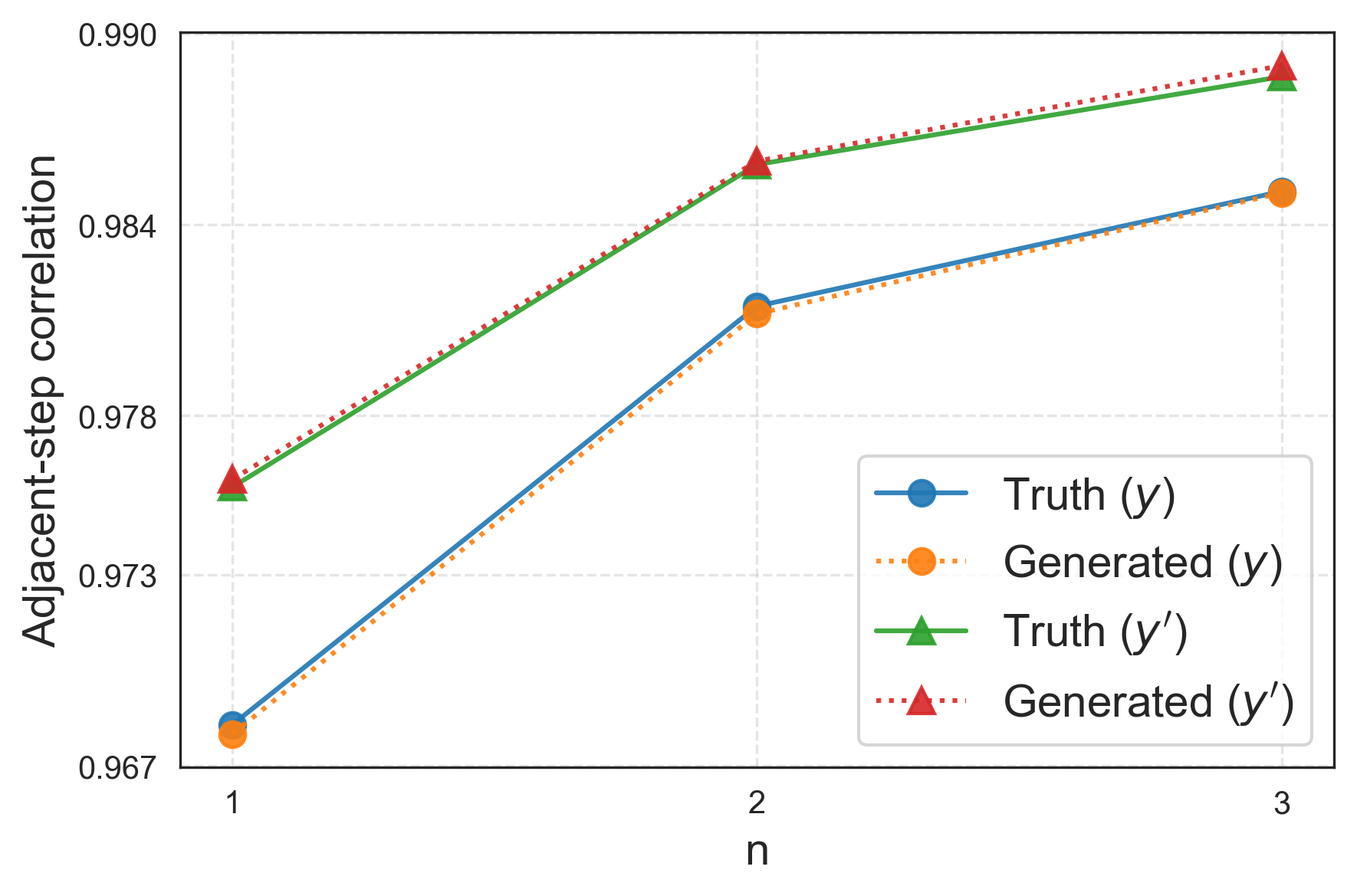}
    }
\end{figure}

The quantitative metrics confirm the same pattern. Table~\ref{tab:d2_N3_metrics} reports the distributional metrics and adjacent-correlation tests over 10 independent runs. The marginal discrepancy metrics, trajectory-level metrics, and the learned average cost all remain small in both unimodal and bimodal settings, indicating accurate recovery of both single-step marginals and the full joint trajectory distribution. Moreover, the adjacent-correlation discrepancies are small, and the permutation tests yield non-significant $p$-values in both cases (unimodal: $\mathtt{p(T_{\max})}=0.9790$, $\mathtt{p(T_2)}=0.9800$; bimodal: $\mathtt{p(T_{\max})}=0.6010$, $\mathtt{p(T_2)}=0.5789$). Therefore, we do not find statistically significant evidence that the generated and ground-truth adjacent-step correlations differ for $n=1,\dots,N$.

\begingroup
\setlength{\tabcolsep}{4pt}
\renewcommand{\arraystretch}{0.95}
\begin{table}[H]
\centering
\small
\caption{Performance on distributional metrics and adjacent-correlation tests ($d=2$, $N=3$). Mean values are reported with standard deviations in parentheses.\label{tab:d2_N3_metrics}}
\begin{tabular}{lccccccccc}
\toprule
& \multicolumn{5}{c}{\textbf{Distributional Metrics}} & \multicolumn{4}{c}{\textbf{Adjacent-Correlation Tests}} \\
\cmidrule(lr){2-6} \cmidrule(lr){7-10}
\textbf{Setting} & $\mathtt{Cost\_avg}$ & $\mathtt{W2\_avg}$ & $\mathtt{KS\_avg}$ & $\mathtt{SWD}$ & $\mathtt{MMD}^2$ & $\mathtt{T}_{\max}$ & $\mathtt{p(T_{\max})}$ & $\mathtt{T_{2}}$ & $\mathtt{p(T_{2})}$ \\
\midrule
\textbf{Unimodal} & 0.2858 & 0.0010 & 0.0129 & 0.0152 & 0.0124 & 0.0045 & 0.9790 & 0.0001 & 0.9800 \\
& (0.0476) & (0.0002) & (0.0002) & (0.0013) & (0.0005) & (0.0020) & (0.0205) & (0.0001) & (0.0195) \\
\addlinespace
\textbf{Bimodal} & 1.6224 & 0.0038 & 0.0129 & 0.6033 & 0.0241 & 0.0008 & 0.6010 & 0.0001 & 0.5789 \\
& (0.0823) & (0.0040) & (0.0003) & (0.0121) & (0.0034) & (0.0008) & (0.4459) & (0.0001) & (0.4263) \\
\bottomrule
\end{tabular}
\end{table}
\endgroup

\section{Additional details on the implemented normalizing-flow parameterization}
\label{sec: details of nf implementation}

In this section, we summarize the implemented normalizing-flow parameterization used in the synthetic experiments. For each $n \in [N]_0$, let $e_n \in \mathbb{R}^{d_{\mathrm{emb}}}$ denote the sinusoidal embedding of time step $n$. The recursion is initialized by $\widehat Y_{-1}=0$ and $\widehat Y_{-1}'=0$. The implementation of the normalizing-flow architecture is described below.

\begin{enumerate}
    \item \textbf{Normalization.}
    In implementation, the model is trained on coordinate-wise normalized trajectories (we normalize the data by subtracting the mean and dividing by the standard deviation coordinate-wise), and the corresponding affine Jacobian correction is added back when densities are evaluated on the original scale.
    \item \textbf{Time-dependent conditioning variables.}
    For each $n \in [N]_0$, given the state $(\widehat Y_{n-1},\widehat Y_{n-1}')$, we define
    $$
    C_n := (\widehat Y_{n-1},e_n),
    \qquad
    C'_n := (\widehat Y_{n-1}',e_n),
    \qquad
    C_n^{(\rho)} := (\widehat Y_{n-1},\widehat Y_{n-1}',e_n).
    $$
    The conditioning variables $C_n$ and $C'_n$ are used for the two generated conditional marginals $\widehat Y_n$ and $\widehat Y'_n$, while $C_n^{(\rho)}$ is used for the dependency network.

    \item \textbf{Latent Gaussian-copula coupling.}
    Inspired by \citep{laszkiewicz2021copula}, we introduce the dependence parameter produced by a neural network $\mathfrak{N}_n^{\theta_n}$:
    \begin{equation}
    \label{eq:def_varrho_n}
        \varrho_n = \mathfrak{N}_n^{\theta_n}(C_n^{(\rho)}) \in [-\rho_{\max},\rho_{\max}]^d,
    \end{equation}
    where $\rho_{\max}$ is a fixed upper bound of the correlation. Given two independent Gaussian vectors
    $\varepsilon_n^{(1)},\varepsilon_n^{(2)} \sim \mathcal N(0,I_d)$, we define the latent pair $(U_n,V_n)$ by
    \begin{equation}
    \label{eq:def U_n and V_n}
        U_n = \varepsilon_n^{(1)},
        \qquad
        V_n = \varrho_n \, \varepsilon_n^{(1)}
        + \sqrt{1-\varrho_n^2}\, \varepsilon_n^{(2)}.
    \end{equation}
    By construction, $U_n$ and $V_n$ have standard Gaussian marginals, and their dependence is determined by $\varrho_n$.

    \item \textbf{Conditional marginal flows.}
    Conditional on $C_n$ and $C'_n$, the marginal densities are estimated by conditional spline coupling flows \citep{durkan2019neural}. In our implementation, both flows use alternating coupling masks, and the associated conditioner networks are multilayer perceptrons (MLPs).

    \item \textbf{Joint one-step density.}
    Let $u_n,v_n \in \mathbb{R}^d$ denote the latent variables obtained by inverting the conditional flows from $\widehat Y_n$ and $\widehat Y_n'$ under conditioning $C_n$ and $C'_n$, respectively. Specifically, we define
    \begin{equation}
    \label{eq:def_invert_trans}
        \widehat Y_n = f_\varphi^{-1}(U_n; C_n),\quad \widehat Y_n' = g_\psi^{-1}(V_n; C'_n).
    \end{equation}
    The conditional log-density of the implemented one-step kernel is
    \begin{equation*}
    \log q_n^{\theta_n}(y_n,y_n' \mid y_{n-1},y_{n-1}')
    =
    \log q_n^{\theta_n,(1)}(y_n \mid c_n)
    +
    \log q_n^{\theta_n,(2)}(y_n' \mid c_n')
    +
    \log \ell_{\varrho_n}(u_n,v_n),
    \end{equation*}
    where $\ell_{\varrho_n}$ denotes the Gaussian-copula density with correlation vector $\varrho_n$.

    The two marginal log-densities are given by the change-of-variables formula:
    $$
    \log q_n^{\theta_n,(1)}(y_n \,|\, c_n)
    =
    \log \phi_d(u_n)
    +
    \log \bigl|\mathrm{Jcb}_n^{(1)}(y_n;c_n)\bigr|,
    $$
    and
    $$
    \log q_n^{\theta_n,(2)}(y_n' \mid c_n')
    =
    \log \phi_d(v_n)
    +
    \log \bigl|\mathrm{Jcb}_n^{(2)}(y_n';c'_n)\bigr|,
    $$
    where $\phi_d$ denotes the standard Gaussian density on $\mathbb{R}^d$, and $\mathrm{Jcb}_n^{(1)}$ and $\mathrm{Jcb}_n^{(2)}$ are the corresponding determinants of the inverse-flow Jacobians. The copula correction term is
    \begin{equation*}
    \log \ell_{\varrho_n}(u_n,v_n)
    =
    \sum_{j=1}^d
    \left[
    -\frac{1}{2}\log(1-\varrho_{n,j}^2)
    -\frac{u_{n,j}^2-2\varrho_{n,j}u_{n,j}v_{n,j}+v_{n,j}^2}{2(1-\varrho_{n,j}^2)}
    +\frac{1}{2}(u_{n,j}^2+v_{n,j}^2)
    \right].
    \end{equation*}

    \item \textbf{Sampling and likelihood evaluation.}
    Given $(\widehat Y_{n-1},\widehat Y_{n-1}')$, we first compute $\varrho_n$ using \eqref{eq:def_varrho_n}. Then, based on the latent pair $(U_n,V_n)$ sampled from \eqref{eq:def U_n and V_n}, we apply invertible mapping \eqref{eq:def_invert_trans} to generate $(\widehat Y_n,\widehat Y_n')$. Repeating this procedure for $n \in [N]_0$ yields one sample from the path law $\pi^{\pmb\theta}$.

    At each iteration $k$, given sampled trajectories $(\widehat{\pmb Y},\widehat{\pmb Y}')\sim\pi^{\pmb\theta_k}$, the per-trajectory statistics in \eqref{eq:Vhat-def}--\eqref{eq:Khat-def} are calculated. Then, the aggregated gradient estimator \eqref{eq:gkhat-def} can be constructed with the control variates \eqref{eq:agg-control-variate-def}. Finally, we apply the Adam optimizer to update the parameters using the estimated gradient.
\end{enumerate}

\section{Additional details on time series statistical downscaling}
\label{sec:details_downscaling}

\paragraph{Debiasing.}
We implement the debiasing step using Algorithm~\ref{alg:kl-pg}. In each round of training, we construct projected low-resolution samples $\{\pmb Y'^{(m)}\}_{m=1}^{M}$ from high-resolution trajectories $\{\pmb X^{(m)}\}_{m=1}^{M}$ via $C\pmb X^{(m)} = \pmb Y'^{(m)},  m \in [M]$, and pair them with observed low-resolution samples $\{\pmb Y^{(m)}\}_{m=1}^{M}$. These paired samples are then used to learn a bi-causal coupling that transports $\mu_{Y'}$ toward $\mu_Y$ while preserving sequential dependence.

For sampling, given an observed low-resolution path $\pmb Y$, we apply the learned transport map $T$ to obtain $\pmb Y' = T^{-1}(\pmb Y)$. This yields a low-resolution trajectory that is consistent with the observed low-resolution law and suitable for downstream high-resolution generation.

In Section~\ref{subsec:synthetic_exp_setup}, we explained the normalizing-flow parameterization used in Algorithm~\ref{alg:kl-pg}; see Appendix~\ref{sec: details of nf implementation} for implementation details. Concretely, given a projected low-resolution time series $\pmb Y = (Y_0',\dots,Y_{N_y}')$, we first map each coordinate to latent noise via $U_n = f_\varphi(Y_n'; C_n)$. We then construct a correlated latent variable
\[
V_n = \varrho_n U_n + \sqrt{1-\varrho_n^2}\,\varepsilon_n,
\]
where $\varepsilon_n \sim \mathcal N(0,I)$ and $\varrho_n$ is the $n$-th correlation coordinate produced by RhoNet. Finally, applying the flow $g_\psi^{-1}$ yields the LFLR sample $\pmb Y = g_\psi(^{-1}(V_n; C_n)$. In this way, the OT component converts a biased HFLR sample $\pmb Y'$ into a LFLR sample $\widehat{\pmb Y}$, and HFLR sample is statistically aligned with the observed low-resolution law and preserves temporal dependence. As in the previous experiments, we set the KL-penalty coefficient to $\beta = 50$.

\paragraph{Conditional modeling.}
Once a debiasing map is learned, we generate an HFHR trajectory conditional on it. To model the HFHR prior $\mu_X$, following \citet{wan2024debias}, we first train an unconditional diffusion model using an Efficient U-Net architecture with self-attention at the coarsest resolution.

Next, we formulate the conditional generation task through a conditional reverse-time SDE. Let $\mathcal E_y = \{C\bar X_T = y\}$ denote the event that the terminal path satisfies the prescribed low-resolution constraint. As derived below in ``additional details on conditional modeling'', the conditioned reverse-time process $\tilde X$ evolves according to
\begin{equation}
\label{eq:cond_sde_body}
    \dd\tilde X_t=\left[\bar b(t,\tilde X_t)+\bar\sigma\bar\sigma^\top(t)\nabla_x\log h(t,\tilde X_t;y)\right]\dd t+\bar\sigma(t)\dd\tilde W_t,
\end{equation}
where $h(t,x;y) = \mathbb P(C\bar X_T = y \mid \bar X_t = x).$

The term $\nabla_x \log h(t,x;y)$ acts as a correction that steers the reverse-time dynamics toward trajectories consistent with the prescribed low-resolution path. Since this correction is generally intractable to compute exactly, we follow \citet{finzi2023user}, building on \citet{chung2022diffusion,chung2022improving}, and approximate conditional sampling using a guidance-based procedure. In this way, the HFLR path produced by the bi-causal OT step is incorporated into the high-resolution diffusion model.

\paragraph{Additional details on conditional modeling.}
Firstly, we train an (unconditional) diffusion model to approximate $\mu_X$, which is often referred to as {\it the pre-trained model}. Consider a filtered probability space $(\Omega, \mathcal F,\mathbb{P},\mathbb{F}=\{\mathcal{F}_t\}_{t\geq0})$ satisfying the usual conditions. We consider the forward process, with state-independent volatility: 
    \begin{eqnarray}
        \label{eq:forward}
        \dd X_t = {b(t,X_t)}\dd t + {\sigma(t)}\dd W_t, \ \ X_0\sim\mu_0=\mu_X,
    \end{eqnarray}
    with drift $b:[0,T]\times \mathcal{X}\rightarrow\mathcal{X}$, volatility $\sigma:[0,T]\rightarrow\mathcal{X}$, standard Wiener process $W=\{W_t\}_{t\geq0}$ with $W_t\in \mathbb{R}^{(N_x+1)d}$, {supported by $\mathbb{F}$} and initial distribution $\mu_0$ being the underlying true distribution of the data. {Under mild conditions on $b$ and $\sigma$, \eqref{eq:forward} has a unique strong solution $X$. Let $\{\mu_t\}_{t\geq0}$ denote the distribution flow for the process $X$ such that $\mu_t=Law(X_t)$ for all $t\geq0$. Assume that $\mu_t$ admits a density function $m(t,\cdot)$ for all $t\geq0$. By the Fokker-Planck equation, the density function $m:[0,+\infty)\times\mathcal X\to\mathbb{R}$ should solve
    \begin{align*}
        \dot{m}(t,x)&=-\nabla_x\cdot[m(t,x)b(t,x)]+\frac{1}{2}\nabla_x\cdot\left\{\nabla_x\cdot\left[m(t,x)\sigma\sigma^\top(t)\right]\right\}\\
        &=-\nabla_x\cdot\left\{m(t,x)b(t,x)-\frac{\nabla_x\cdot\left[m(t,x)\sigma\sigma^\top(t)\right]}{2}\right\}
    \end{align*}
    }
    Fix any $T>0$. The {\it time-reverse SDE} was introduced in \cite{haussmann1986time}.{Let $\bar W=\{\bar W_t\}_{t\geq0}$ be a standard Brownian motion also supported by $\mathbb{F}$, independent from $W$.}
    Consider an SDE of the following form,
    \begin{eqnarray}
        \dd \bar{X}_t = \bar{b}(t,\bar{X}_t)\dd t + \bar{\sigma}(t) \dd \bar{W}_t, \text{ with } \bar{X}_0\sim \mu_T;\label{eq:backward}
    \end{eqnarray}
    {
    where for any $(t,x)\in [0,T]\times\mathcal X$, $\bar\sigma(t)=\sigma(T-t)$ and 
    \begin{align*}
    \bar b(t,x)&=-b(T-t,x)+\frac{\nabla_x\cdot[\sigma\sigma^\top(T-t)m(T-t,x)]}{m(T-t,x)}\\
    &=-b(T-t,x)+\sigma\sigma^\top(T-t)\nabla_x\log{m(T-t,x)}\\
    &=-b(T-t,x)+\sigma\sigma^\top(T-t)\underbrace{\nabla_x\log{m(T-t,x)}}_{\textit{Stein's score function}}.
    \end{align*}
    }

Then under mild conditions, \eqref{eq:backward} admits a strong solution $\bar X=\{\bar X_t\}_{t\in[0,T]}$. In addition, $\bar\mu_t=Law(\bar X_t)$ admits a density function $\bar m(t,\cdot)$ for each $t\in[0,T]$, by checking the corresponding Fokker-Planck equation, we have $\bar m(t,\cdot)=m(T-t,\cdot)$ for all $t\in[0,T]$, that is, $X_{T-t}\overset{d}{=}\bar X_t$. The drift function of the time-reverse SDE (\ref{eq:backward}) depends on the time-dependent score function, which is approximated by a neural network trained with denoising score matching.

Given the forward process \eqref{eq:forward} and its corresponding time-reverse process \eqref{eq:backward},
fix arbitrary $y\in\mathcal Y'$ and let $\tilde X=\{\tilde X_t\}_{t\in[0,T]}$ be a process where for each $t$, $\tilde X_t=\bar X_t \,|\, \mathcal{E}_{y}$ and $\mathcal{E}_y=\{C\bar X_T=y\}$. Recall that $\mathcal{A}_y=\{x\in\mathcal{X}:Cx=y\}$, then $\mathcal{E}_y=\{\bar X_T\in \mathcal{A}_y\}$; notice that $\mathbb{P}(\mathcal{E}_y)=\mathbb{P}(X_0\in \mathcal{A}_y)$. 
For any $(t,x)\in[0,T]\times\mathcal{X}$ and any  $t'\in[t,T]$, 
\begin{itemize}
    \item let $\mathbb{P}_{t'|t}(\cdot \,|\, x)$ denote the conditional distribution $\bar X_{t'} \,|\, \{\bar X_t=x\}$ with $\mathbb{P}_{t|t}(\cdot \,|\, x)=\delta_x(\cdot)$ and denote the corresponding density function $p_{t'|t}(\cdot \,|\, x)$;
    \item let $h(t,x;y)=\mathbb{P}(\mathcal{E}_y|\bar X_t=x)$;
    \item let $\tilde{\mathbb{P}}_{t'|t}(\cdot \,|\, x;y)$ denote the conditional distribution of $\bar X_{t'} \,|\, \{\bar X_t=x,\bar X_T\in \mathcal{A}_y\}\overset{d}{=}\tilde X_{t'} \,|\, \{\tilde X_t=x\}$, with density function $\tilde p_{t'|t}(\cdot \,|\, x;y)$. 
\end{itemize} 
By Bayes' rule,
\[\tilde p_{t'|t}(x' \,|\, x;y)=p_{t'|t}(x' \,|\, x)\frac{h(t',x';y)}{h(t,x;y)},\quad\forall x'\in\mathcal X.\]
For any smooth, bounded test function $\phi:\mathcal{X}\to\mathbb{R}$ with compact support, let $\mathcal L^t$ denote the infinitesimal generator for $\bar X$ at each $t$, with 
\begin{eqnarray}\label{eq:L_t}
    \mathcal L^t\phi(x)=\bar b(t,x)^\top\nabla_x\phi(x)+\frac{Tr\left(\bar\sigma\bar\sigma^\top(t)\nabla_x^2\phi(x)\right)}{2}
\end{eqnarray}
observe that 
\[\mathbb{E}\left[\phi(\bar X_{t'})-\phi(\bar X_t)\,\biggl|\,\bar X_t=x,E_y\right]=\frac{\mathbb{E}\left[\phi(\bar X_{t'})h(t',\bar X_{t'};y)-\phi(\bar X_{t})h(t,\bar X_{t};y)\,\biggl|\,\bar X_t=x\right]}{h(t,x;y)}.\]
By the tower property of conditional expectations, $(\partial_t+\mathcal{L}^t)h(t,\cdot;y)\equiv0$. Let $\tilde{\mathcal{L}}_y^t$ be the infinitesimal generator for $\tilde X$ at each $t$, then 
\begin{align*}
\tilde{\mathcal{L}}_y^t\phi(x)&=\lim_{t'\downarrow t}\frac{\mathbb{E}\left[\phi(\bar X_{t'})-\phi(\bar X_t)\,\biggl|\,\bar X_t=x,E_y\right]}{t'-t}=\frac{(\partial_t+\mathcal{L}^t)(h\phi)(t,x;y)}{h(t,x;y)}\\
&=\mathcal{L}^t\phi(x)+\langle\bar\sigma\bar\sigma^\top(t,x)\nabla_x\log{h(t,x;y)},\nabla_x\phi(x)\rangle.
\end{align*}
Thus, $\tilde X$ follows the following diffusion process
\begin{equation*}
\label{eq:cond_fidd_process}
    \dd\tilde X_t=\left[\bar b(t,\tilde X_t)+\bar\sigma\bar\sigma^\top(t)\nabla_x\log{h(t,\tilde X_t;y)}\right]\dd t+\bar\sigma(t)\dd\tilde W_t,
\end{equation*}
where $\tilde W$ is a standard Brownian motion supported by $\mathbb F$, independent from $W$ and $\bar W$.

Notice that, $h$ should solve 
\begin{eqnarray*}
\label{eq:pde_h2-simplify}
    -\partial_t{h}(t,x;y)=\mathcal{L}^th(t,x;y),\quad \forall (t,x)\in[0,T)\times\mathcal{X},\ \ y\in\mathcal Y'
\end{eqnarray*}
with terminal condition
\begin{eqnarray*}
\label{eq:boundary_relaxation}
    h(T,x;y) = {\bf 1}(x\in A_y)={\bf 1}(C\,x = y)
\end{eqnarray*}
where $\mathcal{L}^t$ is defined in \eqref{eq:L_t}.

\section{Evaluation metrics}
\label{sec:evaluation metrics}
This section provides the evaluation metrics used in the experiments. We first introduce the general metrics used in the synthetic experiments (Section~\ref{sec: synthetic experiments}), and then record the additional metrics specific to the robust hedging (Section~\ref{sec: robust hedging}) and time series statistical downscaling applications (Section~\ref{sec: statistical downscaling}).

\subsection{Evaluation metrics for synthetic experiments}
\label{sec:gen_set_metrics}

This subsection introduces the evaluation metrics used in the synthetic experiments. These metrics assess the learned coupling from three perspectives: the transport objective value attained by Algorithm~\ref{alg:kl-pg}, the accuracy of the generated distributions, and the preservation of temporal dependence.

\begin{itemize}
    
    \item \textbf{Transport cost.}
    To quantify the objective value attained by the relaxed formulation \eqref{eq:relaxed-bicausal-ot}, we report the empirical average cost induced by the learned coupling returned by Algorithm~\ref{alg:kl-pg}:
    \begin{equation*}
    \mathtt{Cost\_avg}
    :=\frac{1}{M(N+1)d}\sum_{m=1}^M \sum_{n=0}^{N}\sum_{i=1}^{d} \big(\widehat{Y}_{n,i}^{(m)}-\widehat{Y}'^{(m)}_{n,i}\big)^2.
    \end{equation*}

   \item \textbf{Wasserstein--2 distance for marginal distributions.}
    For each time step $n$ and asset $i$, define the empirical marginal measures
    \[
    \widehat P_{n,i}=\frac{1}{M}\sum_{m=1}^M \delta_{Y_{n,i}^{(m)}},\quad
    \widehat P'_{n,i}=\frac{1}{M}\sum_{m=1}^M \delta_{Y'^{(m)}_{n,i}},
    \quad
    \widehat Q_{n,i}=\frac{1}{M}\sum_{m=1}^M \delta_{\widehat{Y}_{n,i}^{(m)}},\quad \widehat Q'_{n,i}=\frac{1}{M}\sum_{m=1}^M \delta_{\widehat{Y}'^{(m)}_{n,i}}.
    \]
    The averaged squared Wasserstein--2 distance \citep{villani2009optimal} is given by
    \begin{equation*}
    \mathtt{W2\_avg}
    := \frac{1}{(N+1)d}\sum_{n=0}^N\sum_{i=1}^d
    \Big(W^2_2\big(\widehat P_{n,i},\widehat Q_{n,i}\big)+W^2_2\big(\widehat P'_{n,i},\widehat Q'_{n,i}\big)\Big),
    \end{equation*}
    where $W^2_2(P, Q) := \inf_{X \sim P, Y \sim Q} \mathbb{E}[(X-Y)^2]$ and this metric quantifies discrepancies between one-step marginals across time and assets.

    \item \textbf{Kolmogorov--Smirnov distance for marginal distributions.}
    Let $\widehat F_{n,i}$, $\widehat F'_{n,i}$, $\widehat G_{n,i}$, and $\widehat G'_{n,i}$ denote the empirical cumulative distribution
    functions corresponding to $\widehat P_{n,i}$, $\widehat P'_{n,i}$, $\widehat Q_{n,i}$, and $\widehat Q'_{n,i}$, respectively.
    The averaged Kolmogorov--Smirnov (KS) distance \citep{Smirnov1948,Massey1951} is defined as
    \begin{equation*}
    \mathtt{KS\_avg}
    := \frac{1}{(N+1)d}\sum_{n=0}^N\sum_{i=1}^d
    \Big(\sup_{y\in\mathbb{R}}
    \bigl|\widehat F_{n,i}(y)-\widehat G_{n,i}(y)\bigr|+\sup_{y'\in\mathbb{R}}
    \bigl|\widehat F'_{n,i}(y')-\widehat G'_{n,i}(y')\bigr|\Big),
    \end{equation*}
    which provides a complementary, distribution-free measure of marginal mismatch.

    \item \textbf{Sliced Wasserstein distance.}
    Let $Z^{(m)}=(Z^{(m)}_{1},\dots,Z^{(m)}_{N}):=\mathrm{vec}(\pmb Y^{(m)})\in\mathbb{R}^{p}$ and
    $\widehat{Z}^{(m)}=(\widehat Z^{(m)}_{1},\dots,\widehat Z^{(m)}_{N}):=\mathrm{vec}(\widehat{\pmb Y}^{(m)})$, where $p=2(N+1)d$. Given a projection direction $\alpha$, denote the marginal measures
    \[
    \widehat P_{\alpha}=\frac{1}{M}\sum_{m=1}^M \delta_{\alpha^\top Z^{(m)}} \quad \textrm{and} \quad
    \widehat Q_{\alpha}=\frac{1}{M}\sum_{m=1}^M \delta_{\alpha^\top \widehat{Z}^{(m)}}.
    \]
    The sliced Wasserstein distance (SWD) \citep{Bonneel2015} is defined as
    \begin{equation*}
    \mathtt{SWD}
    :=
    \E_{\alpha\sim\mathrm{Unif}(\mathbb{S}^{p-1})}
    \Bigl[
    W^2_2\bigl(\widehat P_{\alpha},\widehat Q_{\alpha}\bigr)
    \Bigr],
    \end{equation*}
    and is approximated numerically via Monte Carlo sampling over random projection directions.

    \item \textbf{Maximum mean discrepancy.}
    For the joint trajectory distributions $P$ and $Q$, the squared maximum mean discrepancy
    (MMD) \citep{Gretton2012} is given by
    \begin{equation*}
    \mathtt{MMD}^2
    :=
    \frac{1}{M^2}\sum_{m,l=1}^{M} \Big[ k(Z^{(m)}, Z^{(l)})
    + k(\widehat Z^{(m)},\widehat Z^{(l)})
    -2 k(Z^{(m)},\widehat Z^{(l)}) \Big],
    \end{equation*}
    where $k(\cdot,\cdot)$ is a positive definite kernel. This metric captures discrepancies
    in the full joint distribution of the stacked trajectories.

    \item \textbf{Adjacent-step correlation discrepancy.}
    To assess temporal dependence, for each time index $n \in \{1,\dots,N\}$, we compute the empirical adjacent-step correlation
    \begin{equation*}
    \widehat\rho_n^{(\mathrm{gen})}
    :=
    \frac{\sum_{m=1}^M
    \langle \widehat Z_n^{(m)}-\bar Z_n,\, \widehat Z_{n-1}^{(m)}-\bar Z_{n-1}\rangle}
    {\sqrt{\sum_{m=1}^M \|\widehat Z_n^{(m)}-\bar Z_n\|^2}\,
     \sqrt{\sum_{m=1}^M \|\widehat Z_{n-1}^{(m)}-\bar Z_{n-1}\|^2}},
    \qquad
    \bar Z_n:=\frac{1}{M}\sum_{m=1}^M \widehat Z_n^{(m)},
    \end{equation*}
    and $\widehat\rho_n^{(\mathrm{true})}$ is defined analogously by replacing the generated samples $\widehat Z^{(m)}$ with ground-truth samples $Z^{(m)}$. Let $\Delta_n:=\widehat\rho_n^{(\mathrm{gen})}-\widehat\rho_n^{(\mathrm{true})}$ and define
    \begin{equation*}
    \mathtt{T}_{\max}:=\max_{1\le n\le N}|\Delta_n|,
    \qquad
    \mathtt{T}_2:=\sum_{n=1}^N \Delta_n^2.
    \end{equation*}
    Statistical significance is assessed via a trajectory-level permutation test
    \citep{Good2005,WestfallYoung1993}, with $p$-values computed as
    \begin{equation*}
    \mathtt{p(T)}=\frac{1+\sum_{\ell=1}^L \mathbf{1}\{T^{(\ell)}\ge T^{\mathrm{obs}}\}}{B+1},
    \end{equation*}
    where $L$ is the number of permutations and $T^{(\ell)}$ is recomputed after permuting generated and ground-truth trajectories.
\end{itemize}

\subsection{Additional evaluation metrics for robust hedging}
\label{sec:eval_robust_hedging}

This subsection records the additional pricing metrics used in the robust hedging experiment. In addition to the distributional and temporal-dependence metrics introduced in Section~\ref{sec:gen_set_metrics}, we assess pricing accuracy through the estimated subhedging price under the learned coupling and its relative error with respect to the closed-form benchmark.

The empirical subhedging price is defined by
\begin{equation*}
\widehat{p(\xi)}
:=\frac{1}{M}\sum_{m=1}^M \sum_{n=1}^{N} (\widehat{Y}_n^{(m)}-\widehat{Y}'^{(m)}_n)^2,
\end{equation*}
where $\{(\widehat Y_0^{(m)},\widehat{Y}_0'^{(m)},\dots,\widehat Y_N^{(m)},\widehat{Y}_N'^{(m)})\}_{m=1}^M$ are i.i.d.\ samples from the learned coupling. Since the optimal value $\underline p(\xi)$ is available in closed form under our setup \citep[Lemma~2]{han2025kalman}, we also denote the relative error by
\begin{equation*}
\mathtt{RE}
:=\frac{\bigl|\widehat{p(\xi)}-\underline p(\xi)\bigr|}{|\underline p(\xi)|}.
\end{equation*}

\subsection{Additional evaluation metrics for time series statistical downscaling}
\label{sec:eval_met_sd}

This subsection records the additional evaluation metrics used in the time series statistical downscaling experiment. Beyond the general distributional and dependence metrics in Section~\ref{sec:gen_set_metrics}, we report the constraint relative root mean squared error (cRMSE), the Wasserstein--1 distance, and the kernel-density-estimated KL divergence (KLD).

Let $\{\widehat{\pmb X}^{(k,m)}\}_{m=1}^M$ denote generated samples, where $k$ indexes the conditioning value and $m$ indexes the generated sample under that conditioning. Let $\{ y^{(k)}\}_{k=1}^K$ denote the $K$ conditioning values. Each sample
\[
\widehat{\pmb X}^{(k,m)}
=
\{(\widehat X_{n,i}^{(k,m)})\}_{0\le n\le N_x,\;1\le i\le d}
\]
represents a length-$(N_x+1)$ trajectory of $d$ assets.

\begin{itemize}
    \item \textbf{Constraint relative root mean squared error.}
    This metric quantifies the extent to which the generated sample satisfies the conditioning constraint. It is defined by
    \begin{equation*}
        \mathtt{cRMSE}
        =
        \frac{1}{MK}\sum_{m=1}^M \sum_{k=1}^{K}
        \frac{\|C\widehat{\pmb X}^{(k,m)}- y^{(k)}\|_2}{\|C\widehat{\pmb X}^{(k,m)}\|_2}.
    \end{equation*}

    \item \textbf{Wasserstein-1 distance.}
    Let $F_n$ and $\widehat F_n$ denote the empirical cumulative distribution functions of the reference and generated marginal distributions at time $n$, respectively, pooled over samples, conditions, and asset dimensions. Then
    \begin{equation*}
    \mathtt{Wass}_1
    =
    \frac{1}{N_x+1}\sum_{n=0}^{N_x}
    \int_{-\infty}^{\infty} |\widehat F_n(t)-F_n(t)|\,dt.
    \end{equation*}
    This measures the discrepancy between pooled univariate marginal distributions across time, which is sensitive to differences in distributional location and shape.

    \item \textbf{Kernel-density-estimated Kullback--Leibler divergence.}
    Let $p_n$ and $\widehat p_n$ denote the kernel density estimates of the reference and generated marginal densities at time $n$, respectively. Then, define
    \begin{equation*}
    \mathtt{KLD}
    =
    \frac{1}{N_x+1}\sum_{n=0}^{N_x}
    \int_{-\infty}^{\infty}
    p_n(t)\log\frac{p_n(t)}{\widehat p_n(t)}\,dt,
    \end{equation*}
    which quantifies the discrepancy between pooled univariate marginal densities across time and particularly captures local density mismatch and tail undercoverage.
\end{itemize}

\end{document}